\crefname{equation}{}{}
\crefname{algocf}{Algorithm}{Algorithms}
\crefname{equation}{}{} 
\colorlet{refkey}{orange!20}
\colorlet{labelkey}{blue!30}
\crefname{algocf}{Algorithm}{Algorithms}
\numberwithin{equation}{section}
\newtheorem{theorem}{Theorem}[section]
\newtheorem{proposition}[theorem]{Proposition}
\newtheorem{lemma}[theorem]{Lemma}
\crefname{claim}{Claim}{Claims}
\newtheorem{corollary}[theorem]{Corollary}
\newtheorem{conjecture}[theorem]{Conjecture}
\newtheorem*{question*}{Question}
\theoremstyle{definition}
\newtheorem{definition}[theorem]{Definition}
\newtheorem*{definition*}{Definition}
\newtheoremstyle{bfnote}%
{}{}%
{}{}%
{\bfseries}{.}%
{ }%
{\thmname{#1}\thmnumber{ #2}\thmnote{ (#3)}}
\theoremstyle{bfnote}
\newtheorem*{remark}{Remark}
\newcommand{\norm}[1]{\bigg\lVert#1\bigg\rVert}
\newcommand{\snorm}[1]{\lVert#1\rVert}
\newcommand{\mb}{\mathbb}
\newcommand{\mbm}{\mathbbm}
\newcommand{\mc}{\mathcal}
\newcommand{\ol}{\overline}
\newcommand{\on}{\operatorname}
\newcommand{\wt}{\widetilde}
\title{Singularity of Discrete Random Matrices}
\author[A1]{Vishesh Jain}
\address{Department of Statistics, Stanford University, Stanford, CA 94305, USA}
\email{visheshj@stanford.edu}
\author[A2]{Ashwin Sah}
\author[A3]{Mehtaab Sawhney}
\address{Department of Mathematics, Massachusetts Institute of Technology, Cambridge, MA 02139, USA}
\email{\{asah,msawhney\}@mit.edu}
\begin{document}

\begin{abstract}
Let $\xi$ be a non-constant real-valued random variable with finite support, and let $M_{n}(\xi)$ denote an $n\times n$ random matrix with entries that are independent copies of $\xi$. 

For $\xi$ which is not uniform on its support, we show that
\begin{align*}
\mb{P}[M_{n}(\xi)\text{ is singular}] &= \mb{P}[\text{zero row or column}] + (1+o_n(1))\mb{P}[\text{two equal (up to sign) rows or columns}],
\end{align*}
thereby confirming a folklore conjecture. 

As special cases, we obtain:
\begin{itemize}
\item For $\xi = \on{Bernoulli}(p)$ with fixed $p \in (0,1/2)$, 
\[\mb{P}[M_{n}(\xi)\text{ is singular}] = 2n(1-p)^{n} + (1+o_n(1))n(n-1)(p^2 + (1-p)^2)^{n},\]
which determines the singularity probability to \emph{two} asymptotic terms. Previously, no result of such precision was available in the study of the singularity of random matrices. 

\item For $\xi = \on{Bernoulli}(p)$ with fixed $p \in (1/2,1)$,
\[\mb{P}[M_{n}(\xi)\text{ is singular}] = (1+o_n(1))n(n-1)(p^2 + (1-p)^2)^{n}.\]
Previously, only the much weaker upper bound of $(\sqrt{p} + o_n(1))^{n}$ was known due to the work of Bourgain-Vu-Wood.
\end{itemize}
For $\xi$ which is uniform on its support:
\begin{itemize}
\item We show that
\begin{align*}
\mb{P}[M_{n}(\xi)\text{ is singular}] &= (1+o_n(1))^{n}\mb{P}[\text{two rows or columns are equal}].
\end{align*}
\item Perhaps more importantly, we provide a sharp analysis of the contribution of the `compressible' part of the unit sphere to the lower tail of the smallest singular value of $M_{n}(\xi)$.
\end{itemize}
\end{abstract}

\maketitle

\section{Introduction}\label{sec:introduction}
Let $M_n(\xi)$ be an $n\times n$ random matrix, each of whose entries is an independent copy of a random variable $\xi$. We will restrict attention to when $\xi$ is a real-valued random variable whose support is finite and contains at least two points (which we call \emph{discrete}). What is the probability that $M_n(\xi)$ is singular? This question, which has been studied since the 1960s, has attracted considerable attention over the years. A well-known folklore conjecture is that the dominant contribution to the probability of singularity is from the events that a row or column is zero, or that two rows or two columns are equal (possibly up to a sign). In order to facilitate discussion, let us introduce some notation. For a vector $v\in \mb{R}^{n}$, we define the event 
\[\mc{E}_{v} := \{M_n(\xi)v = 0\}.\]
We will also denote the canonical basis vectors of $\mb{R}^{n}$ by $e_1,\dots,e_n$.
Then, the aforementioned conjecture may be stated as follows. 
\begin{conjecture}
\label{conj:exact}
Let $\xi$ be a discrete random variable, and let $M_n(\xi)$ be an $n\times n$ random matrix whose entries are independent copies of $\xi$. Then
\[\mb{P}[M_n(\xi)\emph{ is singular}] = (1+o_n(1))\bigg(2n\mb{P}[\mc{E}_{e_1}] + n(n-1)\mb{P}[\mc{E}_{e_1 - e_2}] + n(n-1)\mb{P}[\mc{E}_{e_1 + e_2}]\bigg).\]
\end{conjecture}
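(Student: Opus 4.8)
\emph{Proof proposal.}

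Write $p_x:=\mb{P}[\xi=x]$; then $\mb{P}[\mc{E}_{e_1}]=p_0^{\,n}$, $\mb{P}[\mc{E}_{e_1-e_2}]=\big(\sum_xp_x^2\big)^n$ and $\mb{P}[\mc{E}_{e_1+e_2}]=\big(\sum_xp_xp_{-x}\big)^n$, and since $\sum_xp_xp_{-x}\le\sum_xp_x^2$ by Cauchy--Schwarz, the conjectured quantity has exponential rate $(p^\ast)^n$ for $p^\ast:=\max\{p_0,\sum_xp_x^2\}$ and is at least $n\cdot(p^\ast)^n$. Let $\mathrm{Triv}$ be the event that $M_n$ has a zero row or column, or two equal or two opposite rows or columns, and split $\mb{P}[M_n\text{ singular}]=\mb{P}[\mathrm{Triv}]+\mb{P}[M_n\text{ singular},\,\neg\mathrm{Triv}]$. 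The plan has two halves: (I) compute $\mb{P}[\mathrm{Triv}]$ by inclusion--exclusion, and (II) show the correction term is $o((p^\ast)^n)$. If $M_n$ is singular and $\neg\mathrm{Triv}$ holds, then $M_n$ has a nonzero null vector $v$ that is not proportional to any $e_i$, $e_i-e_j$ or $e_i+e_j$, so $v$ has support of size $\ge3$, or of size $2$ with ratio outside $\{1,-1\}$; I would bound $\mb{P}[M_n\text{ singular},\,\neg\mathrm{Triv}]$ by splitting on whether $|\operatorname{supp}(v)|\le\delta n$ (the ``sparse'' range) or not, for a small $\delta=\delta(\xi)>0$.

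The sparse range rests on a hyperplane anti-concentration estimate: for every $k$ and every linear hyperplane $H\subseteq\mb{R}^k$, writing $\xi_1,\dots,\xi_k$ for i.i.d.\ copies of $\xi$, one has $\mb{P}[(\xi_1,\dots,\xi_k)\in H]\le p^\ast$, with equality only when $H$ is a ``trivial'' hyperplane $\{x_i=0\}$, $\{x_i=x_j\}$, or (when $\xi$ is symmetric) $\{x_i=-x_j\}$, and otherwise $\mb{P}[(\xi_1,\dots,\xi_k)\in H]\le p^\ast-\varepsilon$ for a fixed $\varepsilon=\varepsilon(\xi)>0$. The inequality comes from conditioning on all but two coordinates in the support of the normal of $H$ and applying Cauchy--Schwarz; the rigidity follows because equality forces the normal to be supported on at most two coordinates, and (if on exactly two) the ratio $c$ of its entries to satisfy $c\cdot\operatorname{supp}(\xi)=\operatorname{supp}(\xi)$ and $p_{cx}\equiv p_x$, which---$\operatorname{supp}(\xi)$ being finite---forces $c\in\{1,-1\}$, with $c=-1$ possible only when $\xi$ is symmetric; finally the gap $\varepsilon$ is uniform in $k$ because the classical Erd\H{o}s--Littlewood--Offord bound $\mb{P}[\sum_ja_j\xi_j=0]\le C_\xi m^{-1/2}$ (with $m$ of the $a_j$ nonzero) restricts near-extremal $H$ to have normals with $O_\xi(1)$ nonzero coordinates. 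For a fixed support $S$ with $|S|=k$, the columns of $M_n$ indexed by $S$ are linearly dependent iff all $n$ rows of $M_n|_S$ lie in a common lattice hyperplane of $\mb{R}^k$; discarding the trivial hyperplanes (precisely the trivial structures already excluded) and the rarer codimension-$\ge2$ degeneracies, and summing over $S$, bounds the sparse contribution by $\sum_{3\le k\le\delta n}\binom{n}{k}\,k^{O(1)}\big((p^\ast-\varepsilon)^n+(p^\ast)^{(1+\varepsilon)n}\big)$, which is $o((p^\ast)^n)$ once $\delta$ is small; the $k=2$, ratio $\notin\{\pm1\}$, case of proportional column pairs is identical.

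The non-sparse range---null vectors with support of size $>\delta n$---is the step I expect to be the main obstacle, the naive union bound over supports being hopeless there. For it I would invoke the inverse-Littlewood--Offord / least-common-denominator theory of Rudelson--Vershynin together with Tikhomirov's refinements: split into compressible and incompressible vectors and, for each resulting family $\mc{N}$, bound $\mb{P}[\ker M_n\cap\mc{N}\ne\{0\}]$ by a net-and-tensorization argument organized by the essential LCD of $v$, with sharp Hal\'asz-type anti-concentration for $\sum_ja_j\xi_j$ as input, so as to obtain $(p^\ast-\varepsilon)^n$ \emph{uniformly} over $\mc{N}$. Obtaining a genuine exponential gap to $p^\ast$ at this step---rather than merely $(p^\ast)^{n(1+o_n(1))}$---is what requires the quantitative slack afforded by $\xi$ being non-uniform on its support; when $\xi$ is uniform this slack disappears, which is why one can only hope to prove the weaker statement $\mb{P}[M_n\text{ singular}]=(1+o_n(1))^n\,\mb{P}[\text{two rows or columns equal}]$ in that case.

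Finally, for (I), apply Bonferroni to the $2n+n(n-1)+n(n-1)$ events---zero columns, zero rows, equal column pairs, equal row pairs, opposite column pairs, opposite row pairs---with individual probabilities $p_0^{\,n}$, $(\sum_xp_x^2)^n$, $(\sum_xp_xp_{-x})^n$. Each pairwise intersection either factorizes over two essentially disjoint sets of matrix entries, hence has probability $(p^\ast)^{2n-O(1)}$, or forces a strictly richer structure (three mutually equal columns, a column that is simultaneously zero and equal to another, etc.), hence has probability $(p^\ast)^{(1+\Omega(1))n}$; either way the sum over all pairs is $o((p^\ast)^n)$, so $\mb{P}[\mathrm{Triv}]=(1+o_n(1))\big(2n\,\mb{P}[\mc{E}_{e_1}]+n(n-1)\mb{P}[\mc{E}_{e_1-e_2}]+n(n-1)\mb{P}[\mc{E}_{e_1+e_2}]\big)$. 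Combined with (II) and the bound $\ge n(p^\ast)^n$ on the right-hand side, this yields the conjectured identity.
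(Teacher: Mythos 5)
Your plan correctly identifies both the high-level structure (trivial singularity plus a small correction) and the key obstruction (the uniform-$\xi$ case), and the Bonferroni step for $\mb{P}[\mathrm{Triv}]$ and the hyperplane anti-concentration lemma (which is essentially \cref{prop:p-characterization} for exact membership rather than a tube) are sound. But your split by support size of the null vector does not match the paper's split of the sphere into $\on{Cons}(\delta,\rho)$ and $\on{Noncons}(\delta,\rho)$, and both of the hard halves of your argument are left as appeals to machinery that does not deliver what you need.

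For the non-sparse range, Rudelson--Vershynin LCD and Hal\'asz bounds give $c^n$ with a non-sharp $c$, whereas you need $\exp((\epsilon-H(\vec p))n)$ so that the non-uniformity gap $e^{-H(\vec p)}<\snorm{\vec p}_2^2$ yields the exponential improvement over $\snorm{\vec p}_2^{2n}$. Obtaining that exponent requires anti-concentration \emph{conditionally on a multislice} (after conditioning on the empirical type of a column $b\in\on{supp}(\xi)^n$), and Tikhomirov's i.i.d.\ smoothing argument does not transfer, since it uses both coordinate independence and integrality of the support; the paper's \cref{thm:inversion-of-randomness,cor:threshold-inversion} are a genuine and technical extension, not a black-box citation. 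Second, almost-elementary unit vectors with full support (near $e_i$ or $(e_i\pm e_j)/\sqrt2$ with small but nonzero residual coordinates) land in your non-sparse range, and a uniform $(p^\ast-\varepsilon)^n$ bound over a compressible net does not isolate the conjectured main terms from the correction. The paper's \cref{prop:e1,prop:e1-e2,prop:e1+e2} handle these by conditioning on the first (rotated) column, using that on the singularity event it is forced to lie near a constant vector in $\on{supp}(\xi)^n$, splitting off the exactly-constant case (which is precisely $\mathrm{Triv}$), and controlling the residual via \cref{lem:sharp-block}, which in turn relies on the preliminary sharp estimate \cref{thm:weak} for the complementary minor. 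Your sketch has no analogue of this step, and without it the decomposition $\mb{P}[\text{singular}]=\mb{P}[\mathrm{Triv}]+o((p^\ast)^n)$ does not close.
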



In this paper, as our first main result, we confirm a stronger version of \cref{conj:exact} for \emph{all} discrete distributions which are not uniform on their support. Let $s_n(M_n)$ denote the least singular value of an $n\times n$ matrix $M_n$; recall that $s_{n}(M_n) = \inf_{x\in \mb{S}^{n-1}}\snorm{M_n x}_{2}$, where $\mb{S}^{n-1}$ denotes the unit sphere in $\mb{R}^{n}$ and $\snorm{\cdot}_{2}$ denotes the standard Euclidean norm on $\mb{R}^{n}$.

\begin{theorem}\label{thm:main-exact}
Let $\xi$ be a discrete random variable which is not uniform on its support. There exist $c_\xi, C_\xi > 0$ so that for all sufficiently large $n$, and for all $t\ge 0$,
\[\mb{P}[s_n(M_n(\xi))\le t/\sqrt{n}]\le C_\xi t + 2n\mb{P}[\mc{E}_{e_1}] + (1+O(\exp(-c_\xi n))) \bigg(n(n-1) \mb{P}[\mc{E}_{e_1+e_2}] + n(n-1) \mb{P}[\mc{E}_{e_1-e_2}] \bigg).\]
\end{theorem}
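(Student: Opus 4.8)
The plan is to decompose the unit sphere $\mb{S}^{n-1}$ into "compressible/structured" and "incompressible/unstructured" parts, following the by-now-standard Rudelson–Vershynin geometric framework, but pushed to a much finer level of precision. For a matrix $M$ to have $s_n(M) \le t/\sqrt{n}$, there must be a unit vector $x$ with $\snorm{Mx}_2 \le t/\sqrt{n}$; we split according to whether $x$ is close to sparse (and in fact close to a scaled $\pm e_i$, $\pm e_i \pm e_j$ combination) or "spread out." The incompressible contribution will be handled by an $\varepsilon$-net / anti-concentration argument using the structure theorem of Littlewood–Offord–Erdős type together with inverse Littlewood–Offord theory, giving a bound of the shape $C_\xi t + \exp(-c_\xi n)$; this is where a robust version of Odlyzko's lemma and the negation/swapping tricks for controlling the least common denominator come in. Crucially, because $\xi$ is \emph{not} uniform on its support, there is an entropy gap that lets the fully unstructured part be pushed below the exact "two rows/columns" threshold $n(n-1)\mb{P}[\mc{E}_{e_1 \pm e_2}]$, rather than merely matching it up to a constant.

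**The heart of the matter** is the "almost-structured" regime: unit vectors $x$ that are extremely close to $\operatorname{span}(e_i)$ or $\operatorname{span}(e_i, e_j)$ with the two nonzero coordinates nearly equal in magnitude. Here one cannot afford to lose a constant factor, so the plan is to condition on which coordinates are (near-)supported, and on that event compare $\mb{P}[Mx \approx 0]$ to the exact events $\mc{E}_{e_1}$ and $\mc{E}_{e_1 \pm e_2}$ directly via a local central limit theorem / exact anti-concentration computation for the relevant linear forms in $\xi$. The point is that a vector genuinely exploiting near-sparsity to make $Mx$ small must essentially \emph{be} one of these special vectors, up to an error that the column/row count $2n$ or $n(n-1)$ can absorb with a multiplicative $(1 + O(\exp(-c_\xi n)))$; handling the $t$-dependence (the $s_n \le t/\sqrt n$ rather than exact singularity) is done by a standard conditioning-on-$n-1$-columns argument that produces the linear-in-$t$ term $C_\xi t$. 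One also has to rule out "moderately sparse" vectors — those with, say, $\varepsilon n$ nonzero coordinates — which is a separate, more classical counting argument balancing the number of such supports against the anti-concentration gain per coordinate.

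**The main obstacle I expect** is exactly the transition zone between the "two special coordinates" structured vectors and the genuinely incompressible ones: one needs the incompressible bound to come in \emph{strictly below} $n(n-1)\mb{P}[\mc{E}_{e_1\pm e_2}]$ (up to the allowed $\exp(-c_\xi n)$ relative error), which forces a quantitatively sharp inverse Littlewood–Offord statement — sharp enough that the exponential rate for unstructured vectors beats $(\,p^2+(1-p)^2\,)$-type rates. This is where non-uniformity is essential and where the bulk of the technical work lies: establishing that any unit vector whose anti-concentration is as large as that of $e_1 \pm e_2$ must be, up to negligible perturbation, a rescaled $\pm e_i \pm e_j$, i.e.\ a rigidity/stability version of the Littlewood–Offord inequality adapted to the specific arithmetic structure of $\operatorname{supp}(\xi)$. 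A secondary nuisance is bookkeeping the union bound over all $\binom{n}{2}$ pairs and both sign patterns without destroying the $(1+o_n(1))$, which requires the per-pair error to be genuinely exponentially small and uniform.
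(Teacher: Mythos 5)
Your high-level architecture is right, and you have correctly located the single most important structural fact: the theorem needs the ``unstructured'' contribution to land strictly below the $\snorm{\vec{p}}_2^{2n}$ scale of $n(n-1)\mb{P}[\mc{E}_{e_1\pm e_2}]$, and the only thing that makes this possible is that $\exp(-H(\vec{p})) < \snorm{\vec{p}}_2^2$ with equality iff $\xi$ is uniform. That is exactly how the paper chooses its $\epsilon$ in the final deduction. But I see two places where the machinery you propose would not deliver the rate you need.

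First, and most seriously, the tools you name for the unstructured part — $\varepsilon$-nets with inverse Littlewood--Offord / LCD control over columns treated as independent $\xi$-vectors — top out at a bound of the form $(\snorm{\vec{p}}_\infty + o_n(1))^n$, which is the single-coordinate L\'evy concentration rate. Since $\snorm{\vec{p}}_\infty \ge \snorm{\vec{p}}_2^2$ always (equality iff uniform), this rate \emph{never} beats the two-column threshold and your plan stalls. The paper's route is to condition each column on lying in the typical multislice $\{x\in\on{supp}(\xi)^n : \#\{x_i=a_j\}\approx p_jn\}$, where the right rate becomes $\exp(-H(\vec{p})n)$, and then to develop the entire Tikhomirov inversion-of-randomness / threshold apparatus on the (non-independent) multislice (\cref{thm:inversion-of-randomness}, \cref{cor:threshold-inversion}, \cref{prop:structure}, \cref{thm:incompressible}). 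The paper explicitly rejects LCD-based approaches here, noting that even the Boolean-slice analogue (Litvak--Tikhomirov's UDLCD) is not powerful enough for multislices near the center. So this step in your plan is not a gap to be filled in; it is a wrong turn that needs to be replaced.

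Second, and tied to the first, the decomposition you propose (compressible $\approx$ near-sparse vs.\ incompressible) is not the one the paper uses and, I believe, not the one that works here. Once columns are conditioned onto a multislice, the anti-concentration estimate for $\sum b_i x_i$ (cf.\ \cref{lem:mslkr}) is driven by differences $|x_i - x_j|$, not by $|x_i|$; the vector $1_n/\sqrt{n}$ is a genuine obstruction even though it is maximally incompressible in the Rudelson--Vershynin sense. The paper therefore decomposes into ``almost-constant'' vectors $\on{Cons}(\delta,\rho)$ (those with $(1-\delta)n$ coordinates within $\rho/\sqrt{n}$ of a common value, \cref{def:almost-constant}) and their complement. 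Your structured class must be enlarged to include this constant direction or the unstructured bound collapses. Within $\on{Cons}(\delta,\rho)$, the paper then singles out the elementary vectors near $e_i$ and $(e_i\pm e_j)/\sqrt2$ (as you anticipate), but the non-elementary almost-constant vectors are killed by sharp two- and three-coordinate Cauchy--Schwarz anti-concentration (\cref{prop:p-characterization,prop:p-characterization-II}), not by a sparsity-counting argument. Finally, for the elementary vectors near $(e_i\pm e_j)/\sqrt2$, the paper uses a rotation trick plus a conditional Rudelson--Vershynin block-invertibility lemma (\cref{lem:strange-block}, \cref{lem:sharp-block}) rather than a local CLT; the block argument is what naturally yields the linear-in-$t$ term. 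Your ``rigidity of Littlewood--Offord'' intuition is morally the content of \cref{prop:p-characterization}, so that part of your plan is on target.
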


By applying \cref{thm:main-exact} with $t = 0$ for the upper bound, and considering the probability that a row or column is zero, or that two rows or two columns are the same (up to a sign) for the lower bound (cf.~the corresponding calculation in \cite[Section~3.2]{LT20}), we thus establish the following strengthening of \cref{conj:exact} for discrete distributions which are not uniform on their support. 
\begin{corollary}
\label{cor:exact}
Let $\xi$ be a discrete random variable which is not uniform on its support. There exists $c_\xi > 0$ such that 
\[\mb{P}[M_n(\xi)\emph{ is singular}] = 2n\mb{P}[\mc{E}_{e_1}] + (1+O(\exp(-c_\xi n)))\bigg(n(n-1) \mb{P}[\mc{E}_{e_1+e_2}] + n(n-1) \mb{P}[\mc{E}_{e_1-e_2}] \bigg).\]
In particular, for fixed $p \in (0,1), p \neq 1/2$, there exists $c_p > 0$ for which
\[\mb{P}[M_{n}(\on{Ber}(p)) \emph{ is singular}] = 2n(1-p)^{n} + (1+O(\exp(-c_pn)))n(n-1)(p^2 + (1-p)^2)^n.\]
\end{corollary}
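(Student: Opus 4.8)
The plan is to follow the compressible/incompressible decomposition of the unit sphere $\mb S^{n-1}$ in the spirit of Rudelson--Vershynin and Tikhomirov, but to analyze the compressible part \emph{sharply}, extracting the exact contributions of the distinguished directions rather than merely showing they are exponentially small. First, bound the ``obvious'' events: let $\mc B$ be the event that $M_n(\xi)$ has a zero row or column, or two rows or two columns that are equal or opposite; a union bound gives $\mb P[\mc B]\le 2n\mb P[\mc E_{e_1}]+n(n-1)(\mb P[\mc E_{e_1+e_2}]+\mb P[\mc E_{e_1-e_2}])$, which supplies the main terms. It then suffices to prove $\mb P[s_n(M_n(\xi))\le t/\sqrt n,\ \neg\mc B]\le C_\xi t+(\text{error to be absorbed into the stated }(1+O(e^{-c_\xi n}))\text{ factor})$; note $\mb P[\mc E_{e_1-e_2}]^{1/n}=\sum_x\mb P[\xi=x]^2\ge |\on{supp}\,\xi|^{-1}$ with equality only for the uniform law, so the error need only beat $e^{-c_\xi n}$ for a suitable $c_\xi>\log(1/\sum_x\mb P[\xi=x]^2)$. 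Everything below takes place on the event $\{\snorm{M_n(\xi)}_{\mathrm{op}}\le K_\xi\sqrt n\}$, whose complement has probability $e^{-\Omega(n)}$ with rate made larger than $\log(1/\sum_x\mb P[\xi=x]^2)$ by choosing $K_\xi$ large.

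Now decompose according to the location of the bottom right singular vector $v$ of $M_n(\xi)$. If $v$ lies within $c_\xi/\sqrt n$ of $\pm e_i$, then for every coordinate $k$ with $(M_n(\xi)e_i)_k\ne 0$ one has $|(M_n(\xi)v)_k|\ge |(M_n(\xi)e_i)_k|/2-Mc_\xi\ge m_0/4$ (where $m_0$ is the least nonzero value of $|x|$ over $x\in\on{supp}\,\xi$ and $M=\max_x|x|$), which exceeds $t/\sqrt n$ once $t<m_0\sqrt n/4$ --- and for larger $t$ the asserted bound is trivial since $C_\xi t\ge 1$; hence column $i$ of $M_n(\xi)$ vanishes and $\mc B$ holds, contradicting $\neg\mc B$, so this case is vacuous. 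The same computation with $e_i\pm e_j$ in place of $e_i$, using that $\{x\pm y:x,y\in\on{supp}\,\xi\}$ has a positive least nonzero element, shows that $v$ within $c_\xi/\sqrt n$ of $\tfrac1{\sqrt2}(\pm e_i\pm e_j)$ forces two columns of $M_n(\xi)$ to be equal or opposite, again contradicting $\neg\mc B$. If instead $v$ is incompressible, the Rudelson--Vershynin reduction bounds the contribution by a constant times $\max_k\mb P[\mathrm{dist}(X_k,H_k)\le C_\xi t]$ with $X_k$ the $k$-th column, $H_k$ the span of the others, and $\mathrm{dist}(X_k,H_k)=|\langle X_k,n_k\rangle|$ for a unit normal $n_k$ independent of $X_k$; on $\neg\mc B$, if $n_k$ were within $c_\xi/\sqrt n$ of a distinguished direction then --- since $\langle X_j,n_k\rangle=0$ for $j\ne k$, and on the event $\mathrm{dist}(X_k,H_k)\le C_\xi t$ also $\langle X_k,n_k\rangle$ is small --- discreteness would force a zero or repeated row, contradicting $\neg\mc B$; so only $n_k$ of super-exponentially large least common denominator contribute, using the bound $(CL/\sqrt n)^n$ on the number of unit vectors of LCD $\le L$ in a fine net together with a sharp inverse Littlewood--Offord theorem, after which $\mb P_{X_k}[|\langle X_k,n_k\rangle|\le C_\xi t]=O_\xi(t)$ by Erd\H{o}s--Littlewood--Offord anti-concentration of $\langle X_k,n_k\rangle$ at scale $n^{-1/2}$. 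Altogether the incompressible contribution is $\le C_\xi t+e^{-\Omega(n\log n)}$.

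The heart of the matter is the remaining \emph{structured} vectors: $v$ compressible but not within $c_\xi/\sqrt n$ of any distinguished direction. Here non-uniformity of $\xi$ is essential. For $v$ of bounded sparsity that is moreover bounded away (by a fixed $\delta_\xi>0$, in direction) from every $\pm e_i$ and $\tfrac1{\sqrt2}(\pm e_i\pm e_j)$, I will prove the structural estimate $\sup_r\mb P[|\langle R,v'\rangle|\le r]\le\max(\mb P[\xi=0],\sum_x\mb P[\xi=x]^2)-\delta_\xi'$ for all $v'$ within $e^{-\Omega(n)}$ of $v$, where $R$ is a row and $\delta_\xi'>0$ depends only on $\xi$; for $v$ of larger sparsity, a quantitative Erd\H{o}s--Littlewood--Offord bound makes this per-row probability $n^{-\Omega(1)}$. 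There remains an ``annular'' regime --- $v$ within a fixed small constant of some $\pm e_i$ (or $\tfrac1{\sqrt2}(\pm e_i\pm e_j)$) but outside the $c_\xi/\sqrt n$ cap --- in which column $i$ of $M_n(\xi)$ is forced only to be sparse, not zero; this is handled by a dedicated net over the possible sub-linear support of that column, the point being that the resulting linear system for $v'$ is slightly over-determined, so that after paying the $\mb P[\xi=0]^n$-type cost for the many forced zeros what remains is anti-concentrated. Covering the structured vectors by nets at a scale fine enough that all these per-row bounds survive, with net entropy controlled by the sparsity budget exactly as in Tikhomirov's rounding argument, and using the $n$ independent rows, one finds that the structured contribution is at most $e^{-c_\xi n}$ with $c_\xi>\log(1/\sum_x\mb P[\xi=x]^2)$; this, and every $e^{-\Omega(n)}$ or $e^{-\Omega(n\log n)}$ error above, is then absorbed into the factor $(1+O(e^{-c_\xi n}))$ multiplying $n(n-1)(\mb P[\mc E_{e_1+e_2}]+\mb P[\mc E_{e_1-e_2}])$.

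I expect the main obstacle to be the structured regime just described: proving that the per-row anti-concentration genuinely drops by a fixed amount off the distinguished directions requires a careful Fourier-analytic case analysis of when $\mb P[\sum_i v_i\xi_i=0]$ can approach $\max(\mb P[\xi=0],\sum_x\mb P[\xi=x]^2)$, showing that this can only happen when $v$ is essentially $\pm e_i$ or $\tfrac1{\sqrt2}(\pm e_i\pm e_j)$ --- and it is exactly here that ``$\xi$ not uniform'' enters essentially, since for a uniform law $\sum_x\mb P[\xi=x]^2=|\on{supp}\,\xi|^{-1}$ and the gap $\delta_\xi'$ closes, which is why only a $(1+o_n(1))^n$-type statement is available there. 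Equally delicate is arranging the annular near-distinguished analysis and the entropy/rounding argument so that \emph{every} scale of sparsity --- bounded, intermediate, and near-incompressible --- is controlled with exponential rate strictly above $\log(1/\sum_x\mb P[\xi=x]^2)$, all without degrading the constant $1$ in front of $n(n-1)(\mb P[\mc E_{e_1+e_2}]+\mb P[\mc E_{e_1-e_2}])$.
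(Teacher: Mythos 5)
Your high-level decomposition (caps around the distinguished directions $\pm e_i$ and $(\pm e_i\pm e_j)/\sqrt2$, remaining almost-constant vectors, incompressible vectors) is in the right spirit, and your plan to show via a Fourier/anticoncentration case analysis that $\mc{L}_\xi(\sum_i v_ib_i,\cdot)$ drops strictly below $\snorm{\vec p}_2^2$ off the distinguished directions corresponds closely to what the paper proves (cf.\ \cref{prop:p-characterization}). But there is a genuine gap in your treatment of the unstructured vectors, and it is precisely the gap that forced the paper to develop its main technical machinery. You propose to handle the incompressible normal $n_k$ by arguing that, away from the distinguished caps, ``only $n_k$ of super-exponentially large least common denominator contribute,'' and to bound the rest via inverse Littlewood--Offord nets, concluding with an $e^{-\Omega(n\log n)}$ error. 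This does not work: incompressible unit vectors with moderate LCD (say between $\sqrt n$ and $e^{cn}$) are not rare in the sense needed, and the classical Rudelson--Vershynin/LCD machinery (and its inverse Littlewood--Offord refinements) produces an error of the form $e^{-cn}$ where $c$ is tied to subgaussian constants and net granularity, \emph{not} to $-\log\snorm{\vec p}_2^2$. Since your error must be absorbed into a factor $(1+O(e^{-c_\xi n}))$ multiplying $n(n-1)\mb{P}[\mc{E}_{e_1-e_2}]\asymp n^2\snorm{\vec p}_2^{2n}$, you need $c_\xi>-\log\snorm{\vec p}_2^2$, and there is no route to such a constant via LCD nets. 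The paper circumvents this entirely: it abandons LCD-type decompositions and instead proves an ``inversion of randomness'' threshold estimate on the \emph{multislice} (\cref{thm:inversion-of-randomness}, \cref{cor:threshold-inversion}, and all of \cref{sec:multislice}), which shows directly that a random kernel vector has $\mc{T}_{\xi,\vec\gamma}(v(A),L)\le\exp((\epsilon-H(\vec p))n)$ with probability $1-k^{-2n}$ (\cref{prop:structure}); since $H(\vec p)>-\log\snorm{\vec p}_2^2$ strictly whenever $\xi$ is non-uniform, this is exactly what gives the required rate in \cref{thm:incompressible}. Your proposal has no substitute for this step and the claimed $e^{-\Omega(n\log n)}$ bound is not attainable.

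Two further points. First, your ``annular'' analysis of vectors near (but not at) a distinguished direction is underspecified and, as sketched, seems not to match how the paper handles this: the paper conditions on the relevant column equaling a sparse vector $a\neq 0$ and then applies a quantitative invertibility estimate for the remaining $n\times(n-1)$ block (\cref{lem:strange-block,lem:sharp-block}); that estimate in turn relies on a \emph{preliminary} weak form of the main theorem (\cref{prop:initial-unstructured,thm:weak}) as a bootstrap. There is no ``over-determined system'' saving here; the gain is explicitly a product of the probability that the column is sparse and an invertibility bound for the complementary block, and the block bound needs the weak theorem. Second, a small but real issue: you bound $\mb{P}[\mc B]$ only from above by a union bound, but the corollary is an equality, so you also need the inclusion--exclusion lower bound on $\mb{P}[\mc B]$ (the paper points to the Bonferroni-type computation in the reference following \cref{thm:main-exact}). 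Finally, for context: the paper's actual proof of \cref{cor:exact} is a two-line deduction (apply \cref{thm:main-exact} with $t=0$ for the upper bound, then the lower bound computation); what you are attempting is in effect a reproof of \cref{thm:main-exact} from scratch, and the obstacle you under-weight --- the incompressible rate --- is the one that requires the paper's new machinery.
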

\begin{remark}
Let us discuss the implications of this corollary for $\xi = \on{Ber}(p)$.
\begin{itemize}
\item For fixed $p \in (1/2, 1)$, \cref{cor:exact} provides the leading term in the asymptotic expansion of the singularity probability of $M_{n}(\on{Ber}(p))$. Prior to this work, even the correct value of $\lim_{n\to \infty}n^{-1}\log\mb{P}[M_{n}(\on{Ber}(p)) \text{ is singular})]$ had not been determined; compared to the true value of $(p^{2} + (1-p)^{2})$ for this quantity, the previous best-known result of Bourgain, Vu, and Wood \cite{BVW10} provides a weaker upper bound of $\sqrt{p}$. The reason that the case $p \in (1/2, 1)$ is more challenging than $p \in (0,1/2)$ (treated in \cite{BR18, LT20, Hua20}, see the discussion below) is that in the former case, the dominant contribution to the probability of singularity comes from the event of \emph{two} rows or columns being equal to each other, whereas in the latter case, the dominant contribution comes from the much simpler event of a single row or column being zero. 

\item However, even for fixed $p \in (0,1/2)$, we obtain more precise results than those in the recent work of Litvak and Tikhomirov \cite{LT20} as well as \cite{Tik20,JSS20discrete1}. Namely, \cref{cor:exact} provides the first \emph{two} terms in the asymptotic expansion of the singularity probability of $M_{n}(\on{Ber}(p))$; a result of this precision was not available before in any context.
\end{itemize}
\end{remark}

For general discrete distributions, we determine the value of $\lim_{n \to \infty}n^{-1}\log{\mb{P}[M_{n}(\xi)\text{ is singular}]}$. The only case not covered by \cref{thm:main-exact} is that of uniform distributions, which we handle with a non-exact main term.

\begin{theorem}\label{thm:main-approximate}
Let $\xi$ be a discrete random variable. There exists $C_\xi > 0$ such that for any fixed $\epsilon > 0$ and for all sufficiently large $n$ and all $t\ge 0$, 
\[\mb{P}[s_n(M_n)\le t/\sqrt{n}]\le C_\xi t + 2n\mb{P}[\mc{E}_{e_1}] +  (1+\epsilon)^n\mb{P}[\mc{E}_{e_1-e_2}].\]
\end{theorem}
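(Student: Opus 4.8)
First, it suffices to treat $\xi$ uniform on its support: for non-uniform $\xi$, \cref{thm:main-exact} already gives a stronger bound, upon noting that $n(n-1)\mb{P}[\mc{E}_{e_1+e_2}]+n(n-1)\mb{P}[\mc{E}_{e_1-e_2}]\le 2n(n-1)\mb{P}[\mc{E}_{e_1-e_2}]\le(1+\epsilon)^n\mb{P}[\mc{E}_{e_1-e_2}]$ for $n$ large (using $\mb{P}[\mc{E}_{e_1+e_2}]\le\mb{P}[\mc{E}_{e_1-e_2}]$, by Cauchy--Schwarz). So assume $\xi$ is uniform on its support. The plan is the usual one: writing $\mb{P}[s_n(M_n)\le t/\sqrt n]=\mb{P}[\exists\,v\in\mb{S}^{n-1}\colon\snorm{M_n v}_2\le t/\sqrt n]$ and conditioning on $\snorm{M_n}_{\on{op}}\le K_\xi\sqrt n$, split $\mb{S}^{n-1}$ into compressible vectors --- those within a small constant $\rho_0$ of a $\delta_0 n$-sparse vector --- and incompressible vectors. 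The incompressible part contributes at most $C_\xi t+(1+\tau)^n\mb{P}[\mc{E}_{e_1-e_2}]$, where $\tau=\tau(\rho_0,\xi)$ can be made as small as we wish by shrinking $\rho_0$, via the inverse Littlewood--Offord / essential-LCD machinery of this paper; for uniform $\xi$ --- in particular $\on{Ber}(1/2)$, where this input is of the delicate type in the spirit of Tikhomirov's work --- this bound is the best presently available and is the sole reason \cref{thm:main-approximate} carries $(1+\epsilon)^n$ rather than the $1+O(\exp(-c_\xi n))$ of \cref{thm:main-exact}. All of the combinatorial main terms come from the compressible part.

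For the compressible part I would subdivide by the least $m=m(v)$ for which $v$ lies within $\rho_0$ of an $m$-sparse vector. When $m=1$, so $v\approx\pm e_i$, the hypothesis $\snorm{M_n v}_2\le t/\sqrt n$ forces the $i$th column of $M_n$ to have Euclidean norm $O(\rho_0\sqrt n+t)$; separating ``column $i$ is exactly zero'' --- which, over all $i$ and over $M_n$ and $M_n^{\mathsf T}$, contributes exactly $2n\mb{P}[\mc{E}_{e_1}]$, the residual $t$-dependence going into $C_\xi t$ --- from ``column $i$ nonzero but short'', the latter is a binomial event of probability at most $(1+\tau)^n\mb{P}[\mc{E}_{e_1}]\le(1+\epsilon)^n\mb{P}[\mc{E}_{e_1-e_2}]$ for $\rho_0$ small (for uniform $\xi$ one has $\mb{P}[\xi=0]\le\sum_x\mb{P}[\xi=x]^2$, i.e.\ $\mb{P}[\mc{E}_{e_1}]\le\mb{P}[\mc{E}_{e_1-e_2}]$). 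When $m=2$, the hypothesis forces $v_i\,\mathrm{col}_i+v_j\,\mathrm{col}_j$ to be short; coefficient ratios bounded away from $\{+1,-1\}$ make this contribution exponentially smaller than $\mb{P}[\mc{E}_{e_1-e_2}]$, while ratio $\mp 1$ says columns $i$ and $j$ disagree in at most $O(\rho_0^2 n)$ coordinates, an event of probability at most $(1+\tau)^n\mb{P}[\mc{E}_{e_1\pm e_2}]\le(1+\epsilon)^n\mb{P}[\mc{E}_{e_1-e_2}]$ after using $\mb{P}[\mc{E}_{e_1+e_2}]\le\mb{P}[\mc{E}_{e_1-e_2}]$ and absorbing the $O(n^2)$ choices of $\{i,j\}$ and the transpose.

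The vectors with $3\le m(v)\le\delta_0 n$ should contribute an amount exponentially smaller than $\mb{P}[\mc{E}_{e_1-e_2}]$, hence harmlessly. One passes from the hypothesis to a near-dependence among the $m$ essential columns with coefficient vector of Euclidean norm $\ge 1-\rho_0^2$, nets over the choice of the $m$ coordinates and over the coefficient direction (a combined net of size $(1+\tau)^n$, mild since $m\le\delta_0 n$), and bounds each net point by tensorizing a per-row small-ball estimate. The one genuinely new ingredient is that the per-row anti-concentration $\rho_m(\xi):=\sup\{\mb{P}[\,|\langle X,v\rangle|\le\eta\,]:v\ \text{genuinely spread over}\ m\ \text{coordinates}\}$ is bounded \emph{strictly} below $\sum_x\mb{P}[\xi=x]^2$, by a constant depending only on $\xi$, for every $m\ge 3$ --- intuitively a third genuinely large coordinate strictly raises the anti-concentration cost, and this holds even for uniform $\xi$ (proved via a Berry--Esseen / Erd\H{o}s--Littlewood--Offord estimate for $m$ up to some $M_\xi$ and a local-central-limit bound for $M_\xi\le m\le\delta_0 n$). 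Together with the $(1+\tau)^n$ loss from tensorization, this caps the $m\ge 3$ contribution at $(1-c_\xi)^n\mb{P}[\mc{E}_{e_1-e_2}]$ once $\rho_0,\delta_0$ are small.

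The main obstacle --- and the reason the statement is only approximate --- is the interplay between the radius $\rho_0$ and the incompressible invertibility estimate for uniform $\xi$. One needs $\rho_0$ small enough that every $(1+\tau)^n$ factor above is at most $(1+\epsilon)^n$ and that the $m\ge 3$ net is beaten, but $\rho_0$ must stay above the threshold at which invertibility on incompressible vectors can be established for a uniform discrete $\xi$ --- for $\on{Ber}(1/2)$ a genuinely delicate matter, unlike the non-uniform setting where $\rho_0=e^{-c_\xi n}$ is admissible and collapses the $m=1,2$ ``shell'' events to ``column exactly zero'' and ``columns exactly equal up to sign'', recovering \cref{thm:main-exact}. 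For uniform $\xi$ only a small constant $\rho_0$ is admissible, so the shell events, and the incompressible bound, survive at size $(1+\epsilon)^n\mb{P}[\mc{E}_{e_1-e_2}]$ --- harmless precisely because $\mb{P}[\mc{E}_{e_1}]\le\mb{P}[\mc{E}_{e_1-e_2}]$ for uniform $\xi$. Fixing $\rho_0=\rho_0(\epsilon,\xi)$ suitably small and assembling the pieces yields the claimed bound for all sufficiently large $n$.
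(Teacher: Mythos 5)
Your proposal is correct in its reductions but takes a genuinely different, and ultimately more circuitous, route than the paper. The opening step, that the non-uniform case follows directly from \cref{thm:main-exact} together with $\mb{P}[\mc{E}_{e_1+e_2}]\le\mb{P}[\mc{E}_{e_1-e_2}]$ (Cauchy--Schwarz), is a valid shortcut that the paper does not bother to take. For uniform $\xi$, however, the stratification of compressible vectors by sparsity level $m$ is superfluous, and your decomposition (the classical Rudelson--Vershynin compressible/incompressible split) is not the one the paper uses: the paper works with $\on{Cons}(\delta,\rho)$ versus $\on{Noncons}(\delta,\rho)$, where the structured class consists of \emph{almost-constant} rather than almost-sparse vectors --- an incomparable class (e.g.\ $n^{-1/2}1_n$ is almost-constant but incompressible, $e_1$ is compressible but still in $\on{Cons}$), forced on the paper by the multislice conditioning underlying \cref{prop:structure,thm:incompressible}. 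The simpler point your argument circles around but never states outright is that for uniform $\xi$ one has $\snorm{\vec{p}}_\infty=\snorm{\vec{p}}_2^2=\mb{P}[\xi=\xi']$, so the warm-up estimate \cref{thm:weak} already reads $\mb{P}[s_n(M_n)\le t/\sqrt n]\le Ct+(\snorm{\vec{p}}_\infty+\epsilon')^n=Ct+(1+k\epsilon')^n\mb{P}[\mc{E}_{e_1-e_2}]$, which closes the uniform case immediately upon taking $\epsilon'\le\epsilon/k$; the $m=1,2$ column analysis and the $m\ge 3$ net argument you sketch effectively re-derive fragments of \cref{prop:e1,prop:e1-e2,prop:e1+e2,prop:p-characterization} to no additional gain here. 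The paper's own proof instead treats all discrete $\xi$ uniformly, adding the bounds from \cref{thm:compressible} (for $\on{Cons}$) and \cref{thm:incompressible} (for $\on{Noncons}$) and then absorbing the secondary terms into $(1+\epsilon)^n\mb{P}[\mc{E}_{e_1-e_2}]$ via $\mb{P}[\mc{E}_{e_1+e_2}]\le\mb{P}[\mc{E}_{e_1-e_2}]$ and the weighted AM--GM inequality $\exp(-H(\vec p))\le\snorm{\vec p}_2^2$, without any uniform/non-uniform case split. One terminological flag: the incompressible bound you attribute to ``inverse Littlewood--Offord / essential-LCD machinery of this paper'' is not LCD-based; the relevant input here is \cref{cor:independent-threshold-inversion} (inversion of randomness in the independent model), and the introduction is explicit that the paper's approach replaces, rather than refines, the LCD framework.
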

\begin{remark}
For non-uniform discrete distributions, \cref{thm:main-exact} is strictly stronger.
\end{remark}

\subsection{Previous work}\label{sub:previous-work}
Let us put \cref{thm:main-exact,thm:main-approximate} in the context of known results. For convenience, we will use $q_{n}(\xi)$ to denote $\mb{P}[M_n(\xi)\text{ is singular}]$. The work of Koml\'os \cite{Kom67} was the first to show that $q_{n}(\on{Ber}(1/2)) = o_n(1)$. Much later, an exponential bound on $q_{n}(\on{Ber}(1/2))$ was obtained by Kahn, Koml\'os, and Szemer\'edi \cite{KKS95}. Subsequently, the base of the exponent was improved to $0.939$ and $3/4 + o_n(1)$ in a series of works by Tao and Vu  \cite{TV06, TV07}, and later to $1/\sqrt{2} + o_n(1)$ by Bourgain, Vu, and Wood \cite{BVW10}. Finally, a truly breakthrough result of Tikhomirov \cite{Tik20} in 2018 established that $q_n(\on{Ber}(p)) = (1-p+o_n(1))^{n}$ for fixed $p \in (0,1/2]$. As mentioned earlier, for fixed $p \in (1/2, 1)$, the analogous result was not known prior to this work. 

\cref{conj:exact} has been most accessible for sparse Bernoulli distributions, in which case, the right hand side simplifies considerably to $(1+o_n(1))\cdot 2n\mb{P}[\mc{E}_{e_1}]$. Here, by the Bernoulli distribution with parameter $p$, which we will henceforth denote by $\on{Ber}(p)$, we mean the two point distribution which attains the value $1$ with probability $p$ and the value $0$ with probability $1-p$. Basak and Rudelson \cite{BR18} confirmed the conjecture for $\xi = \on{Ber}(p_n)$ for $p_n$ in a certain range of sparsity limited to $n^{-1}\ln n - \omega_n(n^{-1})\le p_n \le n^{-1}\ln n + o_n(n^{-1}\ln\ln n)$. Subsequently, Litvak and Tikhomirov showed that the conjecture also holds for $\xi = \on{Ber}(p_n)$ for $Cn^{-1}\ln n \leq p_n \leq c$, where $c > 0$ is a small absolute constant and $C > 0$ is a large absolute constant. Recent work of Huang \cite{Hua20} was able to bridge the gap between the regimes covered in \cite{BR18} and \cite{Hua20}, leaving open the regime $p \in (c, 1/2)$. Establishing \cref{conj:exact} (as opposed to the stronger \cref{thm:main-exact}) in this regime does not require the full strength of the ideas in this paper -- since this is a case of substantial interest (see Problem 8.2~in the recent work of Litvak and Tikhomirov \cite{LT20}), we have provided the much simpler details in the self-contained companion note \cite{JSS20discrete1}, which also serves as a gentle introduction to the present work and contains complete details of some relatively standard arguments which are only sketched in the present work. 

For general discrete distributions $\xi$, the only previous systematic study in the literature is the work of Bourgain, Vu, and Wood \cite{BVW10}. They show \cite[Corollary~1.2]{BVW10} that if $\xi$ is a discrete distribution with $\sup_{r \in \mb{R}}\on{P}[\xi = r] =: p$, then $q_n(\xi) \le (\sqrt{p} + o_n(1))^{n}$, which is far from optimal (the true bound is never more than $(p+o_n(1))^{n}$, although it may be much smaller). On the other hand, up to a possible $o_n(1)$ term, \cref{thm:main-approximate} in this work always obtains the correct base of the exponent. 

For certain specific distributions, Bourgain, Vu, and Wood obtain the correct base of the exponent (again, up to a $o_n(1)$ term). Specifically, they show \cite[Corollaries~3.1, 3.2]{BVW10} that if $\xi_{1,\mu}$ is a random variable taking on the value $0$ with probability $1-\mu$ and $\pm 1$ with probability $\mu/2$ each, and if $\xi_{2,\mu}$ is a random variable taking on the value $0$ with probability $1-\mu$ and $\pm 1, \pm 2$ with probability $\mu/4$ each, then $q_{n}(\xi_{1,\mu}) = (1-\mu + o_n(1))^{n}$ for all $\mu \in (0,1/2)$ and $q_{n}(\xi_{2,\mu}) = (1-\mu + o_n(1))^{n}$ for all $\mu \in (0,16/25)$. For these random variables, \cref{thm:main-approximate} determines the correct base of the singularity probability for all fixed $\mu \in (0,1)$, and \cref{thm:main-exact} determines the leading order in the asymptotic expansion for $\mu \in (0,1), \mu \neq 2/3$ in the first case, and $\mu \in (0,1), \mu \neq 4/5$ in the second case.
In fact, for $\mu\in(0,2/3)$ in the first case, and $\mu\in(0,4/5)$ in the second case, \cref{thm:main-exact} determines the first two terms in the asymptotic expansion.

We remark that the results of \cite{BVW10} such as \cite[Corollary~1.2]{BVW10} are also applicable to discrete random variables valued in the complex numbers, and settings where the entries of $M_{n}(\xi)$ are not identically distributed, and a small number of rows of $M_{n}(\xi)$ are possibly deterministic; we have not pursued these extensions. 

Finally, we remark that there was a recent paper of Irmatov \cite{Irm20} which claimed to resolve \cref{conj:exact} for Rademacher random matrices. Experts have informed us that there are some unresolved issues in that work that its author is aware of, including \cite[Theorem~3]{Irm20}. Furthermore, upon slight modification, the proof in \cite{Irm20} would appear to give impossibly good error terms.

\subsection{Additional results}\label{sub:miscellaneous}
The next result addresses the main question left open by our work, namely, the resolution of \cref{conj:exact} for discrete distributions $\xi$ which are uniform on their support. In this direction, we provide a sharp analysis of the contribution of a certain low-entropy part of the unit sphere; in fact, it is this contribution which forms the leading term of the conjectured asymptotic expansion of the singularity probability. This theorem is also central to the proofs of \cref{thm:main-exact,thm:main-approximate}. 
\begin{theorem}\label{thm:compressible}
Fix a discrete distribution $\xi$. There exist $\delta,\rho,\eta> 0$ depending on $\xi$ such that for all sufficiently large $n$ and $t\le 1$, 
\[\mb{P}\bigg[\inf_{x\in \on{Cons}(\delta,\rho)} \snorm{M_n(\xi)x}_2\le t\bigg]\le n\mb{P}[\mc{E}_{e_1}] + \binom{n}{2}(\mb{P}[\mc{E}_{e_1-e_2}] + \mb{P}[\mc{E}_{e_1+e_2}]) + (t+\mb{P}[\mc{E}_{e_1 - e_2}])e^{-\eta n}.\]
\end{theorem}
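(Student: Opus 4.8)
\emph{Setup and the main terms.}
Write $\on{supp}(\xi)=\{a_1<\cdots<a_m\}$ and set $\kappa:=\min_{i\ne j}|a_i-a_j|>0$, $A:=\max_i|a_i|$, $\mu:=\mb E\xi$, $\sigma^2:=\on{Var}(\xi)$ and $p_{\max}:=\max_a\mb P[\xi=a]<1$. Let $c_1,\dots,c_n$ be the columns of $M_n:=M_n(\xi)$, so that $\mb P[c_i=0]=\mb P[\mc E_{e_1}]$ and $\mb P[c_i=\pm c_j]=\mb P[\mc E_{e_1\mp e_2}]$ for $i\ne j$, and write $Z_x$ for the scalar obtained by pairing a single row of $M_n$ with $x$. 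The plan is to peel off the dominant contributions first. Let $\mc G$ be the event that no column of $M_n$ is zero and no two columns are equal to or negatives of one another; a union bound over the $n+2\binom n2$ offending configurations gives
\[\mb P[\mc G^c]\le n\mb P[\mc E_{e_1}]+\binom n2\big(\mb P[\mc E_{e_1-e_2}]+\mb P[\mc E_{e_1+e_2}]\big),\]
and since $\mb P[A]\le\mb P[\mc G^c]+\mb P[A\cap\mc G]$ for any event $A$, it remains to prove
\[\mb P\Big[\mc G\cap\big\{\inf\nolimits_{x\in\on{Cons}(\delta,\rho)}\snorm{M_nx}_2\le t\big\}\Big]\le(t+\mb P[\mc E_{e_1-e_2}])e^{-\eta n}.\]

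\emph{Partition, and the spread vectors.}
For $\delta,\rho$ small every $x\in\on{Cons}(\delta,\rho)$ is within $O(\rho)$ of a sparse unit vector $y$, and I would split according to the shape of $y$: \textup{(i)} $x$ within $O(\rho)$ of some $\pm e_i$; \textup{(ii)} $x$ within $O(\rho)$ of some $(\pm e_i\pm e_j)/\sqrt2$; \textup{(iii)} the rest, for which $y$ has two non-negligible coordinates of incomparable size, or three comparable coordinates, or is genuinely spread. A short computation using the gap $\kappa$ shows that in case \textup{(iii)} one has $\sup_a\mb P[Z_x\in[a,a+2t]]\le 1-c_0$ uniformly (invoking an Erd\H{o}s--Littlewood--Offord estimate when $y$ has many coordinates), whereas in cases \textup{(i)}--\textup{(ii)} the L\'evy concentration of $Z_x$ can be arbitrarily close to $1$; this is precisely why those cases cannot be folded into the net bound below. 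For case \textup{(iii)} I would run the standard $\epsilon$-net-plus-tensorization argument with two modifications. First, as $\mb E[M_nx]=(\sum_jx_j)\mu\mbf{1}$ may have norm of order $|\mu|n$ when $\mu\ne0$ (so that $\snorm{M_n}_{\mr{op}}$ is of order $n$, not $\sqrt n$), I split case \textup{(iii)} by whether $|\sum_jx_j|$ is large or small: a Hoeffding bound on the i.i.d.\ column sums gives $\snorm{M_nx}_2>t$ in the large case off an $e^{-\Omega(n)}$ event whose rate can be made large by enlarging the cutoff, while in the small case $M_n$ acts on the relevant directions like a centered matrix, of operator norm $O(\sqrt n)$ off an $e^{-\Omega(n)}$ event, which makes an $\epsilon$-net usable. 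Second, to avoid losing factors in tensorizing I would use $\mb E\exp(-sZ_x^2)$ (optimized in $s$) rather than a crude ``half the coordinates are small'' bound; together with $\sup_a\mb P[Z_x\in[a,a+2t]]\le 1-c_0$ this yields $\mb P[\snorm{M_nx}_2\le 2t]\le(1-c_0/2)^n$, and since a net of the $\delta n$-sparse sphere has size $e^{O(\delta n\log(1/\delta\rho))}$, taking $\delta$ small makes the union bound over the net at most $e^{-\eta n}$.

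\emph{The nearly-structured vectors, and the main obstacle.}
Cases \textup{(i)} and \textup{(ii)} are the heart of the matter: one must show their contribution \emph{beyond} $\mc G^c$ is \emph{exponentially} smaller than the naive count $n^{O(1)}\cdot\mb P[\text{structured event}]$. Union over the $O(n)$ directions $\pm e_i$ reduces case \textup{(i)} to bounding $\mb P[\mc G\cap\{\exists\,x\in\mc B(e_1,C\rho):\snorm{M_nx}_2\le t\}]$ by $n^{-1}(t+\mb P[\mc E_{e_1-e_2}])e^{-\eta n}$. I would condition on $M':=[c_2\mid\cdots\mid c_n]$, reveal $c_1$ last, and write $x=x_1e_1+(0,w)$ with $x_1\ge 1/2$ and $\snorm w_2\le C\rho$; the event then becomes $\on{dist}(c_1,M'(\mc B_{2C\rho}))\le 2t$. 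Membership forces $c_1$ within $2t$ of a \emph{small} combination $\sum_{j\ge2}u_jc_j$ with $\snorm u_2\le 2C\rho$, which, being concentrated near $(\sum_ju_j)\mu\mbf{1}$ with fluctuation $O(\rho\sigma\sqrt n)$, forces $c_1$ close to a multiple of $\mbf{1}$; if moreover $c_1\ne 0$ it has an entry of size $\ge\kappa$, and using that row of $M'$ one finds $w$ must be roughly proportional to it, which when $\mu\ne 0$ inflates another coordinate of $M_nx$ to size $\Omega(\rho\sqrt n)\gg t$ --- a contradiction, so $c_1=0$ off an $e^{-\Omega(n)}$ event. When $\mu=0$ one uses instead $\snorm{M'}_{\mr{op}}=O(\sqrt n)$ to conclude $c_1$ has only $O(\rho^2 n)$ nonzero entries and then rules out the polynomially many patterns with $1,2,\dots$ nonzeros one at a time, each forcing $M'$ (or $M'$ with a few rows deleted) to be anomalously ill-conditioned in the direction of $c_1$, an event of probability $\lesssim(t+\cdots)e^{-\Omega(n)}$ --- the $2t$-fuzz of the ellipsoid $M'(\mc B_{2C\rho})$ being exactly what produces the $t\,e^{-\eta n}$ term. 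Case \textup{(ii)} is identical but one conditions on $c_3,\dots,c_n$, reveals $c_1,c_2$ last, and now also permits $c_1=\pm c_2$, which is where the $\mb P[\mc E_{e_1-e_2}]e^{-\eta n}$ term comes from. The hard part is this exponential gain over the crude count in cases \textup{(i)}--\textup{(ii)}: it genuinely needs the randomness of the \emph{other} columns --- their operator norm, the conditioning on them, and least-singular-value control of their submatrices --- and not merely that of $c_1$; everything else (the reduction to $\mc G$ and the net argument) is comparatively routine.
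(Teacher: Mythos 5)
Your high-level architecture (peel off the zero/equal-column events, then split $\on{Cons}(\delta,\rho)$ into neighborhoods of $e_i$, of $(e_i\pm e_j)/\sqrt2$, and the rest; for the elementary cases condition on the other columns and force the relevant column or column-difference to be nearly constant) does match the paper's. But two quantitative steps are asserted at a strength that cannot be achieved by the generic tools you invoke, and these are exactly where the paper has to work hardest.

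First, for the non-elementary almost-constant vectors (your case (iii)), a bound $\sup_a\mb{P}[Z_x\in[a,a+2t]]\le 1-c_0$ with $c_0$ coming from the gap $\kappa$ or an Erd\H{o}s--Littlewood--Offord estimate is far too weak. After removing $\mc{G}^c$ your remaining target is $(t+\mb{P}[\mc{E}_{e_1-e_2}])e^{-\eta n}$, and $\mb{P}[\mc{E}_{e_1-e_2}]=\snorm{\vec{p}}_2^{2n}$, which for (say) $\xi$ uniform on $100$ points is $100^{-n}$; a tensorized bound of the form $(1-c_0/2)^n e^{O(\delta n\log(1/\delta\rho))}$ with a small universal $c_0$ is exponentially larger than this no matter how small you take $\delta$. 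What is actually needed is the sharp statement that for every non-elementary $x$ the L\'evy concentration is at most $\snorm{\vec{p}}_2^2-\theta$ (the paper's \cref{prop:p-characterization,prop:p-characterization-II}, proved by a Cauchy--Schwarz argument with an equality analysis on two- and three-term sums), which then tensorizes to $(\snorm{\vec{p}}_2^2-\epsilon')^n\le\snorm{\vec{p}}_2^{2n}e^{-\eta n}$.

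Second, in your case (i) the entire burden on the event $\mc{G}$ falls on nonzero nearly-constant realizations $a$ of $c_1$, of which there are exponentially many (not ``polynomially many patterns''), with total probability roughly $\snorm{\vec{p}}_\infty^{n}e^{o(n)}$. To beat the target $\snorm{\vec{p}}_2^{2n}e^{-\eta n}$ you therefore need the conditional probability that $M'$ maps some small $u$ to within $2t$ of $-a$ to be at most about $n^{O(1)}\sqrt{t}+(\snorm{\vec{p}}_\infty+\eta)^n$; a generic ``$e^{-\Omega(n)}$'' from Rudelson--Vershynin-type arguments has a small universal rate and does not suffice (e.g.\ for $\xi=\on{Ber}(0.4)$ one needs rate at least $\log(0.6/0.52)$). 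The paper obtains this rate only via a preliminary sharp invertibility theorem (\cref{thm:weak}, together with \cref{prop:initial-unstructured}), which in turn rests on the inversion-of-randomness machinery of \cref{sec:multislice}; your proposal contains no substitute for this input. (By contrast, your two-column case is fine with generic rates, since there the prefactor is already $\snorm{\vec{p}}_2^{2n}e^{o(n)}$ and only a small exponential gain is needed --- this is why the paper's \cref{lem:strange-block} suffices there but \cref{lem:sharp-block} is needed for one column.) Your claimed shortcut that $c_1=0$ off an $e^{-\Omega(n)}$ event when $\mu\neq0$ does not repair this, both because the ``inflated coordinate'' argument is not justified and because on $\mc{G}$ the case $c_1=0$ is excluded anyway.
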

The set $\on{Cons}(\delta, \rho)$ appearing above is the set of unit vectors which have at least $(1-\delta)n$ coordinates within distance $\rho/\sqrt{n}$ of each other (see \cref{def:almost-constant}), although a trivial modification shows this result holds for any sufficiently low-entropy subset of the unit sphere.

\begin{remark}
In the companion note \cite{JSS20discrete1}, as a natural consequence of our study of anti-concentration on slices, we are able to resolve a conjecture of Nguyen \cite[Conjecture~1.4]{Ngu13} on the probability of singularity for a certain combinatorial model of random matrices. Namely, let  $Q_{n}$ be an $n\times n$ random matrix with independent rows, each of which is chosen uniformly from among those vectors in $\{0,1\}^{n}$ which have sum exactly $\lfloor n/2 \rfloor$. In \cite{Ngu13}, Nguyen showed that $\mb{P}[Q_n \text{ is singular}] = O_{C}(n^{-C})$ for any $C > 0$, and conjectured   \cite[Conjecture~1.4]{Ngu13} that $\mb{P}[Q_n \text{ is singular}] = (1/2 + o_n(1))^{n}$. After intermediate work \cite{FJLS19, Jai19}, an exponential upper bound on the singularity probability was only very recently obtained in work of Tran \cite{Tra20}, the key difficulty being establishing even relatively coarse inverse Littlewood--Offord type results for the slice. In \cite{JSS20discrete1}, we settle \cite[Conjecture~1.4]{Ngu13} using the techniques developed in this work.

\begin{theorem}[{\cite[Theorem~1.4]{JSS20discrete1}}]
\label{thm:row-regular}
For every $\epsilon>0$, there exists $C_{\epsilon}$ depending on $\epsilon$ such that for all sufficiently large $n$, and for all $t\ge 0$,  
\[\mb{P}[s_n(Q_n)\le t/\sqrt{n}]\le C_\epsilon t + (1/2+\epsilon)^n.\]
\end{theorem}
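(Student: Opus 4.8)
The plan is to follow the Rudelson--Vershynin strategy: decompose the unit sphere $\mb{S}^{n-1}$ into a low-entropy (``almost constant'') part, handled by the sharp estimate of \cref{thm:compressible} in its analogue for $Q_n$, and the complementary ``spread'' part, which is reduced to an anti-concentration statement for a single row of $Q_n$ tested against a random normal vector. A preliminary observation: every row of $Q_n$ lies in the affine hyperplane $\{w:\sum_i w_i=\lfloor n/2\rfloor\}$, a translate of $\mathbf{1}^{\perp}$ where $\mathbf{1}=(1,\dots,1)$, so $Q_n\mathbf{1}=\lfloor n/2\rfloor\,\mathbf{1}$ and it is the action of $Q_n$ near $\mathbf{1}^{\perp}$ that controls $s_n(Q_n)$; all structural statements below are to be read modulo the $\mathbf{1}$-direction. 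Since the claimed inequality is vacuous once $C_\epsilon t\ge 1$, I assume throughout that $t\le 1$.

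\emph{Step 1 (almost-constant vectors).} First I would establish the exact analogue of \cref{thm:compressible} with $M_n(\xi)$ replaced by $Q_n$, by re-running its proof with anti-concentration estimates for linear forms on the slice (developed in the present paper) in place of the product-measure input. A direct computation then gives $\mb{P}[\mc{E}_{e_1}]\le 2^{-n}$ (a zero column forces each row to vanish in a fixed coordinate, probability $1-\lfloor n/2\rfloor/n\le 1/2$) and $\mb{P}[\mc{E}_{e_1\pm e_2}]\le (1/2+o_n(1))^{n}$ (each row must have two fixed coordinates equal, resp.\ summing to zero). Plugging these into the $Q_n$-analogue of \cref{thm:compressible} bounds the contribution of $x\in\on{Cons}(\delta,\rho)$ by $n\,2^{-n}+n^{2}(1/2+o_n(1))^{n}+(t+2^{-n})e^{-\eta n}\le C_\epsilon t+(1/2+\epsilon)^{n}$ for all large $n$.

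\emph{Step 2 (spread vectors: reduction to a distance).} For $x\notin\on{Cons}(\delta,\rho)$ the standard invertibility-via-distance lemma bounds the corresponding contribution by $C\,\mb{P}[\on{dist}(R_1,H_1)\le Ct]$ for an absolute constant $C$, where $R_1$ is the first row and $H_1=\on{span}(R_2,\dots,R_n)$; at the cost of a negligible-probability event one may assume $H_1$ is an $(n-1)$-dimensional subspace. Conditioning on $R_2,\dots,R_n$ and letting $v$ be a unit normal to $H_1$, we get $\on{dist}(R_1,H_1)=|\langle R_1,v\rangle|$, and replacing $v$ by $v-\bar v\,\mathbf{1}$ (with $\bar v=\tfrac1n\sum_i v_i$), which only shifts $\langle R_1,v\rangle$ by the deterministic quantity $\bar v\lfloor n/2\rfloor$, lets us take $v\perp\mathbf{1}$. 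Thus it suffices to show that, with probability at least $1-(1/2+\epsilon)^{n}$ over $R_2,\dots,R_n$, the conditional probability $\mb{P}_{R_1}[|\langle R_1,v\rangle|\le Ct]$ is at most $C_\epsilon t+(1/2+\epsilon)^{n}$.

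\emph{Step 3 (structure of the normal vector --- the main obstacle).} Introduce an arithmetic-structure parameter (a suitably slice-adapted least common denominator $\on{LCD}(v)$). The inverse Littlewood--Offord theory on the slice supplies: whenever $\on{LCD}(v)$ is exponentially large, $\mb{P}_{R_1}[|\langle R_1,v\rangle|\le Ct]\le C_\epsilon t+e^{-c_\epsilon n}$, which is within budget. The crux is therefore to show that the random unit normal $v$ to $H_1$ has exponentially large $\on{LCD}(v)$ except on an event (over $R_2,\dots,R_n$) of probability $\le (1/2+\epsilon)^{n}$. This is the hard part and the main obstacle: a naive union bound over an $\epsilon$-net of the set of small-$\on{LCD}$ unit vectors is hopeless, since such vectors can have per-row small-ball probability close to $1$. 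Instead one must argue in the spirit of Tikhomirov and Litvak--Tikhomirov --- read the arithmetic structure of $v$ off a linear-sized block of its coordinates, condition $Q_n$ on the corresponding columns, bound the number of structured $v$ compatible with that conditioning, and only then union-bound the defining relations $\langle R_j,v\rangle=0$ ($j\ne 1$) over the remaining independent rows; adapting each of these steps to the non-product measure on the slice is exactly where the present paper's anti-concentration-on-slices toolkit is used. The constant $1/2$ emerges because, for genuinely structured directions, the per-row small-ball probability on the slice concentrates around $1/2$ (rather than around $1-p$, as for sparse Bernoulli). Assembling Steps 1--3 gives the stated bound.
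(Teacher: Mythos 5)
First, note that the present paper does not itself prove \cref{thm:row-regular}: the statement is imported from the companion note~\cite{JSS20discrete1}, and \cref{sub:miscellaneous} indicates that proof runs on the Boolean-slice specialization of the machinery in \cref{sec:multislice,sec:incompressible,sec:compressible}. Your high-level decomposition into almost-constant and spread vectors, the row-distance reduction, and the observation that the $1_n$-direction is special are all consistent with that template. However, Step~3 and the glue between Steps~1 and~2 contain genuine gaps.

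The central problem is Step~3. You propose to measure the structure of the random normal $v$ by a slice-adapted LCD and invoke inverse Littlewood--Offord theory on the slice. The paper explicitly flags this as a dead end: in the discussion of unstructured vectors it notes that the arithmetic-structure (``UDLCD'') approach of Litvak--Tikhomirov ``is not powerful enough to handle slices that are \emph{not} very far from the central slice'' --- and the rows of $Q_n$ lie exactly on the central slice $\{0,1\}^n_{\lfloor n/2\rfloor}$. The tool that actually works is Tikhomirov's inversion-of-randomness: LCD is replaced by the conditional threshold function $\mc{T}$ of \cref{def:conditional-threshold}; a putative structured normal is rounded to an admissible integer net via \cref{lem:round}; and the Boolean-slice version of \cref{cor:threshold-inversion} shows the fraction of net points with non-negligible threshold is super-exponentially small, so that a union bound over that tiny bad portion of the net, against the per-row slice anti-concentration, yields the $Q_n$-analogue of \cref{prop:structure}. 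Your alternative description (``read the structure off a linear-sized block of coordinates, condition $Q_n$ on the corresponding columns, bound the number of compatible $v$'') is not that mechanism; indeed conditioning on columns of $Q_n$ is degenerate here, since fixing $n-1$ columns determines the last one through the row-sum constraint.

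A second gap is that Steps~1 and~2 do not speak to the same object. The distance reduction $\on{dist}(R_1,\on{span}(R_2,\dots,R_n))$ is the column-distance lemma applied to $Q_n^\intercal$, whose columns are the i.i.d.\ slice rows; it therefore controls \emph{left} spread vectors $y$ with $\snorm{Q_n^\intercal y}_2$ small. Step~1 instead treats \emph{right} almost-constant vectors, $\inf_{x\in\on{Cons}(\delta,\rho)}\snorm{Q_n x}_2$. For $M_n(\xi)$ the two sides are identified via $M_n(\xi)\sim M_n(\xi)^\intercal$ (cf.~\cref{sec:deduction-main}), but that symmetry fails for $Q_n$, whose columns are not exchangeable with its rows. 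You would still need to bound $\inf_{y\in\on{Cons}(\delta,\rho)}\snorm{Q_n^\intercal y}_2 = \inf_{y\in\on{Cons}(\delta,\rho)}\snorm{\sum_i y_i R_i}_2$, a different anti-concentration problem (coordinates are weighted sums of the independent Bernoulli column entries, correlated across coordinates by the slice constraint) that is not addressed simply by re-running \cref{thm:compressible}.
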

\end{remark}

\subsection{Overview of the techniques}
As in many works in this area, we use the high-level strategy (going back to Ka\v{s}in \cite{Kas77} and subsequently used in \cite{Sch04,LPRT05,RV08, Rud08}) of dividing the unit sphere into `structured' and `unstructured' components, and estimating the contribution of each part separately. However, compared to previous works, the treatment of both components require overcoming significant obstacles which unavoidably arise in the sharp analysis of the invertibility of random matrices in any amount of generality. 

For instance, in the analysis of structured vectors, we need to additionally capture the event that two rows/columns of the matrix are equal (up to a sign) whereas previous considerations of sharp invertibility only addressed scenarios where the dominant contribution to the probability of singularity is due to a single row or column being zero. As discussed in the remark after \cref{cor:exact}, this is a fundamental issue. Moreover, in the analysis of unstructured vectors, we need precise metric entropy estimates for the anti-concentration problem with respect to random vectors on general multi-slices. Obtaining partial estimates of this nature (which are not sufficient to prove \cref{conj:exact}) even for the special case of the Boolean slice is already a highly non-trivial endeavor which is at the heart of the recent work of Litvak and Tikhomirov \cite{LT20}, where it is accomplished using the substantially more involved notion of the `UDLCD'. 

\textbf{Structured vectors}: The structured vectors in our work are `almost-constant vectors' i.e.~those vectors on $\mb{S}^{n-1}$ which have $(1-\delta)n$ coordinates within distance $\rho/\sqrt{n}$ of each other, where $\delta, \rho > 0$ are sufficiently small constants. This class of structured vectors arises naturally in the consideration of the anti-concentration property of a sequence of numbers with respect to a random vector constrained to lie in a `slice'. Moreover, since vectors which are close to the standard basis vectors $e_i$ or to $e_i \pm e_j$ clearly play a special role in the problem under consideration, it is natural to separately handle `elementary' and `non-elementary' structured vectors. 

Our treatment of structured vectors, culminating in \cref{thm:compressible}, requires significant innovations compared to previous works on the sharp invertibility of sparse random Bernoulli matrices \cite{BR18, LT20, Hua20} -- in the sparse Bernoulli case, the corresponding class of elementary vectors only needs to consist of those vectors which are close to some $e_i$, and the largest atom of the the random variable $\on{Ber}(p)$ is conveniently at $0$. We refer the reader to \cite{JSS20discrete1} for the much simpler argument in this case. 

In the present work, in order to handle non-elementary vectors, we need to develop novel sharp anticoncentration estimates \cref{prop:p-characterization,prop:p-characterization-II} (in contrast, the essentially standard estimate \cite[Lemma~2.6]{JSS20discrete1} was sufficient for the corresponding step in \cite{JSS20discrete1}). Even more involved is the analysis of elementary vectors, for which we develop a new technique. Let us begin by discussing this technique for $\xi = \on{Ber}(p)$ for fixed $p \in (0,1/2)$, in which case, the elementary vectors are those which are close to some standard basis vector. For concreteness, consider vectors which are sufficiently close to $e_1$. We show that, if any such vector has exponentially small
image, then either the first column of the matrix is the zero vector, or it must belong to
a universal subset of nonzero vectors of $\{0,1\}^n$ of measure at most $(1 - p + \epsilon)^n$. 
The first case corresponds to the term $\mb{P}[\mc{E}_{e_1}]$ in \cref{conj:exact}; for the second case, we leverage the seminal work of Rudelson
and Vershynin to show that, on our event, the probability that any vector in this universal subset
appears as the first column of the matrix is at most $\exp(-4\epsilon n)$, at which point we can conclude using
the union bound. 

Of course, for general discrete random variables $\xi$, one must enlarge the class of elementary vectors
to include unit vectors which are close to $(e_i \pm e_j)/\sqrt{2}$ and unit vectors which are close to $e_i$. In the first case (\cref{prop:e1-e2,prop:e1+e2}), we use a rotation trick to reduce to a situation where we can use an analysis similar to (but more complicated than) the one outlined in the previous paragraph. The second case requires a very careful treatment since we are aiming for a leading term of the form $(\mb{P}[\xi = 0])^{n}$ (as opposed to $(\sup_{r \in \mb{R}}\mb{P}[\xi = r])^{n}$), and moreover, the desired error is $(\mb{P}[\xi = \xi'] - \eta)^{n}$ which may be very small. To accomplish this, we first prove a version of \cref{thm:main-approximate} with an estimate on the singularity probability of the form  $(\sup_{r \in \mb{R}}\mb{P}[\xi = r] + o_n(1))^{n}$ (\cref{prop:initial-unstructured,thm:weak}), and then leverage these preliminary estimates to obtain the desired bound. 

We emphasize that our treatment of structured vectors, as captured by \cref{thm:compressible}, is not sensitive to the non-uniformity of the distribution $\xi$. In particular, given \cref{thm:main-exact,thm:compressible}, the only missing case in the complete resolution of \cref{conj:exact} (in fact, in a stronger form) is a sharp analysis of unstructured vectors in the case when $\xi$ is uniform on its support.

\textbf{Unstructured vectors}: The unstructured vectors are the complement of the structured vectors i.e.~ those which do not have a $(1-\delta)$-fraction of their coordinates within $\rho/\sqrt{n}$ of each other. Our treatment of these vectors relies on the non-uniformity of $\xi$ by exploiting the gap between $\mb{P}[\xi = \xi']$ and the entropy of $\xi$; the idea to exploit such a gap to prove sharp invertibility results (in the case of Bernoulli random variables) is due to Litvak and Tikhomirov \cite{LT20}. 

The main ingredient in our work for handling such vectors is \cref{thm:inversion-of-randomness}, which is an extension of \cite[Theorem~B]{Tik20} to a (real) multislice, i.e., the set of vectors in $\{a_1,\dots,a_{k}\}^{n}$ which have a prescribed number of coordinates taking on each of the values $a_1,\dots,a_k$. Such a result was previously not known even for the Boolean slice; indeed, the work \cite{LT20} uses a rather involved notion of arithmetic structure to study anti-concentration on Boolean slices, which is not powerful enough to handle slices that are \emph{not} very far from the central slice. We remark that in general, even establishing much less precise versions of \cite[Theorem~B]{Tik20} on the Boolean slice has been very challenging, despite much work due to the natural connection to certain combinatorial models of random matrices (cf.~\cite{JSS20digraphs} and the references therein).

Compared to \cite[Theorem~B]{Tik20}, we need to overcome two challenges. The first, as mentioned above, is the lack of independence between the coordinates of a vector uniformly distributed on the multi-slice. The second challenge is that $a_1,\dots,a_k$ are now arbitrary real numbers (corresponding to the support of $\xi$), and hence, certain arguments tailored for integers no longer apply. Overcoming these challenges requires additional ideas, which we discuss in \cref{sec:multislice}. The reader is also referred to \cite[Theorem~3.1]{JSS20discrete1} where we record the proof for the Boolean slice, a setting which encounters the first challenge but not the second. 

\subsection{Notation}
For a positive integer $N$, $\mb{S}^{N-1}$ denotes the set of unit vectors in $\mb{R}^{N}$, and if $x\in\mb{R}^N$ and $r\ge 0$ then $\mb{B}_2^N(x,r)$ denotes the radius $r$ Euclidean ball in $\mb{R}^{N}$ centered at $x$. $\snorm{\cdot}_2$ denotes the standard Euclidean norm of a vector, and for a matrix $A = (a_{ij})$, $\snorm{A}$ is its spectral norm (i.e., $\ell^{2} \to \ell^{2}$ operator norm).

We will let $[N]$ denote the interval $\{1,\dots, N\}$. For nonnegative integers $m\le n$, we let $\{0,1\}^n_m$ be the set of vectors in $\{0,1\}^{n}$ with sum $m$.

Since it is essential throughout the paper, we formally record the definition of a discrete random variable and the corresponding random matrix. 

\begin{definition}
We say that a random variable $\xi$ is a discrete random variable (equivalently, has a discrete distribution) if it is real-valued, its support is finite, and the support contains at least two distinct points. $M_{n}(\xi)$ denotes the $n\times n$ random matrix, with independent entries that are copies of $\xi$.  
\end{definition}

For $\xi$ a discrete random variable with $k = |\on{supp}(\xi)|$ (so that $k\ge 2$), we will denote its support by $\vec{a} = (a_1,\dots,a_k)$, and the (nonzero) probabilities of attaining $a_1,\dots,a_k$ by $\vec{p} = (p_1,\dots,p_k)$. Note that $\snorm{\vec{p}}_{1} = 1$, and $\snorm{\vec{p}}_{2}^2\le\snorm{\vec{p}}_{\infty}$ with equality if and only if $\xi$ is uniform on its support. We will use $H(\xi)$ to denote the natural-logarithmic entropy of $\xi$, i.e., $H(\xi) = H(\vec{p}) = \sum_{i=1}^{k} -p_i\log(p_i)$. We will (somewhat abusively) use $p_0$ to denote $\mb{P}[\xi = 0]$. 

For a random variable $\xi$ and a real number $r \ge 0$, we let $\mc{L}(\xi, r):= \sup_{z \in \mb{R}}\mb{P}[|\xi-z|\le r]$. We will use $\ell_1(\mb{Z})$ to denote the set of functions $f\colon \mb{Z} \to \mb{R}$ for which $\sum_{z \in \mb{Z}}|f(z)| < \infty$.

We will also make use of asymptotic notation. For functions $f,g$, $f = O_{\alpha}(g)$ (or $f\lesssim_{\alpha} g$) means that $f \le C_\alpha g$, where $C_\alpha$ is some constant depending on $\alpha$; $f = \Omega_{\alpha}(g)$ (or $f \gtrsim_{\alpha} g$) means that $f \ge c_{\alpha} g$, where $c_\alpha > 0$ is some constant depending on $\alpha$, and $f = \Theta_{\alpha}(g)$ means that both $f = O_{\alpha}(g)$ and $f = \Omega_{\alpha}(g)$ hold. For parameters $\epsilon, \delta$, we write $\epsilon \ll \delta$ to mean that $\epsilon \le c(\delta)$ for a sufficient function $c$. 

Finally, we will omit floors and ceilings where they make no essential difference.

\subsection{Organization}
The remainder of this paper is organized as follows. In \cref{sec:multislice}, we prove our key inversion of randomness estimate for conditional thresholds on the multislice, \cref{thm:inversion-of-randomness}. In \cref{sec:incompressible}, we use this to prove the invertibility estimate for unstructured vectors (\cref{thm:incompressible}). In the short \cref{sec:initial-compressible}, we prove a weaker version of \cref{thm:main-approximate}; this is used in our treatment of structured vectors (i.e., the proof of \cref{thm:compressible}), which is the content of \cref{sec:compressible}. In \cref{sec:deduction-main}, we quickly combine \cref{thm:compressible,thm:incompressible} to prove \cref{thm:main-exact,thm:main-approximate}.

\subsection{Acknowledgements}
We thank Mark Rudelson, Konstantin Tikhomirov, and Yufei Zhao for comments on the manuscript. A.S.~and M.S.~were supported by the National Science Foundation Graduate Research Fellowship under Grant No.~1745302. This work was done when V.J.~was participating in a program at the Simons Institute for the Theory of Computing.

\section{Inversion of randomness on the multislice}\label{sec:multislice}
In this section, we prove our key inversion of randomness result, \cref{thm:inversion-of-randomness}. We will focus on the non-independent ``multislice'' version as its deduction is strictly harder than the independent version, \cref{cor:independent-threshold-inversion} (which we will only use to establish the preliminary estimate \cref{thm:weak}). 

The proof of \cref{thm:inversion-of-randomness} follows a direction introduced by
Tikhomirov \cite{Tik20}. In this approach, the relevant L\'evy concentration function of a random vector is
replaced with certain random averages of functions. One then shows that the random vectors with
large values of the L\'evy concentration function are super-exponentially rare, by first demonstrating
a weaker notion of anticoncentration after revealing $(1-\epsilon)n$ coordinates of the random vector,
and then iterating a smoothing procedure on linear-sized pieces of the vector which allows one
to bootstrap the strength of anticoncentration considered.

Our major challenges lie in (i) the non-independence of the coordinates of a vector on the multislice, as the arguments in \cite{Tik20} rely strongly on the independence structure of the considered model, and (ii) the freedom to allow the support of $\xi$ to consist of arbitrary real numbers, as certain arguments in \cite{Tik20} rely on the integrality of the support. 

The reader may benefit from first perusing a simpler version of this argument, for slices of the Boolean hypercube, which is presented in the companion note \cite[Section~3]{JSS20discrete1}. We will often note the corresponding statements in \cite{JSS20discrete1} for the reader's convenience. 

\subsection{Statement and preliminaries}\label{sub:smoothing-statement}
Let $N,n\ge 1$ be integers and let $0 < \delta < 1/4$, $K_3 > K_2 > K_1 > 1$ be real parameters. We say that $\mc{A}\subseteq\mb{Z}^n$ is $(N,n,K_1,K_2,K_3,\delta)$-admissible if
\begin{itemize}
    \item $\mc{A} = A_1\times\cdots\times A_n$, where each $A_i$ is a subset of $\mb{Z}$,
    \item $|A_1|\cdots|A_n|\le (K_3N)^n$,
    \item $\max_i\max\{|a|: a\in A_i\}\le nN$,
    \item $A_i$ is an integer interval of size at least $2N+1$ for $i > 2\delta n$,
     and either (P1) and (P2) hold, or (Q1) and (Q2) hold:
\end{itemize}
\begin{enumerate}[(P1)]
    \item $A_{2i}$ is an integer interval of size at least $2N+1$ contained in $[-K_1N,K_1N]$ for $i\le\delta n$,
    \item $A_{2i-1}$ is symmetric about $0$, is a union of two integer intervals of total size at least $2N$, and satisfies $A_{2i-1}\cap[-K_2N,K_2N] = \emptyset$ for $i\le\delta n$.
\end{enumerate}
\begin{enumerate}[(Q1)]
    \item $A_{2i}$ is an integer interval of size at least $2N+1$ contained in $[K_1N,K_2N]$ for $i\le\delta n$,
    \item $A_{2i-1}$ is an integer interval of size at least $2N+1$ contained in $[-K_2N,-K_1N]$ for $i\le\delta n$.
\end{enumerate}

Recall at this point that $\xi$, which has (nonzero) probabilities $\vec{p} = (p_1,\ldots,p_k)$ on atoms $\vec{a} = (a_1,\ldots,a_k)$, is fixed. Let $\mc{A} = A_1\times\cdots\times A_n$ be an $(N,n,K_1,K_2,K_3,\delta)$-admissible set, and let $(X_1,\ldots,X_n)$ be the random vector uniformly distributed on $\mc{A}$. For any $f: \mb{R}\to\mb{R}$, any $0\le\ell\le n$, and any $\vec{s}\in\mb{Z}_{\ge 0}^k$ with $\snorm{\vec{s}}_1 = \ell$, define the random function (depending on the randomness of $X_1,\dots,X_n$):
\[f_{\mc{A},\vec{s},\ell}(t) := \mb{E}_b\bigg[f\bigg(t+\sum_{i=1}^\ell b_iX_i\bigg)\bigg|\#\{b_i=a_j\}=s_j \forall j \in [k]\bigg],\]
where $\mb{E}_b$ denotes the expectation over a random vector $b=(b_1,\dots,b_\ell)\in\mb{R}^\ell$ with coordinates independently distributed as $\xi$. The conditioning encodes that for all $j \in [k]$, there are exactly $s_j$ coordinates (out of $\ell$) where $b$ hits the atom $a_j$.

\begin{theorem}\label{thm:inversion-of-randomness}
Fix a discrete distribution $\xi$. For $0 < \delta < 1/4$, $K_3 > K_2 > K_1 > 1$, $\epsilon\ll\min(\vec{p})$, and a given parameter $M\ge 1$, there are  $L_{\ref{thm:inversion-of-randomness}} = L_{\ref{thm:inversion-of-randomness}}(\xi,\epsilon,\delta,K_1,K_2,K_3) > 0$, and $\gamma_{\ref{thm:inversion-of-randomness}} = \gamma_{\ref{thm:inversion-of-randomness}}(\xi,\epsilon,\delta,K_1,K_2,K_3)\in(0,\epsilon)$ independent of $M$ and $n_{\ref{thm:inversion-of-randomness}} = n_{\ref{thm:inversion-of-randomness}}(\xi,\epsilon,\delta,K_1,K_2,K_3,M)\ge 1$ and $\eta_{\ref{thm:inversion-of-randomness}} = \eta_{\ref{thm:inversion-of-randomness}}(\xi,\epsilon,\delta,K_1,K_2,K_3,M)$ such that the following holds.

Let $n\ge n_{\ref{thm:inversion-of-randomness}}$, $1\le N\le\exp((H(\vec{p})-\epsilon)n)$, $f\in L^1(\mb{R})$ be a nonnegative function such that $\snorm{f}_1 = 1$ and $\log_2 f$ is $\eta_{\ref{thm:inversion-of-randomness}}$-Lipschitz, and $\mc{A}$ be $(N,n,K_1,K_2,K_3,\delta)$-admissible. Suppose also that $\snorm{\vec{\gamma}}_\infty\le\gamma_{\ref{thm:inversion-of-randomness}}$. Then, for any $\vec{m} \in \mb{Z}_{\ge 0}^{k}$ such that $\snorm{\vec{m}}_{1} = n$ and $\snorm{\vec{m}-\vec{p}n}_{\infty}\le\gamma_{\ref{thm:inversion-of-randomness}}n$,
\[\mb{P}[\snorm{f_{\mc{A},\vec{m}, n}}_\infty\ge L(N\sqrt{n})^{-1}]\le \exp(-Mn).\]
\end{theorem}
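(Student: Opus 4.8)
The plan is to follow Tikhomirov's template for inversion of randomness, adapted to the multislice, in three stages: (i) a base-case anticoncentration estimate after revealing all but $\epsilon n$ coordinates; (ii) a single smoothing step that upgrades the quality of anticoncentration on a linear-sized block of coordinates; and (iii) an iteration of (ii) that drives the failure probability down to $\exp(-Mn)$. Throughout, the key device is to replace the L\'evy concentration function of the relevant random vector by the random averages $f_{\mc{A},\vec{s},\ell}$; the statement to be proved is precisely the assertion that $\snorm{f_{\mc{A},\vec{m},n}}_\infty$ — the fully-averaged function — is super-exponentially unlikely to be large (of order $(N\sqrt n)^{-1}$ rather than the trivial $N^{-1}$). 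The role of the admissibility hypotheses on $\mc{A}$, and in particular the block structure in (P1)/(P2) or (Q1)/(Q2), is to guarantee that on the $2\delta n$ special coordinates one has pairs $(X_{2i-1},X_{2i})$ whose differences (or sums), together with the $\xi$-weighting from the vector $b$, produce genuine spreading at the initial scale $N$; this is what powers the base case.

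First I would set up the base case: reveal the coordinates $X_i$ for $i>2\delta n$, and work conditionally on them. Using the interval structure of the $A_i$ for $i>2\delta n$ together with a standard Esseen/Fourier argument (the analogue of \cite[Lemma~2.6]{JSS20discrete1} but using the sharper anticoncentration inputs \cref{prop:p-characterization,prop:p-characterization-II}), I would show that except on an event of probability $\exp(-\Omega(n))$ over the revealed coordinates, the conditional random function $f_{\mc{A},\vec{s},\ell}$ (with $\ell=2\delta n$ and $\vec{s}$ the partial counts) already satisfies $\snorm{f_{\mc{A},\vec s,\ell}}_\infty \le \exp(o(n))\cdot (N)^{-1}$ — i.e.\ a weak, $\exp(o(n))$-lossy version of the target. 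Here the hypothesis $N\le \exp((H(\vec p)-\epsilon)n)$ is essential: it is the gap between the entropy $H(\vec p)$ and $\log N$ that creates the room for the spreading estimate, and the Lipschitz hypothesis on $\log_2 f$ ensures $f$ does not itself concentrate at a scale finer than $N$.

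Next comes the smoothing/bootstrapping step, which I expect to be the main obstacle. Given a bound of strength $K$ on $\snorm{f_{\mc{A},\vec s,\ell}}_\infty$, one reveals a further block of $c n$ coordinates and averages; the combinatorial heart is to show that this averaging contracts the $\infty$-norm by a definite multiplicative factor with overwhelming probability, so that after $O(1/c)$ iterations one reaches the target strength $L(N\sqrt n)^{-1}$ with failure probability $\exp(-Mn)$ (the dependence of $n_{\ref{thm:inversion-of-randomness}}$ and $\eta_{\ref{thm:inversion-of-randomness}}$ on $M$ enters exactly here, since more iterations are needed for larger $M$). The two genuine difficulties, relative to \cite{Tik20}, are: (a) the coordinates $X_1,\dots,X_n$ are \emph{not} independent — they are a uniform point of a product set intersected with the multislice constraint $\#\{b_i=a_j\}=m_j$ — so one cannot directly condition coordinate-by-coordinate; I would handle this by a swapping/exchangeability argument, comparing the conditional law of a small block given the rest to an i.i.d.\ reference measure and controlling the Radon--Nikodym derivative using $\snorm{\vec m-\vec p n}_\infty\le \gamma n$, much as in \cite[Theorem~3.1]{JSS20discrete1}; and (b) the atoms $a_1,\dots,a_k$ are arbitrary reals, so integrality-based counting arguments from \cite{Tik20} must be replaced by continuous anticoncentration (Esseen's inequality on $\mb R$, plus the structure theorems \cref{prop:p-characterization,prop:p-characterization-II}) applied at scale $N$, with care that the Lipschitz constant $\eta_{\ref{thm:inversion-of-randomness}}$ of $\log_2 f$ is small enough that $f$ behaves like a near-constant function on each scale-$N$ window throughout the iteration. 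Finally I would assemble the pieces: union-bound the $\exp(-\Omega(n))$ base-case failure probability, the per-step smoothing failure probabilities, and the multislice-to-i.i.d.\ comparison error, choosing $\epsilon\ll\min(\vec p)$, then $\gamma_{\ref{thm:inversion-of-randomness}},\eta_{\ref{thm:inversion-of-randomness}}$ small, then $n$ large in terms of $M$, to conclude $\mb P[\snorm{f_{\mc A,\vec m,n}}_\infty\ge L(N\sqrt n)^{-1}]\le \exp(-Mn)$.
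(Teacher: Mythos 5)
Your high-level outline matches Tikhomirov's template (base case, smoothing, iteration), and you correctly identify the entropy gap $N\le\exp((H(\vec p)-\epsilon)n)$, the role of the log-Lipschitz hypothesis, the two structural obstacles, and where the $M$-dependence enters. But the base case is quantitatively miscalibrated in a way that breaks the iteration. You aim for $\snorm{f_{\mc A,\vec s,\ell}}_\infty\le\exp(o(n))\cdot N^{-1}$; at the target scale this is $L_0(N\sqrt n)^{-1}$ with $L_0=\exp(o(n))\sqrt n$, which is unbounded. Each smoothing step (as in \cref{prop:multislice-Linfty-decrement}) contracts $L$ by a fixed factor $19/20$ per block of $\Theta(\epsilon n)$ coordinates, so with only $O(1)$ iterations available one gets only a constant-factor gain on $L_0$. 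The base case must already deliver $L_0(M)\cdot(N\sqrt n)^{-1}$ with $L_0(M)$ a constant in $n$ --- this is exactly \cref{prop:rough-Linfty} --- and this is not proved by an Esseen/Fourier argument: it is the combinatorial step-record/averaging-sequence argument of \cref{def:record-averaging,lem:entropy}, where the telescoping product $\prod_i h_i/h_{i-1}$ together with the identity $\prod_i \ol{W}_i(w_i)=\binom{\ell}{\vec s}^{-1}$ extracts the $\exp(-H(\vec p)n)$ factor from the multinomial, and \cref{lem:mslkr,lem:slice-levy-kolmogorov-rogozin} supply the $\sqrt n$ at robust non-drop steps, after a union bound over step records and witnessing start points. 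Also, \cref{prop:p-characterization,prop:p-characterization-II} are the wrong references here: they appear in \cref{sec:compressible}, bound $\mc L_\xi$ by the constant $\snorm{\vec p}_2^2-\theta$ at a fixed scale $\theta$, and have no bearing on an $L^\infty$ estimate at scale $(N\sqrt n)^{-1}$.

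For the decrement itself, the paper does not compare the multislice conditional law to i.i.d.\ via a Radon--Nikodym density. Instead it re-randomizes inside the multislice, swapping each pair $(b_{2i-1},b_{2i})$ by an independent $\on{Ber}(1/2)$ variable $b_i'$ --- this leaves the conditional law invariant while exposing genuine i.i.d.\ randomness to exploit (\cref{prop:multislice-Linfty-decrement}) --- and it handles real atoms not by continuous Esseen but by the explicit operator $\mc O:L^1(\mb R)\to\ell^1(\mb Z)$, $(\mc O\omega)(t)=\int_{-z/2}^{z/2}\omega(zt+u)\,du$, which transfers norms within constants (using the log-Lipschitz control) and lets \cite[Proposition~4.10]{Tik20} be invoked as a black box. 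Your Radon--Nikodym and Esseen plan is plausible in spirit, but would require substantial new estimates that the re-randomization and lifting shortcuts sidestep entirely; as proposed it is not a complete argument.
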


Given this we can deduce the following corollary which is crucial in our application.

\begin{definition}
\label{def:conditional-levy}
Fix a discrete distribution $\xi$. Let $\vec{\gamma}$ be a nonnegative vector with $\snorm{\vec{\gamma}}_\infty\in(0,\min(\vec{p}))$ and let $r\ge 0$. For a vector $(x_1,\dots,x_n) \in \mb{R}^{n}$, we define
\begin{align*}
    \mc{L}_{\xi,\vec{\gamma}}\bigg(\sum_{i=1}^nb_i x_i, r\bigg) := \sup_{z\in \mb{R}}\mb{P}\bigg[\bigg|\sum_{i=1}^nb_i x_i - z \bigg| \le r \bigg|\#\{b_i=a_j\}\in [p_jn - \gamma_jn, p_jn + \gamma_jn] \forall j \in [k]\bigg],
\end{align*}
where $b_1,\dots,b_n$ are independent $\xi$ random variables. We also define
\[\mc{L}_\xi\bigg(\sum_{i=1}^{n}b_ix_i, r\bigg) = \sup_{z\in\mb{R}}\mb{P}\bigg[\bigg|\sum_{i=1}^nb_ix_i-z\bigg|\le r\bigg].\]

\end{definition}

\begin{corollary}\label{cor:threshold-inversion}
Fix a discrete distribution $\xi$. For $0 < \delta < 1/4$, $K_3 > K_2 > K_1 > 1$, $\epsilon\ll\min(\vec{p})$, and a given parameter $M\ge 1$, there are  $L_{\ref{cor:threshold-inversion}} = L_{\ref{cor:threshold-inversion}}(\xi,\epsilon,\delta,K_1,K_2,K_3) > 0$ and $\gamma_{\ref{cor:threshold-inversion}} = \gamma_{\ref{cor:threshold-inversion}}(\xi,\epsilon,\delta,K_1,K_2,K_3)\in(0,\epsilon)$ independent of $M$ and $n_{\ref{cor:threshold-inversion}} = n_{\ref{cor:threshold-inversion}}(\xi,\epsilon,\delta,K_1,K_2,K_3,M)\ge 1$ such that the following holds.

Let $n\ge n_{\ref{cor:threshold-inversion}}$, $1\le N\le\exp((H(\vec{p})-\epsilon)n)$ and $\mc{A}$ be $(N,n,K_1,K_2,K_3,\delta)$-admissible. Suppose also that $\snorm{\vec{\gamma}}_\infty\le\gamma_{\ref{cor:threshold-inversion}}$. Then
\[\bigg| \bigg\{x\in \mc{A}: \mc{L}_{\xi,\vec{\gamma}}\bigg(\sum_{i=1}^nb_ix_i,\sqrt{n}\bigg)\ge L_{\ref{cor:threshold-inversion}}N^{-1}\bigg\}\bigg|\le e^{-Mn}|\mc{A}|.\]
\end{corollary}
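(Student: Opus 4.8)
The plan is to deduce \cref{cor:threshold-inversion} from \cref{thm:inversion-of-randomness} by an averaging argument that trades the pointwise-threshold statement about random smoothed functions for a counting statement about vectors $x \in \mc{A}$ with large conditional L\'evy concentration. First I would fix the parameters: given $\xi, \delta, K_1, K_2, K_3, \epsilon, M$, apply \cref{thm:inversion-of-randomness} with a suitable replacement $M' = M'(M, \xi, \ldots)$ (say $M' = M + C$ for a constant absorbing the losses below), obtaining $L_{\ref{thm:inversion-of-randomness}}, \gamma_{\ref{thm:inversion-of-randomness}}, \eta_{\ref{thm:inversion-of-randomness}}, n_{\ref{thm:inversion-of-randomness}}$; then set $\gamma_{\ref{cor:threshold-inversion}} := \gamma_{\ref{thm:inversion-of-randomness}}$ and choose $L_{\ref{cor:threshold-inversion}}$ in terms of $L_{\ref{thm:inversion-of-randomness}}$ and $\eta_{\ref{thm:inversion-of-randomness}}$ as dictated by the test-function construction in the next step.

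The key reduction is to pass from the L\'evy concentration function $\mc{L}_{\xi,\vec\gamma}(\sum b_i x_i, \sqrt n)$ — which concerns the probability that $\sum b_i x_i$ lies in a window of radius $\sqrt n$ around the worst-case center $z$ — to $\snorm{f_{\mc{A}, \vec m, n}}_\infty$ for an appropriately chosen $f$. I would take $f$ to be a fixed nonnegative bump (e.g. a suitably normalized tent/Gaussian-like function) supported near scale $\sqrt n$, with $\snorm f_1 = 1$ and $\log_2 f$ being $\eta_{\ref{thm:inversion-of-randomness}}$-Lipschitz after rescaling by $1/\sqrt n$; the standard choice is $f(t) \propto \exp(-\eta_{\ref{thm:inversion-of-randomness}} |t| / \sqrt n)$ normalized to have integral $1$, so that $\snorm f_\infty \asymp \eta_{\ref{thm:inversion-of-randomness}}/\sqrt n$ and $f(t) \gtrsim_{\eta} 1/\sqrt n$ for $|t| \le \sqrt n$. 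Then for any vector $x$ and any center $z$, $f_{\mc{A}, \vec m, n}(z') \gtrsim_\eta (1/\sqrt n)\, \mb{P}[|\sum b_i x_i - z| \le \sqrt n \mid \cdots]$ for an appropriate $z'$ near $z$, which shows $\mc{L}_{\xi, \vec\gamma}(\sum b_i x_i, \sqrt n) \lesssim_\eta \sqrt n \cdot \snorm{f_{\mc{A},\vec m, n}}_\infty$ — but one must be careful that in \cref{thm:inversion-of-randomness} the conditioning is on \emph{exact} counts $\#\{b_i = a_j\} = m_j$ with $\snorm{\vec m - \vec p n}_\infty \le \gamma n$, whereas in \cref{def:conditional-levy} the conditioning is on the \emph{interval} event $\#\{b_i = a_j\} \in [p_j n - \gamma_j n, p_j n + \gamma_j n]$. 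I would handle this by writing the interval-conditioned probability as a convex combination (over admissible $\vec m$) of the exact-count-conditioned probabilities, so that the sup over $z$ of the former is bounded by the max over $\vec m$ of the sup over $z$ of the latter; thus $\mc{L}_{\xi,\vec\gamma}(\sum b_i x_i, \sqrt n) \ge L_{\ref{cor:threshold-inversion}} N^{-1}$ forces $\snorm{f_{\mc{A}, \vec m, n}}_\infty \ge L_{\ref{thm:inversion-of-randomness}}(N\sqrt n)^{-1}$ for some valid $\vec m$, once $L_{\ref{cor:threshold-inversion}}$ is chosen appropriately relative to $L_{\ref{thm:inversion-of-randomness}}$ and the implied $\eta$-constant.

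Finally I would run the union bound / averaging. For each fixed $\vec m$ with $\snorm{\vec m}_1 = n$ and $\snorm{\vec m - \vec p n}_\infty \le \gamma_{\ref{cor:threshold-inversion}} n$, \cref{thm:inversion-of-randomness} (applied with $M'$) gives $\mb{P}_{X \sim \on{Unif}(\mc{A})}[\snorm{f_{\mc{A}, \vec m, n}}_\infty \ge L_{\ref{thm:inversion-of-randomness}}(N\sqrt n)^{-1}] \le \exp(-M' n)$, i.e. the set of bad $x$ for this $\vec m$ has size at most $\exp(-M' n)|\mc{A}|$. Summing over the at most $(n+1)^k = \exp(o(n))$ choices of $\vec m$ and using the containment from the previous paragraph, the set $\{x \in \mc{A} : \mc{L}_{\xi,\vec\gamma}(\sum b_i x_i, \sqrt n) \ge L_{\ref{cor:threshold-inversion}} N^{-1}\}$ has size at most $(n+1)^k \exp(-M' n) |\mc{A}| \le \exp(-Mn)|\mc{A}|$ for $n$ large, by choosing $M' = M + 1$ (say) and $n_{\ref{cor:threshold-inversion}}$ large enough that $(n+1)^k \le e^n$. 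I expect the main obstacle to be the bookkeeping in the previous paragraph — namely verifying the Lipschitz condition on $\log_2 f$ with the \emph{exact} constant $\eta_{\ref{thm:inversion-of-randomness}}$ coming out of \cref{thm:inversion-of-randomness} (which depends on $\epsilon, \delta, K_i$ but, crucially, we need it to be chosen \emph{before} $f$), and carefully relating the exact-count and interval-count conditionings; the honest resolution is that $f$ and $L_{\ref{cor:threshold-inversion}}$ are defined only after $\eta_{\ref{thm:inversion-of-randomness}}$ is produced, so there is no circularity, and the convex-combination argument is elementary once one notes $\gamma_{\ref{cor:threshold-inversion}} = \gamma_{\ref{thm:inversion-of-randomness}}$ makes every $\vec m$ in the support of the interval event admissible for \cref{thm:inversion-of-randomness}.
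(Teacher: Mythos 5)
Your overall strategy matches the paper's exactly: build a test function $f$ concentrated at scale $\sqrt n$, relate $\mc{L}_{\xi,\vec\gamma}$ to $\snorm{f_{\mc{A},\vec m,n}}_\infty$ by writing the interval-conditioned probability as a convex combination over exact counts $\vec m$, apply \cref{thm:inversion-of-randomness} with a boosted exponent $M'$ for each admissible $\vec m$, and union bound over the $\le (n+1)^k$ choices. The convex-combination reduction and the union-bound accounting are both correct, and setting $\gamma_{\ref{cor:threshold-inversion}} := \gamma_{\ref{thm:inversion-of-randomness}}$ is the right move since the latter is $M$-independent.

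There is, however, one genuine gap in your choice of test function $f(t) \propto \exp(-\eta_{\ref{thm:inversion-of-randomness}}|t|/\sqrt n)$. With this normalization, $f(t) \ge \tfrac{\eta}{2\sqrt n}e^{-\eta}$ on $|t|\le\sqrt n$, so the bridge $\snorm{f_{\mc{A},\vec m,n}}_\infty \gtrsim_\eta \tfrac1{\sqrt n}\mc{L}_{\xi,\vec\gamma}(\cdot,\sqrt n)$ forces $L_{\ref{cor:threshold-inversion}} \gtrsim e^{\eta}L_{\ref{thm:inversion-of-randomness}}/\eta$, where $\eta = \eta_{\ref{thm:inversion-of-randomness}}$. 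But in \cref{thm:inversion-of-randomness}, only $L_{\ref{thm:inversion-of-randomness}}$ and $\gamma_{\ref{thm:inversion-of-randomness}}$ are declared independent of $M$, while $\eta_{\ref{thm:inversion-of-randomness}}$ (and $n_{\ref{thm:inversion-of-randomness}}$) genuinely depend on $M$ and can shrink as $M$ grows. So your $L_{\ref{cor:threshold-inversion}}$ inherits a dependence on $M$, which contradicts the corollary's explicit claim that $L_{\ref{cor:threshold-inversion}} = L_{\ref{cor:threshold-inversion}}(\xi,\epsilon,\delta,K_1,K_2,K_3)$ is independent of $M$. The fix (and the paper's choice) is to take a \emph{fixed} decay rate, $f(t) = 2^{-|t|/\sqrt n}/\iota$ with $\iota = 2\sqrt n/\ln 2$. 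Then $\log_2 f$ is $(1/\sqrt n)$-Lipschitz, which is automatically $\le\eta_{\ref{thm:inversion-of-randomness}}$ once $n \ge \eta_{\ref{thm:inversion-of-randomness}}^{-2}$ — an acceptable threshold since $n_{\ref{cor:threshold-inversion}}$ may depend on $M$ — while $f(t)\ge \tfrac{\ln 2}{4\sqrt n}$ on $|t|\le\sqrt n$ gives an \emph{absolute} constant, so that $L_{\ref{cor:threshold-inversion}} = 4L_{\ref{thm:inversion-of-randomness}}/\ln 2$ is manifestly $M$-independent. With that substitution your argument goes through verbatim.
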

\begin{proof}[Proof sketch]
This is essentially the same as the deduction in \cite[Corollary~4.3]{Tik20}. We apply \cref{thm:inversion-of-randomness} to $f(t):= 2^{-|t|/\sqrt{n}}/\iota$, where $t \in \mb{R}$ and $\iota$ is an appropriate normalization, separately for all $\vec{m} \in \mb{Z}_{\ge 0}^{k}$ such that $\snorm{\vec{m}-\vec{p}n}_\infty\le\gamma_{\ref{thm:inversion-of-randomness}}n$,
and then conclude using a union bound.
\end{proof}

The proof of \cref{thm:inversion-of-randomness} makes use of an anticoncentration estimate on the multislice, which we record below (\cref{lem:mslkr,lem:slice-levy-kolmogorov-rogozin}), 
and is ultimately a consequence of the following standard anticoncentration inequality due to Kolmogorov-L\'evy-Rogozin.

\begin{lemma}[\cite{Rog61}]
\label{lem:LKR}
Let $\xi_1,\dots, \xi_{n}$ be independent random variables. Then, for any real numbers $r_1,\dots,r_n > 0$ and any real $r \ge \max_{i \in [n]}r_i$, we have
\begin{align*}
    \mc{L}\bigg(\sum_{i=1}^{n}\xi_i, r\bigg) \le \frac{C_{\ref{lem:LKR}}r}{\sqrt{\sum_{i=1}^{n}(1-\mc{L}(\xi_i, r_i))r_i^{2}}},
\end{align*}
where $C_{\ref{lem:LKR}}>0$ is an absolute constant. 
\end{lemma}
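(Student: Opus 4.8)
This is the classical Kolmogorov--L\'evy--Rogozin inequality, so the cleanest option is simply to invoke \cite{Rog61}; for completeness, the natural self-contained route is the characteristic-function method, which I sketch here. Write $S=\sum_{i=1}^n\xi_i$, let $\phi_i$ denote the characteristic function of $\xi_i$, so that $\phi_S=\prod_{i=1}^n\phi_i$ by independence, and set $V:=\sum_{i=1}^n r_i^2\big(1-\mc{L}(\xi_i,r_i)\big)$. First I would reduce to symmetric summands: replacing each $\xi_i$ by $\tilde\xi_i:=\xi_i-\xi_i'$ with $\xi_i'$ an independent copy, and writing $\tilde S:=\sum_i\tilde\xi_i$, changes both sides by at most absolute constants, since $\mc{L}(S,r)^2\lesssim\mc{L}(\tilde S,r)$ by Cauchy--Schwarz (splitting an interval into two halves and using independence), while $\mc{L}(\tilde\xi_i,r_i)\le\mc{L}(\xi_i,r_i)$ gives $1-\mc{L}(\tilde\xi_i,r_i)\ge 1-\mc{L}(\xi_i,r_i)$. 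So we may assume each $\xi_i$ is symmetric, whence each $\phi_i$ is real with $0\le|\phi_i|\le 1$. Next, Esseen's smoothing inequality (obtained by integrating $|\phi_S|$ against a fixed nonnegative kernel of compact spectral support, e.g.\ the Fej\'er kernel) supplies an absolute constant $A$ with
\[\mc{L}(S,r)\le Ar\int_{-1/r}^{1/r}|\phi_S(\theta)|\,d\theta\le Ar\int_{-1/r}^{1/r}\exp\bigg({-}\frac{1}{2}\sum_{i=1}^n\big(1-|\phi_i(\theta)|^2\big)\bigg)\,d\theta,\]
where the second inequality uses $|\phi_i|-1\le\tfrac12(|\phi_i|^2-1)$ together with $\log x\le x-1$; and by symmetry $1-|\phi_i(\theta)|^2=\mb{E}[1-\cos(\theta\xi_i)]$.

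The heart of the matter is to turn the per-variable non-concentration into a bound on this Fourier integral. The basic input is $\mb{P}[|\xi_i|\ge r_i]\ge 1-\mc{L}(\xi_i,r_i)$ (the interval $(-r_i,r_i)$ has radius $r_i$, and $\xi_i$ is symmetric). One would like the pointwise bound $1-|\phi_i(\theta)|^2\gtrsim\big(1-\mc{L}(\xi_i,r_i)\big)(r_i\theta)^2$ on $|\theta|\le 1/r$, but this is \emph{false}: $1-\cos$ oscillates, so an atom of $\xi_i$ at a large value contributes essentially nothing at the resonant frequencies. The fix is to average over frequency windows, using the elementary estimate $1-\tfrac{\sin u}{u}\gtrsim\min(u^2,1)$: for any $h>0$,
\[\frac{1}{2h}\int_{-h}^{h}\big(1-|\phi_i(\theta)|^2\big)\,d\theta=\mb{E}\bigg[1-\frac{\sin(h\xi_i)}{h\xi_i}\bigg]\gtrsim\mb{E}\big[\min\big((h\xi_i)^2,1\big)\big]\gtrsim\min\big((hr_i)^2,1\big)\,\mb{P}[|\xi_i|\ge r_i].\]
Taking $h=1/r$ (so $(hr_i)^2=(r_i/r)^2\le 1$) and summing over $i$ gives $\int_{-1/r}^{1/r}\sum_i\big(1-|\phi_i(\theta)|^2\big)\,d\theta\gtrsim V/r^3$. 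Feeding this into the integral $\int_{-1/r}^{1/r}\exp\big({-}\tfrac12\sum_i(1-|\phi_i(\theta)|^2)\big)\,d\theta$ — and using that each $1-|\phi_i(\theta)|^2$ is a spread-out almost-periodic function rather than a narrow spike, which is what prevents the exponent from being concentrated on a short subinterval — a standard but delicate estimate bounds this integral by $O(V^{-1/2})$. Combining with the Esseen bound yields $\mc{L}(S,r)\le Ar\cdot O(V^{-1/2})=C_{\ref{lem:LKR}}\,r/\sqrt{V}$, as claimed.

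The main obstacle is precisely this last transfer: passing from the per-variable non-concentration statements, each holding at its own scale $r_i$, to control of the single Fourier integral over $|\theta|\le 1/r$. The naive pointwise estimate genuinely fails because of the oscillation of $1-\cos$, so one must average over frequency windows and carefully handle the multiscale structure of the $r_i$ (e.g.\ by a dyadic decomposition of the indices according to the size of $r_i$) while keeping all constants absolute. The symmetrization reduction, Esseen's inequality, and the concluding Gaussian-type integration are routine by comparison.
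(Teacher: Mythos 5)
The paper gives no proof of this lemma: it is the classical Kolmogorov--L\'evy--Rogozin inequality, imported verbatim from \cite{Rog61}. Your first option --- simply invoking that reference --- is exactly what the paper does, and for the purposes of this paper that is all that is required.

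Your ``for completeness'' sketch, however, does not close. The routine parts are fine: symmetrization, Esseen's inequality, the bound $\prod_i|\phi_i|\le\exp(-\tfrac12\sum_i(1-|\phi_i|^2))$, and the frequency-window averaging via $1-\tfrac{\sin u}{u}\gtrsim\min(u^2,1)$ (modulo the small slip that for your already-symmetrized variables $1-|\phi_i(\theta)|^2=\mb{E}[1-\cos(\theta(\tilde\xi_i-\tilde\xi_i'))]$, a second symmetrization, not $\mb{E}[1-\cos(\theta\tilde\xi_i)]$; this is harmless since $\mb{P}[|\eta-\eta'|\ge r_i]\ge 1-\mc{L}(\eta,r_i)$ for any $\eta$). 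The genuine gap is the final transfer. What you have proved is a lower bound on the \emph{average} of the exponent $f(\theta):=\tfrac12\sum_i(1-|\phi_i(\theta)|^2)$ over $[-1/r,1/r]$; what you need is an upper bound on $\int_{-1/r}^{1/r}e^{-f(\theta)}\,d\theta$. Jensen runs in the wrong direction: a nonnegative $f$ with large average can vanish on most of the window, in which case $\int e^{-f}$ is comparable to the full window length $2/r$ and you recover only the trivial bound $\mc{L}(S,r)\le O(1)$. Your parenthetical appeal to $f$ being ``spread out rather than a narrow spike'' is exactly the assertion that has to be proved, and it is where all of the content of Esseen's argument lives (one exploits, e.g., the doubling property $1-\cos(2u)\le 4(1-\cos u)$, which forces $f(2\theta)\le 4f(\theta)$ and hence confines the sublevel sets of $f$ to short intervals about the origin; the multiscale bookkeeping over the $r_i$ then produces the $V^{-1/2}$). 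Since you explicitly defer this step as ``standard but delicate,'' the sketch is an outline of Esseen's proof with its core lemma missing, not a proof. Stick with the citation.
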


\begin{lemma}\label{lem:mslkr}
Fix $(a_1,\ldots,a_k)\in\mb{R}^k$ with distinct coordinates. Let $\sigma, \lambda \in (0,1/3)$ and $r > 0$. Let $Z = \{z_1,\ldots,z_n\}$ be a set of real numbers for which there exist disjoint subsets $Z_1, Z_2 \subseteq Z$ such that $|Z_1|, |Z_2| \ge \sigma n$ and such that $|z_i - z_j| \ge r$ for all $z_i \in Z_1, z_j \in Z_2$. Then, there exists $C_{\ref{lem:mslkr}} = C_{\ref{lem:mslkr}}(\lambda, \sigma, k)$ such that for any $\vec{s}\in\mb{Z}_{\ge 0}^k$ with $\snorm{\vec{s}}_1 = n$ and with $s_\ell\in[\lambda n,(1-\lambda)n]$ for some $\ell \in [k]$, we have
\[\mc{L}\bigg(\sum_{i=1}^{n}z_ib_i, r\cdot \min_{i<j}|a_i-a_j|\bigg) \le\frac{C_{\ref{lem:mslkr}}}{\sqrt{n}},\]
where $(b_1,\dots, b_n)$ is a random vector uniformly chosen from among those with $s_j$ coordinates equal to $a_j$ for all $j \in [k]$.
\end{lemma}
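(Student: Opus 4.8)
The plan is to reduce the multislice L\'evy concentration bound to the independent Kolmogorov--L\'evy--Rogozin inequality (\cref{lem:LKR}) by the standard device of randomly pairing coordinates and exploiting the fact that a uniformly random vector on the multislice, restricted to a random matching of coordinates, has enough independent ``swap'' randomness to produce anticoncentration. Concretely, since $s_\ell \in [\lambda n, (1-\lambda)n]$ for some $\ell$, there is a value $a_\ell$ attained by a linear-sized but not almost-total set of coordinates of $b$; hence among the indices $i \in [n]$ there is a $\Theta(n)$-sized set on which $b_i = a_\ell$ and a disjoint $\Theta(n)$-sized set on which $b_i = a_{\ell'}$ for some fixed $\ell' \neq \ell$. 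I would pair up $\Theta(n)$ coordinates $i$ (with $z_i$ large, say in $Z_1$) with coordinates $j$ (with $z_j$ small, in $Z_2$), so that within each pair $\{i,j\}$, after conditioning on the multiset of values of $b$ outside the pair and on the unordered pair of values $\{b_i,b_j\}$, the ordered assignment is uniform. When the two values in a pair differ (which happens with probability $\Omega_{\lambda,\sigma,k}(1)$ by the count of $\ell$ and $\ell'$ coordinates), the contribution $b_iz_i + b_jz_j$ is a two-point random variable whose two possible values differ by $(b_i-b_j)(z_i-z_j)$, whose absolute value is at least $\min_{i<j}|a_i-a_j|\cdot r$ since $z_i \in Z_1, z_j \in Z_2$. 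Thus each such pair is an independent summand $\eta_t$ with $1 - \mc{L}(\eta_t, r\min_{i<j}|a_i-a_j|) \ge 1/2$, and there are $\Theta_{\lambda,\sigma,k}(n)$ of them.

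The technical step making this rigorous is a conditioning/decoupling argument: fix a matching $\{ \{i_1,j_1\},\dots,\{i_m,j_m\}\}$ with $m = \Theta(n)$ chosen inside $Z_1 \times Z_2$, condition on the event that ``enough'' pairs receive two distinct values from $\{a_\ell, a_{\ell'}\}$ (a routine concentration estimate on a hypergeometric-type count, using $s_\ell, s_{\ell'} = \Theta(n)$ — note $s_{\ell'} = \Theta(n)$ can be arranged since $\snorm{\vec s}_1 = n$ and $s_\ell \le (1-\lambda)n$ forces $\sum_{j \neq \ell} s_j \ge \lambda n$, so some $s_{\ell'} \ge \lambda n /k$), and then observe that conditionally on the full multiset of values assigned to the matched coordinates and to the unmatched coordinates, the random variable $\sum_i z_i b_i$ is a sum of the deterministic unmatched part plus independent two-point variables indexed by the pairs, where the pairs carrying two distinct values contribute genuine spread. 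Applying \cref{lem:LKR} to these $\Theta(n)$ independent summands with all $r_t := r\min_{i<j}|a_i-a_j|$ gives $\mc{L}(\,\cdot\,, r\min_{i<j}|a_i-a_j|) \le C_{\ref{lem:LKR}} / \sqrt{\Omega_{\lambda,\sigma,k}(n)}$ on the conditioned event; averaging over the conditioning (and absorbing the $o(1)$-probability bad event, on which we bound $\mc L$ by $1 \le C/\sqrt n$ for suitable $C$ since then $n$ is bounded, or more carefully fold it into the constant) yields the claimed $C_{\ref{lem:mslkr}}/\sqrt n$.

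The main obstacle I anticipate is purely bookkeeping rather than conceptual: one must set up the matching and the conditioning so that the residual randomness really is an independent product of two-point distributions — i.e. that conditioning on the sorted list of values on matched coordinates leaves the per-pair ordered assignments exchangeable and independent across pairs — and one must verify the combinatorial lower bound on the number of ``mixed'' pairs is uniform in the choice of $\vec s$ (this is where the hypothesis $s_\ell \in [\lambda n,(1-\lambda)n]$, as opposed to merely $s_\ell > 0$, is essential, and where the dependence of $C_{\ref{lem:mslkr}}$ on $\lambda, \sigma, k$ enters). A clean way to handle the decoupling is to first reveal which coordinates take which value among the unmatched indices and which take value $\notin\{a_\ell,a_{\ell'}\}$ among matched indices, leaving a fixed number $m'$ of matched pairs to be filled with a fixed number of $a_\ell$'s and $a_{\ell'}$'s uniformly at random across the $2m'$ slots; then a second, internal pairing makes the per-pair contributions i.i.d. two-point variables, and \cref{lem:LKR} applies directly. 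No step should require more than standard concentration and the already-quoted \cref{lem:LKR}.
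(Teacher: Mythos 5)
Your proposal is correct and follows essentially the same route as the paper's proof: pair $\Theta(n)$ coordinates of $Z_1$ with coordinates of $Z_2$, use pigeonhole to find $\ell'\neq\ell$ with $s_{\ell'}\ge\lambda n/k$, re-randomize within pairs (the paper implements your conditional-exchangeability observation via auxiliary independent $\on{Ber}(1/2)$ swap variables $b_j'$), show by a large-deviation estimate that $\Omega_{\lambda,\sigma,k}(n)$ pairs receive the mixed values $\{a_\ell,a_{\ell'}\}$, and apply \cref{lem:LKR} to the resulting independent two-point summands, absorbing the exponentially small bad event into the $C/\sqrt{n}$ bound. No substantive differences.
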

\begin{proof}
By reindexing the coordinates of $Z$, we may assume that for $i \in [\sigma n]$, $z_{2i - 1} \in Z_1$ and $z_{2i} \in Z_2$. In particular, for $i \in [\sigma n]$, we have $|z_{2i} - z_{2i - 1}| \ge r$. Furthermore, by the pigeonhole principle, there exists some $\ell' \neq \ell$ such that $s_{\ell'}\ge \lambda n/k$. We will now use the randomness within the atoms $a_\ell$ and $a_{\ell'}$ in order to derive the anticoncentration result. Note that $\sum_{i=1}^{n} b_iz_i$
has the same distribution as
\[\sum_{i>2\sigma n} z_ib_i + \sum_{j\le \sigma n}\bigg(z_{2j-1}b_{2j-1} + z_{2j}b_{2j} + b_j'(b_{2j} - b_{2j-1})(z_{2j-1} - z_{2j})\bigg),\]
where $b_1',\dots,b_{\sigma n}'$ are i.i.d.~$\on{Ber}(1/2)$ random variables. Next, note that by a standard large deviation estimate, we have
\begin{align}
\label{eq:rerandomize}
\mb{P}[|\{j \in [\sigma n]: \{b_{2j-1}, b_{2j}\} = \{a_\ell,a_{\ell'}\}| \le c(\sigma, \lambda, k)n] \le \exp(-c(\sigma, \lambda, k)n),
\end{align}
where $c(\sigma, \lambda, k) > 0$ is a constant depending only on $\sigma$, $\lambda$, and $k$. On the other hand, on the complement of this event, we may conclude by applying \cref{lem:LKR} to \cref{eq:rerandomize}, using only the randomness in $b_1',\dots, b_{\sigma n}'$.
\end{proof}

\begin{lemma}\label{lem:slice-levy-kolmogorov-rogozin}
Fix a discrete distribution $\xi$, $\lambda\in(0,1/3)$,  $\delta_0\in(0,1/4)$. Let $\mc{A}$ be $(N,n,K_1,K_2,K_3,\delta)$-admissible for some integer parameters $N,n$ and real parameters $\delta\in[\delta_0,1/4)$, $K_3 > K_2 > K_1 > 1$. Suppose that $n > n_{\ref{lem:slice-levy-kolmogorov-rogozin}}(\lambda,\delta_0,K_1,K_2,K_3)$, $\ell \ge \delta_0n$, and $\vec{s} \in \mb{Z}_{\ge 0}^{k}$ with $\snorm{s}_{1} = \ell$ and $s_{j_0} \in  [\lambda \ell, (1-\lambda)\ell]$ for some $j_0 \in [k]$. Then, for any interval $J$,
\[\int_{t\in J}f_{\mc{A},\vec{s},\ell}(t)dt\le\frac{C_{\ref{lem:slice-levy-kolmogorov-rogozin}}(\lambda,\xi, \delta_0,K_1,K_2)\max(|J|,N)}{N\sqrt{n}}.\]
\end{lemma}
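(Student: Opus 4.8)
The plan is to reduce the bound on $\int_{t\in J} f_{\mc{A},\vec{s},\ell}(t)\,dt$ to an anticoncentration statement to which \cref{lem:mslkr} applies, after first extracting enough ``spread'' among the revealed coordinates from the admissibility hypotheses. Recall that, by definition,
\[
\int_{t\in J} f_{\mc{A},\vec{s},\ell}(t)\,dt
= \mb{E}_{X}\,\mb{E}_{b}\!\left[\,|J|\cdot\mbm{1}\!\left[\sum_{i=1}^{\ell} b_i X_i + U \in J\right]\;\Big|\;\#\{b_i=a_j\}=s_j\ \forall j\right],
\]
where I have written the integral over $t\in J$ as $|J|$ times the probability that a uniform point $U$ of $J$ (independent of everything) lands so that $t=$ that point works; more cleanly, $\int_J f_{\mc{A},\vec s,\ell} = \mb{E}\big[\mu_{\mc A,\vec s,\ell}(J - \sum b_iX_i)\big]$ where $\mu$ is the (random) law of $\sum_{i\le\ell}b_iX_i$ pushed forward, and by Fubini this equals $\mb{P}[\sum_{i=1}^\ell b_iX_i \in J - t_0]$ integrated, i.e.\ it is controlled by $|J|$ times a L\'evy concentration function of the random sum $S_\ell := \sum_{i=1}^{\ell} b_i X_i$ at scale $\max(|J|,1)$-ish, conditioned on the profile $\vec s$ of $b$ and averaged over $X$ uniform on $\mc A$. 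So it suffices to show $\mc{L}\big(S_\ell \mid \vec s\big) \lesssim 1/\sqrt n$ for $X$ outside a negligible set, and then integrate.

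The heart of the matter is producing, for a typical $X\in\mc A$, two linear-sized index sets $Z_1,Z_2\subseteq\{X_i : i\le\ell\}$ that are uniformly separated, so that \cref{lem:mslkr} delivers the $C/\sqrt n$ bound (using the hypothesis $s_{j_0}\in[\lambda\ell,(1-\lambda)\ell]$, which guarantees the needed entropy within the atoms). This is exactly where admissibility enters. In case (P1)--(P2): for $i\le\delta n$ the coordinate $A_{2i}\subseteq[-K_1N,K_1N]$ while $A_{2i-1}$ avoids $[-K_2N,K_2N]$, so $|X_{2i}-X_{2i-1}|\ge (K_2-K_1)N$ deterministically, giving $\min(\delta n,\ell/2)\gtrsim n$ separated pairs — here I would take $Z_1=\{X_{2i}\}$, $Z_2=\{X_{2i-1}\}$ over the relevant range of $i$, noting $\ell\ge\delta_0 n$ and $\delta\ge\delta_0$ ensure a $\sigma=\sigma(\delta_0)$ fraction lies among the first $\ell$ indices. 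In case (Q1)--(Q2): $A_{2i}\subseteq[K_1N,K_2N]$ and $A_{2i-1}\subseteq[-K_2N,-K_1N]$, so again $|X_{2i}-X_{2i-1}|\ge 2K_1N$ deterministically. In both cases the separation is $\Omega(N)$, so \cref{lem:mslkr} with $r\asymp N$ (and $\min_{i<j}|a_i-a_j|$ a constant depending on $\xi$) yields $\mc{L}(S_\ell\mid\vec s, r')\le C/\sqrt n$ for every scale $r'\lesssim N$; combined with a standard covering of $J$ by $O(\max(|J|,N)/N)$ intervals of length $N$ this gives the claimed $C\max(|J|,N)/(N\sqrt n)$, after absorbing the constant $\min_{i<j}|a_i-a_j|$ and the $\xi$-dependence into $C_{\ref{lem:slice-levy-kolmogorov-rogozin}}$.

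One subtlety to handle carefully: \cref{lem:mslkr} is stated for a fixed set $Z=\{z_1,\dots,z_n\}$ of size $n$, whereas here I only have $\ell$ revealed coordinates and a profile $\vec s$ supported on $\ell$ indices. I would apply it with $n$ replaced by $\ell\ge\delta_0 n$ (harmless, changing constants by a $\delta_0$-dependent factor), using $Z=\{X_1,\dots,X_\ell\}$, the separated subsets $Z_1,Z_2$ constructed above of size $\ge\sigma\ell$, and the coordinate $\ell=j_0$ with $s_{j_0}\in[\lambda\ell,(1-\lambda)\ell]$. The other point is that the deterministic separation above requires $2i-1,2i\le\ell$; since $\ell\ge\delta_0 n$ and the separated pairs are indexed by $i\le\delta n$ with $\delta\ge\delta_0$, at least a $\min(1,\delta_0/(2\delta))\ge c(\delta_0,K_3)$-fraction of them — actually a fixed fraction since $\delta<1/4$ — survive, so I can take $\sigma=\sigma(\delta_0)>0$. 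I expect the main obstacle to be purely bookkeeping: tracking that the separated pairs genuinely lie within the first $\ell$ coordinates and that \cref{lem:mslkr}'s hypotheses ($\sigma,\lambda<1/3$, the existence of a second atom $\ell'\neq j_0$ with $s_{\ell'}\gtrsim n$, which follows by pigeonhole from $s_{j_0}\le(1-\lambda)\ell$) are all met uniformly; the probabilistic content is entirely off-loaded onto \cref{lem:mslkr,lem:LKR}, and there is no exceptional set here at all since the separation is deterministic in $X$, so in fact the bound holds for \emph{every} $\mc A$-admissible configuration and every $X\in\mc A$, which is stronger than one might first expect and simplifies the write-up.
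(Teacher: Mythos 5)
Your proof takes essentially the same approach as the paper's: fix $X_1,\dots,X_\ell$, apply Fubini to reduce $\int_J f_{\mc A,\vec s,\ell}$ to a L\'evy concentration function of $\sum_{i\le\ell}b_iX_i$, extract $\Omega(N)$-separated pairs from the admissibility structure (P1--P2 or Q1--Q2), and feed them into \cref{lem:mslkr} with $r=(K_2-K_1)N$. The Fubini reformulation at the start is written a bit loosely (the integral is controlled by $\mc{L}(\sum b_iX_i,|J|)\cdot\snorm{f}_1$, not ``$|J|$ times'' a concentration function — the $|J|$ appears as a scale, not a factor), but you land on the right bound, and your observation that the separation is deterministic in $X$ so no exceptional set is needed matches the paper exactly.
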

\begin{proof}
The proof is nearly identical to that in \cite[Lemma~4.4]{Tik20} though we provide details as we are in the slightly different setting of $L^1(\mb{R})$. Fix $X_1,\ldots,X_\ell$. Then
\begin{align*}
\int_{t\in J}f_{\mc{A},\vec{s},\ell}(t)dt &= \int_{t\in J} \mb{E}_b\bigg[f\bigg(t+\sum_{i=1}^\ell b_iX_i\bigg)\bigg|\#\{b_i=a_j\}=s_j \forall j \in [k]\bigg] dt\\
&= \mb{E}_b\bigg[\int_{t\in J}  f\bigg(t+\sum_{i=1}^\ell b_iX_i\bigg) dt\bigg|\#\{b_i=a_j\}=s_j \forall j \in [k]\bigg] \\
&= \mb{E}_b\bigg[\int_{t\in\mb{R}}f(t)\mbm{1}_{ J+\sum_{i=1}^\ell b_iX_i}(t)dt\bigg|\#\{b_i=a_j\}=s_j \forall j \in [k]\bigg]\\
&= \int_{t\in\mb{R}}f(t)\mb{E}_b\bigg[\mbm{1}_{ J+\sum_{i=1}^\ell b_iX_i}(t)\bigg|\#\{b_i=a_j\}=s_j\forall j \in [k]\bigg]dt\\
&= \int_{t\in\mb{R}}f(t)\mb{P}_b\bigg[\sum_{i=1}^\ell b_iX_i\in J-t\bigg|\#\{b_i=a_j\}=s_j\forall j \in [k]\bigg]dt\\
&\le \mc{L}\bigg(\sum_{i=1}^{\ell}b_i X_i, |J|\bigg)\int_{t\in \mb{R}}|f(t)|dt \le \mc{L}\bigg(\sum_{i=1}^{\ell}b_i X_i, |J|\bigg),
\end{align*}
where $(b_1,\dots,b_\ell)$ is uniformly chosen from vectors which have $s_j$ coordinates equal to $a_j$ for all $j \in [k]$, and we have used that $\snorm{f}_{1} = 1$. 
The required estimate now follows immediately from \cref{lem:mslkr} applied with $r = (K_2-K_1)N$, which is possible due to the admissibility of $\mc{A}$. 
\end{proof}

\subsection{Preprocessing on real-valued multislices}\label{sub:preprocessing}
As in \cite{Tik20}, we first prove a version of \cref{thm:inversion-of-randomness} in which $L$ is allowed to depend on $M$. 
\begin{proposition}\label{prop:rough-Linfty}
Fix a discrete distribution $\xi$. For $0 < \delta < 1/4$, $K_3 > K_2 > K_1 > 1$, $\epsilon\ll\min(\vec{p})$, and a given parameter $M\ge 1$, there is $\gamma_{\ref{prop:rough-Linfty}} = \gamma_{\ref{prop:rough-Linfty}}(\xi,\epsilon,\delta,K_1,K_2,K_3)\in(0,\epsilon)$ independent of $M$ and there are $L_{\ref{prop:rough-Linfty}} = L_{\ref{prop:rough-Linfty}}(\xi,\epsilon,\delta,K_1,K_2,K_3,M) > 0$ and $n_{\ref{prop:rough-Linfty}} = n_{\ref{prop:rough-Linfty}}(\xi,\epsilon,\delta,K_1,K_2,K_3,M)\ge 1$ such that the following holds. 

Let $n\ge n_{\ref{prop:rough-Linfty}}$, $1\le N\le\exp((H(\vec{p})-\epsilon)n)$, and $\mc{A}$ be $(N,n,K_1,K_2,K_3,\delta)$-admissible. Let $f$ be a nonnegative function in $L^1(\mb{R})$ with $\snorm{f}_1 = 1$ such that $\log_2{f}$ is $1$-Lipschitz. Then, for all $\ell\in[(1-\gamma_{\ref{prop:rough-Linfty}})n,n]$ and $\vec{s}\in \mb{Z}_{\ge 0}^{k}$ with $\snorm{\vec{s}}_{1} = \ell$ and $\snorm{\vec{s}-\vec{p}\ell}_\infty\le\gamma_{\ref{prop:rough-Linfty}}\ell$, we have
\[\mb{P}\bigg[\snorm{f_{\mc{A},\vec{s},\ell}}_\infty\ge L_{\ref{prop:rough-Linfty}}(N\sqrt{n})^{-1}\bigg]\le\exp(-Mn).\]
\end{proposition}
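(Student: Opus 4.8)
The plan is to follow the strategy of \cite[Section~4]{Tik20}, adapted to the multislice setting, but to exploit that here we are content with a constant $L_{\ref{prop:rough-Linfty}}$ that may depend on $M$ (and on $\xi,\epsilon,\delta,K_1,K_2,K_3$). The key observation is that $\snorm{f_{\mc{A},\vec{s},\ell}}_\infty$ is controlled, up to a constant depending only on the Lipschitz constant of $\log_2 f$, by the maximum of $f_{\mc{A},\vec{s},\ell}$ over a net of $O(nN)$-many unit intervals covering the support, which in turn (since $\log_2 f$ is $1$-Lipschitz, so $f$ varies by at most a bounded factor on any interval of length $O(1)$) is comparable to $\max_J N^{-1}\int_J f_{\mc{A},\vec{s},\ell}$ where $J$ ranges over the length-$N$ intervals in this net. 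Thus it suffices to show that, with probability at least $1 - \exp(-Mn)$, every such integral is at most $L_{\ref{prop:rough-Linfty}}(N\sqrt n)^{-1}\cdot N$.

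First I would use the admissibility hypothesis to locate, inside the coordinates $\{1,\dots,\ell\}$, a linear-sized block of indices $i$ on which the intervals $A_i$ are genuine integer intervals of length at least $2N+1$ and pairwise well-separated in the sense required by \cref{lem:mslkr} (this is exactly what (P1)/(P2) or (Q1)/(Q2), together with the $i>2\delta n$ condition and $\ell\ge(1-\gamma_{\ref{prop:rough-Linfty}})n$ with $\gamma_{\ref{prop:rough-Linfty}}$ small, provide). Conditioning on all the $X_i$ outside a chosen such block, and on the multiset of atom-values that $b$ assigns inside the block, \cref{lem:slice-levy-kolmogorov-rogozin} (applied with the $Z$ there being the revealed $X_i$'s in the block, and using $\snorm{\vec s - \vec p\ell}_\infty\le\gamma_{\ref{prop:rough-Linfty}}\ell$ with $\gamma_{\ref{prop:rough-Linfty}}$ small to guarantee some coordinate $s_{j_0}\in[\lambda\ell,(1-\lambda)\ell]$) bounds the relevant L\'evy concentration function by $O(1/\sqrt n)$, hence bounds the expectation over $X$ of each $\int_J f_{\mc{A},\vec s,\ell}$ by $O(\max(|J|,N)/(N\sqrt n)) = O(1/\sqrt n)$. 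So in expectation over $X$ each integral is small.

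The step from "small in expectation" to "small with probability $1-\exp(-Mn)$ simultaneously for all $J$" is where the real work lies, and is the main obstacle. Following Tikhomirov, the idea is a tensorization/large-deviation argument: write the bad event as saying that for some $J$, $\int_J f_{\mc{A},\vec s,\ell}(t)\,dt$ is at least $\exp(Cn)$ times its $X$-average (for $L_{\ref{prop:rough-Linfty}}$ large depending on $M$ via $C$). Because $f_{\mc{A},\vec s,\ell}(t)$ is an average over $b$ of a product of independent (after conditioning on the partition of coordinates into atom-classes, which concentrates) contributions, one can run the argument of \cite[Lemma~4.6 and Proposition~4.7 or the analogous statements]{Tik20}: decompose the $\ell$ coordinates into $\Theta(1)$-many linear-sized groups, on each of which the conditional L\'evy bound gives a factor-$e^{-\Omega(1)}$ contraction, and iterate; the number of groups, chosen depending on $M$, makes the product beat $e^{-Mn}$, while the union bound over the $O(nN) = \exp(O(n))$ choices of $J$ and over the $\exp(o(n))$ admissible partitions $\vec s$ near $\vec p\ell$ is absorbed since $N\le\exp((H(\vec p)-\epsilon)n)$ and we may take $L_{\ref{prop:rough-Linfty}}$ (equivalently $C$) as large as needed. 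The non-independence of the multislice coordinates is handled exactly as in the passage from \cite[Section~3]{JSS20discrete1} to the present setting: reveal the coordinate-to-atom assignment, note it is a uniformly random word with prescribed letter-frequencies, and use that conditioning on a linear-sized prefix still leaves a multislice with frequencies close to $\vec p$, so the same anticoncentration inputs apply at each stage of the iteration; the arbitrariness of the real atoms $a_1,\dots,a_k$ enters only through the factor $\min_{i<j}|a_i-a_j|$ in \cref{lem:mslkr}, which is a fixed positive constant and thus harmless. Assembling these pieces yields the claimed bound.
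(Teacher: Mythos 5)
Your opening reduction is flawed, and it is a genuine gap rather than a presentational slip. You claim that $\snorm{f_{\mc{A},\vec{s},\ell}}_\infty$ is controlled, up to a constant, by $\max_J N^{-1}\int_J f_{\mc{A},\vec{s},\ell}$ over length-$N$ intervals $J$, and that it therefore suffices to show each such integral is at most $L_{\ref{prop:rough-Linfty}}/\sqrt{n}$. But the comparison goes the wrong way by a factor of $N$: a unit-scale spike of height $h$ contributes only $h/N$ to $N^{-1}\int_J$, so $\snorm{f_{\mc{A},\vec{s},\ell}}_\infty$ can be as large as $N\cdot\max_J N^{-1}\int_J f_{\mc{A},\vec{s},\ell}$ (and $N$ is exponentially large). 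Indeed, \cref{lem:slice-levy-kolmogorov-rogozin} already gives $\int_J f_{\mc{A},\vec{s},\ell}\le C_{\ref{lem:slice-levy-kolmogorov-rogozin}}/\sqrt{n}$ \emph{deterministically} for every length-$N$ interval $J$ and every $(X_1,\dots,X_n)\in\mc{A}$; if your reduction were correct the proposition would be trivial with probability $1$, which it clearly is not. The whole content of \cref{prop:rough-Linfty} is that the pointwise value of $f_{\mc{A},\vec{s},\ell}$ is, with superexponentially high probability over $X$, a factor of roughly $N$ smaller than the trivial $1/\sqrt{n}$ bound; this cannot be seen through the integrals $\int_J$ alone.

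The paper's actual mechanism, which is absent from your sketch, is the \emph{averaging sequence / step record} decomposition of \cref{def:record-averaging,def:drop-robust} together with the entropy lemma, \cref{lem:entropy}. Given a point $t$ where $f_{\mc{A},\vec{s},\ell}(t)$ is large, one traces the value back through the recursion $f_{\mc{A},W_i,i}(t_i)=\sum_j\ol{W}_i(j)f_{\mc{A},W_i-e_j,i-1}(t_i+a_jX_i)$ one coordinate at a time, always following a largest term. The ratios $h_i/h_{i-1}$ at steps which \emph{are} $R$-drops are controlled in terms of the weights $\ol{W}_i(w_i)$, and the telescoping product $\prod_i\ol{W}_i(w_i)=\binom{\ell}{\vec s}^{-1}\approx e^{-H(\vec p)\ell}$ is precisely what pairs against the hypothesis $N\le e^{(H(\vec p)-\epsilon)n}$ to leave an $e^{\Omega(\epsilon n)}$ surplus; this surplus forces $\gamma n$ many $\lambda$-robust non-$R$-drop steps. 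Then \cref{lem:slice-levy-kolmogorov-rogozin} is applied conditionally at each such step to show it is a non-drop with probability $O(1/R)$, and a union bound over the $k^n$ step records, $2^n$ choices of the robust index set, and the $O(N\sqrt n)$ integer starting points of the witnessing sequence (this last count using the $1$-Lipschitzness of $\log_2 f$) finishes the argument once $L$, hence $R=\gamma L$, is taken large in terms of $M$. Your sketch gestures at a ``tensorization/iteration'' in the style of the $L^\infty$-decrement (\cref{prop:multislice-Linfty-decrement}), but that is the \emph{second} stage of the smoothing scheme, not the mechanism behind the rough estimate; it presupposes an a priori $L^\infty$ bound of the type you are trying to prove. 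You also observe, correctly, that the step record is a uniformly random word on $[k]$ with prescribed letter frequencies, but you never extract the entropy factor $\binom{\ell}{\vec s}^{-1}$ from it or explain how the constraint on $N$ enters quantitatively; in the paper's proof the role of $N\le e^{(H(\vec p)-\epsilon)n}$ is not to tame the union bound over intervals (any $N\le k^n$ would do for that) but to create the margin that the entropy lemma converts into robust non-drop steps. Without \cref{lem:entropy}, or something playing its role, the argument does not close.
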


For a simpler version of this argument, we refer the reader to the proof of \cite[Proposition~3.6]{JSS20discrete1}. \cref{prop:rough-Linfty} should be seen as an analogue of \cite[Lemma~4.6]{Tik20} for the multislice. As mentioned earlier, compared to \cite{Tik20}, our situation is much more delicate since we are working with a vector with non-independent coordinates and need to extract a term corresponding to the entropy of the multislice. Such complications are already encountered when working with a Boolean slice, as in \cite[Proposition~3.6]{JSS20discrete1}. Working on real multislices presents additional difficulties (along with significant notational complications), owing to the fact that we are working on $L^1(\mb{R})$; this extension is handled by using the log-Lipschitz condition on $f$. We note that the corresponding statements in \cite{JSS20discrete1, Tik20} do not need to use any log-Lipschitz assumption at this stage of the argument since they are proved for $\ell^{1}(\mb{Z})$. We also note that, while the constant $1$ in $1$-log-Lipschitz is arbitrary, some condition of this nature is necessary to rule out $f$ being very close to a Dirac mass (\cite{Tik20}).

We first note the trivial recursive relation
\[f_{\mc{A},\vec{s},\ell}(t) = \sum_{i=1}^k\frac{s_i}{\ell}f_{\mc{A},\vec{s}-e_i,\ell-1}(t+a_iX_\ell)\]
for all $1\le\ell\le n$ and $\vec{s}\in\mb{Z}_{\ge 0}^k$ with $\snorm{\vec{s}}_{1} = \ell$. If any coordinate of $\vec{s}$ is zero, note the corresponding term (which would be undefined) has a coefficient of $0$, and drops out. Note also that, by definition, $f_{\mc{A},\vec{0},0} = f$.

\begin{definition}[Step record and averaging sequence]
\label{def:record-averaging}
Fix $f, \mc{A}, \vec{s}, \ell$, a point $t \in \mb{R}$, and a choice of $X = (X_1,\dots,X_n)$. For such a choice, we define the \emph{averaging sequence} $(t_i)_{i=0}^{\ell}$ and \emph{step record} $(w_i)_{i=1}^{\ell}$ as follows:
\begin{itemize}
    \item $t_\ell := t$,
    \item Since \[h_\ell := f_{\mc{A},\vec{s},\ell}(t_\ell) = \sum_{j=1}^k\frac{s_j}{\ell}f_{\mc{A},\vec{s}-e_j,\ell-1}(t_\ell+a_jX_\ell),\]
    at least one of the $k$ terms $f_{\mc{A},\vec{s}-e_j,\ell-1}(t_\ell+a_jX_\ell)$ has a positive coefficient and is at least $h_\ell$. If it is index $j$, set $w_\ell = j$.
    \item Set $t_{\ell-1} := t_\ell + a_{w_\ell}X_\ell$, $h_{\ell-1} := f_{\mc{A}, \vec{s}-e_{w_\ell}, \ell-1}(t_{\ell-1})$, and repeat with $t_{\ell-1},\vec{s}-e_{w_\ell},\ell-1$.
\end{itemize}
It will be convenient to write
\begin{itemize}
    \item $W_i(j):= \#\{u\in[i]: w_u = j\}$ and $\ol{W}_i(j) := W_i(j)/i$ for all $i \in [\ell]$ and $j\in[k]$. We will view $W_i = (W_i(1),\dots, W_i(k))$ as a vector in $\mb{Z}^{k}$.
\end{itemize}
\end{definition}

We note some straightforward consequences of these definitions. 
\begin{itemize}
\item $W_\ell = \vec{s}$.
\item $W_{i-1} = W_{i}-e_{w_i}$ for $1\le i\le\ell$, where we assume $W_0 = \vec{0}$.
\item $\snorm{W_{i}}_{1} = i$.
\item $t_{i-1} = t_i + a_{w_i} X_i$ for all $i \in [\ell]$.
\item $f_{\mc{A}, W_i, i}(t_i) = \sum_{j=1}^k\ol{W_i}(j)f_{\mc{A},W_i-e_j,i-1}(t_i+a_jX_i)$.
\item $h_i = f_{\mc{A}, W_i, i}(t_i)$.
\item $f(t_0) = h_0 \ge h_1 \ge \cdots \ge h_{\ell} = f_{\mc{A},\vec{s},\ell}(t)$.
\end{itemize}
\begin{definition}[Drops and robust steps]
\label{def:drop-robust}
With notation as above, given $i \in [\ell]$:
\begin{itemize}
\item For $\lambda \in (0,1)$, we say that step $i$ is \emph{$\lambda$-robust} if 
\begin{align*}
    \ol{W}_i(w_i) \in (\lambda, 1-\lambda)
\end{align*}
\item For $R > 0$, we say that there is an \emph{$R$-drop} at step $i$ if
\begin{align*}
    f_{\mc{A}, W_i - e_j, i-1}(t_{i-1} + z X_i) \le \frac{R}{N\sqrt{n}}
\end{align*}
for all $j\in[k]$ such that $W_i(j) > 0$ and for all $z\in\on{supp}(\xi-\xi')\setminus\{0\}$.
\end{itemize}
\end{definition}

Next we show that if $\snorm{f_{\mc{A},\vec{s},\ell}}_\infty$ is large in an appropriate sense, then there is a step record and averaging sequence with linearly many robust steps which do not participate in an $R$-drop.

\begin{lemma}\label{lem:entropy}
Let $\xi, \mc{A}, f, N, \epsilon$ be as in \cref{prop:rough-Linfty}, and let $L \ge 1$. Then, there exist $\lambda_{\ref{lem:entropy}} = \lambda_{\ref{lem:entropy}}(\xi,\epsilon) \in (0,1/3)$, $\gamma_{\ref{lem:entropy}} = \gamma_{\ref{lem:entropy}}(\xi,\epsilon) \in (0,1)$, and $n_{\ref{lem:entropy}} = n_{\ref{lem:entropy}}(\xi,\epsilon)$ for which the following holds.

Let $n \ge n_{\ref{lem:entropy}}$, $R = \gamma_{\ref{lem:entropy}}L$, let  $\ell\in[(1-\gamma_{\ref{lem:entropy}})n,n]$ and $\vec{s}\in\mb{Z}_{\ge 0}^k$ satisfy $\snorm{\vec{s}}_1 = \ell$ and $\snorm{\vec{s}-\vec{p}\ell}_\infty\le\gamma_{\ref{lem:entropy}}\ell$. Then, for $(X_1,\dots,X_n) \in \mc{A}$,
\[\snorm{f_{\mc{A},\vec{s},\ell}}_\infty\ge L(N\sqrt{n})^{-1}\]
implies that there exists some $t\in\mb{R}$ with $f_{\mc{A},\vec{s},\ell}(t) \ge L(N\sqrt{n})^{-1}$ so that its averaging sequence $(t_i)_{i=0}^{\ell}$ and step record $(w_i)_{i=1}^{\ell}$ satisfy
\[\#\{i\in[\ell]: \emph{step }i\emph{ is }\lambda_{\ref{lem:entropy}}\emph{-robust and is not an }R\emph{-drop}\}\ge\gamma_{\ref{lem:entropy}} n.\]
\end{lemma}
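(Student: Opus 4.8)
The plan is to track the evolution of $h_i = f_{\mc{A},W_i,i}(t_i)$ along the averaging sequence and account for how much ``mass'' can be lost at each step, exploiting the recursive relation $h_{i-1} \ge h_i$ together with the $L^1$-normalization $\snorm{f}_1 = 1$. The starting point: if $\snorm{f_{\mc{A},\vec{s},\ell}}_\infty \ge L(N\sqrt n)^{-1}$, pick $t$ attaining this (or nearly so) and run \cref{def:record-averaging} to get $(t_i), (w_i)$. Since the chain $h_0 \ge h_1 \ge \cdots \ge h_\ell \ge L(N\sqrt n)^{-1}$ never drops below $L(N\sqrt n)^{-1}$, and since $f = f_{\mc{A},\vec{0},0}$ integrates to $1$, a global $L^1$ budget argument (of the type in \cite[Lemma~4.5]{Tik20} or \cite[Lemma~3.5]{JSS20discrete1}) limits how many steps can be ``$R$-drops'' in the sense of \cref{def:drop-robust}: each $R$-drop corresponds to the one-step averaged function $f_{\mc{A},W_i-e_j,i-1}$ being uniformly small (below $R/(N\sqrt n)$) on all of $\on{supp}(\xi-\xi')\setminus\{0\}$ translated appropriately, and intuitively each such event forces a definite multiplicative loss in an appropriate potential, so there can be at most $O((\log(1/R)) \cdot n / \log(1/\cdots))$ — i.e., a controlled number — of them; choosing $R = \gamma_{\ref{lem:entropy}}L$ with $\gamma_{\ref{lem:entropy}}$ small makes the count of $R$-drops at most, say, $\gamma_{\ref{lem:entropy}} n /100$. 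Simultaneously, the number of non-$\lambda$-robust steps — those where $\ol{W}_i(w_i) \notin (\lambda, 1-\lambda)$ — is controlled by the entropy/concentration hypothesis $\snorm{\vec{s}-\vec{p}\ell}_\infty \le \gamma_{\ref{lem:entropy}}\ell$: since $W_\ell = \vec{s}$ and $\snorm{W_i}_1 = i$, the step record $(w_i)$ is essentially a sequence realizing these nearly-proportional counts, so for all but $o(n)$ many indices $i$ the coordinate being incremented has $\ol{W}_i(w_i)$ close to some $p_j \in (0,1)$, hence is $\lambda$-robust for $\lambda = \lambda_{\ref{lem:entropy}}(\xi,\epsilon)$ chosen below $\min(\vec p)/2$.

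Concretely, I would carry out the steps in this order. \textbf{Step 1:} Fix $t$ with $f_{\mc{A},\vec s,\ell}(t)\ge L(N\sqrt n)^{-1}$ and form the averaging sequence and step record; record the monotone chain $h_0\ge\cdots\ge h_\ell$ and the consequence $f(t_0)=h_0\ge L(N\sqrt n)^{-1}$. \textbf{Step 2:} Bound the number of non-$\lambda$-robust steps. For each $j\in[k]$, the indices $i$ with $w_i = j$ form a set of size $s_j = \Theta(p_j \ell)$; a step $i$ with $w_i=j$ fails to be $\lambda$-robust only if $W_i(j)/i\le\lambda$ or $\ge 1-\lambda$, and since the final density $W_\ell(j)/\ell = s_j/\ell$ lies safely inside $[\min(\vec p)/2,\, 1-\min(\vec p)/2]$ once $\gamma_{\ref{lem:entropy}}$ is small, only the prefix of ``early'' indices (before the running density stabilizes) plus a tail near $i=\ell$ can fail; a counting/rearrangement argument bounds the total number of such bad indices across all $j$ by $C(\xi)\gamma_{\ref{lem:entropy}}n + o(n)$ — here I'd choose $\lambda_{\ref{lem:entropy}}$ as a fixed function of $\vec p$ (say $\min(\vec p)/10$) and absorb the bound into $\gamma_{\ref{lem:entropy}}n/100$. \textbf{Step 3:} Bound the number of $R$-drop steps. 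This is where I'd invoke the $L^1$-mass-loss lemma: define a potential such as $\max_{t'}$ over a window of $f_{\mc{A},W_i,i}(t')$, or directly argue that each $R$-drop at step $i$ forces $\int_{\text{relevant window}} f_{\mc{A},W_{i-1},i-1} \le \int_{\text{window}} f_{\mc{A},W_i,i} - (\text{definite amount})\cdot\frac{R}{N\sqrt n}\cdot N$; since the total integral is $1$ and each surviving $h$-value is $\ge L(N\sqrt n)^{-1}$, telescoping caps the number of $R$-drops at $O(1/(R\cdot(\text{something})))$ windows — which, with $R=\gamma_{\ref{lem:entropy}}L$ and $L\ge 1$, is at most $\gamma_{\ref{lem:entropy}}n/100$ after adjusting constants. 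The precise packaging mirrors \cite[Lemma~4.5]{Tik20}. \textbf{Step 4:} Subtract: at least $\ell - \gamma_{\ref{lem:entropy}}n/100 - \gamma_{\ref{lem:entropy}}n/100 \ge (1-\gamma_{\ref{lem:entropy}})n - \gamma_{\ref{lem:entropy}}n/50 \ge \gamma_{\ref{lem:entropy}}n$ steps are simultaneously $\lambda_{\ref{lem:entropy}}$-robust and not $R$-drops, provided $\gamma_{\ref{lem:entropy}}$ is chosen small enough relative to $1$ (a clean choice like $\gamma_{\ref{lem:entropy}}<1/4$ works once the constants in Steps 2–3 are pinned down).

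The main obstacle is \textbf{Step 3}: making the $L^1$-mass-loss argument quantitative in the real-valued ($L^1(\mb R)$) setting rather than the $\ell^1(\mb Z)$ setting of \cite{Tik20,JSS20discrete1}. Over $\mb Z$ one can say an $R$-drop kills a unit of counting measure cleanly; over $\mb R$ one must integrate against a window of width $\sim N$ and use the log-Lipschitz control on $f$ (hence on each $f_{\mc{A},\vec s,\ell}$, which inherits a log-Lipschitz bound by averaging) to transfer pointwise smallness on the finite set $\on{supp}(\xi-\xi')\setminus\{0\}$ to smallness of an integral — and to ensure the ``windows'' at different steps don't overlap too much so that telescoping is valid. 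I expect this to require care with the admissibility parameters ($K_1,K_2,K_3,\delta$) to guarantee the shifts $zX_i$ land in a controlled range, and a careful choice of window width (a constant multiple of $N$) so that the log-Lipschitz constant, which is $\le 1$, only distorts integrals by a bounded factor. The entropy bound in Step 2 is comparatively routine (it's pure combinatorics on the multiset of increments), and Step 1 and Step 4 are bookkeeping.
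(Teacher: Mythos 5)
Your count-and-subtract architecture does not work for this lemma, and both of its counting claims are false, not merely unproven. First, Step 2: the number of non-$\lambda$-robust steps is \emph{not} $O(\gamma_{\ref{lem:entropy}}n)+o(n)$. The step record $(w_i)$ is produced greedily by whichever term of the recursion is large, so its order is adversarial subject only to the final counts $W_\ell=\vec{s}$; if, say, all indices with $w_i=1$ occur first, then $\ol{W}_i(w_i)=1$ for the first $s_1\approx p_1\ell=\Theta(n)$ steps, all of which fail robustness. There is no ``stabilization after a short prefix.'' The paper never bounds this \emph{count}; it bounds the \emph{multiplicative} contribution $\prod_{i\notin I}\ol{W}_i(w_i)^{-1}\le(1-\lambda)^{-\ell}(e/\lambda)^{2k\lambda\ell}$, splitting the non-robust indices into those with $\ol{W}_i(w_i)\ge 1-\lambda$ and those with $\ol{W}_i(w_i)\le\lambda$ (for the latter, the $y$-th such index with $w_i=j$ has $W_i(j)\ge y$, so the factors are at most $\ell/y$ and there are at most $\lambda\ell$ of them per symbol). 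Second, Step 3: there is no $L^1$ mass loss to exploit, since $\snorm{f_{\mc{A},W_i,i}}_1=\snorm{f}_1=1$ for every $i$ (each $f_{\mc{A},W_i,i}$ is a convex combination of translates of $f$); what decays is the pointwise value $h_i$ at the tracked point. Moreover the conclusion you want from Step 3 is false: when $N$ is close to $\exp((H(\vec{p})-\epsilon)n)$, all but a small linear fraction of the robust steps can be $R$-drops, so no bound of the form $\#\{R\text{-drops}\}\le\gamma_{\ref{lem:entropy}}n/100$ can hold. Consequently the subtraction in Step 4 can produce a vacuous (non-positive) lower bound.

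The idea your proposal is missing is the exact entropy identity $\prod_{i\in[\ell]}\ol{W}_i(w_i)=\binom{\ell}{\vec{s}}^{-1}$, which is why the lemma is an entropy statement and why the hypothesis $N\le\exp((H(\vec{p})-\epsilon)n)$ enters. The paper's argument is purely multiplicative: at a robust $R$-drop, the recursion gives $h_i/h_{i-1}\le\ol{W}_i(w_i)+(1-\ol{W}_i(w_i))\gamma\le\ol{W}_i(w_i)(1+\gamma/\lambda)$, so telescoping $h_0\le 1$ down to $h_\ell\ge L(N\sqrt{n})^{-1}\ge n^{-1/2}\exp((\epsilon-H(\vec{p}))n)$ and substituting $\binom{\ell}{\vec{s}}^{-1}\approx\exp(-H(\vec{p})\ell)$ shows that the total drop available from robust $R$-drops is essentially exactly $\exp(-H(\vec{p})n)$; the surplus $\exp(\epsilon n/2)$ must be absorbed by the robust non-drop steps, each contributing at most a factor $\lambda^{-1}$, which forces $|J|\ge\gamma_{\ref{lem:entropy}}n$. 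If you want to salvage your write-up, replace Steps 2--4 with this single telescoping inequality; the $L^1(\mb{R})$ versus $\ell^1(\mb{Z})$ issue you flag as the main obstacle is in fact immaterial here (only $h_0\le\snorm{f}_\infty\le\snorm{f}_1=1$, via the log-Lipschitz normalization, is used at this stage), and the genuinely delicate real-line issues are deferred to \cref{prop:rough-Linfty} and \cref{prop:multislice-Linfty-decrement}.
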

\begin{proof}
Consider $(X_1,\dots,X_n) \in \mc{A}$ satisfying $\snorm{f_{\mc{A}, \vec{s}, \ell}}_{\infty} \ge L(N\sqrt{n})^{-1}$. Then, there is some $t \in \mb{R}$ such that $f_{\mc{A},s,\ell}(t) \ge L(N\sqrt{n})^{-1}$. We will show that the conclusion of the lemma is satisfied for this $t$, for suitable choice of $\gamma_{\ref{lem:entropy}}, \lambda_{\ref{lem:entropy}}$. Below, we will make extensive use of the notation and relations in \cref{def:drop-robust,def:record-averaging}. Let $(t_i)_{i=0}^{\ell}$ and $(w_i)_{i=1}^{\ell}$ denote, respectively, the averaging sequence and step record of $t$.
Note that
\[L(N\sqrt{n})^{-1}\le f_{\mc{A},\vec{s},\ell}(t) = h_0\prod_{i=1}^\ell\frac{h_i}{h_{i-1}} \le h_{\ell-1} \le \cdots \le h_{0}.\]

We begin by controlling the ratios $h_i/h_{i-1}$ at steps $i$ which are $R$-drops. Hence, suppose that step $i$ is an $R$-drop. If $w_i = u$, then $W_i = W_{i-1}+e_u$ and $t_i=t_{i-1}-a_uX_i$. Hence
\begin{align*}
\frac{h_i}{h_{i-1}} &= \sum_{j=1}^k\ol{W}_i(j)\frac{f_{\mc{A},W_i-e_j,i-1}(t_i+a_jX_i)}{f_{\mc{A},W_{i-1},i-1}(t_{i-1})}\\
&= \ol{W}_i(u) + \sum_{j\neq u}\ol{W}_i(j)\frac{f_{\mc{A},W_i-e_j,i-1}(t_{i-1}+(a_j-a_u)X_i)}{h_{i-1}}\\
&\le\ol{W}_i(u) + \sum_{j\neq u}\ol{W}_i(j)\frac{R(N\sqrt{n})^{-1}}{L(N\sqrt{n})^{-1}}\\
&= \ol{W}_i(u)+(1-\ol{W}_i(u))\gamma_{\ref{lem:entropy}}.
\end{align*}
The inequality uses is the definition of $R$-drops (this is applicable since $a_j-a_u\in\on{supp}(\xi-\xi')\setminus\{0\}$) along with $h_i\ge L(N\sqrt{n})^{-1}$. Note that the condition $W_i(j) > 0$ in the definition of $R$-drops is not satisfied, then the $j$th term already drops out in the first line. Thus, we see that if step $i$ is an $R$-drop, then
\begin{align}
\label{eq:ratio-bound}
    \frac{h_i}{h_{i-1}} \le\ol{W}_i(w_i)+(1-\ol{W}_i(w_i))\gamma_{\ref{lem:entropy}}.
\end{align}

Note that if step $i$ is $\lambda_{\ref{lem:entropy}}$-robust, the right-hand side is at least $\lambda_{\ref{lem:entropy}}$. Therefore, for any step $i$ which is $\lambda_{\ref{lem:entropy}}$-robust, we have
\begin{align}
\label{eq:robust-step-bound}
    \lambda_{\ref{lem:entropy}}
    \le\ol{W}_i(w_i)+(1-\ol{W}_i(w_i))\gamma_{\ref{lem:entropy}}\le\ol{W}_i(w_i)\bigg(1+\frac{\gamma_{\ref{lem:entropy}}}{\lambda_{\ref{lem:entropy}}}\bigg),
\end{align}
where the final inequality uses $(1-\ol{W}_i(w_i))/\ol{W}_i(w_i)\le 1/\lambda_{\ref{lem:entropy}}$ at any $\lambda_{\ref{lem:entropy}}$-robust step $i$.

Now, let $I \subseteq [\ell]$ denote the steps $i$ which are $\lambda_{\ref{lem:entropy}}$-robust, and let $J \subseteq I$ denote the steps $i$ which are \emph{not} $R$-drops (so that $I \setminus J$ is the set of $\lambda_{\ref{lem:entropy}}$-robust $R$-drops). Our goal is to provide a lower bound on $|J|$.

Since $h_0\le\snorm{f}_\infty\le\snorm{f}_1 = 1$ (this uses the $1$-Lipschitz condition on $\log_{2}f$), we have
\begin{align}
\label{eq:telescoping-bound}
    L(N\sqrt{n})^{-1} &\le \prod_{i \in I \setminus J}\frac{h_i}{h_{i-1}} \le \prod_{i \in I\setminus J}(\ol{W}_i(w_i)+(1-\ol{W}_i(w_i))\gamma_{\ref{lem:entropy}}) \nonumber \\
    &= \frac{\prod_{i \in I}(\ol{W}_i(w_i)+(1-\ol{W}_i(w_i))\gamma_{\ref{lem:entropy}})}{\prod_{i \in J}(\ol{W}_i(w_i)+(1-\ol{W}_i(w_i))\gamma_{\ref{lem:entropy}})} \nonumber \\
    &\le \frac{(1+\gamma_{\ref{lem:entropy}}/\lambda_{\ref{lem:entropy}})^{|I|}\prod_{i\in I}\ol{W}_i(w_i)}{\lambda_{\ref{lem:entropy}}^{|J|}} \nonumber \\
    &= (1+\gamma_{\ref{lem:entropy}}/\lambda_{\ref{lem:entropy}})^{|I|}\lambda_{\ref{lem:entropy}}^{-|J|}\prod_{i\in[\ell]}\ol{W}_i(w_i)\prod_{i\in[\ell]\setminus I}\ol{W}_i(w_i)^{-1} \nonumber \\
    & = (1+\gamma_{\ref{lem:entropy}}/\lambda_{\ref{lem:entropy}})^{|I|} \cdot \lambda_{\ref{lem:entropy}}^{-|J|} \cdot \binom{\ell}{\vec{s}}^{-1} \cdot \prod_{i\in[\ell]\setminus I}\ol{W}_i(w_i)^{-1};
\end{align}
here, the first line uses $h_i/h_{i-1} \le 1$ and \cref{eq:ratio-bound}, the third line uses \cref{eq:robust-step-bound}, and the last line uses the identity
\[\prod_{i\in[\ell]}\ol{W}_i(w_i) = \binom{\ell}{\vec{s}}^{-1} := \binom{\ell}{s_1,\dots,s_k}^{-1}.\]
This follows since both sides are equal to the probability that a uniformly random sample from $[k]^{\ell}$, conditioned on having $s_j$ copies of $j$ for each $j\in[k]$, returns $(w_1,\dots,w_\ell)$.

Note that the first and the third terms in the final product in \cref{eq:telescoping-bound} are easy to suitably control (by taking $\gamma_{\ref{lem:entropy}}$ and $\lambda_{\ref{lem:entropy}}$ to be sufficiently small). As we will see next, these parameters also allow us to make the last term at most $\exp(c\epsilon n)$ for any constant $c > 0$. 

Let $K \subseteq [\ell] \setminus I$ denote those indices $i$ such that $\ol{W}_i(w_i) \ge 1-\lambda_{\ref{lem:entropy}}$. Then, 
\begin{align}
\label{eq:product-K}
\prod_{i\in K}\ol{W}_i(w_i)^{-1} \le (1-\lambda_{\ref{lem:entropy}})^{-|K|}.
\end{align}
It remains to bound 
\[\prod_{i\in [\ell]\setminus (I\cup K)} \ol{W}_i(w_i)^{-1}.\]
Note that for every $i \in [\ell]\setminus (I\cup K)$, we have $\ol{W}_i(w_i) \le \lambda_{\ref{lem:entropy}}$. Let $J_j$ for $j\in[k]$ be the set of $i\in[\ell]\setminus(I\cup K)$ with $w_i=j$.

The following is the key point: let $i_1,\dots, i_{u_j} \in J_j$ be all elements of $J_j$ in order. Then, for all $y \in [u_j]$, we have
\begin{align*}
    y \le W_{i_y}(j) \le \lambda_{\ref{lem:entropy}}\ell. 
\end{align*}
Hence, 
\begin{align*}
    u_j \le \lambda_{\ref{lem:entropy}}\ell \quad \text{ and } \quad \ol{W}_{i_y}(w_{i_y})^{-1} \le i_y/y\le \ell/y.
\end{align*}
We derive
\begin{align}
    \label{eq:K-complement-bound}
 \prod_{i\in [\ell]\setminus (I\cup K)}\ol{W}_i(w_i)^{-1} = \prod_{j=1}^k\prod_{i\in J_j}\ol{W}_i(w_i)^{-1} \le \bigg(\prod_{u=1}^{\lceil\lambda_{\ref{lem:entropy}}\ell\rceil}\frac{\ell}{u}\bigg)^k \le \bigg(\frac{e}{\lambda_{\ref{lem:entropy}}}\bigg)^{2k\lambda_{\ref{lem:entropy}}\ell}.   
\end{align}
Substituting \cref{eq:product-K} and \cref{eq:K-complement-bound} in \cref{eq:telescoping-bound}, we have
\begin{align}
\label{eq:J-bound}
    Ln^{-1/2}\exp((\epsilon-H(\vec{p}))n) \le \lambda_{\ref{lem:entropy}}^{-|J|}\cdot \bigg(1+\frac{\gamma_{\ref{lem:entropy}}}{\lambda_{\ref{lem:entropy}}}\bigg)^{\ell}\cdot \binom{\ell}{\vec{s}}^{-1}\cdot (1-\lambda_{\ref{lem:entropy}})^{-\ell}\cdot \bigg(\frac{e}{\lambda_{\ref{lem:entropy}}}\bigg)^{2k\lambda_{\ref{lem:entropy}}\ell}.
\end{align}

We will first choose $\lambda_{\ref{lem:entropy}}$, and then choose some  $\gamma_{\ref{lem:entropy}} < \lambda_{\ref{lem:entropy}}^{2}$. Note that, enforcing the constraint $\gamma_{\ref{lem:entropy}} < \lambda_{\ref{lem:entropy}}^{2}$, we can choose $\lambda_{\ref{lem:entropy}}$ sufficiently small depending on $\epsilon$ and $\xi$ so that the second term, the fourth term, and the fifth term in the product in \cref{eq:J-bound} are each bounded above by $\exp(\epsilon n/10)$ and so that (using Stirling's approximation) the third term is bounded above by $\exp(\epsilon n/10-H(\vec{p})n)$. Hence, we can choose $\lambda_{\ref{lem:entropy}}$ depending on $\epsilon$ and $\xi$ such that
$$n^{-1/2}\exp(\epsilon n/2) \le \lambda_{\ref{lem:entropy}}^{-|J|}.$$
Now, for all $n$ sufficiently large depending on $\epsilon$, we can find $\gamma_{\ref{lem:entropy}}$ sufficiently small depending on $\epsilon, \lambda_{\ref{lem:entropy}}$ such that 
$|J| \ge \gamma_{\ref{lem:entropy}}n$. This completes the proof.
\end{proof}

We are now ready to prove \cref{prop:rough-Linfty}.
\begin{proof}[Proof of \cref{prop:rough-Linfty}]
We use \cref{lem:entropy} along with a union bound. For controlling individual events in the union, we will use the following. Consider a step record $(w_i)_{i=1}^{\ell}$.  We write $A_i=A_{i,0}\cup A_{i,1}$, where each of these is an integer interval of size at least $N$ (this is possible by the admissibility of $\mc{A}$). Now suppose step $i$ is $\lambda_{\ref{lem:entropy}}$-robust with respect to $(w_i)_{i=1}^{\ell}$. If $i > \delta_0 n$, then for any $t \in \mb{R}$, $j\in[k]$ and $z\in\on{supp}(\xi-\xi')\setminus\{0\}$, by \cref{lem:slice-levy-kolmogorov-rogozin}, we have 
\begin{align*}
\mb{E}[f_{\mc{A},W_i-e_j,i-1}(t+zX_i)|X_1,\ldots,X_{i-1}]
&= \frac{1}{|A_i|}\sum_{\tau\in t+zA_i}f_{\mc{A},W_i-e_j,i-1}(\tau)\\
&\le\max_{y\in\{0,1\}}\frac{1}{|A_{i,y}|}\sum_{\tau\in t+zA_{i,y}}f_{\mc{A},W_i-e_j,i-1}(\tau)\\
&\le\max_{y\in\{0,1\}}\frac{2^{|z|}}{|A_{i,y}|}\bigg|\int_{t+z\min A_{i,y}}^{t+z\max A_{i,y}}f_{\mc{A},W_i-e_j,i-1}(\tau)d\tau\bigg|\\
&\le\frac{2^{|z|+1}C_{\ref{lem:slice-levy-kolmogorov-rogozin}}(\lambda_{\ref{lem:entropy}}/2, \xi, \delta_0,K_1,K_2)\max(|z||A_{i,y}|,N)}{|A_{i,y}|N\sqrt{n}}\\
&\le\frac{4^{|z|+1}C_{\ref{lem:slice-levy-kolmogorov-rogozin}}(\lambda_{\ref{lem:entropy}}/2, \xi, \delta_0,K_1,K_2)}{N\sqrt{n}}.
\end{align*}
Here, we have used that $i-1 \ge \delta_0 n$, that $W_i - e_j$ has at least one coordinate in $[\lambda_{\ref{lem:entropy}}(i-1)/2, (1-\lambda_{\ref{lem:entropy}}/2)(i-1)]$ (since $W_i$ satisfies a similar property with coordinate $w_i$), and that each $A_{i,y}$ is length at least $N$. We also used that $\log_2 f$ is $1$-Lipschitz in the second inequality (where the absolute values are put just in case $z < 0$ and the limits of integration are in the wrong direction).

Now, consider $t \in \mb{R}$ with averaging sequence $(t_i)_{i=0}^{\ell}$ and step record $(w_i)_{i=1}^{\ell}$. Note that, given the `starting point' $t_0$ of the averaging sequence, the points $t_1,\dots, t_{i-1}$ are determined by $X_1,\dots,X_{i-1}$. In particular, the event that step $i$ is not an $R$-drop is determined by $t_0, X_1,\dots, X_i, w_1,\dots,w_i$.  
Therefore, by Markov's inequality, we see that for any $\lambda_{\ref{lem:entropy}}$-robust step $i$ with $i > \delta_0 n$, given the step record $(w_i)_{i=1}^{\ell}$ and the starting point $t_0$ of the averaging sequence $(t_i)_{i=0}^{\ell}$,
\begin{align}
\label{eq:R-drop-bound}
    \mb{P}[ \text{step } i \text{ is not an }R\text{-drop} | X_1,\dots, X_{i-1}] \le \frac{k^34^{2\snorm{\vec{a}}_\infty+1}C_{\ref{lem:slice-levy-kolmogorov-rogozin}}(\lambda_{\ref{lem:entropy}}/2, \xi, \delta_0 ,K_1,K_2)}{R}. 
\end{align}
This follows from a union bound over the at most $k^3$ possible conditions for an $R$-drop and the fact that all $z\in\on{supp}(\xi-\xi')\setminus\{0\}$ have magnitude at most $2\snorm{\vec{a}}_\infty$.

From here on, the proof closely follows the proofs of \cite[Proposition~4.5]{Tik20}. Fix parameters as given in the proposition statement. Let $\lambda_{\ref{lem:entropy}} = \lambda_{\ref{lem:entropy}}(\xi,\epsilon)$. 
We choose $\gamma_{\ref{prop:rough-Linfty}} = \gamma_{\ref{lem:entropy}}(\xi,\epsilon)$. Further, we set  $R' = \gamma_{\ref{lem:entropy}}L/2$, where $L \ge 1$ will be chosen later.  

Let $\mc{E}_L$ denote the event that $\snorm{f_{\mc{A},\vec{s},\ell}}_\infty\ge L(N\sqrt{n})^{-1}$. For $(X_1,\dots,X_n) \in \mc{E}_L$, by \cref{lem:entropy}, there exists $t \in \mb{R}$ with $f_{\mc{A},\vec{s},\ell}(t) \ge L(N\sqrt{n})^{-1}$ with averaging sequence $(t_i)_{i=0}^{\ell}$ and step record $(w_i)_{i=1}^{\ell}$ such that
\[\#\{i\in[\ell]: \text{step }i\text{ is }\lambda_{\ref{lem:entropy}}\text{-robust and is not a }2R'\text{-drop in }(t_i)_{i=0}^\ell\}\ge\gamma_{\ref{lem:entropy}} n.\]
We then shift $t_0$ to the nearest integer $\wt{t_0}$. We also shift $(t_i)_{i=1}^{\ell}$ by the same amount to obtain points $(\wt{t_i})_{i=1}^{\ell}$ (note that these points are not necessarily integers). We call the sequence $(\wt{t_i})_{i=0}^{\ell}$, which technically may no longer be an averaging sequence, a \emph{witnessing sequence}. We see that every index which is not a $2R'$-drop in $(t_i)_{i=0}^\ell$ will not be an $R'$-drop in $(\wt{t_i})_{i=0}^\ell$ as $\log_2f$ is $1$-Lipschitz.

Taking a union bound over the choice of the step record is not costly, and note that given $(X_1,\dots,X_n)$ and the step record, the witnessing sequence is completely determined by its starting point $\wt{t_0}$. Furthermore, the definition of the witnessing sequence and the definition of $f_{\mc{A},\vec{s},\ell}$ easily show that
\[\wt{t_0}\in\{\tau\in\mb{Z}: f(\tau) > (2N\sqrt{n})^{-1}\} =: \mc{D}.\]
Note that $\mc{D}$ is a deterministic set depending only on $f$. Further, since $\snorm{f}_1 = 1$ and $\log_2f$ is $1$-Lipschitz, we see that
\[|\mc{D}|\le 4N\sqrt{n}.\]
To summarize, we have shown that if $(X_1,\dots,X_n) \in \mc{E}_L$, then there exists a witnessing sequence $(\wt{t_i})_{i=0}^{\ell}$ with step record $(w_i)_{i=1}^{\ell}$ such that $\wt{t_0} \in\mc{D}$, and such that
\[\#\{i\in[\ell]: \text{step }i\text{ is }\lambda_{\ref{lem:entropy}}\text{-robust and is not an }R'\text{-drop in }(\wt{t_i})_{i=0}^\ell\}\ge\gamma_{\ref{lem:entropy}} n.\]
Therefore, by the union bound and since $N\le k^n$ (as $H(\vec{p})\le\log k$), it follows that
\begin{align*}
    \mb{P}[\mc{E}_L] 
    &\le (2k^2)^n\sup_{\substack{I\subseteq[\ell], |I|=\lceil\gamma_{\ref{lem:entropy}}n\rceil\\\wt{t_0}\in\mc{D}, (w_i)_{i=1}^\ell\in[k]^\ell}}\mb{P}[\text{The witnessing sequence starts at }\wt{t_0}\text{, has step record }(w_i)_{i=1}^\ell\text{, and}\\
&\qquad\qquad\qquad\qquad\qquad\qquad\text{every }i\in I\text{ is }\lambda_{\ref{lem:entropy}}\text{-robust and is not an }R'\text{-drop}],
\end{align*}
where the supremum is only over those $(w_i)_{i=1}^\ell$ which have $s_j$ coordinates equal to $j$ for all $j\in[k]$.

From \cref{eq:R-drop-bound}, taking $\delta_0 = \gamma_{\ref{lem:entropy}}/2$, it follows that the probability appearing on the right hand side above is bounded by
\[\bigg(\frac{2k^34^{2\snorm{\vec{a}}_\infty+1}C_{\ref{lem:slice-levy-kolmogorov-rogozin}}(\lambda_{\ref{lem:entropy}}/2,\xi,\gamma_{\ref{lem:entropy}}/2,K_1,K_2)}{\gamma_{\ref{lem:entropy}}L}\bigg)^{\gamma_{\ref{lem:entropy}}n/2},\]
since there are at least  $\gamma_{\ref{lem:entropy}}n/2$ values of $i \in I$ with $i > \delta_0 n$ and since $R' = \gamma_{\ref{lem:entropy}}L/2$ by definition. Therefore, taking $L$ and $n$ sufficiently large depending on $M$ and the parameters appearing above gives the desired conclusion.
\end{proof}

\subsection{Refining the initial estimate}\label{sub:refining}
We now need to remove the dependence of $L$ on $M$. This is accomplished by the main result of this subsection, \cref{prop:multislice-Linfty-decrement}, which is a multislice and $L^1(\mb{R})$ analogue of \cite[Proposition~4.10]{Tik20}. Even though we are working in the much more complicated setting of real multislices, remarkably, our proof of \cref{prop:multislice-Linfty-decrement} is able to use \cite[Proposition~4.10]{Tik20} as a black box: 
roughly, we first use a re-randomization procedure to reduce smoothing on the multislice for $L^1(\mb{R})$ to smoothing on the hypercube, also for $L^1(\mb{R})$. At this juncture, the necessary smoothing estimate on the hypercube for $L^1(\mb{R})$ can in fact be lifted from the smoothing estimate for the hypercube for $\ell^1(\mb{Z})$, proved in \cite{Tik20}. In particular, we reduce the smoothing estimate for general log-Lipschitz functions in $L^1(\mb{R})$ to that of a simpler class of ``step'' functions, which in turn is equivalent to $\ell^1(\mb{Z})$.

A simpler version of this argument, for Boolean slices, is presented in \cite[Proposition~3.10]{JSS20discrete1}.

\begin{proposition}\label{prop:multislice-Linfty-decrement}
Fix a discrete distribution $\xi$. There exists $h = h(\xi) \ge 1$ so that the following holds. For any $\epsilon\in(0,1)$, $\wt{R}\ge 1$, $L_0\ge h\wt{R}$, and $M\ge 1$, there is $\gamma_{\ref{prop:multislice-Linfty-decrement}} = \gamma_{\ref{prop:multislice-Linfty-decrement}}(\xi)$ and there are $n_{\ref{prop:multislice-Linfty-decrement}} = n_{\ref{prop:multislice-Linfty-decrement}}(\xi,\epsilon,L_0,\wt{R},M) > 0$ and $\eta_{\ref{prop:multislice-Linfty-decrement}} = \eta_{\ref{prop:multislice-Linfty-decrement}}(\xi,\epsilon,L_0,\wt{R},M)\in(0,1)$ with the following property. Let $L_0\ge L\ge h\wt{R}$, let $n\ge n_{\ref{prop:multislice-Linfty-decrement}}$, $N\in\mb{N}$, and let $g\in L^1(\mb{R})$ be a nonnegative function satisfying
\begin{enumerate}[\indent\indent(A)]
    \item $\snorm{g}_1 = 1$,
    \item $\log_2 g$ is $\eta_{\ref{prop:multislice-Linfty-decrement}}$-Lipschitz,
    \item $\int_{t\in I}g(t)\le\wt{R}/\sqrt{n}$ for any interval $I$ of size $N$, and
    \item $\snorm{g}_\infty\le L/(N\sqrt{n})$.
\end{enumerate}
For each $i\le 2\lfloor\epsilon n\rfloor$, let $Y_i$ be a random variable uniform on some disjoint union of integer intervals of cardinality at least $N$ each, and assume that $Y_1,\ldots,Y_{2\lfloor\epsilon n\rfloor}$ are mutually independent. Define a random function $\wt{g}\in L^1(\mb{R})$ by
\[\wt{g}(t) = \mb{E}_b\bigg[g\bigg(t+\sum_{i=1}^{2\lfloor\epsilon n\rfloor}b_iY_i\bigg)\bigg|\#\{b_i=a_j\}=s_j\forall j\in[k]\bigg]\]
where $b=(b_1,\dots,b_{2\lfloor \epsilon n \rfloor})$ is a vector of independent $\xi$ components and $\vec{s}\in\mb{Z}_{\ge 0}^k$ satisfies $\snorm{\vec{s}}_1=2\lfloor\epsilon n\rfloor$ and
\[\norm{\frac{\vec{s}}{2\lfloor\epsilon n\rfloor}-\vec{p}}_\infty\le\gamma_{\ref{prop:multislice-Linfty-decrement}}.\]
Then
\[\mb{P}\bigg[\snorm{\wt{g}}_\infty > \frac{19L/20}{N\sqrt{n}}\bigg]\le\exp(-Mn).\]
\end{proposition}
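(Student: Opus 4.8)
\textbf{Proof strategy for \cref{prop:multislice-Linfty-decrement}.}
The plan is to reduce the multislice smoothing estimate to the hypercube smoothing estimate \cite[Proposition~4.10]{Tik20}, which we intend to use as a black box. The central device is a re-randomization: conditioning on $\#\{b_i = a_j\} = s_j$, we pair up the $2\lfloor \epsilon n\rfloor$ indices $\{1,2\},\{3,4\},\dots$ and, as in the proof of \cref{lem:mslkr}, observe that for a linear-sized collection of these pairs the two coordinates land on a fixed pair of distinct atoms $\{a_\ell, a_{\ell'}\}$; a standard large-deviation estimate (\cref{eq:rerandomize}) shows this happens for at least $c(\xi,\epsilon) n$ pairs except with probability $\exp(-\Omega(n))$. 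Within each such pair the conditional law is exchangeable, so we may write the contribution of a pair $(2j-1,2j)$ in the form $a_\ell Y_{2j-1} + a_{\ell'}Y_{2j} + b_j'(a_\ell - a_{\ell'})(Y_{2j-1}-Y_{2j})$ with $b_j' \sim \on{Ber}(1/2)$ i.i.d., thereby extracting a genuine Bernoulli (hypercube) degree of freedom while the remaining coordinates are frozen. After this step the function being smoothed has the form $g(t + \text{(frozen shift)} + \sum_j b_j' c_j Z_j)$ with $Z_j := Y_{2j-1}-Y_{2j}$ a random variable uniform on a difference set of two unions of intervals and $c_j = a_\ell - a_{\ell'}$ a fixed nonzero real, i.e. exactly the hypercube smoothing setup, and one applies \cite[Proposition~4.10]{Tik20} to this.

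There is, however, a wrinkle: \cite[Proposition~4.10]{Tik20} is stated for $\ell^1(\mb{Z})$, whereas $g \in L^1(\mb{R})$ and the $c_j Z_j$ need not be integers. To bridge this I would pass through ``step functions'': approximate $g$ from above and below by functions that are constant on a fine grid $\frac1T\mb{Z}$ (choosing $T$ large depending on $\xi$, using hypotheses (B)--(D) — the log-Lipschitz bound on $g$ controls the distortion of $\snorm{g}_\infty$, $\snorm{g}_1$ and the interval-integrals under this discretization, and allows us to take $\eta_{\ref{prop:multislice-Linfty-decrement}}$ small enough that the multiplicative errors are negligible). A step function on $\frac1T\mb{Z}$ with the shifts $c_j Z_j$ can be rescaled/rounded so that, after clearing denominators by a factor depending only on $\xi$ and $T$, it is literally an $\ell^1(\mb{Z})$ instance — this is where the constant $h = h(\xi)$ and the hypothesis $L_0 \ge L \ge h\wt R$ enter, to guarantee that the rescaled data still satisfies the hypotheses of \cite[Proposition~4.10]{Tik20} with the constants the black box needs (in particular that the uniform-norm-to-$\ell^1$ ratio and the interval mass $\wt R$ are in the admissible regime, and the $19L/20$ threshold accommodates the $O(\eta)$ rounding losses). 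Finally, combine: the re-randomization failure event costs $\exp(-\Omega(n))$, the hypercube estimate on the good event costs $\exp(-M'n)$ for $M'$ chosen large in terms of $M$ and the linear fraction $c(\xi,\epsilon)$ of usable pairs, and a union bound over the choice of which pairs are ``good'' (at most $2^{2\lfloor\epsilon n\rfloor} \le \exp(n)$ choices if $\epsilon$ is small, or more carefully absorbed into $M'$) yields the claimed $\exp(-Mn)$.

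The main obstacle, I expect, is the $L^1(\mb{R})$-to-$\ell^1(\mb{Z})$ reduction done uniformly: one must arrange the grid refinement and rescaling so that \emph{all} of hypotheses (A)--(D) survive with only a controlled loss, that the log-Lipschitz parameter $\eta_{\ref{prop:multislice-Linfty-decrement}}$ can be chosen \emph{after} fixing the black-box constants but \emph{independently of} $M$ in the right places, and that the integrality-clearing does not blow up the parameter $\wt R$ (which is why the step-function class, rather than an arbitrary discretization, is the right intermediate object). The re-randomization itself is routine given \cref{lem:mslkr}'s proof, and the probabilistic bookkeeping (large-deviation failure plus union bound over good-pair patterns) is standard; the delicate part is purely the real-analytic transfer, which is precisely the point flagged in the discussion preceding the proposition.
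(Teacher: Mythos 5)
Your high-level strategy matches the paper's: re-randomize the multislice into a fixed-$b$ part plus i.i.d.\ $\on{Ber}(1/2)$ flips within pairs (exactly as in \cref{lem:mslkr}), isolate a linear-sized collection of pairs landing on a fixed two-element subset of the support to extract genuine hypercube randomness, pass to $\ell^1(\mb{Z})$ and apply \cite[Proposition~4.10]{Tik20} as a black box, and union bound. The two places where your proposal diverges from the paper's proof are worth flagging.

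First, the probabilistic bookkeeping. The paper does \emph{not} treat ``the re-randomization failure event'' as something to be added on: rather, it defines two subsets of the $b$-space, $B_0$ (many good pairs) and $B_1$ (the conditional $b'$-average is large at the point attaining $\snorm{\wt g}_\infty$), shows each has conditional mass $>1/2$ via a Chernoff bound and a reverse-Markov argument respectively, concludes $B_0\cap B_1\neq\emptyset$, and then union bounds over $b\in B_0$. Your proposal replaces the reverse-Markov step with a direct decomposition $\wt g(t)\le \mb{P}[B_0^c]\cdot L/(N\sqrt n) + \sup_{b\in B_0}\snorm{g_{b,Y_{-2\cdot I}}}_\infty$ and a strong Chernoff bound $\mb{P}[B_0^c]\le\exp(-cn)$ to absorb the first term into the $L/20$ slack. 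That is a legitimate variant, but you should note two things: the union bound must run over the full vector $b$ (or equivalently the good-pair pattern together with the $b$-values on the bad pairs), since the frozen shift depends on all of $b$ and not just on which pairs are good --- the cost is still $\exp(O(n))$ and gets absorbed by choosing the black-box parameter $M'$ proportional to $M$, but ``$2^{2\lfloor\epsilon n\rfloor}$ choices'' as written undercounts; and the good-pair threshold must actually be set so that $\mb{P}[B_0^c]$ is at most a small numeric constant (the paper uses $>1/2$ because it has the reverse-Markov argument to pick up the slack), which your approach requires.

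Second, the $L^1(\mb{R})\to\ell^1(\mb{Z})$ bridge. The paper does this in one clean stroke with the operator $(\mc O\omega)(t)=\int_{-z/2}^{z/2}\omega(zt+u)\,du$ applied with $z=a_\ell-a_{\ell'}$: this both rescales so that the shifts $z\cdot(\text{integer})$ become integers and coarsens $g$ in a way that exactly preserves $\snorm\cdot_1$, changes $\snorm\cdot_\infty$ and the interval-mass bound by factors controlled in terms of $h$ and the log-Lipschitz constant, and commutes with the Bernoulli averaging via Fubini. Your ``step functions on a grid $\frac1T\mb{Z}$ then clear denominators'' route will run into trouble as stated because $a_\ell-a_{\ell'}$ is an arbitrary real, not a rational, so there is no universal denominator to clear; what makes the transfer work is the \emph{rescaling} by $1/(a_\ell-a_{\ell'})$, which you mention but do not make the organizing principle. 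If you recast your discretization as: rescale first by $1/z$, then coarsen at unit scale (which is precisely $\mc O$), the approach is the same as the paper's. Both proofs correctly identify that the log-Lipschitz hypothesis (B) is what controls the distortion under this coarsening and dictates how small $\eta_{\ref{prop:multislice-Linfty-decrement}}$ must be, and that $h=h(\xi)$ (quantifying $\max(|a_\ell-a_{\ell'}|,|a_\ell-a_{\ell'}|^{-1})$) is what inflates $\wt R$ and thus forces the lower bound $L\ge h\wt R$.
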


We now state an analogue of \cref{prop:multislice-Linfty-decrement} for independent scaled Bernoulli random variables, which in fact is strong enough to imply \cref{prop:multislice-Linfty-decrement}.

\begin{proposition}\label{prop:indep-Linfty-decrement}
Fix $h \ge 1$, and let $z\in[h^{-1},h]$. For any $\epsilon\in(0,1)$, $\wt{R}\ge 1$, $L_0\ge 64h^2\wt{R}$, and $M\ge 1$, there are $n_{\ref{prop:indep-Linfty-decrement}} = n_{\ref{prop:indep-Linfty-decrement}}(h,\epsilon,L_0,\wt{R},M) > 0$ and $\eta_{\ref{prop:indep-Linfty-decrement}} = \eta_{\ref{prop:indep-Linfty-decrement}}(h,\epsilon,L_0,\wt{R},M)\in(0,1)$ with the following property. Let $L_0\ge L\ge 64h^2\wt{R}$, let $n\ge n_{\ref{prop:indep-Linfty-decrement}}$, $N\in\mb{N}$, and let $g\in L^1(\mb{R})$ be a nonnegative function satisfying
\begin{enumerate}[\indent\indent(A)]
    \item $\snorm{g}_1 = 1$,
    \item $\log_2 g$ is $\eta_{\ref{prop:indep-Linfty-decrement}}$-Lipschitz,
    \item $\int_{t\in I}g(t)\le\wt{R}/\sqrt{n}$ for any interval $I$ of size $N$, and
    \item $\snorm{g}_\infty\le L/(N\sqrt{n})$.
\end{enumerate}
For each $i\le\lfloor\epsilon n\rfloor$, let $Y_i$ be a random variable uniform on some disjoint union of integer intervals of cardinality at least $N$ each, and assume that $Y_1,\ldots,Y_{\lfloor\epsilon n\rfloor}$ are mutually independent. Define a random function $\wt{g}\in L^1(\mb{R})$ by
\[\wt{g}(t) = \mb{E}_bg\bigg(t+z\sum_{i=1}^{\lfloor\epsilon n\rfloor}b_iY_i\bigg)\]
where $b$ is a vector of independent $\on{Ber}(1/2)$ components. Then
\[\mb{P}\bigg[\snorm{\wt{g}}_\infty\ge\frac{9L/10}{N\sqrt{n}}\bigg]\le\exp(-Mn).\]
\end{proposition}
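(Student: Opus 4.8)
The plan is to deduce \cref{prop:indep-Linfty-decrement} from the $\ell^1(\mb{Z})$ smoothing estimate on the hypercube, \cite[Proposition~4.10]{Tik20}, by dominating $g$ from above by a function constant on the cells $[zj,z(j+1))$, $j\in\mb{Z}$, of the lattice $z\mb{Z}$. The point that makes this clean is that every translate of $g$ appearing inside $\wt g$ is by an integer multiple of $z$, so a \emph{single} lattice of spacing exactly $z$ controls $\wt g(t)$ for all real $t$ at once — in particular the dominating step function is deterministic, and there is no need to union bound over the continuum of lattice offsets.

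First I would reduce to $z>0$ by replacing some $Y_i$ with $-Y_i$ (which preserves their hypotheses). Since $\log_2 g$ is $\eta_{\ref{prop:indep-Linfty-decrement}}$-Lipschitz, $g$ is continuous, positive and decays at infinity, so $D_j:=\sup_{[zj,z(j+1))}g$ is finite and $D_\bullet:=(D_j)_{j\in\mb{Z}}\in\ell^1(\mb{Z})$. Choosing $\eta_{\ref{prop:indep-Linfty-decrement}}$ small enough in terms of $h$, the Lipschitz bound gives $D_j\le 2g(y)$ for each $y$ in the $j$th cell, hence (averaging) $D_j\le (2/z)\int_{zj}^{z(j+1)}g$; summing, $1/z\le\snorm{D_\bullet}_1\le 2/z$, and $\snorm{D_\bullet}_\infty=\snorm{g}_\infty$. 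The key deterministic inequality is then that, since $g(x)\le D_{\lfloor x/z\rfloor}$ and $\sum_i b_iY_i\in\mb{Z}$, for every realization of $(Y_i)$ and every $t$,
\[\wt g(t)=\mb{E}_b\, g\bigl(t+z\textstyle\sum_i b_iY_i\bigr)\le\mb{E}_b\, D_{\lfloor t/z\rfloor+\sum_i b_iY_i}\le\sup_{m\in\mb{Z}}\ \mb{E}_b\, D_{m+\sum_i b_iY_i},\]
so $\snorm{\wt g}_\infty$ is bounded by the $\ell^\infty$-norm of the one-round $\on{Ber}(1/2)$-weighted smoothing of the \emph{fixed} sequence $D_\bullet$ — precisely the object controlled by \cite[Proposition~4.10]{Tik20}.

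It then remains to verify that $\hat D:=D_\bullet/\snorm{D_\bullet}_1$ satisfies the hypotheses of \cite[Proposition~4.10]{Tik20} with that proposition's $L$, $L_0$, $\wt{R}$ taken to be $L/\snorm{D_\bullet}_1$, $L_0/\snorm{D_\bullet}_1$, $O(h\wt{R})$ and the same $\epsilon$, $M$: the $\ell^1$-normalization is immediate, hypothesis (D) transfers verbatim, and for hypothesis (C) one uses that a length-$N$ integer interval of cells corresponds to a length-$zN\le hN$ real interval, covered by $O(h)$ intervals of length $N$. The one substantive point is that the ratio of the $\ell^\infty$-scale to the interval-scale of $\hat D$ is $\gtrsim L/(h^2\wt{R})$, which meets \cite[Proposition~4.10]{Tik20}'s threshold precisely because of the assumption $L_0\ge L\ge 64h^2\wt{R}$ — this is the role of that (seemingly mysterious) constant. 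Invoking \cite[Proposition~4.10]{Tik20} and then undoing the normalization — crucially, the factor $\snorm{D_\bullet}_1$ cancels, so there is \emph{no} residual loss in the $\ell^\infty$-bound — gives $\snorm{\wt g}_\infty\le\kappa L/(N\sqrt n)$ off an event of probability $\le\exp(-Mn)$, where $\kappa<9/10$ is the decrement constant of \cite[Proposition~4.10]{Tik20}; this is the desired bound.

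The only genuinely new ingredient over \cite[Proposition~4.10]{Tik20} is passing from $\ell^1(\mb{Z})$ to $L^1(\mb{R})$ with the arbitrary scalar $z\in[h^{-1},h]$, and the step I expect to require the most care is exactly this: choosing the lattice to have spacing $z$ (so the step-function domination is lossless in $\ell^\infty$ and uniform in $t$), together with the bookkeeping that turns $L_0\ge 64h^2\wt{R}$ into \cite[Proposition~4.10]{Tik20}'s $\ell^\infty$-versus-interval hypothesis after normalizing. The log-Lipschitz assumption on $g$ is what makes $D_\bullet$ comparable to $g$ in the first place; without it $g$ could be close to a point mass, for which the statement fails.
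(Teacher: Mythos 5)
Your proposal is correct and follows essentially the same route as the paper: discretize onto the lattice $z\mb{Z}$ (exploiting that every shift is an integer multiple of $z$), invoke \cite[Proposition~4.10]{Tik20} as a black box, and transfer back, with the $64h^2$ in the hypothesis absorbing the losses from the change of scale. The only (cosmetic) difference is that you dominate $g$ by cell-\emph{suprema}, giving a one-sided pointwise majorization of $\wt g$, whereas the paper uses the cell-\emph{integral} operator $\mc{O}$ together with Fubini and a two-sided $L^\infty$-comparison; both land on the same decrement constant $(2+\sqrt2)/4<9/10$.
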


This follows almost immediately from an $\ell^\infty(\mb{Z})$ decrement result established by Tikhomirov \cite{Tik20}.

\begin{proposition}[{\cite[Proposition~4.10]{Tik20}}]\label{prop:linfty-decrement}
For any $p\in(0,1/2]$, $\epsilon\in(0,1)$, $\wt{R}\ge 1$, $L_0\ge 16\wt{R}$, and $M\ge 1$ there are $n_{\ref{prop:linfty-decrement}} = n_{\ref{prop:linfty-decrement}}(p,\epsilon,L_0,\wt{R},M) > 0$ and $\eta_{\ref{prop:linfty-decrement}} = \eta_{\ref{prop:linfty-decrement}}(p,\epsilon,L_0,\wt{R},M)\in(0,1)$ with the following property. Let $L_0\ge L\ge 16\wt{R}$, let $n\ge n_{\ref{prop:linfty-decrement}}$, $N\in\mb{N}$, and let $g\in\ell^1(\mb{Z})$ be a nonnegative function satisfying
\begin{enumerate}[\indent\indent(A)]
    \item $\snorm{g}_1 = 1$,
    \item $\log_2 g$ is $\eta_{\ref{prop:linfty-decrement}}$-Lipschitz,
    \item $\sum_{t\in I}g(t)\le\wt{R}/\sqrt{n}$ for any integer interval $I$ of size $N$, and
    \item $\snorm{g}_\infty\le L/(N\sqrt{n})$.
\end{enumerate}
For each $i\le\lfloor\epsilon n\rfloor$, let $Y_i$ be a random variable uniform on some disjoint union of integer intervals of cardinality at least $N$ each, and assume that $Y_1,\ldots,Y_{\lfloor\epsilon n\rfloor}$ are mutually independent. Define a random function $\wt{g}\in\ell^1(\mb{Z})$ by
\[\wt{g}(t) = \mb{E}_bg\bigg(t+\sum_{i=1}^{\lfloor\epsilon n\rfloor}b_iY_i\bigg)\]
where $b$ is a vector of independent $\on{Ber}(p)$ components. Then
\[\mb{P}\bigg[\snorm{\wt{g}}_\infty > \frac{(1-p(1-1/\sqrt{2}))L}{N\sqrt{n}}\bigg]\le\exp(-Mn).\]
\end{proposition}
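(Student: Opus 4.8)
The statement is a constant-factor multiplicative decrement of the $\ell^{\infty}(\mb{Z})$-norm under averaging against $\lfloor\epsilon n\rfloor$ independent lazy shifts, and the plan is to process the shifts one at a time. Set $g^{(0)}:=g$ and, for $1\le j\le\lfloor\epsilon n\rfloor$, let $g^{(j)}(t):=(1-p)g^{(j-1)}(t)+p\,g^{(j-1)}(t+Y_j)$, so that $g^{(\lfloor\epsilon n\rfloor)}=\wt{g}$. Each $g^{(j)}$ arises from $g^{(j-1)}$ by an averaging operator, so $\snorm{g^{(j)}}_\infty$ is nonincreasing in $j$; moreover every $g^{(j)}$ again satisfies (A)--(D) with the same $\wt{R}$ and the same log-Lipschitz constant, since convolving with a probability measure cannot increase any window-sum and preserves log-Lipschitzness. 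Because the norms are nonincreasing, it suffices to show that, off an event of probability $\exp(-Mn)$, some $g^{(j)}$ with $j\le\lfloor\epsilon n\rfloor$ already satisfies $\snorm{g^{(j)}}_\infty\le\Theta$, where $\Theta:=(1-p(1-1/\sqrt{2}))L/(N\sqrt{n})$; equivalently, that the stopping time $\tau:=\min\{j:\snorm{g^{(j)}}_\infty\le\Theta\}$ is at most $\lfloor\epsilon n\rfloor$ with that probability. One also first reduces the $\ell^\infty$ over $\mb{Z}$ to an $\ell^\infty$ over a bounded window: since $\snorm{g}_1=1$ and $\log_2 g$ is Lipschitz, the set $\{t:g^{(j)}(t)>\Theta/2\}$ lies in an interval of length polynomial in $N\sqrt{n}$, so a polynomial union bound over the location of the maximizer is harmless against $\exp(-Mn)$.

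The key per-step input is anti-concentration at scale $N$. Fix $j$, condition on $Y_1,\dots,Y_{j-1}$, and suppose $\snorm{g^{(j-1)}}_\infty>\Theta$. Since $\sum_{s\in I}g^{(j-1)}(s)\le\wt{R}/\sqrt{n}$ for every integer interval $I$ of length $N$, and $\on{supp}(Y_j)$ is a disjoint union of integer intervals each of length at least $N$, the average of $g^{(j-1)}$ over $z+\on{supp}(Y_j)$ is at most $2\wt{R}/(N\sqrt{n})$ for every $z$. Hence
\[\mb{E}\big[\snorm{g^{(j)}}_\infty\,\big|\,Y_1,\dots,Y_{j-1}\big]\le(1-p)\snorm{g^{(j-1)}}_\infty+p\cdot\frac{2\wt{R}}{N\sqrt{n}}.\]
Using $\snorm{g^{(j-1)}}_\infty\le\snorm{g}_\infty\le L/(N\sqrt{n})$ together with $L\ge16\wt{R}$, the right-hand side is bounded by $\snorm{g^{(j-1)}}_\infty$ times a factor comfortably below $1-p(1-1/\sqrt{2})$: the point is exactly that the ``error'' term $2p\wt{R}/(N\sqrt{n})$ is a small fraction of the ``main'' term $(1-p)\snorm{g^{(j-1)}}_\infty$, i.e., that when $g^{(j-1)}(t)$ is near its maximum the shifted value $g^{(j-1)}(t+Y_j)$ cannot simultaneously be large, and the slack $\theta-(1-p)=p/\sqrt{2}$ is what the weaker, probabilistically robust constant $1-p(1-1/\sqrt{2})$ leaves after one passes from expectation to high probability. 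Since $\snorm{g^{(j)}}_\infty\in[0,\snorm{g^{(j-1)}}_\infty]$, a reverse-Markov inequality shows that, conditionally at each step before $\tau$, the norm has a strictly negative drift of size $\gtrsim p\,\snorm{g^{(j-1)}}_\infty$.

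It remains to convert this drift into an exponentially small failure probability at the prescribed rate, and this is the crux of the proof. The sequence $(\snorm{g^{(j)}}_\infty)_j$ is decreasing, with per-step jumps $\le 2p\snorm{g^{(j-1)}}_\infty$ and conditional negative drift $\gtrsim p\snorm{g^{(j-1)}}_\infty$ while $j<\tau$; a stopped exponential (Freedman-type) supermartingale applied to $\exp(\lambda\snorm{g^{(j\wedge\tau)}}_\infty)$ already yields $\mb{P}[\tau>\lfloor\epsilon n\rfloor]\le\exp(-c'n)$ for some fixed $c'>0$. Obtaining $\exp(-Mn)$ for an arbitrary prescribed $M$ (with $n$ taken large in terms of $M$) is the genuinely delicate point: one must sharpen the per-step analysis, for instance by exploiting that the spreading compounds, so that once $j$ is a constant fraction of $n$ the function $g^{(j-1)}$ is already anti-concentrated at the much larger scale $\approx\sqrt{pj}\,N$ and the drift at step $j$ is correspondingly stronger, and one must separately treat the extremal regime in which $\snorm{g}_\infty$ sits just above the target and $L$ is near its lower bound $16\wt{R}$, where a bad step forces a rigid near-arithmetic structure on $g^{(j-1)}$ whose persistence across all $\lfloor\epsilon n\rfloor$ shifts must be shown to be exponentially improbable. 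The reduction, the invariance of (A)--(D) under convolution, and the per-step drift estimate are routine; the only step I expect to require substantial effort is this boosting to the stated exponential rate.
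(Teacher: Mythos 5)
First, a point of orientation: the paper does not prove this proposition at all --- it is imported verbatim from \cite[Proposition~4.10]{Tik20}, the only original content being the remark that the hypothesis $N\le 2^n$ may be dropped. So your proposal is competing with Tikhomirov's proof, and it has two genuine gaps. The first is that your only quantitative step is invalid as justified. You claim
\[\mb{E}\big[\snorm{g^{(j)}}_\infty\,\big|\,Y_1,\dots,Y_{j-1}\big]\le(1-p)\snorm{g^{(j-1)}}_\infty+\frac{2p\wt{R}}{N\sqrt{n}},\]
deducing it from the fact that the average of $g^{(j-1)}$ over $z+\on{supp}(Y_j)$ is at most $2\wt{R}/(N\sqrt{n})$ for each fixed $z$. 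That fact bounds $\sup_t\mb{E}_{Y_j}\big[(1-p)g^{(j-1)}(t)+pg^{(j-1)}(t+Y_j)\big]$, whereas the left-hand side of your display is $\mb{E}_{Y_j}\sup_t[\cdots]$, and $\mb{E}\sup\ge\sup\mb{E}$ goes the wrong way. The gap is not cosmetic: hypothesis (C) permits $g^{(j-1)}$ to sit near $L/(N\sqrt n)$ on an $\wt R/L$-fraction of every length-$N$ window, so a constant fraction of $\on{supp}(Y_j)$ may carry the maximizer of $g^{(j-1)}$ onto another near-maximum, and the conditional expectation of the supremum need not decrease by anything like the claimed amount.

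The second gap is the one you flag yourself: you do not derive the failure probability $\exp(-Mn)$ for an \emph{arbitrary} prescribed $M$, offering only directions (``the spreading compounds,'' ``rigid near-arithmetic structure''). This is not a detail to be filled in later --- it is the entire content of the proposition. A constant-probability or in-expectation decrement is useless downstream, where the result is applied after union bounds over exponentially many step records and starting points (see the proofs of \cref{prop:rough-Linfty} and \cref{thm:inversion-of-randomness}); and your sequential scheme, which consumes one shift per step and tracks the running supremum, can at best yield a failure rate $\exp(-c\epsilon n)$ for a fixed $c$ depending on $L_0,\wt R$, since at each step the supremum fails to drop with conditional probability bounded below by a constant. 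The shape of the target constant, $1-p(1-1/\sqrt2)=(1-p)\cdot 1+p\cdot(1/\sqrt2)$, points to the decomposition actually used: for a \emph{fixed} $t$ and each $i$ one writes $\wt g(t)=(1-p)h_i(t)+p\,h_i(t+Y_i)$ with $h_i$ the average over the remaining coordinates, bounds $h_i(t)\le L/(N\sqrt n)$ by (D) and shows via (C) and Markov (this is where $L\ge 16\wt R$ enters) that $h_i(t+Y_i)\le L/(\sqrt2 N\sqrt n)$ except with small probability over $Y_i$; failure of the target bound at $t$ then forces \emph{all} $\lfloor\epsilon n\rfloor$ independent shifts to fail simultaneously. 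Your one-at-a-time processing discards exactly this simultaneity, and I see no way to recover the stated rate within your framework.
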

\begin{remark}
In \cite[Proposition~4.10]{Tik20}, there is a condition $N\le 2^n$ which is not necessary (indeed, it is not used anywhere in the proof) and so has been dropped. In fact, we will only need values $N\le k^n$, in which case one can actually replace $n$ by $kn$ and $\epsilon$ by $\epsilon/k$ (and adjust other parameters appropriately) in order to deduce what we need directly from the statement as written in \cite{Tik20}. We will only need this statement for $p=1/2$.
\end{remark}
Now we first prove \cref{prop:indep-Linfty-decrement}.
\begin{proof}[Proof of \cref{prop:indep-Linfty-decrement}]
Consider the operator $\mc{O}: L^1(\mb{R})\to\mb{\ell}^1(\mb{Z})$ given by
\[(\mc{O}\omega)(t) = \int_{-z/2}^{z/2}\omega(zt+u)~du.\]\
We note that $\snorm{\omega}_1 = \snorm{\mc{O}\omega}_1$ and if $\omega$ is nonnegative and $\log_2\omega$ is $\eta$-Lipschitz, then
\[z2^{-\eta h/2}\snorm{\omega}_\infty\le\snorm{\mc{O}\omega}_\infty\le z\snorm{\omega}_\infty.\]

Given $g\in L^1(\mb{R})$ satisfying the given conditions, we consider $g'\in\ell^1(\mb{Z})$ defined via
\[g' = \mc{O}g.\]
We see that $g'$ satisfies properties (A), (B), (C), (D) of \cref{prop:linfty-decrement} with log-Lipschitz constant slightly changed (depending on $z$, hence $h$), $L$ changed to $zL$, and $\wt{R}$ increased to $4h\wt{R}$. These last changes are responsible for the condition $L_0\ge 64h^2\wt{R}$.

Since $zL\ge h^{-1}L\ge 16(4h\wt{R})$, we may apply \cref{prop:linfty-decrement} to $g'$ to deduce that $\snorm{\wt{g'}}_\infty$ is small, except with superexponentially small probability. Here $\wt{g'}$ is averaged in the sense of \cref{prop:linfty-decrement} with respect to the same $Y_1,\ldots,Y_{\lfloor\epsilon n\rfloor}$.

Now, by Fubini's theorem, note that
\[\wt{g'} = \mc{O}\wt{g},\]
where $\wt{g}$ is averaged in the sense of \cref{prop:indep-Linfty-decrement}. Therefore, 
\[\mb{P}\bigg[\snorm{\mc{O}\wt{g}}_\infty > \frac{(2+\sqrt{2})zL/4}{N\sqrt{n}}\bigg]\le\exp(-Mn),\]
so that
\[\mb{P}\bigg[\snorm{\wt{g}}_\infty > \frac{(2+\sqrt{2})2^{\eta_{\ref{prop:indep-Linfty-decrement}} h/2}L/4}{N\sqrt{n}}\bigg]\le\exp(-Mn).\]
Finally, if $\eta_{\ref{prop:indep-Linfty-decrement}}$ is appropriately small, we deduce the desired as
\[\frac{2+\sqrt{2}}{4} < \frac{9}{10}.\qedhere\]
\end{proof}
Finally, we are able to deduce \cref{prop:multislice-Linfty-decrement}.
\begin{proof}[Proof of \cref{prop:multislice-Linfty-decrement}]
Similar to the proof of \cref{lem:mslkr}, we can use an equivalent method of sampling from the $\vec{s}$-multislice to rewrite $\wt{g}(t)$ as
\begin{align*}
\wt{g}(t) &= \mb{E}_b\bigg[g\bigg(t+\sum_{i=1}^{\lfloor\epsilon n\rfloor}(b_{2i-1}Y_{2i-1}+b_{2i}Y_{2i})\bigg)\bigg|\#\{b_i=a_j\}=s_j \forall j \in [k]\bigg]\\
&= \mb{E}_{b,b'}\bigg[g\bigg(t+\sum_{i=1}^{\lfloor\epsilon n\rfloor}b_{2i-1}Y_{2i-1}+b_{2i}Y_{2i} + b_i'(b_{2i}-b_{2i-1})(Y_{2i-1}-Y_{2i})\bigg)\bigg|\#\{b_i=a_j\}=s_j \forall j \in [k]\bigg],
\end{align*}
where $b'$ is an $\lfloor \epsilon n \rfloor$-dimensional vector with \emph{independent} $\on{Ber}(1/2)$ components. Below, we will fix $b$ and use only the randomness in $b'$. In order to do this, let
\[B_0 := \bigg\{b_1,\dots, b_{2\lfloor \epsilon n \rfloor} : \#\{i : b_{2i-1}=a_1,b_{2i}=a_2\}\ge\min(\vec{p})^2\epsilon n/8\bigg\}.\]
Then, provided that $\gamma_{\ref{prop:multislice-Linfty-decrement}}$ is chosen sufficiently small depending on $\xi$, and $n$ is sufficiently large depending on $\xi$ and $\epsilon$, we have 
\[\mb{E}_{b}[1_{B_0}|\#\{b_i=a_j\}=s_j \forall j \in [k]] > \frac{1}{2}.\]
Let $\mc{E}_L$ denote the event (depending on $Y_1,\dots, Y_{2\lfloor \epsilon n \rfloor}$) that $\snorm{\wt{g}}_\infty > 19L/(20N\sqrt{n})$. Now, suppose $Y_1,\dots, Y_{2\lfloor \epsilon n \rfloor} \in \mc{E}_L$, and suppose further that $\snorm{\wt{g}}_{\infty}$ is attained at $t \in \mb{R}$.
Let 
\[B_{1} := \bigg\{b_1,\dots, b_{2\lfloor \epsilon n \rfloor}: \mb{E}_{b'}\bigg[g\bigg(t+\sum_{i=1}^{\lfloor\epsilon n\rfloor}(b_{2i-1}Y_{2i-1}+b_{2i}Y_{2i} + b_i'(b_{2i}-b_{2i-1})(Y_{2i-1}-Y_{2i})\bigg)\bigg|b\bigg]\ge\frac{9L/10}{N\sqrt{n}}\bigg\}.\]
Since $\snorm{g}_\infty\le L/(N\sqrt{n})$, it follows from the reverse Markov inequality that  
\[\mb{E}_{b}[1_{B_1}| \#\{b_i=a_j\}=s_j \forall j \in [k]] > \frac{1}{2}.\]
Thus, we see that for every $(Y_1,\dots, Y_{2\lfloor \epsilon n \rfloor}) \in \mc{E}_L$, there exists some $b \in B_0 \cap B_1$. Hence, taking a union bound, we see that
\begin{align}
\label{eq:union-B_0}
\mb{P}&\bigg[\snorm{\wt{g}}_\infty > \frac{19L/20}{N\sqrt{n}}\bigg] \le \mb{P}[\exists b \in B_0 : b\in B_1]
 \nonumber \\
&\le|B_0|\sup_{b\in B_0}\mb{P}\bigg[\exists t: \mb{E}_{b'}\bigg[g\bigg(t+\sum_{i=1}^{\lfloor\epsilon n\rfloor}(b_{2i-1}Y_{2i-1}+b_{2i}Y_{2i} + b_i'(b_{2i}-b_{2i-1})(Y_{2i-1}-Y_{2i}))\bigg)\bigg]\ge\frac{9L/10}{N\sqrt{n}}\bigg]\nonumber\\
&\le|B_0|\sup_{b\in B_0}\mb{P}\bigg[\exists t: \mb{E}_{b'}\bigg[g\bigg(t+\sum_{i=1}^{\lfloor\epsilon n\rfloor} b_i'(b_{2i}-b_{2i-1})(Y_{2i-1}-Y_{2i})\bigg)\bigg]\ge\frac{9L/10}{N\sqrt{n}}\bigg].
\end{align}
We now bound the probability appearing on the right hand side of the above equation uniformly for $b \in B_0$. We fix $b \in B_0$, and note that, by definition, there is a set $I = \{i_1,\dots, i_m\} \subseteq \lfloor \epsilon n \rfloor$ such that $|I| = m \ge\min(\vec{p})^2\epsilon n/8$ and such that for all $j \in [m]$,
\begin{align*}
b_{i_j}'(b_{2{i_j}}-b_{2{i_j-1}})(Y_{2i_j-1}-Y_{2i_j})&= b_{i_j}'(a_2-a_1)(Y_{2i_j-1}-Y_{2i_j}).
\end{align*}
For $j \in [k]$, let $Y_j^{b}:= Y_{2i_j} - Y_{2i_j - 1}$. Let $Y_{-2\cdot I}$ denote all components of $Y_1,\dots,Y_{2\lfloor \epsilon n \rfloor}$, except those corresponding to indices in $2\cdot I$, and let $Y_{2\cdot I}$ denote the remaining components. Then, for $b \in B_0$ and a choice of $Y_{-2\cdot I}$, we define the random function (depending on $Y_{2\cdot I}$),
\[\wt{g}_{b,Y_{-2\cdot I}}(t) := \mb{E}_{b'}g\bigg(t+(a_1-a_2)\sum_{j=1}^{\lfloor \min(\vec{p})^{2}\epsilon n/8\rfloor}b_j'Y_j^b\bigg).\]
Thus, we see that for any $b \in B_0$ and $Y_{-2\cdot I}$, the probability appearing on the right hand side of \cref{eq:union-B_0} is bounded by
\[\mb{P}\bigg[\snorm{\wt{g}_{b, Y_{-2\cdot I}}}_{\infty} \ge \frac{9L/10}{N\sqrt{n}} \bigg],\]
where the probability is over the choice of $Y_{2\cdot I}$. 

At this point, we can apply \cref{prop:indep-Linfty-decrement} to $\wt{g}_{b,Y_{-2\cdot I}}$. Let us quickly check that the hypotheses of \cref{prop:indep-Linfty-decrement} are satisfied. The assumptions on $g$ needed in \cref{prop:indep-Linfty-decrement} are satisfied because the same properties are assumed in \cref{prop:multislice-Linfty-decrement} (see below for the log-Lipschitz condition). Moreover, $b_1',\dots, b'_{\lfloor \min(\vec{p})^{2}\epsilon n/8 \rfloor}$ are independent $\on{Ber}(1/2)$ random variables. Finally, notice that, given $Y_{-2\cdot I}$, each $Y_j^{b}$ is a random variable uniform on some disjoint intervals of cardinality at least $N$ each (since $Y_j^{b}$ is a translation of $Y_{2i_j}$ which is assumed to satisfy this property). Also, $a_1-a_2$ is bounded away from $0$ (in terms of $\xi$).

Thus, \cref{prop:indep-Linfty-decrement} shows that the expression on the right hand side of \cref{eq:union-B_0} is bounded above by
\begin{align*}
|B_0|\sup_{b\in B_0, Y_{-2\cdot I} }\mb{P}\bigg[\snorm{\wt{g}_{b, Y_{-2\cdot I}}}_\infty > \frac{9L/10}{N\sqrt{n}}\bigg]\le k^n\exp(-M\min(\vec{p})^2n/8),
\end{align*}
provided that we choose $\eta_{\ref{prop:multislice-Linfty-decrement}}$ sufficiently small compared to $\eta_{\ref{prop:indep-Linfty-decrement}}(d, \on{min}(\vec{p})^2\epsilon/8, L_0, \tilde{R}, M)$, where $d = \max(|a_2 - a_1|, |a_2 - a_1|^{-1})$. The desired result now follows after rescaling $M$ by a constant factor (depending on $\xi$).
\end{proof}

\subsection{Deriving the final result}\label{sub:deriving} \cref{thm:inversion-of-randomness} is almost immediate given the previous propositions. 
\begin{proof}[Proof of \cref{thm:inversion-of-randomness}]
The proof of \cref{thm:inversion-of-randomness} given \cref{prop:rough-Linfty,prop:multislice-Linfty-decrement} is similar to the derivation in \cite[Theorem~4.2]{Tik20}; we refer the reader to the companion note \cite[Theorem~3.1]{JSS20discrete1} for a leisurely account of the complete details. In words, we use \cref{prop:rough-Linfty} to show that any revelation of the first $(1-\gamma)n$ random variables (for suitably chosen $\gamma$) will give an $L^\infty$ bound at roughly the right scale $L'(N\sqrt{n})^{-1}$, where $L'$ depends on $M$. Then, we divide the remaining random variables into a constant number of pieces (this number depends on $L'$ and $C_{\ref{lem:mslkr}}$ with appropriate parameters), and use \cref{prop:multislice-Linfty-decrement} iteratively on these pieces. As in Steps 2 and 3 in the proof of \cite[Theorem~3.1]{JSS20discrete1}, we may restrict our attention to the case when each of the remaining pieces is a well-conditioned multislice (this automatically implies that the first $(1-\gamma)n$ coordinates are also a well-conditioned multislice), noting that the complement of this event occurs with very low probability, and the product of this probability with $L'(N\sqrt{n})^{-1}$ is much smaller than the bound on the essential sup-norm that we are aiming for.
\end{proof}

\subsection{Independent model}\label{sub:independent}
We conclude this section with an analogue of \cref{cor:threshold-inversion} in the independent case. 

\begin{theorem}\label{cor:independent-threshold-inversion}
Fix a discrete distribution $\xi$. For $0 < \delta < 1/4$, $K_3 > K_2 > K_1 > 1$, $\epsilon\ll \snorm{\vec{p}}_\infty$, and a given parameter $M\ge 1$, there is  $L_{\ref{cor:independent-threshold-inversion}} = L_{\ref{cor:independent-threshold-inversion}}(\xi,\epsilon,\delta,K_1,K_2,K_3) > 0$ independent of $M$ and $n_{\ref{cor:independent-threshold-inversion}} = n_{\ref{cor:independent-threshold-inversion}}(\xi,\epsilon,\delta,K_1,K_2,K_3,M)\ge 1$ such that the following holds.

Let $n\ge n_{\ref{cor:independent-threshold-inversion}}$, $1\le N\le\snorm{\vec{p}}_\infty^{-n}\exp(-\epsilon n)$ and $\mc{A}$ be $(N,n,K_1,K_2,K_3,\delta)$-admissible. Then
\[\bigg| \bigg\{x\in \mc{A}: \mc{L}_{\xi}\bigg(\sum_{i=1}^nb_ix_i,\sqrt{n}\bigg)\ge L_{\ref{cor:independent-threshold-inversion}}N^{-1}\bigg\}\bigg|\le e^{-Mn}|\mc{A}|.\]
\end{theorem}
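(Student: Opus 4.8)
The plan is to deduce \cref{cor:independent-threshold-inversion} from an independent-model analogue of \cref{thm:inversion-of-randomness}, in exactly the way that \cref{cor:threshold-inversion} is deduced from \cref{thm:inversion-of-randomness}. Concretely, with $\mc{A}$ being $(N,n,K_1,K_2,K_3,\delta)$-admissible, $X_1,\dots,X_n$ uniform on $\mc{A}$, and $f\in L^1(\mb{R})$ nonnegative with $\snorm{f}_1 = 1$ and $\log_2 f$ sufficiently Lipschitz, I would prove that for $1\le N\le\snorm{\vec{p}}_\infty^{-n}\exp(-\epsilon n)$ the random function $f^{\mr{ind}}_{\mc{A},n}(t):=\mb{E}_b[f(t+\sum_{i=1}^n b_iX_i)]$ (here $b_1,\dots,b_n$ are i.i.d.\ copies of $\xi$, with \emph{no} conditioning) satisfies $\mb{P}[\snorm{f^{\mr{ind}}_{\mc{A},n}}_\infty\ge L(N\sqrt{n})^{-1}]\le\exp(-Mn)$, with $L$ independent of $M$. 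The proof mirrors \cref{sub:preprocessing,sub:refining,sub:deriving}, and is strictly simpler since there is no multislice to contend with (as the excerpt already anticipates).

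In the analogue of \cref{lem:entropy}, the recursion becomes $f^{\mr{ind}}_{\mc{A},\ell}(t)=\sum_{i=1}^k p_i f^{\mr{ind}}_{\mc{A},\ell-1}(t+a_iX_\ell)$, so the step-record weights are the fixed probabilities $p_{w_i}$ rather than the data-dependent $\ol{W}_i(w_i)$. Since $p_{w_i}\le\snorm{\vec{p}}_\infty<1$ uniformly, \emph{no} notion of robust step is needed: the telescoping bound at $R$-drops collapses to $L(N\sqrt{n})^{-1}\le(\snorm{\vec{p}}_\infty+(1-\snorm{\vec{p}}_\infty)\gamma)^{\ell-|J|}$, which, for $\gamma$ small relative to $\epsilon$ and using $N\le\snorm{\vec{p}}_\infty^{-n}\exp(-\epsilon n)$ together with $\ell\ge(1-\gamma)n$, directly yields $\ge cn$ non-$R$-drop steps; this is precisely where the base $\snorm{\vec{p}}_\infty$, rather than $\exp(-H(\vec{p}))$, enters. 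The analogue of \cref{prop:rough-Linfty} then proceeds by the same union bound over witnessing sequences and step records, where the per-step non-drop probability is bounded via a direct application of Kolmogorov--L\'evy--Rogozin (\cref{lem:LKR}): because the $b_j$ are genuinely i.i.d.\ copies of the non-constant variable $\xi$, and admissibility guarantees $\gtrsim\delta n$ coordinates with $|X_j|\gtrsim N$, one gets $\mc{L}(\sum_{j=1}^{i-1}b_jX_j,r)\lesssim r/(N\sqrt{n})$ for $r\ge N$ and $i-1\ge\delta' n$, hence a non-drop probability $\lesssim C/R$ by Markov; this replaces \cref{lem:slice-levy-kolmogorov-rogozin} with no ``balanced coordinate'' hypothesis and no high-probability well-conditioning event. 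For the decrement, I would use \cref{prop:indep-Linfty-decrement} as a black box after the re-randomization of \cref{prop:multislice-Linfty-decrement}: pairing coordinates $2i-1,2i$ and using $(b_{2i-1},b_{2i})\overset{d}{=}(b_{2i},b_{2i-1})$ for i.i.d.\ pairs, we insert i.i.d.\ $\on{Ber}(1/2)$ variables $b_i'$ so that the smoothing contains $(a_1-a_2)\sum b_i'(Y_{2i-1}-Y_{2i})$ over the $\ge cn$ pairs with $\{b_{2i-1},b_{2i}\}=\{a_1,a_2\}$ --- now a standard \emph{unconditional} large-deviation event --- to which \cref{prop:indep-Linfty-decrement} applies with $z\asymp|a_1-a_2|^{\pm1}$. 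Combining the rough estimate with $O_M(1)$ iterated decrements on linear-size blocks, exactly as in the proof of \cref{thm:inversion-of-randomness}, completes the independent analogue.

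To then deduce \cref{cor:independent-threshold-inversion}, I would apply this analogue to $f(t):=2^{-|t|/\sqrt{n}}/\iota$ with $\iota:=2\sqrt{n}/\ln 2$ (so $\log_2 f$ is $(1/\sqrt{n})$-Lipschitz, within the required bound for $n$ large), exactly as in the proof sketch of \cref{cor:threshold-inversion}. Since $f^{\mr{ind}}_{\mc{A},n}(t)\ge\tfrac{\ln 2}{4\sqrt{n}}\mb{P}[\,|\sum_i b_iX_i+t|\le\sqrt{n}\,]$, we obtain $\snorm{f^{\mr{ind}}_{\mc{A},n}}_\infty\ge\tfrac{\ln 2}{4\sqrt{n}}\,\mc{L}_\xi(\sum_i b_iX_i,\sqrt{n})$, so the set of $x\in\mc{A}$ with $\mc{L}_\xi(\sum_i b_ix_i,\sqrt{n})\ge L_{\ref{cor:independent-threshold-inversion}}N^{-1}$ has the claimed exponentially small density; unlike in the proof of \cref{cor:threshold-inversion}, no union bound over count vectors $\vec{m}$ is required here because there is no conditioning.

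The main point requiring care --- and essentially the only nontrivial one, since every ingredient above is a simplification of the corresponding multislice argument --- is the entropy bookkeeping in the analogue of \cref{lem:entropy}: one must verify that dropping the multislice constraint replaces the factor $\binom{\ell}{\vec{s}}^{-1}\approx\exp(-H(\vec{p})\ell)$ by $\snorm{\vec{p}}_\infty^{\ell}$, so that the hypothesis $N\le\snorm{\vec{p}}_\infty^{-n}\exp(-\epsilon n)$ (rather than $N\le\exp((H(\vec{p})-\epsilon)n)$) is exactly what makes the argument close. A secondary point is checking that \cref{prop:indep-Linfty-decrement} is stated robustly enough to accept the re-randomized smoothing directly, which it is.
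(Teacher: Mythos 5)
Your proposal is correct and follows essentially the same route as the paper, which itself only sketches this result: the paper's proof remark says precisely to rerun the argument of \cref{thm:inversion-of-randomness} with plain drops in place of well-conditioned (robust) drops, and to obtain the decrement via the same pairing/subsampling reduction to \cref{prop:indep-Linfty-decrement}. Your additional bookkeeping — the replacement of the multinomial factor $\binom{\ell}{\vec{s}}^{-1}$ by the product of step probabilities bounded by $\snorm{\vec{p}}_\infty^{\,\ell-|J|}$, matched against $N\le\snorm{\vec{p}}_\infty^{-n}e^{-\epsilon n}$ — correctly fills in the detail the paper leaves to the reader.
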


The proof of \cref{cor:independent-threshold-inversion} is analogous to that of \cref{thm:inversion-of-randomness} followed by  \cref{cor:threshold-inversion}, except that the random variables $b_i$ are now independent copies of $\xi$. This independence simplifies matters dramatically, as one can derive an analogue of \cref{prop:rough-Linfty} by simply considering drops (as in \cite[Proposition~4.5]{Tik20}) instead of ``well-conditioned'' drops,  and then using subsampling arguments similar to those appearing above to prove analogues of \cref{prop:multislice-Linfty-decrement} and \cref{thm:inversion-of-randomness}. We leave the details to the interested reader.

\section{Non-almost-constant vectors}\label{sec:incompressible}

\subsection{Preliminaries}\label{sec:preliminaries}
We recall the following basic definition of almost-constant vectors.
\begin{definition}\label{def:almost-constant}
We define $\on{Cons}(\delta,\rho)$ to be the set of $x\in\mb{S}^{n-1}$ such that there is $\lambda\in\mb{R}$ so that $|x_i-\lambda|\le\rho/\sqrt{n}$ for at least $(1-\delta)n$ values $i\in[n]$. We let $\on{Noncons}(\delta,\rho) = \mb{S}^{n-1}\setminus\on{Cons}(\delta,\rho)$.
\end{definition}
The following is a standard fact regarding non-almost-constant vectors. For the reader's convenience, we provide complete details of the proof in \cite{JSS20discrete1}.

\begin{lemma}[{\cite[Lemma~2.3]{JSS20discrete1}}]
\label{lem:nonconstant-admissible}
For $\delta,\rho \in (0,1/4)$, there exist $\nu,\nu' > 0$ depending only on $\delta, \rho$, and a finite set $\mc{K}$ of positive real numbers, also depending only only on $\delta, \rho$, such that the following holds.  

If $x\in \on{Noncons}(\delta, \rho)$, then at least one of the following two conclusions is satisfied.
\begin{enumerate}
\item There exist $\kappa, \kappa' \in \mc{K}$ such that 
\[|x_i| \le \frac{\kappa}{\sqrt{n}} \text{ for at least } \nu n \text{ indices }i\in [n], \text{ and }\]
\[\frac{\kappa + \nu'}{\sqrt{n}} < |x_i| \le \frac{\kappa'}{\sqrt{n}} \text{ for at least } \nu n \text{ indices }i\in [n].\]
\item There exist $\kappa, \kappa' \in \mc{K}$ such that
\[\frac{\kappa}{\sqrt{n}} < x_i < \frac{\kappa'}{\sqrt{n}} \text{ for at least } \nu n \text{ indices }i\in [n], \text{ and }\]
\[-\frac{\kappa'}{\sqrt{n}} < x_i < -\frac{\kappa}{\sqrt{n}} \text{ for at least } \nu n \text{ indices }i\in [n].\]
\end{enumerate}
\end{lemma}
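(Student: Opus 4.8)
The plan is to show that if $x \in \on{Noncons}(\delta,\rho)$, then after binning the coordinates into finitely many scales, two of these bins must be ``well-separated'' and each contain linearly many coordinates — and that this separation can be taken to come either from a gap in absolute values (conclusion (1)) or from a genuine sign change (conclusion (2)). First I would normalize: since $\snorm{x}_2 = 1$, the number of coordinates with $|x_i| > \Lambda/\sqrt{n}$ is at most $n/\Lambda^2$, so for a suitable absolute constant $\Lambda = \Lambda(\delta)$ we may discard the (at most $\delta n/2$, say) coordinates of size exceeding $\Lambda/\sqrt n$ and assume essentially all coordinates lie in $[-\Lambda/\sqrt n, \Lambda/\sqrt n]$. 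Partition $[-\Lambda,\Lambda]$ into $O_{\delta,\rho}(1)$ intervals each of length a small constant $\nu'$ (to be chosen $\ll \rho$); call these the \emph{buckets}, and let $\mc{K}$ be the (finite) set of endpoints of these buckets, rescaled appropriately. Each remaining coordinate $x_i\sqrt n$ falls into some bucket.

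The key dichotomy is this: either the nonzero-side structure is concentrated, or it is spread out. Consider the coordinates with $x_i \ge 0$ and those with $x_i < 0$ separately. If one sign class, say the positive one, contains at least $\delta n/4$ coordinates spread across two buckets that are separated by at least one empty (or nearly empty) bucket — i.e., a bucket containing fewer than, say, $(\delta/100)n \cdot (\text{something})$ coordinates — then we are in conclusion (1): take $\kappa,\kappa'$ to be the rescaled endpoints witnessing the gap, with $\nu'$ the bucket-width. The subtle point, and what I expect to be the main obstacle, is arguing that such a configuration (a gap within one sign class, \emph{or} a genuine two-sided spread) \emph{must} occur — i.e., ruling out the possibility that $x$ evades both conclusions yet is still non-almost-constant. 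The escape routes to eliminate are: (a) all mass sits in a single bucket of width $\nu'$ — but then, since $\nu' \ll \rho$, those $\ge(1-\delta)n$ coordinates are all within $\rho/\sqrt n$ of the bucket's center, contradicting $x\in\on{Noncons}(\delta,\rho)$; (b) mass is split between a positive bucket near $0$ and a negative bucket near $0$ with no separation — but ``near $0$ on both sides'' again forces almost-constancy around $\lambda = 0$ once the buckets are narrow enough, unless the buckets are actually separated, which lands us in conclusion (2) with $\kappa,\kappa'$ reading off the two buckets' distances from $0$; (c) mass split between two positive buckets that are \emph{adjacent} — then merge them and recurse, noting the merged bucket still has width $O(\nu')\ll\rho$, so after finitely many merges either everything collapses to one $O(\rho)$-window (case (a), contradiction) or a true gap appears.

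Concretely, I would set up the argument as a pigeonhole over the constantly-many buckets: among the $\ge (1-\delta)n - \delta n/2 \ge n/2$ surviving coordinates, either some single bucket holds $\ge (1-2\delta)n$ of them — handled by case (a) above to derive $x\in\on{Cons}$, a contradiction — or the mass is genuinely spread over $\ge 2$ buckets with $\ge \nu n$ coordinates in each of two of them, for $\nu := \nu(\delta,\rho) > 0$ chosen as a small fraction of $\delta$ divided by the (constant) number of buckets. In the latter case, pick two such heavily-populated buckets $B, B'$ that are as far apart as possible. If they lie on the same side of $0$, the buckets strictly between them are lightly populated, and by choosing $\nu'$ as the bucket width and $\kappa < \kappa'$ as appropriate endpoints we get conclusion (1) (replacing $B$ by a near-$0$ bucket if needed, which always has $\ge \nu n$ coordinates once we note coordinates of size $\le \kappa/\sqrt n$ are plentiful). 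If $B$ is positive and $B'$ negative, we get conclusion (2) directly, reading $\kappa,\kappa'$ off the inner and outer endpoints of $B$ (and symmetrically for $B'$) after possibly shrinking to ensure strict inequalities. The finiteness of $\mc{K}$ and the fact that $\nu,\nu'$ depend only on $\delta,\rho$ are then immediate since the bucketing was fixed in terms of $\delta,\rho$ alone. The only real care needed is the bookkeeping to guarantee that in every branch at least $\nu n$ coordinates survive in \emph{each} of the two designated ranges simultaneously, which is why $\nu$ must be taken small relative to both $\delta$ and $1/|\mc{K}|$; I would defer the full details to the companion note \cite{JSS20discrete1} as the statement indicates.
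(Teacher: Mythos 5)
The overall strategy you propose — truncating via $\snorm{x}_2=1$, bucketing the rescaled coordinates into finitely many intervals of width $\ll\rho$, using $\on{Noncons}(\delta,\rho)$ to force two heavy, well-separated buckets, then splitting into same-sign versus cross-sign — is the right one, and the same-sign branch is essentially correct once the separation is in hand. However there is a genuine gap in the cross-sign branch, and the mechanism you describe for producing two \emph{separated} heavy buckets also needs repair.

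The substantive gap: you claim that if the two heavy buckets $B$ (positive) and $B'$ (negative) lie on opposite sides of $0$ then conclusion~(2) follows ``directly.'' Conclusion~(2) requires $\kappa>0$ with $\kappa/\sqrt n<x_i$ on the counted positive coordinates, so if $B$ is the bucket abutting $0$ (say $B=[0,\nu')$ after rescaling) its coordinates can be arbitrarily close to $0$ and no $\kappa\in\mc{K}$ bounds them below, since $\mc{K}$ must be fixed in advance of $x$. This really occurs for non-constant $x$: take $x$ with $n/2$ coordinates at $a/\sqrt n$ for tiny $a$ and $n/2$ near $-\sqrt{2}/\sqrt n$; this lies in $\on{Noncons}(\delta,\rho)$ for $\delta<1/2$ and $\rho<1/4$, yet the positive coordinates are not bounded away from $0$ by any fixed $\kappa$. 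Your escape route (b) addresses ``both buckets near $0$'' (absorbing into almost-constancy) but not ``one near $0$, one far,'' which is exactly this case. The missing observation is that when $B$ sits within $O(\nu')$ of $0$ while $B'$ is at distance $\gtrsim\rho$ from $B$, the two buckets \emph{are} separated in magnitude $|x_i|$, so it is conclusion~(1), not~(2), that applies, with $|x_i|\le\kappa$ picking up $B$ and $(\kappa+\nu',\kappa']$ picking up $|B'|$.

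The secondary issue is the pigeonhole. A bucket holding $\ge(1-2\delta)n$ coordinates does \emph{not} contradict $\on{Noncons}(\delta,\rho)$, which only forbids $(1-\delta)n$ coordinates in a window of radius $\rho/\sqrt n$. And the merge-and-recurse in (c) asserts the merged bucket ``still has width $O(\nu')\ll\rho$,'' which is false — merging $j$ buckets yields width $j\nu'$, which grows until it is comparable to $\rho$. A clean substitute: declare a bucket heavy if it contains $\ge\nu n$ coordinates with $\nu:=\delta/(8|\mc{K}|)$; then light buckets together with the coordinates of size $>\Lambda/\sqrt n$ account for $<\delta n/2$ coordinates, so if every heavy bucket lay inside a single window of width $2\rho$ (in the rescaled scale), that window's center $\lambda$ would witness $x\in\on{Cons}(\delta,\rho)$, a contradiction. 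Hence the heavy buckets span width $>2\rho$, and for bucket width $\nu'\le\rho/4$ the leftmost and rightmost heavy buckets have gap $>3\rho/2$. Running the same-sign / cross-sign split on this pair — and treating the near-$0$ cross-sign subcase via conclusion~(1) as above — closes the argument.
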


We record two useful tensorization statements.
\begin{lemma}[{\cite[Lemma~3.2]{Tik20}}]\label{lem:tensorization}
Let $\chi_1,\ldots,\chi_m$ be independent random variables.
\begin{itemize}
    \item Assume that for all $\epsilon\ge \epsilon_0$, 
    \[\mb{P}[|\chi_i|\le\epsilon]\le K\epsilon.\]
    Then for $\epsilon\ge \epsilon_0$
    \[\mb{P}[\snorm{(\chi_1,\ldots,\chi_m)}_2\le \epsilon\sqrt{m}]\le(CK\epsilon)^m,\]
    where $C$ is an absolute constant. 
    \item Assume that for some $\eta, \tau >0$, 
    \[\mb{P}[|\chi_i|\le\eta]\le \tau.\]
    Then for $\epsilon\in (0,1]$,
    \[\mb{P}[\snorm{(\chi_1,\ldots,\chi_m)}_2\le \eta\sqrt{\epsilon m}]\le(e/\epsilon)^{\epsilon m}\tau^{m-\epsilon m}.\]
\end{itemize}
\end{lemma}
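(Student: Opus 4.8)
The plan is to prove the two parts separately, each by a now-standard tensorization argument using only independence and elementary one-variable estimates. For the \textbf{first part} I would run an exponential-moment (Laplace transform) argument. Since $\{\snorm{(\chi_1,\ldots,\chi_m)}_2\le\epsilon\sqrt{m}\}$ is the event $\sum_{i=1}^m\chi_i^2/\epsilon^2\le m$, Markov's inequality applied to $\exp(-\sum_i\chi_i^2/\epsilon^2)$ together with independence gives
\[\mb{P}[\snorm{(\chi_1,\ldots,\chi_m)}_2\le\epsilon\sqrt{m}]\le e^m\prod_{i=1}^m\mb{E}\exp(-\chi_i^2/\epsilon^2).\]
So it suffices to show $\mb{E}\exp(-\chi_i^2/\epsilon^2)\le C_0K\epsilon$ for an absolute constant $C_0$ and every $\epsilon\ge\epsilon_0$, after which $C=eC_0$ works (the asserted bound being vacuous once $CK\epsilon\ge1$). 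To estimate this one-dimensional expectation I would use the layer-cake formula together with the substitution $t=e^{-u^2}$ to rewrite it as $\int_0^\infty 2ue^{-u^2}\mb{P}[|\chi_i|<\epsilon u]\,du$, and then split the $u$-integral at $1$: for $u\ge1$ we have $\epsilon u\ge\epsilon\ge\epsilon_0$, so the hypothesis gives $\mb{P}[|\chi_i|<\epsilon u]\le K\epsilon u$; for $u<1$ monotonicity gives $\mb{P}[|\chi_i|<\epsilon u]\le\mb{P}[|\chi_i|\le\epsilon]\le K\epsilon$. Bounding the two Gaussian integrals $\int_0^\infty 2ue^{-u^2}\,du=1$ and $\int_0^\infty 2u^2e^{-u^2}\,du=\sqrt{\pi}/2$ then yields $\mb{E}\exp(-\chi_i^2/\epsilon^2)\le(1+\sqrt{\pi}/2)K\epsilon$, as required.

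For the \textbf{second part} I would argue combinatorially. On the event $\snorm{(\chi_1,\ldots,\chi_m)}_2\le\eta\sqrt{\epsilon m}$ we have $\sum_i\chi_i^2\le\eta^2\epsilon m$, so at most $\epsilon m$ indices can satisfy $|\chi_i|\ge\eta$; equivalently, at least $\lceil(1-\epsilon)m\rceil$ of the independent events $\{|\chi_i|<\eta\}$ occur. Union-bounding over the $\binom{m}{\lceil(1-\epsilon)m\rceil}=\binom{m}{\lfloor\epsilon m\rfloor}$ possible coordinate sets $S$ and using independence together with $\mb{P}[|\chi_i|<\eta]\le\tau$ to bound $\mb{P}[\bigcap_{i\in S}\{|\chi_i|<\eta\}]\le\tau^{|S|}$ gives
\[\mb{P}[\snorm{(\chi_1,\ldots,\chi_m)}_2\le\eta\sqrt{\epsilon m}]\le\binom{m}{\lfloor\epsilon m\rfloor}\tau^{\lceil(1-\epsilon)m\rceil}\le(e/\epsilon)^{\epsilon m}\tau^{m-\epsilon m},\]
using the standard estimate $\binom{m}{\lfloor\epsilon m\rfloor}\le(e/\epsilon)^{\epsilon m}$ and $\tau\le1$.

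The step I expect to need the most care is the one-dimensional estimate of $\mb{E}\exp(-\chi_i^2/\epsilon^2)$ in the first part, precisely because the small-ball hypothesis $\mb{P}[|\chi_i|\le\epsilon']\le K\epsilon'$ is available only in the range $\epsilon'\ge\epsilon_0$; it is the split of the $u$-integral at $u=1$ (legitimate because $\epsilon\ge\epsilon_0$) that lets one handle the small-$u$ contribution. Everything else --- the Markov step, tensorizing by independence, and the binomial estimate in the second part --- is routine.
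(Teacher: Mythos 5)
Your proof is correct, and the paper itself gives no proof of this lemma (it is quoted verbatim from \cite[Lemma~3.2]{Tik20}); the arguments you give are precisely the standard ones — a Laplace-transform/Markov tensorization for the first part and a binomial union bound for the second — which is also how the cited source handles it.
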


We will need a standard concentration estimate for the operator norm of a random matrix with independent centered sub-Gaussian entries.
\begin{lemma}[{\cite[Lemma~4.4.5]{Ver18}}]\label{lem:subgaussian-norm}
There exists an absolute constant $C>0$ such that the following holds. Let $A$ be an $m\times n$ i.i.d.~matrix with mean $0$, sub-Gaussian entries with sub-Gaussian norm at most $K$. Then for any $t\ge 0$ we have
\[\mb{P}[\snorm{A}\le C(\sqrt{m}+\sqrt{n} + t)]\le 2\exp(-t^2/K^2).\]
\end{lemma}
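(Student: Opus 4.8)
This is a standard fact, recorded for instance as \cite[Theorem~4.4.5]{Ver18}; we outline the usual $\epsilon$-net proof. (The displayed bound should of course be read as $\mb{P}[\snorm{A}\ge C(\sqrt{m}+\sqrt{n}+t)]\le 2\exp(-t^2/K^2)$.) By replacing $A$ with $A/K$ we may assume $K=1$, so it suffices to prove $\mb{P}[\snorm{A}\ge C(\sqrt{m}+\sqrt{n}+t)]\le 2e^{-t^2}$ for a sufficiently large absolute constant $C$; the stated form follows upon undoing the scaling.

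\textbf{Step 1 (reduction to a net).} Fix $\tfrac14$-nets $\mc{N}\subseteq\mb{S}^{n-1}$ and $\mc{M}\subseteq\mb{S}^{m-1}$; a volumetric bound lets us take $|\mc{N}|\le 9^n$ and $|\mc{M}|\le 9^m$. The standard approximation estimate for operator norms (see e.g.~\cite[Lemma~4.4.1]{Ver18}) gives
\[\snorm{A}\le 2\max_{x\in\mc{N},\ y\in\mc{M}}\sang{Ax,y},\]
so it is enough to control $\sang{Ax,y}$ for each fixed pair and then union bound.

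\textbf{Step 2 (one fixed pair).} For fixed $x\in\mb{S}^{n-1}$, $y\in\mb{S}^{m-1}$ we have $\sang{Ax,y}=\sum_{i,j}(x_jy_i)a_{ij}$, a linear combination of the independent, mean-zero, sub-Gaussian entries $a_{ij}$ whose coefficient vector $(x_jy_i)_{i,j}$ has Euclidean norm $\snorm{x}_2\snorm{y}_2=1$. Hence $\sang{Ax,y}$ is itself sub-Gaussian with sub-Gaussian norm $O(1)$, and there is an absolute $c>0$ with $\mb{P}[\sang{Ax,y}\ge s]\le 2e^{-cs^2}$ for all $s\ge 0$.

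\textbf{Step 3 (union bound).} Apply Step 2 with $s=C(\sqrt{m}+\sqrt{n}+t)$ and union bound over the at most $9^{n+m}$ pairs in $\mc{N}\times\mc{M}$:
\[\mb{P}\Big[\max_{x\in\mc{N},\ y\in\mc{M}}\sang{Ax,y}\ge C(\sqrt{m}+\sqrt{n}+t)\Big]\le 9^{n+m}\cdot 2\exp\!\big(-cC^2(\sqrt{m}+\sqrt{n}+t)^2\big).\]
Since $(\sqrt{m}+\sqrt{n}+t)^2\ge m+n+t^2$, choosing $C$ large enough that $cC^2\ge\log 9+1$ makes the right-hand side at most $2e^{-t^2}$; combining with Step 1 finishes the proof. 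There is no genuine obstacle here — the result is textbook — and the only points needing (routine) care are the operator-norm/net comparison of Step 1 and the bookkeeping in Step 3 ensuring that the sub-Gaussian tail exponent $\gtrsim m+n+t^2$ dominates the net cardinality $(m+n)\log 9$, which is precisely what forces $C$ to be a sufficiently large absolute constant.
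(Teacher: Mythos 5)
The paper does not prove this lemma; it is cited directly from \cite[Theorem~4.4.5]{Ver18}, and your $\epsilon$-net argument is exactly the textbook proof of that result. You have also correctly flagged that the displayed $\le$ in the paper is a typo for $\ge$; the only (inconsequential) slip is that after Step~1 one should apply Step~2 with threshold $s=C(\sqrt m+\sqrt n+t)/2$ rather than $s=C(\sqrt m+\sqrt n+t)$, which merely rescales the absolute constant $C$.
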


Finally, we will need the following lemma, proved using randomized rounding (cf.~\cite{Liv18}), which is a straightforward generalization of \cite[Lemma~5.3]{Tik20}. We omit details since the proof is identical.
\begin{lemma}\label{lem:round}
Let $y = (y_1,\ldots,y_n)\in \mb{R}^n$ be a vector, and let $\mu>0$, $\lambda\in \mb{R}$ be fixed. Let $\Delta$ denote a probability distribution which is supported in $[-s,s]^{n}$. There exist constants $c_{\ref{lem:round}}$ and $C_{\ref{lem:round}}$, depending only on $s$, for which the following holds. 
Suppose that for all $t\ge\sqrt{n}$,
\[\mb{P}\bigg[\bigg|\sum_{i=1}^nb_iy_i-\lambda\bigg|\le t\bigg]\le \mu t,\]
where $(b_1,\dots, b_n)$ is distributed according to $\Delta$. Then, there exists a vector $y' = (y'_1,\dots,y'_n) \in \mb{Z}^{n}$ satisfying
\begin{enumerate}[(R1)]
    \item $\snorm{y-y'}_\infty\le 1$,
    \item $\mb{P}[|\sum_{i=1}^nb_iy_i'-\lambda|\le t]\le C_{\ref{lem:round}}\mu t$ for all $t\ge \sqrt{n}$,
    \item $\mc{L}(\sum_{i=1}^nb_iy_i',\sqrt{n})\ge c_{\ref{lem:round}}\mc{L}(\sum_{i=1}^nb_iy_i,\sqrt{n})$,
    \item $|\sum_{i=1}^ny_i-\sum_{i=1}^ny_i'|\le C_{\ref{lem:round}}\sqrt{n}$.
\end{enumerate}
\end{lemma}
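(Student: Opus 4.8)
The plan is to construct $y'$ by randomized rounding, as in \cite[Lemma~5.3]{Tik20} (cf.\ \cite{Liv18}): write $y_i=\lfloor y_i\rfloor+\{y_i\}$ and set $y_i':=\lfloor y_i\rfloor+\zeta_i$, where the $\zeta_i\in\{0,1\}$ are independent with $\mb{P}[\zeta_i=1]=\{y_i\}$. Then $y'\in\mb{Z}^n$, and the perturbations $\delta_i:=y_i'-y_i$ are independent, mean zero, and bounded by $1$ in absolute value, so (R1) holds deterministically. It then suffices to show that each of (R2), (R3), (R4) fails with probability $<1/3$ over the $\zeta_i$, since a single $y'$ satisfying all four then exists; all constants will depend only on $s$.

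Property (R4) is immediate, since $\sum_iy_i'-\sum_iy_i=\sum_i\delta_i$ is a sum of $n$ independent mean-zero variables bounded by $1$, so Chebyshev bounds it by $C_{\ref{lem:round}}\sqrt n$ outside an event of probability $\le 1/9$. For (R2) and (R3) one compares $\sum_ib_iy_i'$ with $\sum_ib_iy_i$ through the perturbation $W_b:=\sum_ib_i\delta_i$, where $b\sim\Delta$ is independent of the $\zeta_i$: conditionally on $b$, $W_b$ has mean zero and sub-Gaussian tail $\mb{P}_\zeta[|W_b|\ge u]\le 2\exp(-u^2/(2s^2n))$. For fixed $t\ge\sqrt n$, the inclusion $\{|\sum_ib_iy_i'-\lambda|\le t\}\subseteq\{|\sum_ib_iy_i-\lambda|\le t+|W_b|\}$, conditioning on $b$, and summing the hypothesis $\mb{P}_b[|\sum_ib_iy_i-\lambda|\le\tau]\le\mu\tau$ against the $W_b$-tail over dyadic $\tau\ge\sqrt n$ give the per-scale bound $\mb{E}_\zeta\,\mb{P}_b[|\sum_ib_iy_i'-\lambda|\le t]\le C_{\ref{lem:round}}\,\mu t$; symmetrically, intersecting with the event $\{|W_b|\le C_{\ref{lem:round}}\sqrt n\}$ (which has $\zeta$-probability $\ge 1/2$ for $C_{\ref{lem:round}}$ large in $s$) and applying the sub-additivity $\mc{L}(X,Kr)\le\lceil K\rceil\mc{L}(X,r)$ yields, in expectation over $\zeta$, a matching lower bound for $\mc{L}(\sum_ib_iy_i',\sqrt n)$ in terms of $\mc{L}(\sum_ib_iy_i,\sqrt n)$.

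The delicate step---which I expect to be the main obstacle---is to upgrade these first-moment estimates to a \emph{single} rounding for which (R2) holds for all $t\ge\sqrt n$ simultaneously with a constant independent of $\mu,n$, and (R3) holds with high probability; this is precisely \cite[Lemma~5.3]{Tik20}, whose argument I would follow. Conceptually, Esseen's inequality reduces (R2) for all $t\ge\sqrt n$ to the \emph{single} bound $\int_{|\theta|\le 1/\sqrt n}|\varphi_{y'}(\theta)|\,d\theta\lesssim_s\mu$ (where $\varphi_{y'}(\theta)=\mb{E}_b[e^{i\theta\sum_ib_iy_i'}]$), and (R3) to a matching lower bound on a characteristic-function integral; since rounding perturbs the relevant characteristic functions only by factors differing from $1$ by $O(s^2/n)$ per coordinate on the band $|\theta|\le 1/\sqrt n$ (using $\mb{E}\delta_i=0$), these integrals---and hence, via Esseen, the concentration functions at every scale $t\ge\sqrt n$---are comparable up to a factor depending only on $s$, which a first-moment and union-bound argument realizes for a specific $y'$ outside a small-probability event. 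A final union bound over the three exceptional events yields (R1)--(R4). The second-moment inputs are routine; the substance is this frequency-domain stability, imported from \cite[Lemma~5.3]{Tik20}.
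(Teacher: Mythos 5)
Your overall approach matches the paper's exactly: the paper's entire proof is the single remark that the lemma follows by randomized rounding (\`a la Livshyts) and ``is a straightforward generalization of \cite[Lemma~5.3]{Tik20}''; you propose the same rounding $y_i'=\lfloor y_i\rfloor+\zeta_i$ and explicitly defer to Tikhomirov's Lemma~5.3 for the delicate step, so there is no divergence in strategy. The auxiliary observations (deterministic (R1), Chebyshev for (R4), sub-Gaussian control of $W_b=\sum_i b_i\delta_i$ conditional on $b$) are all correct.

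One point of caution about your Esseen-based gloss, though. You claim (R2) for all $t\ge\sqrt n$ reduces to the single bound $\int_{|\theta|\le 1/\sqrt n}|\varphi_{y'}(\theta)|\,d\theta\lesssim_s\mu$, and that this follows since the characteristic-function integrals for $y$ and $y'$ are comparable. The second half is fine (on average over $\zeta$, $\mathbb{E}_\zeta e^{i\theta b_i\delta_i}=1+O(\theta^2 s^2)$ for $|\theta|\le 1/\sqrt n$, since $\mathbb{E}\delta_i=0$), but the first half does not hold: the hypothesis is a one-sided bound on $\mathbb{P}[|\sum_i b_iy_i-\lambda|\le t]$ for a \emph{fixed} $\lambda$, not a bound on the L\'evy concentration function $\mathcal{L}(\sum_i b_iy_i,t)$. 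Mass can be highly concentrated far from $\lambda$ while the hypothesis holds, in which case $\int_{|\theta|\le 1/\sqrt n}|\varphi_y(\theta)|\,d\theta$ is of order $1/\sqrt n$, far larger than $\mu$; so Esseen cannot be the mechanism here, and the actual argument in \cite[Lemma~5.3]{Tik20} proceeds differently (working with the $\lambda$-centered probabilities directly, via the dyadic-layering first-moment estimate you write in your second paragraph, and a more careful argument than a naive union bound over all dyadic scales). Since you state you would follow Tikhomirov's written proof rather than this sketch, this is a flaw in the framing rather than in the plan, but it is worth correcting to avoid going down a dead end when filling in details.
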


\subsection{The structure theorem}\label{sub:structure-theorem}

Let $\xi$ be a discrete distribution, and let $A = A(\xi)$ denote an $(n-1) \times n$ random matrix, each of whose entries is an independent copy of a $\xi$ random variable. We fix a function $v(A)$ which takes as input an $(n-1)\times n$ matrix and outputs a unit vector in its right kernel. A key ingredient in the proof of the main result of this section, \cref{thm:incompressible}, is a structure theorem for kernel vectors of $A$, which encodes the fact that (with very high probability)  non-almost-constant kernel vectors of $A$ must be maximally unstructured in the relevant sense.

\begin{definition}\label{def:conditional-threshold}
Fix a discrete distribution $\xi$. Let $\vec{\gamma}\in\mb{R}_{\ge 0}^k$ with $\snorm{\vec{\gamma}}_\infty < \min(\vec{p})$, and let $L \ge 1$. Then, for any integer $n \ge 1$ and $x \in \mb{S}^{n-1}$, we define
\[\mc{T}_{\xi,\vec{\gamma}}(x,L) := \sup\bigg\{t \in (0,1): \mc{L}_{\xi,\vec{\gamma}}\bigg(\sum_{i=1}^{n}b_i x_i, t\bigg) > Lt\bigg\}.\]
We also define
\[\mc{T}_{\xi}(x,L) := \sup\bigg\{t \in (0,1): \mc{L}_{\xi}\bigg(\sum_{i=1}^{n}b_i x_i, t\bigg) > Lt\bigg\}.\]
\end{definition}

\begin{proposition}
\label{prop:structure}
Let $\delta, \rho, \epsilon \in (0,1)$. There exist $L_{\ref{prop:structure}} = L_{\ref{prop:structure}}(\delta, \rho, \xi, \epsilon)$, $\gamma_{\ref{prop:structure}} = \gamma_{\ref{prop:structure}}(\delta, \rho, \xi, \epsilon)$ and $n_{\ref{prop:structure}} = n_{\ref{prop:structure}}(\delta, \rho, \xi, \epsilon)$ such that for all $n \ge n_{\ref{prop:structure}}$, with probability at least $1-k^{-2n}$, exactly one of the following holds.
\begin{itemize}
    \item $v(A) \in \on{Cons}(\delta, \rho)$, or
    \item $\mc{T}_{\xi, \gamma_{\ref{prop:structure}}1_k}(v(A), L_{\ref{prop:structure}}) \le \exp((\epsilon-H(\vec{p}))n)$.
\end{itemize}
\end{proposition}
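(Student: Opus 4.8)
The plan is to prove \cref{prop:structure} by a net-plus-union-bound argument over ``bad'' kernel vectors, reducing everything to the inversion-of-randomness estimate \cref{cor:threshold-inversion}. Fix $\epsilon' \ll \epsilon$, and suppose the event described fails: $v(A) \in \on{Noncons}(\delta,\rho)$ but $\mc{T}_{\xi,\gamma 1_k}(v(A),L) > \exp((\epsilon - H(\vec p))n)$ for the $L, \gamma$ to be chosen. The idea is that a non-almost-constant unit vector $x$ for which $\mc{L}_{\xi,\vec\gamma}(\sum_i b_i x_i, t) > Lt$ at the threshold scale $t \approx \exp((\epsilon-H(\vec p))n)$ can be ``rounded'' (via \cref{lem:round}, applied to the multislice distribution on the $b_i$'s, which is supported in a bounded box) to an integer vector $y'$ of the shape controlled by \cref{lem:nonconstant-admissible}: after rescaling by roughly $\sqrt{n}/t$ and rounding, we land in a set $\mc A = A_1 \times \cdots \times A_n$ that is $(N,n,K_1,K_2,K_3,\delta)$-admissible with $N \approx 1/(t\sqrt n) \le \exp((H(\vec p) - \epsilon')n)$, where the admissibility conditions (P1)/(P2) or (Q1)/(Q2) are exactly what \cref{lem:nonconstant-admissible}'s two cases provide (the $\kappa, \kappa'$ thresholds give the nested intervals, and the $\pm$ structure gives the symmetric/two-interval case). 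The rounded vector $y'$ still has $\mc L_{\xi,\vec\gamma}(\sum_i b_i y_i', \sqrt n) \ge c_{\ref{lem:round}} \mc L_{\xi,\vec\gamma}(\sum_i b_i x_i, t)/(\text{something}) \gtrsim L_{\ref{cor:threshold-inversion}} N^{-1}$ if $L$ is chosen large enough relative to $L_{\ref{cor:threshold-inversion}}$.

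Next I would run the union bound. By \cref{cor:threshold-inversion} (applied with parameters $\epsilon'$, $M$ a large constant, $\delta$, and the $K_i$ coming from the finite set $\mc K$), for each admissible $\mc A$ the number of $x \in \mc A$ with $\mc L_{\xi,\vec\gamma}(\sum b_i x_i, \sqrt n) \ge L_{\ref{cor:threshold-inversion}} N^{-1}$ is at most $e^{-Mn}|\mc A|$. The rounding $x \mapsto y'$ with $\|x - y'\|_\infty \le 1$ means that each bad kernel vector $v(A)$ produces (after scaling) a lattice point in the $\ell^\infty$-ball of radius $O(1)$ around a point of $\mc A$, so the relevant count is at most $3^n e^{-Mn}|\mc A|$; and the number of choices of admissible box shapes $\mc A$ — determined by finitely many parameters from $\mc K$, the value of $N$ (at most $\exp(O(n))$ choices), and the endpoints of the finitely many constrained intervals $A_1,\dots,A_{2\delta n}$ (again $\exp(O(n))$ choices, since they lie in $[-K_3 N, K_3 N]$) — is at most $\exp(C n)$ for an absolute-ish $C$. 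Crucially, $v(A)$ being a kernel vector of $A$ means $Ay' \approx 0$, i.e.\ each of the $n-1$ rows $r$ of $A$ satisfies $|\langle r, y'\rangle| \le \|A\| \cdot \|y' - (\sqrt n/t) v(A)\|_2 \lesssim \|A\| \sqrt n$; combined with the anticoncentration bound for $y'$ (property (R2) of \cref{lem:round}, which controls $\mb P[|\langle r, y'\rangle| \le u]$ for $u \ge \sqrt n$), each row is ``caught'' by $y'$ with probability $\lesssim \|A\|\sqrt n \cdot \mu \lesssim L_{\ref{cor:threshold-inversion}} N^{-1} \cdot \mathrm{poly}(n)$, and by independence of the rows the probability that a \emph{fixed} candidate $y'$ is a near-kernel vector of $A$ is at most $(\text{const}\cdot N^{-1} \mathrm{poly}(n))^{n-1} \le N^{-n+1}\exp(o(n))$. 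Multiplying: $\exp(Cn) \cdot 3^n e^{-Mn} |\mc A| \cdot N^{-n+1}\exp(o(n))$, and since $|\mc A| \le (K_3 N)^n$, the $|\mc A|$ and $N^{-n}$ nearly cancel, leaving $\exp((C + \log(3K_3) - M + o(1))n)$, which is $\le k^{-2n}$ once $M$ is taken large enough. One has to be slightly careful to first intersect with the high-probability event $\|A\| \le C\sqrt n$ from \cref{lem:subgaussian-norm}, which costs only $e^{-\Omega(n)}$.

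The main obstacle — and the step that needs the most care — is making the reduction from ``$v(A)$ is a bad kernel vector'' to ``$\mc A$ is admissible and $y'$ has large conditional L\'evy concentration at scale $\sqrt n$'' fully rigorous, in particular getting the scaling right so that $N \le \exp((H(\vec p) - \epsilon')n)$ (this is where the threshold $\exp((\epsilon - H(\vec p))n)$ in the statement is used, with some room for $\epsilon'$), and verifying that the rounded vector genuinely lands in a box of the admissible form. The two cases of \cref{lem:nonconstant-admissible} must be matched to the (P1)/(P2) versus (Q1)/(Q2) alternatives: case (1) of the lemma (many small coordinates, many coordinates in an annulus, possibly of both signs) gives the symmetric/union-of-two-intervals structure (P2), while case (2) (coordinates in $(\kappa,\kappa')/\sqrt n$ and in $-(\kappa',\kappa)/\sqrt n$) gives (Q1)/(Q2); the remaining $\ge (1 - 2\delta)n$ coordinates can be placed in long intervals after rounding because $y'$ has entries bounded by $O(\sqrt n / t) \le nN$. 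A secondary subtlety is that \cref{cor:threshold-inversion} is phrased with the conditional L\'evy functional $\mc L_{\xi,\vec\gamma}$ (coordinates of $b$ on a multislice), so I must ensure that throughout the argument — both in defining $\mc T_{\xi,\vec\gamma}$ and in invoking \cref{lem:round} — the relevant distribution $\Delta$ on $(b_1,\dots,b_n)$ is the conditional one, whose support is contained in a bounded box as required by \cref{lem:round}; the constants $c_{\ref{lem:round}}, C_{\ref{lem:round}}$ then depend only on $\xi$. Finally, one chooses the parameters in the order $\epsilon \to \gamma_{\ref{prop:structure}} \to L_{\ref{prop:structure}} \to n_{\ref{prop:structure}}$, matching the dependency chain in \cref{cor:threshold-inversion}.
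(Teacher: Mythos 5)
Your high-level architecture matches the paper's: level-set decomposition of the threshold, rounding via \cref{lem:round} onto an integer box whose shape is governed by \cref{lem:nonconstant-admissible}, a metric-entropy count of such boxes, and then \cref{cor:threshold-inversion} plus a union bound. The admissibility matching you describe---case (1) of \cref{lem:nonconstant-admissible} giving (P1)/(P2), case (2) giving (Q1)/(Q2)---is exactly right, and so is the parameter ordering. But there is a genuine gap in the step where you bound, for a fixed rounded candidate $y'$, the probability that every row of $A$ catches $y'$. You invoke property (R2) of \cref{lem:round}, which gives an anticoncentration bound on $\sum_i b_i y_i'$ \emph{under the same distribution $\Delta$ that appears in its hypothesis}, and in your ``secondary subtlety'' you say that $\Delta$ should be the multislice-conditioned distribution so that $\mc{L}_{\xi,\vec\gamma}$ and \cref{cor:threshold-inversion} apply. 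The trouble is that the rows of $A$ are distributed as iid copies of $\xi$, not according to the multislice-conditioned law. An upper bound on $\mc{L}_{\xi,\vec\gamma}(\sum_i b_i y_i',u)$ does \emph{not} by itself upper bound $\mb{P}[|\langle r,y'\rangle - \lambda| \le u]$ when $r$ has iid $\xi$ entries, because the unconditional probability also carries the contribution from the event that $r$ falls off the multislice, which the conditioning discards. So the claimed estimate $\mb{P}[\text{fixed }y'\text{ is a near-kernel vector}] \lesssim (N^{-1}\mathrm{poly}(n))^{n-1}$ does not follow as written, and the whole union bound computation is resting on it.

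The missing ingredient is the Litvak--Tikhomirov restriction device, which the paper invokes explicitly (and is carried out in detail in \cite[Proposition~4.2]{JSS20discrete1}): before running the product bound over rows, intersect with the event that all but $O_{\xi,\gamma_{\ref{cor:threshold-inversion}}}(1)$ rows of $A$ lie in the prescribed range-multislice $\{\#\{r_i=a_j\} \in [p_j n - \gamma_j n, p_j n+\gamma_j n]\ \forall j\}$. The complementary event has super-exponentially small probability by Chernoff (each row misses the multislice with probability $e^{-\Omega_\gamma(n)}$, and you need more than a bounded number of misses). On the good event, union-bound over the $\le n^{O(1)}$ choices of exceptional rows, drop those rows from the product, and \emph{condition} each remaining row on lying in its multislice, at which point the conditional anticoncentration bound from \cref{lem:round}/(R2), and hence \cref{cor:threshold-inversion}, applies row by row. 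This costs only $\mathrm{poly}(n)$ and a bounded number of lost rows, both absorbed by the $e^{-Mn}$ from \cref{cor:threshold-inversion} once $M$ is large. Without this step your passage from the conditional L\'evy estimate to the probability that an iid row is caught is unjustified, and it is precisely the point at which the multislice machinery of \cref{sec:multislice} interfaces with the iid model of $A$.
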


The proof of \cref{prop:structure} follows from \cref{cor:threshold-inversion} and \cref{lem:round}. At a high level, we consider dyadic intervals for the threshold function of $v(A)$, round $v(A)$ to an appropriate integer lattice (using \cref{lem:round}), and then use \cref{cor:threshold-inversion}. In order to be in a setting where we can apply \cref{cor:threshold-inversion}, we use an idea of Litvak and Tikhomirov \cite{LT20} -- using the Chernoff bound, we restrict ourselves to the event that all but $O_{\xi,\gamma_{\ref{cor:threshold-inversion}}}(1)$ rows belong to a well-conditioned multislice corresponding to $\xi$. We refer the reader to \cite[Proposition~4.2]{JSS20discrete1} for further details. 

\subsection{Invertibility on the bulk}\label{sub:non-almost-constant}

We are now ready to state and prove the key result of this section.
\begin{theorem}\label{thm:incompressible}
Fix a discrete distribution $\xi$. For any $\delta,\rho,\epsilon > 0$, there exists $C_{\ref{thm:incompressible}} = C_{\ref{thm:incompressible}}(\xi, \delta, \rho, \epsilon) > 0$ and $n_{\ref{thm:incompressible}}(\xi, \delta, \rho, \epsilon) \ge 1$ such that for all $n \ge n_{\ref{thm:incompressible}}$ and $t\ge 0$,
\[\mb{P}\bigg[\inf_{x\in\on{Noncons}(\delta,\rho)}\snorm{M_n(\xi)x}_2\le t/\sqrt{n}\wedge\inf_{y\in\on{Cons}(\delta,\rho)}\snorm{yM_n(\xi)}_2 > C_{\ref{thm:incompressible}}t\bigg]\le C_{\ref{thm:incompressible}}t + \exp((\epsilon-H(\vec{p}))n).\]
\end{theorem}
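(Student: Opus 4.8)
The plan is to combine the structure theorem (Proposition 2.20) with a standard $\epsilon$-net argument on the non-almost-constant vectors, exploiting the fact that, off a low-probability event, every non-almost-constant kernel vector of any $(n-1)\times n$ submatrix is maximally unstructured in the sense that its conditional threshold is at most $\exp((\epsilon-H(\vec p))n)$. First I would reduce to an $(n-1)\times n$ matrix: write $M_n(\xi)$ with rows $R_1,\dots,R_n$, let $A$ be the matrix formed by $R_1,\dots,R_{n-1}$, and note that if $\inf_{x\in\on{Noncons}}\snorm{M_n x}_2 \le t/\sqrt n$ then there is a unit vector $x\in\on{Noncons}$ with $\snorm{Ax}_2\le t/\sqrt n$ and $|\langle R_n,x\rangle|\le t/\sqrt n$. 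On the event from Proposition 2.20, either $v(A)\in\on{Cons}(\delta,\rho)$ (this contributes to the failure of the "$\inf_{y\in\on{Cons}}\snorm{yM_n}_2>C t$" clause, to be dealt with separately, or more precisely absorbed since $A$ has a constant left-null vector then) or $\mc T_{\xi,\gamma1_k}(v(A),L)\le\exp((\epsilon-H(\vec p))n)$.

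The core estimate is the following: condition on $A$ (so $v(A)$ is fixed and, on the good event, has tiny threshold), and condition on the rough event that the coordinates of $v(A)$ realize a well-conditioned multislice profile for $\xi$ among all but $O_{\xi,\gamma}(1)$ of them — this holds off an $\exp(-\Omega(n))$-probability set by Chernoff, following Litvak–Tikhomirov. Then for the last row $R_n$, anti-concentration of $\langle R_n, v(A)\rangle$ is governed by $\mc L_{\xi,\vec\gamma}(\sum b_i v(A)_i, t)$, and since the threshold of $v(A)$ is at most $\exp((\epsilon-H(\vec p))n)$, for every $t \gtrsim \exp((\epsilon-H(\vec p))n)$ we get $\mc L_{\xi,\vec\gamma}(\sum b_i v(A)_i, t)\le Lt$. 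Hence $\mb P[|\langle R_n, v(A)\rangle|\le t]\lesssim Lt + \exp((\epsilon-H(\vec p))n)$. Tensorizing over the choice of which row to "leave out" — i.e. applying this to each of the $n$ choices of distinguished row and using Lemma 2.9 (tensorization) to convert the per-row anti-concentration into a bound on $\snorm{M_n x}_2$ over the relevant net — gives, after a union bound over an $\exp(o(n))$-sized net of $\on{Noncons}(\delta,\rho)$ and a spectral-norm bound (Lemma 2.12) to control the discretization error, a bound of the shape $C t + \exp((\epsilon'-H(\vec p))n)$; one then relabels $\epsilon'$ as $\epsilon$.

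To make the net argument work one uses the classical Rudelson–Vershynin-type dichotomy: a near-null non-almost-constant vector $x$ for $M_n$, together with the bound $\inf_{y\in\on{Cons}}\snorm{yM_n}_2 > Ct$, forces $v(A)$ (for a suitable choice of omitted row) to itself lie in $\on{Noncons}$, so Proposition 2.20 applies to it; conversely if $v(A)\in\on{Cons}(\delta,\rho)$ for all $n$ choices then $M_n$ has a near-constant left-null vector, contradicting the assumed lower bound on $\inf_{y\in\on{Cons}}\snorm{yM_n}_2$ once $C$ is large. The role of the spectral norm bound is that, with probability $1-\exp(-\Omega(n))$, $\snorm{M_n(\xi)}\le C\sqrt n$, so replacing $x$ by the nearest net point changes $\snorm{M_n x}_2$ by $O(\snorm{M_n}\cdot(\text{net mesh})) = O(t)$ when the mesh is $\sim t/(n^{3/2})$, and such a net still has size $\exp(o(n))$, negligible against $\exp(H(\vec p)n)\cdot(Lt+\exp((\epsilon-H(\vec p))n))$ — wait, more carefully: the net size is at most $(Cn/t)^{O(\delta n)}$ from the low-entropy description of $\on{Noncons}$, and choosing $\delta$ small relative to $\epsilon$ keeps this below $\exp(\epsilon n/2)$, so the union bound costs only a factor we can absorb by shrinking $\epsilon$.

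The main obstacle is the anti-concentration input for the last row against the (fixed but adversarial) vector $v(A)$: this is exactly where Proposition 2.20 — and behind it Corollary 2.5 / Theorem 2.2, the inversion-of-randomness estimate on the multislice — does the heavy lifting, since without the sharp threshold bound $\exp((\epsilon-H(\vec p))n)$ one would only get a weak $\mc L$ bound insufficient to beat the entropy term. The other delicate point is handling the conditioning on the well-conditioned-multislice event for the rows of $A$ uniformly in the choice of omitted row while keeping the exceptional probability at $\exp(-\Omega(n))$; this is routine Chernoff bookkeeping but must be done before invoking Proposition 2.20. The remaining steps — tensorization, net construction from Lemma 2.8, the spectral norm bound from Lemma 2.12 — are standard and I would carry them out in that order after establishing the per-row anti-concentration estimate.
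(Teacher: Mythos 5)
Your proposal assembles the right ingredients (the structure theorem for the normal vector, anti-concentration of one independent row or column against it, and the dichotomy with the $\inf_{y\in\on{Cons}}$ clause), but the way you glue them together has two genuine gaps. The fatal one is the union bound over an $\epsilon$-net of $\on{Noncons}(\delta,\rho)$: you assert the net has size $(Cn/t)^{O(\delta n)}$ ``from the low-entropy description of $\on{Noncons}$,'' but you have the entropy backwards --- it is $\on{Cons}(\delta,\rho)$ that is low-entropy, while $\on{Noncons}(\delta,\rho)$ is essentially the whole sphere, and any $t/n^{3/2}$-net of it has size $(Cn/t)^{\Theta(n)}$. No per-point bound of the form $Lt+\exp((\epsilon-H(\vec p))n)$ can beat that entropy (consider constant $t$), and moreover the threshold bound from \cref{prop:structure} applies only to the random kernel vector $v(A)$, not uniformly to net points, so the tensorization-plus-union-bound step has no valid input. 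The paper's proof avoids any net over $\on{Noncons}$ entirely: it uses the deterministic inequality $\snorm{Mx}_2\ge|x_i|\on{dist}(M^{(i)},M^{(-i)})$ over the \emph{columns}, so that a non-almost-constant near-null $x$ (which by \cref{lem:nonconstant-admissible} has at least $\nu n$ coordinates of size at least $\kappa/\sqrt n$) forces $\on{dist}(M^{(i)},M^{(-i)})\le t/\kappa$ for linearly many $i$; by averaging one then only needs to bound $n$ individual distance probabilities, each via $\on{dist}(M^{(i)},M^{(-i)})\ge|\langle M^{(i)},v(M^{(-i)})\rangle|$, the structure theorem applied to $v(M^{(-i)})$, and the restriction to the well-conditioned multislice event for $M^{(i)}$.

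The second gap is your row-removal reduction. Knowing $\snorm{Ax}_2\le t/\sqrt n$ and $|\langle R_n,x\rangle|\le t/\sqrt n$ for some $x\in\on{Noncons}$ tells you nothing about $\langle R_n,v(A)\rangle$ unless $x$ is close to $v(A)$, which would require a lower bound on the second-smallest singular value of the row-submatrix $A$ --- an input you do not have and which the column-distance argument never needs. Relatedly, your dichotomy step points the wrong way: if $v(A)$ (a \emph{right} kernel vector of the row-submatrix) lies in $\on{Cons}$, this does not contradict the hypothesis $\inf_{y\in\on{Cons}}\snorm{yM_n}_2>Ct$, which concerns \emph{left} near-null vectors. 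In the column version the dichotomy does work: $v(M^{(-1)})$ is orthogonal to all columns but the first, hence $\snorm{v(M^{(-1)})M}_2=|\langle v(M^{(-1)}),M^{(1)}\rangle|\le t/\kappa$, so $v(M^{(-1)})\in\on{Cons}$ genuinely contradicts the left-inf clause, leaving only the small-threshold alternative. Switching your decomposition from rows to columns and replacing the net argument by the averaging-over-distances reduction recovers the paper's proof.
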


\begin{proof}
Let $M := M_n(\xi)$ for simplicity, and let $\delta,\rho,\epsilon > 0$ be as in the statement of the theorem. We will denote the columns of $M$ by $M^{(1)},\dots, M^{(n)}$. Also, for each $i \in [n]$, $M^{(-i)}$ denotes the subspace spanned by all columns of $M$ except for $M^{(i)}$. 

\textbf{Step 1: } Let $\gamma = \gamma_{\ref{prop:structure}}(\delta, \rho, \xi, \epsilon)$. Let $W_{\gamma}\subseteq\on{supp}(\xi)^{n}$ denote the set of vectors $x\in\on{supp}(\xi)^{n}$ such that $\#\{x_i=a_j\} \in [p_jn - \gamma n, p_jn+\gamma n]$ for all $j \in [k]$. Let $Q \ge 1$ be a constant such that the event 
\[\mc{W}_{Q}: = \{ |\{i \in [n]: M^{(i)} \notin W_{\gamma}| \le Q \} \]
holds with probability at least $1-k^{-1729n}$. Then, it suffices to bound
\begin{equation}\label{eq:desired}\mb{P}\bigg[\inf_{x\in\on{Noncons}(\delta,\rho)}\snorm{Mx}_2\le t/\sqrt{n}\wedge\inf_{y\in\on{Cons}(\delta,\rho)}\snorm{yM}_2 > Ct\wedge \mc{W}_Q\bigg].\end{equation}
Let us denote the first of the three events in the equation above by $\mc{E}_R$, and the second event by $\mc{E}_L$.

Let $x = x(M)$ denote a vector in $\on{Noncons}(\delta, \rho)$ certifying the event $\mc{E}_R$, so that
\[\snorm{x_1M^{(1)}+\cdots+x_nM^{(n)}}_2\le t/\sqrt{n}.\]
Using \cref{lem:nonconstant-admissible}, there is a set $I \subseteq [n]$ such that $|I|\ge \nu n$ and such that for all $i\in I$, $|x_i| \ge \kappa/\sqrt{n}$, for some $\kappa:=\kappa(\delta, \rho) > 0$. In particular, since for any $i\in [n]$,
$\snorm{x_1M^{(1)} + \dots + x_nM^{(n)}}_{2} \ge |x_i|\on{dist}(M^{(i)}, M^{(-i)}),$ it follows that
\[\on{dist}(M^{(i)}, M^{(-i)}) \le \frac{t}{\kappa}\text{ for all }i \in I.\]
Also, on the event $\mc{W}_{Q}$, there are at least $\nu n/2$ indices $i \in I$ such that $M^{(i)} \in W_{\gamma}$. Thus, we see that
\[\cref{eq:desired} = \mb{P}[\mc{E}_R\wedge\mc{E}_L\wedge\mc{W}_Q]\le \frac{2}{\nu n}\sum_{i=1}^n\mb{P}[\on{dist}(M^{(i)}, M^{(-i)}) \le t/\kappa \wedge \mc{E}_L \wedge M^{(i)} \in W_{\gamma}].\]

\textbf{Step 2: }By symmetry, it suffices to bound $\mb{P}[\mc{M}_1]$, where 
\[\mc{M}_1 := \on{dist}(M^{(1)}, M^{(-1)}) \le t/\kappa \wedge \mc{E}_L \wedge M^{(1)} \in W_{\gamma}.\]
Let $v(M^{(-1)})$ be a unit vector normal to $M^{(-1)}$. Then, by \cref{prop:structure}, except with probability $k^{-2n}$ (over the randomness of $M^{(-1)}$), exactly one of the following holds.
\begin{itemize}
    \item $v(M^{(-1)}) \in \on{Cons}(\delta, \rho)$, or
    \item $\mc{T}_{\xi, \gamma 1_k}(v(M^{(-1)}), L) \le\exp((\epsilon-H(\vec{p}))n),$
\end{itemize}
where $L := L_{\ref{prop:structure}}(\delta, \rho, \xi, \epsilon)$. If the first possibility occurs, then $\mc{M}_1$ cannot hold as $v(M^{(-1)}) \in \on{Cons}(\delta, \rho)$ satisfies
\[\snorm{v(M^{(-1)})M}_{2} = |\langle M^{(1)}, v(M^{(-1)})\rangle | \le \on{dist}(M^{(1)}, M^{(-1)}) \le t/\kappa\le Ct,\]
(choosing $C$ appropriately), which contradicts $\mc{E}_L$. Hence, the second possibility must hold. But then, using $\on{dist}(M^{(1)}, M^{(-1)}) \ge |\langle M^{(1)}, v(M^{(-1)}) \rangle|$, we have that (over the randomness of $M^{(1)}$),
\begin{align*}
    \mb{P}[\on{dist}(M^{(1)}, M^{(-1)}) \le t/\kappa \wedge M^{(1)} \in W_{\gamma}] 
    &\le \mb{P}[|\langle M^{(1)}, v(M^{(-1)})\rangle| \le t/\kappa \mid M^{(1)} \in W_{\gamma}]\\
    &\le \frac{Lt}{\kappa} + \exp((\epsilon-H(\vec{p}))n)).\qedhere
\end{align*}

\end{proof}

\section{Preliminary invertibility estimates}\label{sec:initial-compressible}
In this short section, we will prove a version of \cref{thm:main-approximate} with the weaker singularity estimate $(\snorm{\vec{p}}_{\infty} + o_n(1))^{n}$. This estimate, which generalizes \cite[Theorem~A]{Tik20}, will be used crucially in our refined treatment of invertibility for almost-constant vectors in the next section. The techniques in this section also serve as a gentle warm-up to the next section, where much more involved versions of the arguments are presented.

We begin with the following elementary fact regarding sums of $\xi$ random variables. 
\begin{lemma}\label{lem:p-infty}
Fix a discrete distribution $\xi$. There is $\theta = \theta(\xi) > 0$ such that for all $x\in\mb{S}^{n-1}$, 
\[\mc{L}_{\xi}(b_1x_1+\cdots+b_nx_n,\theta)\le\snorm{\vec{p}}_\infty.\]
\end{lemma}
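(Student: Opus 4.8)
The plan is a dichotomy according to whether $x$ has a coordinate of constant magnitude. Write $\delta_0 := \min_{i\neq j}|a_i-a_j| > 0$ and $\beta := 1-\snorm{\vec p}_\infty$; since $\xi$ has at least two atoms of positive probability, $\beta\in(0,1)$. Fix $c_0 := \min\big(1,\ \snorm{\vec p}_\infty\sqrt{\beta}/C_{\ref{lem:LKR}}\big) > 0$, which depends only on $\xi$, and set $\theta := \delta_0 c_0/3$. The claim is that this $\theta$ works.

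First suppose $\snorm{x}_\infty\ge c_0$, and pick $i_0$ with $|x_{i_0}| = \snorm{x}_\infty$. The $k$ values $a_1x_{i_0},\dots,a_kx_{i_0}$ are pairwise at distance at least $\delta_0|x_{i_0}|\ge\delta_0c_0 = 3\theta > 2\theta$, so no interval of length $2\theta$ meets two of them; hence $\mc{L}(b_{i_0}x_{i_0},\theta)\le\snorm{\vec p}_\infty$. Conditioning on $(b_i)_{i\neq i_0}$ — so that $\sum_i b_ix_i$ becomes a fixed translate of $b_{i_0}x_{i_0}$, whose concentration function is translation invariant — and averaging over $(b_i)_{i\neq i_0}$ gives $\mc{L}_\xi\big(\sum_i b_ix_i,\theta\big)\le\mc{L}(b_{i_0}x_{i_0},\theta)\le\snorm{\vec p}_\infty$, as desired.

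Now suppose $\snorm{x}_\infty < c_0$. Let $S := \{i : x_i\neq 0\}$, so that $\sum_{i\in S}x_i^2 = 1$ and $\sum_{i\in S}b_ix_i = \sum_i b_ix_i$ as random variables. For $i\in S$ put $r_i := \delta_0|x_i|/3 > 0$; exactly as in the previous paragraph the atoms of $b_ix_i$ are $3r_i$-separated, so $1-\mc{L}(b_ix_i,r_i)\ge\beta$. Since $\max_{i\in S}r_i = \delta_0\snorm{x}_\infty/3 < \delta_0c_0/3 = \theta$, \cref{lem:LKR} applies with $r = \theta$ and yields
\[
\mc{L}_\xi\Big(\sum_i b_ix_i,\theta\Big)\ \le\ \frac{C_{\ref{lem:LKR}}\,\theta}{\sqrt{\sum_{i\in S}\big(1-\mc{L}(b_ix_i,r_i)\big)r_i^2}}\ \le\ \frac{C_{\ref{lem:LKR}}\,\theta}{\sqrt{\beta(\delta_0/3)^2\sum_{i\in S}x_i^2}}\ =\ \frac{C_{\ref{lem:LKR}}\,c_0}{\sqrt{\beta}}\ \le\ \snorm{\vec p}_\infty,
\]
the last inequality by the choice of $c_0$; this completes the proof. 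There is no genuine obstacle here: the only thing to watch is that the scale $\theta$ in the conclusion and the cutoff $c_0$ separating the ``large coordinate'' regime from the ``spread out'' regime must be compatible, and this is arranged once and for all by the single relation $\theta = \delta_0c_0/3$. Everything else is the routine pairing of the Kolmogorov--L\'evy--Rogozin inequality with the trivial single-coordinate bound.
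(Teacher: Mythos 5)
Your proof is correct and follows essentially the same route as the paper's: the paper also splits into the two cases $\snorm{x}_\infty \ge \delta$ (handle via the trivial single-coordinate bound $\mc{L}(b_{i_0}x_{i_0},\theta)\le\snorm{\vec p}_\infty$) and $\snorm{x}_\infty < \delta$ (apply \cref{lem:LKR}), citing the analogous argument in \cite[Lemma~3.5]{Tik20}. Your write-up simply makes the choice of $\delta$ (your $c_0$) and $\theta$ explicit and verifies that the same $\theta$ serves both regimes, which is exactly the bookkeeping the paper leaves implicit.
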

\begin{proof}
This is essentially identical to the proof given in \cite[Lemma~3.5]{Tik20}. Briefly, if $\snorm{x}_\infty\ge\delta$, then we can choose $\theta$ small enough (depending on $\delta$ and $\xi$) so the claim is immediate. Otherwise $\snorm{x}_\infty < \delta$ and $\snorm{x}_2 = 1$, in which case the claim follows from \cref{lem:LKR} as long as $\delta$ is sufficiently small depending on $\xi$. 
\end{proof}
Combining the above estimate with the second part of \cref{lem:tensorization}, we have the following. 
\begin{corollary}\label{cor:initial-fixed}
Fix a discrete distribution $\xi$. For every $\epsilon > 0$, there exists $c > 0$ depending on $\epsilon$ and $\xi$ such that for any $x\in\mb{S}^{n-1}$ and $y\in\mb{R}^n$, we have
\[\mb{P}[\snorm{M_n(\xi)x-y}_2\le c\sqrt{n}]\le (\snorm{p}_\infty+\epsilon)^n.\]
\end{corollary}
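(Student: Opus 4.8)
The plan is to reduce the statement to a coordinate-wise anti-concentration bound and then tensorize. Write $M := M_n(\xi)$, and for $i \in [n]$ let $\chi_i := (Mx - y)_i = \sum_{j=1}^n \xi_{ij} x_j - y_i$, where the $\xi_{ij}$ are the (independent) entries of $M$. Then $\chi_1,\dots,\chi_n$ are independent, and each $\chi_i$ has the same distribution as $\sum_{j=1}^n b_j x_j - y_i$, where $b_1,\dots,b_n$ are i.i.d.\ copies of $\xi$.

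Now invoke \cref{lem:p-infty}: there is $\theta = \theta(\xi) > 0$ so that $\mc{L}_\xi\big(\sum_{j=1}^n b_j x_j, \theta\big) \le \snorm{\vec{p}}_\infty$ for every $x \in \mb{S}^{n-1}$. Since $\mc{L}_\xi$ is a supremum over all real shifts, this gives in particular, uniformly in $y_i$,
\[
\mb{P}\big[|\chi_i| \le \theta\big] = \mb{P}\Big[\Big|\sum_{j=1}^n b_j x_j - y_i\Big| \le \theta\Big] \le \mc{L}_\xi\Big(\sum_{j=1}^n b_j x_j, \theta\Big) \le \snorm{\vec{p}}_\infty .
\]
We then apply the second part of \cref{lem:tensorization} to $\chi_1,\dots,\chi_n$ with $\eta = \theta$ and $\tau = \snorm{\vec{p}}_\infty$: for any $\epsilon' \in (0,1]$,
\[
\mb{P}\big[\snorm{Mx - y}_2 \le \theta \sqrt{\epsilon' n}\big] \le \Big(\tfrac{e}{\epsilon'}\Big)^{\epsilon' n} \snorm{\vec{p}}_\infty^{(1-\epsilon')n} = \Big(\big(\tfrac{e}{\epsilon'}\big)^{\epsilon'} \snorm{\vec{p}}_\infty^{1-\epsilon'}\Big)^n .
\]
Because $\snorm{\vec{p}}_\infty < 1$ (as $\xi$ is non-constant with support of size at least two), the quantity $\big(e/\epsilon'\big)^{\epsilon'} \snorm{\vec{p}}_\infty^{1-\epsilon'}$ tends to $\snorm{\vec{p}}_\infty$ as $\epsilon' \to 0^+$; hence we may fix $\epsilon' = \epsilon'(\epsilon,\xi) \in (0,1]$ small enough that this quantity is at most $\snorm{\vec{p}}_\infty + \epsilon$. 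Setting $c := \theta\sqrt{\epsilon'}$ then yields the claim.

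There is no real obstacle here: the only points requiring a line of care are that the Lévy concentration function already absorbs the deterministic shift $y_i$ (so the bound is uniform in $y$), and that $\snorm{\vec{p}}_\infty < 1$ guarantees the limiting value is strictly below $\snorm{\vec{p}}_\infty + \epsilon$, making the choice of $\epsilon'$ possible. Both are immediate from the hypotheses and the earlier lemmas, so the argument is short.
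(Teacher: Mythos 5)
Your proposal is correct and is exactly the paper's argument: the paper's one-line proof of \cref{cor:initial-fixed} is precisely to combine \cref{lem:p-infty} with the second part of \cref{lem:tensorization}, and your choice of $\epsilon'$ small enough that $(e/\epsilon')^{\epsilon'}\snorm{\vec{p}}_\infty^{1-\epsilon'}\le\snorm{\vec{p}}_\infty+\epsilon$, followed by $c=\theta\sqrt{\epsilon'}$, is the intended instantiation.
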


Moreover, combining this corollary with the low metric entropy of $\on{Cons}(\delta, \rho)$ and \cref{lem:subgaussian-norm}, we obtain the following (weak) estimate for invertibility on almost-constant vectors. 

\begin{corollary}\label{cor:initial-compressible}
Fix a discrete distribution $\xi$. For every $\epsilon > 0$, there exist $\delta,\rho,c > 0$ depending on $\epsilon$ and $\xi$ such that for any $y\in\mb{R}^n$,
\[\mb{P}\bigg[\inf_{x\in\on{Cons}(\delta,\rho)}\snorm{M_n(\xi)x-y}_2\le c\sqrt{n}\bigg]\le (\snorm{p}_\infty+\epsilon)^n.\]
\end{corollary}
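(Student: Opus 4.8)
The plan is to combine \cref{cor:initial-fixed} with a standard net argument on the low-entropy set $\on{Cons}(\delta,\rho)$, together with the operator norm bound \cref{lem:subgaussian-norm}. First I would fix $\epsilon > 0$ and invoke \cref{cor:initial-fixed} with $\epsilon/2$ in place of $\epsilon$ to obtain a $c_0 > 0$ (depending on $\epsilon$ and $\xi$) such that for every fixed $x \in \mb{S}^{n-1}$ and $y \in \mb{R}^n$,
\[
\mb{P}[\snorm{M_n(\xi)x - y}_2 \le c_0\sqrt{n}] \le (\snorm{\vec p}_\infty + \epsilon/2)^n.
\]
The point is that this bound is exponentially small with base $\snorm{\vec p}_\infty + \epsilon/2 < 1$, so it can absorb an exponential union bound over a sufficiently fine net, provided the net is small enough — and this is exactly what the almost-constant structure buys us.

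Next I would construct a net for $\on{Cons}(\delta,\rho)$. Recall that $x \in \on{Cons}(\delta,\rho)$ means all but $\delta n$ coordinates of $x$ lie within $\rho/\sqrt n$ of some common value $\lambda$; since $\snorm{x}_2 = 1$ one has $|\lambda| \le 1/\sqrt{(1-\delta)n} + \rho/\sqrt n = O(1/\sqrt n)$, so the ``bulk'' coordinates all lie in an interval of length $O(1/\sqrt n)$ around $0$. The standard counting (cf.\ \cite{Tik20}, and the details recorded in \cite{JSS20discrete1}) shows that $\on{Cons}(\delta,\rho)$ admits a $(c_0/(2K_{\text{op}}))$-net $\mc{N}$ in the Euclidean metric of size at most $(C/\delta)^{\delta n} \cdot (\text{poly stuff})^{\delta n}$, i.e.\ of the form $\exp(O(\delta \log(1/\delta) n))$: we choose the $\lambda$ and the bulk coordinate from a one-dimensional net of size $O(\sqrt n/\rho)$ each, and the $\delta n$ exceptional coordinates and their positions from a net of size $\exp(O(\delta \log(1/\delta)n))$. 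Here $K_{\text{op}}$ is the scale at which we control $\snorm{M_n(\xi)}$. Crucially, by first choosing $\delta$ (and then $\rho$) small enough depending on $\epsilon$ and $\xi$, we can guarantee $|\mc N| \le (\snorm{\vec p}_\infty + \epsilon)^{-n/2} \cdot (\text{subexponential})$, so that the union bound against the estimate above still leaves an exponentially small bound with base, say, $\snorm{\vec p}_\infty + 3\epsilon/4$.

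Then I would run the approximation step. By \cref{lem:subgaussian-norm} applied to the centered matrix $M_n(\xi) - \mb{E}M_n(\xi)$ (whose entries are bounded hence sub-Gaussian with norm $O_\xi(1)$), together with the rank-one bound on $\mb E M_n(\xi)$, we have $\snorm{M_n(\xi)} \le K_{\text{op}}\sqrt n$ with probability at least $1 - 2\exp(-\Omega(n))$ for a suitable constant $K_{\text{op}} = K_{\text{op}}(\xi)$. On this event, if $\inf_{x \in \on{Cons}(\delta,\rho)}\snorm{M_n(\xi)x - y}_2 \le c\sqrt n$ for a witness $x$, pick $x' \in \mc N$ with $\snorm{x - x'}_2 \le c_0/(2K_{\text{op}})$; then $\snorm{M_n(\xi)x' - y}_2 \le c\sqrt n + K_{\text{op}}\sqrt n \cdot c_0/(2K_{\text{op}}) = c\sqrt n + c_0\sqrt n/2$. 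Choosing the final constant $c := c_0/2$, this is at most $c_0\sqrt n$, so the fixed-vector event from the first paragraph holds for $x' \in \mc N$. Taking a union bound over $\mc N$ and adding the $2\exp(-\Omega(n))$ failure probability for the operator norm yields
\[
\mb{P}\bigg[\inf_{x\in\on{Cons}(\delta,\rho)}\snorm{M_n(\xi)x-y}_2\le c\sqrt{n}\bigg] \le |\mc N|\,(\snorm{\vec p}_\infty + \epsilon/2)^n + 2\exp(-\Omega(n)) \le (\snorm{\vec p}_\infty + \epsilon)^n
\]
for $n$ large, which is the claim.

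The only genuinely delicate point — and the step I would be most careful about — is matching the net cardinality against the anti-concentration bound: we need the metric entropy of $\on{Cons}(\delta,\rho)$ at scale $\sim c_0/K_{\text{op}}$ (a scale that depends on $\epsilon$ and $\xi$ but \emph{not} on $\delta,\rho$) to be $\exp(o_\delta(1)\cdot n)$ with the implied rate going to $0$ as $\delta \to 0$, so that choosing $\delta$ small last makes $|\mc N| \cdot (\snorm{\vec p}_\infty + \epsilon/2)^n$ still exponentially small. This is why $\delta$ and $\rho$ must be allowed to depend on $\epsilon$ (as in the statement) rather than being fixed in advance; the order of quantifiers is: given $\epsilon$, first extract $c_0$ from \cref{cor:initial-fixed}, then $K_{\text{op}}$ from \cref{lem:subgaussian-norm}, and only then choose $\delta$ (hence $\rho$) small enough. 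Everything else is routine and essentially identical to the corresponding deduction in \cite{Tik20}, so I would simply cite \cite{JSS20discrete1} for the net-counting computation rather than reproduce it.
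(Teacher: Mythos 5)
Your overall route — fix $x$, apply \cref{cor:initial-fixed} with $\epsilon/2$, build a net of $\on{Cons}(\delta,\rho)$ whose cardinality is $\exp(o_\delta(1)\cdot n)$, control the approximation error via \cref{lem:subgaussian-norm}, and only then choose $\delta,\rho$ small depending on $\epsilon,\xi$ — is exactly the deduction the paper intends. However, there is one genuine gap: your operator-norm step is false whenever $\mb{E}[\xi]\neq 0$, which is the typical case (e.g.\ $\on{Ber}(p)$). You write that \cref{lem:subgaussian-norm} applied to $M_n(\xi)-\mb{E}M_n(\xi)$ ``together with the rank-one bound on $\mb{E}M_n(\xi)$'' gives $\snorm{M_n(\xi)}\le K_{\mathrm{op}}\sqrt{n}$; but $\mb{E}M_n(\xi)=\mb{E}[\xi]J_{n\times n}$ has operator norm $|\mb{E}[\xi]|\,n$, so in fact $\snorm{M_n(\xi)}=\Theta(n)$ with high probability. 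Consequently the approximation step $\snorm{M_n(\xi)(x-x')}_2\le K_{\mathrm{op}}\sqrt{n}\,\snorm{x-x'}_2$ fails: with a net at the constant scale $c_0/(2K_{\mathrm{op}})$ the term $\mb{E}[\xi]\,(1_n^{\mathsf T}(x-x'))\,1_n$ can have norm of order $n$, not $c_0\sqrt n$, and this cannot be absorbed by refining the net uniformly (a $c/\sqrt n$-scale net of $\on{Cons}(\delta,\rho)$ would be exponentially too large).

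The fix is the standard one which the paper explicitly flags in the analogous statement \cref{cor:non-elementary}: since $\xi$ may not have mean zero, one must densify the net along the direction $1_n$ (cf.\ \cite[Proposition~3.6]{Tik20} and \cite[Proposition~2.7]{JSS20discrete1}). Concretely, decompose $M_n(\xi)(x-x') = (M_n(\xi)-\mb{E}[\xi]J_{n\times n})(x-x') + \mb{E}[\xi]\,(1_n^{\mathsf T}(x-x'))\,1_n$, bound the first term by \cref{lem:subgaussian-norm} at the coarse scale, and control the second by choosing the net so that the one-dimensional component $\langle x,1_n\rangle/\sqrt n$ is approximated at scale $O(1/\sqrt n)$; since this component is one-dimensional, the densification only multiplies the net cardinality by a polynomial factor in $n$ and does not disturb your entropy-versus-anticoncentration bookkeeping, which is otherwise correct (including the order of quantifiers: $c_0$ from \cref{cor:initial-fixed}, then $K_{\mathrm{op}}$, then $\delta$ and $\rho$). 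With that modification your argument goes through and coincides with the paper's.
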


Next, we show that with very high probability, the inverse of any fixed vector is unstructured.
\begin{proposition}\label{prop:initial-unstructured}
Fix a discrete distribution $\xi$. For every $\epsilon,\eta > 0$, there exist $\delta,\rho,L > 0$ depending on $\epsilon, \eta, \xi$ such that for any $y\in\mb{R}^n$,
\[\mb{P}\bigg[\exists x\in\mb{S}^{n-1}: M_n(\xi)x\parallel y\wedge(x\in\on{Cons}(\delta,\rho)\vee\mc{T}_{\xi}(x,L)\ge(\snorm{\vec{p}}_\infty+\eta)^n\bigg]\le(\snorm{\vec{p}}_\infty+\epsilon)^n.\]
\end{proposition}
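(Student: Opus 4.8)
The plan is to follow the proof of the structure theorem \cref{prop:structure} (i.e.~\cite[Proposition~4.2]{JSS20discrete1}) almost verbatim, with the multislice inversion estimate \cref{cor:threshold-inversion} replaced by its independent-model analogue \cref{cor:independent-threshold-inversion} — so that, pleasantly, the reduction to ``well-conditioned multislices'' disappears entirely — and with the preliminary bounds \cref{cor:initial-fixed,cor:initial-compressible} invoked so as to produce the base $\snorm{\vec p}_\infty$ rather than $\mc L(\xi,\cdot)$. Write $M := M_n(\xi)$. First I would split the event $\{\exists x: Mx\parallel y,\ \dots\}$ into the \emph{compressible} part ($x\in\on{Cons}(\delta,\rho)$) and the \emph{structured} part ($x\in\on{Noncons}(\delta,\rho)$ and $\mc T_\xi(x,L)\ge(\snorm{\vec p}_\infty+\eta)^n$), and condition throughout on $\mc E_{\mr{op}} := \{\snorm M\le C_0\sqrt n\}$, which fails with probability $\le 2e^{-n}$ by \cref{lem:subgaussian-norm}; on $\mc E_{\mr{op}}$ one also has $|\lambda|\snorm y_2 = \snorm{Mx}_2 \le C_0\sqrt n$ whenever $Mx=\lambda y$ and $\snorm x_2=1$.

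For the compressible part, on $\mc E_{\mr{op}}$ the point $\lambda y$ lies on a fixed segment of length $\le 2C_0\sqrt n$; covering this segment by $O(1)$ points $y^{(1)},\dots,y^{(m)}$ at scale $c\sqrt n$ (with $\delta,\rho,c$ from \cref{cor:initial-compressible} applied with a small parameter $\epsilon'$), the event $Mx\parallel y$ with $x\in\on{Cons}(\delta,\rho)$ forces $\inf_{x'\in\on{Cons}(\delta,\rho)}\snorm{Mx'-y^{(j)}}_2\le c\sqrt n$ for some $j$, and a union bound with \cref{cor:initial-compressible} bounds the compressible contribution by $\tfrac12(\snorm{\vec p}_\infty+\epsilon)^n$ once $\epsilon'$ is small and $n$ is large.

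For the structured part, I would first observe (from \cref{lem:nonconstant-admissible} and \cref{lem:LKR}, applied to the $\Omega(n)$ coordinates of $x$ of size $\Theta(1/\sqrt n)$ supplied by the former) that $\mc L_\xi(\sum_i b_ix_i,r)\le\max(C_7r,C_8/\sqrt n)$ for every $x\in\on{Noncons}(\delta,\rho)$, so that $\mc T_\xi(x,L)<c_3/\sqrt n$ once $L>C_7$; hence it suffices to union bound over the $O(n)$ dyadic scales $t\in[(\snorm{\vec p}_\infty+\eta)^n,\,c_3/\sqrt n)$. Fixing such a $t$ and setting $s:=2t$, any witness $x$ has $\mc T_\xi(x,L)\in[t,2t)$, and the definition of $\mc T_\xi$ then pins $\mc L_\xi(\sum_i b_ix_i,s)\in[Ls/2,Ls]$. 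Rescaling $z:=(\sqrt n/s)x$ and applying \cref{lem:round} (its hypothesis holds with $\mu=Ls/\sqrt n$, since for $r\ge\sqrt n$ one has $\mc L_\xi(\sum_i b_iz_i,r)=\mc L_\xi(\sum_i b_ix_i,rs/\sqrt n)\le L\cdot rs/\sqrt n$ because $rs/\sqrt n\ge s>\mc T_\xi(x,L)$) produces $z'\in\mb Z^n$ with $\snorm{z-z'}_\infty\le 1$ and $\ell:=\mc L_\xi(\sum_i b_iz'_i,\sqrt n)\asymp Ls\asymp L/N$ for $N\asymp 1/s$; moreover, by \cref{lem:nonconstant-admissible}, after permuting coordinates $z'$ lies in some $(N,n,K_1,K_2,K_3,\delta)$-admissible box $\mc A$ with $K_1,K_2,K_3$ depending only on $\delta,\rho$. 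Taking the free parameter of \cref{cor:independent-threshold-inversion} to be $\epsilon_0:=\tfrac12\log((\snorm{\vec p}_\infty+\eta)/\snorm{\vec p}_\infty)$ (so that $N\asymp 1/s\le\snorm{\vec p}_\infty^{-n}e^{-\epsilon_0n}$) and $L$ large enough that $\ell\ge L_{\ref{cor:independent-threshold-inversion}}/N$, that corollary gives that $\mc A$ contains at most $e^{-Mn}(K_3N)^n$ such structured vectors. Finally, for a fixed $z'$, on $\mc E_{\mr{op}}$ one has $\snorm{M(z-z')}_2\le C_0n$ and $Mz=\mu y$ with $|\mu|\lesssim n/s$, so $Mx\parallel y$ forces $\snorm{Mz'-\mu^{(j)}y}_2\le 2C_0n$ for one of $O(1/s)$ net points; the coordinates of $Mz'-\mu^{(j)}y$ are independent with Lévy concentration $\mc L_\xi(\sum_i b_iz'_i,O(\sqrt n))\le O(\ell)\le O(L/N)$ at scale $O(\sqrt n)$, so \cref{lem:tensorization} bounds this probability by $(C_{12}L/N)^n$. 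Multiplying the number of boxes, the $\le e^{-Mn}(K_3N)^n$ structured vectors, the $O(1/s)$ net points, and the $(C_{12}L/N)^n$ per-vector probability and summing over the $O(n)$ scales, the powers of $N$ cancel and one is left with $O(n)\cdot(\#\text{boxes})\cdot e^{-Mn}\cdot(K_3C_{12}L/(\snorm{\vec p}_\infty+\eta))^n$, which is $\le\tfrac12(\snorm{\vec p}_\infty+\epsilon)^n$ once $M$ is taken large in terms of $\xi,\delta,\rho,\epsilon,\eta$.

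The step I expect to be the main obstacle is the metric-entropy bookkeeping needed to bound the number of admissible boxes $\mc A$ by $B^n$ for a constant $B=B(\delta,\rho)$: since $x$ is only a unit vector it can carry $\Theta(n)$ coordinates much larger than $1/\sqrt n$, and naïvely the corresponding coordinates of $z'$ would force boxes with interval lengths $\omega(N)$ (violating the admissibility constraint $|A_1|\cdots|A_n|\le(K_3N)^n$) or super-exponentially many box shapes. Controlling this — by using recentered length-$(2N+1)$ intervals for the ``bulk'' coordinates together with a slowly growing threshold ($\gg\sqrt{\log n}$) separating them from the $o(n)$ genuinely large coordinates, and carefully matching all scale constants so that the $N^{\pm n}$ cancellation above goes through — is exactly the bookkeeping carried out for the analogous \cref{prop:structure} in \cite[Proposition~4.2]{JSS20discrete1}, which I would follow. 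The remaining cases, where $y$ has fewer than two nonzero coordinates (in particular $y=0$, i.e.\ the genuine kernel-vector case), are strictly easier: the scalar $\mu$ is then determined (or absent), so the $O(1/s)$ net factor is not incurred at all.
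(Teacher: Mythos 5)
Your proposal is essentially the same as the paper's intended argument: condition on an operator-norm event, treat the compressible part by a net at scale $\sqrt n$ together with \cref{cor:initial-compressible}, treat the structured (non-compressible, high-threshold) part by discretizing both the scale of $\mc T_\xi$ and the scalar multiplier of $y$, rounding via \cref{lem:round} into admissible boxes supplied by \cref{lem:nonconstant-admissible}, and then counting structured vectors with \cref{cor:independent-threshold-inversion} (the independent analogue of \cref{cor:threshold-inversion}, which is correctly the one to use here since the unconditional threshold $\mc T_\xi$ is involved and no multislice conditioning is needed). The cancellation $N^{n}\cdot N^{-n}$, the absorption of the net and box counts into the free parameter $M$, and the choice of $\epsilon_0$ so that $1/s \le \snorm{\vec p}_\infty^{-n}e^{-\epsilon_0 n}$ are all handled correctly, and you rightly flag the metric-entropy bookkeeping for the admissible boxes as the delicate step, deferring to \cite[Proposition~4.2]{JSS20discrete1} exactly as the paper does.

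One small correction: since $\xi$ need not have mean zero, \cref{lem:subgaussian-norm} only gives $\snorm{M_n(\xi)-\mb E[\xi]J_{n\times n}}\le K\sqrt n$ with high probability, so $\snorm{M_n(\xi)}$ is in general $\Theta(n)$, not $O(\sqrt n)$; the correct operator-norm event is $\mc E_K = \{\snorm{M_n(\xi)-\mb E[\xi]J_{n\times n}}\le K\sqrt n\}$ as used elsewhere in the paper, and $K$ should be taken large enough that $\mb P[\mc E_K^c]\le\snorm{\vec p}_\infty^{2n}$ (your stated bound $2e^{-n}$ is insufficient when $\snorm{\vec p}_\infty+\epsilon<1/e$). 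This only changes the various net sizes by polynomial factors and does not affect the argument; indeed the paper uses the cruder $\snorm{\vec p}_\infty^{-2n}$-point uniform grid for the multiplier of $y$ where you use a scale-adaptive $O(1/s)$-point net, and both absorb easily into the $e^{-Mn}$ savings.
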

\begin{proof}
This follows essentially from combining \cref{cor:initial-compressible} with a cruder analogue of \cref{prop:structure}, the only difference being that we are considering $M_{n}(\xi)x \parallel y$ for arbitrary $y \in \mb{R}^{n}$ as opposed to only $y = 0$.

To handle this last point, we begin by choosing (using \cref{lem:subgaussian-norm}) a sufficiently large constant $K$ so that $\mc{E}_K = \{\snorm{M_n(\xi) - \mb{E}[\xi]J_{n\times n}}\le K\sqrt{n}\}$ satisfies $\mb{P}[\mc{E}_K^c]\le\snorm{\vec{p}}_\infty^{2n}$. Then, it suffices to restrict to $\mc{E}_K$. Moreover, by the triangle inequality, we see that on the event $\mc{E}_K$, $\snorm{M_n(\xi)} \le K\sqrt{n} + \mb{E}[\xi]n$, so that in particular, on the event in the proposition (intersected with $\mc{E}_K$), we have that $M_{n}(\xi)x = ty_0$ with $y_0 \in \mb{S}^{n-1}$ fixed and for some $t \in \mb{R}$ with $|t| \le K\sqrt{n} + \mb{E}[\xi]n$.

Now, for the treatment of vectors in $\on{Cons}(\delta, \rho)$, we can divide the range of $t$ into $n^{3}$ uniformly spaced intervals, apply \cref{cor:initial-compressible} with $y$ equal to the mid-point of an interval times $y_0$, and use the union bound. For the treatment of vectors in $\on{Noncons}(\delta, \rho) \wedge \mc{T}(x,L) \le (\snorm{\vec{p}}_{\infty} + \epsilon)^{n}$, we divide the range of $t$ into $\snorm{\vec{p}}_{\infty}^{-2n}$ equally spaced intervals, use a slight generalization of the argument in the proof \cite[Proposition~4.2]{JSS20discrete1} with $M$ sufficiently large (depending on $\xi$) also for $y$ equal to the mid-point of an interval times $y_0$, and finally use the union bound. We leave the details to the interested reader. \qedhere  
\end{proof}

Using \cref{cor:initial-compressible} and \cref{prop:initial-unstructured}, we can prove the following weaker version of \cref{thm:main-approximate}.

\begin{theorem}\label{thm:weak}
Let $\xi$ be a discrete random variable. For any $\epsilon > 0$, there exist $C, n_0 > 0$ depending on $\xi, \epsilon$ such that for all $n \ge n_0$ and $t\ge 0$,
\[\mb{P}[s_n(M_n)\le t/\sqrt{n}]\le C t + (\snorm{\vec{p}}_\infty+\epsilon)^n.\]
\end{theorem}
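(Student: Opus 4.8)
The plan is to combine the invertibility estimate on almost-constant vectors (\cref{cor:initial-compressible}) with the structural dichotomy for kernel vectors of $(n-1)\times n$ submatrices (the crude analogue of \cref{prop:structure} packaged in \cref{prop:initial-unstructured}) and a distance argument, following the now-standard route of Rudelson--Vershynin and Tikhomirov. First, I would fix $\epsilon > 0$ and feed a suitably small auxiliary parameter (say $\epsilon/2$) into \cref{cor:initial-compressible} and \cref{prop:initial-unstructured} to obtain constants $\delta, \rho, c, L, \eta > 0$, where $\eta$ is chosen so that $(\snorm{\vec p}_\infty + \eta)^n$ is still comfortably below the target and $L$ is the associated threshold constant. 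It is convenient to also take $\eta$ small enough that $(\snorm{\vec p}_\infty+\eta)^n \le (\snorm{\vec p}_\infty+\epsilon/2)^n$.

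The decomposition of the sphere is $\mb{S}^{n-1} = \on{Cons}(\delta,\rho) \cup \on{Noncons}(\delta,\rho)$. For the almost-constant part, $\inf_{x\in\on{Cons}(\delta,\rho)}\snorm{M_n x}_2 \le t/\sqrt n \le c\sqrt n$ for small $t$ is controlled directly by \cref{cor:initial-compressible} with $y = 0$ (for $t \ge c n$ the bound $Ct$ is $\ge 1$ and there is nothing to prove), giving a contribution of $(\snorm{\vec p}_\infty + \epsilon/2)^n$. For the non-almost-constant part, I would run the column-distance argument: if $x \in \on{Noncons}(\delta,\rho)$ has $\snorm{M_n x}_2 \le t/\sqrt n$, then by \cref{lem:nonconstant-admissible} a linear number of coordinates $x_i$ satisfy $|x_i| \ge \kappa/\sqrt n$, hence $\on{dist}(M^{(i)}, M^{(-i)}) \le t/\kappa$ for those $i$, so by averaging and symmetry it suffices to bound $n\,\mb{P}[\on{dist}(M^{(1)},M^{(-1)}) \le t/\kappa]$. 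Writing $v = v(M^{(-1)})$ for a unit normal to the remaining columns, $\on{dist}(M^{(1)},M^{(-1)}) \ge |\langle M^{(1)}, v\rangle|$; conditioning on $M^{(-1)}$ and applying the dichotomy of \cref{prop:initial-unstructured} to $v$ (with $y$ taken to be $0$ here, since we only need the $\mc{E}_v$ statement), either $v \in \on{Cons}(\delta,\rho)$ — an event of probability at most $(\snorm{\vec p}_\infty+\epsilon/2)^n$, absorbed — or $\mc{T}_\xi(v, L) \le (\snorm{\vec p}_\infty+\eta)^n$, in which case the definition of $\mc{T}_\xi$ together with $\mc{L}_\xi(\sum b_i v_i, s) \le \max(Ls, (\snorm{\vec p}_\infty+\eta)^n)$ for all $s$ gives $\mb{P}[|\langle M^{(1)}, v\rangle| \le t/\kappa] \le Lt/\kappa + (\snorm{\vec p}_\infty+\eta)^n$. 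Multiplying by $n$ and adding the pieces yields $\mb{P}[s_n(M_n) \le t/\sqrt n] \le C t + n(\snorm{\vec p}_\infty+\eta)^n + (\snorm{\vec p}_\infty+\epsilon/2)^n + (\text{failure probabilities})$, and all the exponential terms are $\le (\snorm{\vec p}_\infty+\epsilon)^n$ for $n$ large, after adjusting $\eta$ downward if necessary to kill the factor of $n$.

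The main obstacle is bookkeeping rather than conceptual: one must ensure the error probabilities from the various high-probability events (the exceptional event in \cref{prop:initial-unstructured} where $v$ is structured or has large threshold, and the event that the matrix has atypically large operator norm, which enters \cref{prop:initial-unstructured}'s proof but not its statement) are all genuinely of the form $(\snorm{\vec p}_\infty + o_n(1))^n$ and can be absorbed after the union bound over $n$ columns; this forces the auxiliary $\epsilon$ fed into the earlier results to be taken strictly smaller than $\epsilon$ and the linear factor $n$ to be swallowed by shrinking $\eta$. A second minor point is the case analysis on the size of $t$: for $t$ above an absolute constant multiple of $\sqrt n$ (or $n$) the bound $Ct$ exceeds $1$ and the statement is vacuous, so one only needs the estimates in the regime $t \le c\sqrt n$ where \cref{cor:initial-compressible} applies directly. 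I expect the proof to be short — essentially a repackaging of \cref{cor:initial-compressible} and \cref{prop:initial-unstructured} through the distance-to-columns identity — which is consistent with the excerpt stating these two results were proved precisely to make this deduction immediate.
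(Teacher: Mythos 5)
Your plan follows the same route the paper intends (the paper's ``proof'' is a one-line pointer to \cite[Section~5]{Tik20} and to the more elaborate deduction in \cref{sec:deduction-main}): split $\mb{S}^{n-1}$ into $\on{Cons}$ and $\on{Noncons}$, handle $\on{Cons}$ by \cref{cor:initial-compressible}, and run the distance-to-columns argument on $\on{Noncons}$ with the structural dichotomy of \cref{prop:initial-unstructured} controlling the normal vector. Two details of the plan, as written, would break.

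First, the normal vector $v = v(M^{(-1)})$ satisfies $(M^{(-1)})^\top v = 0$, equivalently $M_n^\top v \parallel e_1$; it is \emph{not} a kernel vector of $M_n$ --- indeed $\langle M^{(1)}, v\rangle$ is exactly the quantity you want to anticoncentrate, and it is nonzero in general. So you must apply \cref{prop:initial-unstructured} to $M_n^\top$ with $y = e_1$, not with $y = 0$. The reason the proposition allows arbitrary $y \in \mb{R}^n$ is precisely to enable this passage: taking $y$ along a coordinate direction effectively deletes that row, turning the $n\times n$ statement into a dichotomy for kernel vectors of the $(n-1)\times n$ submatrix. With $y = 0$ the conclusion concerns kernel vectors of $M_n$ itself, which is a different (and here unhelpful) object. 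The paper's \cref{prop:structure}, which you rightly identify as the parallel statement, is stated directly for $(n-1)\times n$ matrices for exactly this reason.

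Second, the prefactor in the distance step should be a constant, not $n$. Since $x\in\on{Noncons}(\delta,\rho)$ forces $|x_i|\ge\kappa/\sqrt n$ on at least $\nu n$ indices, the small-singular-value event implies that at least $\nu n$ of the events $\{\on{dist}(M^{(i)},M^{(-i)})\le t/\kappa\}$ hold simultaneously, and Markov's inequality gives
\[
\mb{P}[\mc{E}_R] \;\le\; \frac{1}{\nu n}\sum_{i=1}^n\mb{P}\big[\on{dist}(M^{(i)},M^{(-i)})\le t/\kappa\big] \;=\; \frac{1}{\nu}\,\mb{P}\big[\on{dist}(M^{(1)},M^{(-1)})\le t/\kappa\big].
\]
Your ``$n\,\mb{P}[\cdots]$'' would leave a $t$-linear term of size $\sim nLt/\kappa$, whose $n$-dependent coefficient cannot be killed by shrinking $\eta$ --- that adjustment only tames the exponential terms, and the target $Ct+(\snorm{\vec p}_\infty+\epsilon)^n$ requires $C$ to be an absolute constant. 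You do write the final $t$-term as $Ct$, so this may be a slip rather than a conceptual error, but the displayed factor of $n$ is incorrect. With these two corrections the plan reproduces the intended deduction.
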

\begin{proof}
The deduction of this theorem follows from the argument in \cite[Section~5]{Tik20} with the application of \cref{cor:initial-compressible} and \cref{prop:initial-unstructured} at the appropriate steps. A more complicated version of this deduction also appears in \cref{sec:deduction-main}, so we omit details.
\end{proof}

\section{Almost-constant vectors}\label{sec:compressible}
The goal of this section is to prove \cref{thm:compressible}. The proof is presented at the end of the section and needs a few intermediate steps. 

For the proof, we will need to isolate the following natural class of almost-elementary vectors.
\begin{definition}(Almost-elementary vectors) For $\delta > 0$ and $i,j \in [n]$, $i\neq j$, let
\begin{align*}
    \on{Elem}_i(\delta) &:= \{x \in \mb{S}^{n-1}: \snorm{x-e_i}_{2} \le \delta\},\\
    \on{Elem}_{i,j}(\delta) &:= \{x \in \mb{S}^{n-1}: \snorm{x-(e_i-e_j)/\sqrt{2}}_{2} \le \delta\},\\
    \on{Elem}'_{i,j}(\delta) &:= \{x \in \mb{S}^{n-1}: \snorm{x-(e_i + e_j)/\sqrt{2}}_{2} \le \delta\}.  
\end{align*}
Also, let
\begin{align*}
    \on{Elem}(\delta) &:= \bigcup_{i \in [n]}\on{Elem}_i(\delta) \cup \bigcup_{i,j \in [n], i\neq j}\on{Elem}_{i,j}(\delta),\\
    \on{Elem}'(\delta) &:= \on{Elem}(\delta) \cup \bigcup_{i,j \in [n], i \neq j}\on{Elem}'_{i,j}(\delta).
\end{align*}
\end{definition}

For excluding almost-constant vectors which are not almost-elementary, we will need to develop sharp results regarding the L\'evy concentration function of discrete random variables. 
\begin{proposition}\label{prop:p-characterization}
Fix a discrete distribution $\xi$ and $\delta\in(0,1/2)$. There exists $\theta = \theta(\delta,\xi) > 0$ such that for all $x\in\mb{S}^{n-1} \setminus \on{Elem}'(\delta)$, 
\[\mc{L}_{\xi}(b_1x_1+\cdots+b_nx_n,\theta)\le\snorm{\vec{p}}_2^2-\theta.\]
\end{proposition}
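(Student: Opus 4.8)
The statement asserts a \emph{uniform} gap: for every $x \in \mb{S}^{n-1}$ that is not within $\delta$ of an elementary vector $\pm e_i$ or $\pm(e_i\pm e_j)/\sqrt2$, the L\'evy concentration function of $\sum b_i x_i$ at some fixed scale $\theta$ is bounded away from $\snorm{\vec p}_2^2$, the latter being the exact value $\mc{L}_\xi(b_1 - b_2, 0) = \mb{P}[\xi = \xi']$ achieved by two-dimensional vectors like $(e_1-e_2)/\sqrt2$ (up to normalization). So the proposition is really a robust, quantitative version of the assertion that only (near-)elementary vectors attain the extremal anti-concentration $\snorm{\vec p}_2^2$. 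The natural strategy is a two-step case split on the ``spread'' of the coordinates of $x$, exactly paralleling \cref{lem:p-infty} but one level finer.

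\textbf{Step 1: reduce to a statement about pairs of large coordinates.} First I would dispose of the ``genuinely spread out'' case. If $x$ has, say, three coordinates of magnitude at least some constant $c_0 = c_0(\delta)$ — more precisely, if after removing the two largest-magnitude coordinates the remaining vector still has Euclidean norm bounded below by a constant depending on $\delta$ — then one can apply the Kolmogorov--Rogozin inequality \cref{lem:LKR} to a suitable sub-collection of coordinates (grouping coordinates into blocks each of which carries a constant amount of $\ell^2$-mass and hence a constant amount of anti-concentration deficit $1 - \mc{L}(b_ix_i, r_i) \ge c$ at the appropriate scale $r_i \asymp |x_i|$) to conclude $\mc{L}_\xi(\sum b_i x_i, \theta) \le C/\sqrt{(\text{number of blocks})} \le \snorm{\vec p}_2^2 - \theta$ once the number of blocks is large enough. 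Since $\snorm{\vec p}_2^2 \le \snorm{\vec p}_\infty < 1$ is a fixed constant, a bounded number of blocks suffices, which is exactly what the non-near-elementary hypothesis will guarantee. The content of being outside $\on{Elem}'(\delta)$ is thus: either $x$ is spread out in the above sense (handled here), or $x$ is concentrated on at most two coordinates but is \emph{not} close to any of the admissible two-coordinate patterns.

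\textbf{Step 2: the genuinely two-dimensional case.} This is the heart of the matter. Here $x = x_1 e_i + x_2 e_j + (\text{small})$, with $x_1^2 + x_2^2$ close to $1$, and we must show that unless $(x_1,x_2)$ is close to $(\pm1,0)$, $(0,\pm1)$, $\tfrac1{\sqrt2}(\pm1,\mp1)$, or $\tfrac1{\sqrt2}(\pm1,\pm1)$, we have $\mc{L}_\xi(b_1 x_1 + b_2 x_2, \theta) \le \snorm{\vec p}_2^2 - \theta$, where $b_1,b_2$ are independent copies of $\xi$ (the contribution of the small remaining coordinates can only decrease the concentration function, via \cref{lem:LKR} again, or be absorbed into $\theta$). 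Now $\mc{L}_\xi(b_1 x_1 + b_2 x_2, 0) = \sup_z \mb{P}[b_1 x_1 + b_2 x_2 = z]$, a purely combinatorial/arithmetic quantity in the finitely many values $\{a_r x_1 + a_s x_2\}_{r,s\in[k]}$ with weights $p_r p_s$. One shows: if two of these values coincide, $a_r x_1 + a_s x_2 = a_{r'} x_1 + a_{s'} x_2$ with $(r,s)\ne(r',s')$, then $(x_1,x_2)$ lies on one of finitely many lines through the origin determined by $\xi$; and the extremal value $\snorm{\vec p}_2^2 = \sum_r p_r^2$ is attained (for $x_1,x_2$ both nonzero) \emph{only} when the pattern of collisions is exactly $a_r x_1 + a_s x_2 = a_s x_1 + a_r x_2$ for all $r,s$, i.e. $x_1 = x_2$ (giving $\tfrac1{\sqrt2}(1,1)$ type) — and similarly $x_1 = -x_2$ after accounting for the possibility, if $\on{supp}(\xi)$ is symmetric, of $a_r x_1 - a_r x_2$ collisions — or when one of $x_1, x_2$ vanishes (the $e_i$ patterns). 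Away from these finitely many exceptional rays and the coordinate axes, the maximum atom of $b_1 x_1 + b_2 x_2$ is a proper sub-sum of $\sum p_r^2$, hence bounded by $\snorm{\vec p}_2^2 - c(\xi)$ for an explicit $c(\xi) > 0$; a compactness argument on the circle (the exceptional set is closed and disjoint from the allowed region once we are outside the $\delta$-neighborhoods) upgrades this to a uniform gap, and finally a continuity/perturbation argument passes from scale $0$ to a small scale $\theta$ and absorbs the neglected coordinates.

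\textbf{Main obstacle.} The delicate point is the \emph{exact} identification in Step 2 of which rays $(x_1,x_2)$ achieve $\sum p_r^2$, and in particular verifying that the only collision patterns compatible with the maximum are the ``transposition'' pattern $r \leftrightarrow s$ (and, in the symmetric-support case, its sign-flipped analogue). This requires a careful argument that any other collision among the $a_r x_1 + a_s x_2$ — one not arising from a symmetry of the multiset of $(a_r, p_r)$ — forces a strict loss, using that distinct atoms have distinct probabilities can \emph{fail}, so one genuinely has to track the full weighted collision structure rather than just counting collisions. This is presumably why the proposition is stated for the specific exceptional set $\on{Elem}'(\delta)$ including \emph{both} signs $e_i \pm e_j$: the $+$ case is live exactly when $\xi$ has a symmetry, and must be excluded unconditionally to get a clean statement. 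A companion estimate \cref{prop:p-characterization-II} (referenced but not shown) presumably handles a residual regime; I would structure the proof so that Step 1 reduces cleanly to Step 2, and cite the Kolmogorov--Rogozin machinery of \cref{lem:LKR} for all the ``spread-out'' bookkeeping rather than redoing it.
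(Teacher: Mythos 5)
Your overall two-step architecture (dispose of spread-out $x$, then analyze a low-dimensional residue) is the right instinct, and your Step~2 discussion correctly locates the crux: which rays $(x_1,x_2)$ can attain $\snorm{\vec p}_2^2$. But there is a genuine gap in Step~1 that a separate idea is needed to close, and it is exactly the idea the paper supplies.

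The gap: the Kolmogorov--Rogozin reduction does not reduce to the two-dimensional case. Consider $x = (e_1+e_2+e_3)/\sqrt3$, or more generally any $x$ whose three largest coordinates have comparable magnitude $\Theta(1)$ and whose remaining coordinates are tiny. This $x$ is bounded away from $\on{Elem}'(\delta)$, so the proposition must cover it, and it does satisfy your Step~1 hypothesis that $\snorm{(x_3,\dots,x_n)}_2$ is bounded below. But \cref{lem:LKR} applied to three blocks gives a bound of order $C_{\ref{lem:LKR}}/\sqrt{1-\snorm{\vec p}_\infty}$, which is useless --- it exceeds $1$ whenever $\snorm{\vec p}_\infty$ is close to $1$, and in any event cannot be forced below $\snorm{\vec p}_2^2-\theta$ with a universal $\theta$. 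Kolmogorov--Rogozin is only effective when the $\ell^2$-mass outside the top two coordinates is spread across \emph{many} small coordinates (as in the paper's reductions where the tail has norm $\ge\delta/4$ while each entry is $\le\delta^4$, giving $\gtrsim\delta^{-6}$ blocks). A bounded number of comparably large coordinates defeats it. The paper isolates this as Case~IV ($|x_1|\approx|x_2|\approx|x_3|$) and handles it by a completely different mechanism: after an approximation it reduces to bounding the atoms of $\xi_1+\xi_2\pm\xi_3$, and Young's convolution inequality together with the observation that $\on{supp}(\xi_1+\xi_2)$ is strictly larger than $\on{supp}(\xi)$ yields the strict gap $\le\snorm{\vec p}_2^2-c_\xi$. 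Your plan has no counterpart to this; adding a "three-coordinate" case treated this way is not optional.

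Two further remarks on Step~2. First, the paper's treatment of the regime where two coordinates dominate but $|x_2|<(1-\delta^5)|x_1|$ (Cases~I and~III) avoids the collision bookkeeping you describe: it runs Cauchy--Schwarz on $\mb P[x_1\xi+x_2\xi'\in[s-c,s+c]]$, notes that equality would force a permutation matching between the atoms of $\xi$ and the shrunk images $x_1^{-1}(s-x_2a)$, and then derives a contradiction purely from the geometry of $\on{supp}(\xi)$ (the shrunk image sits in an interval of length $<m_\xi$). This sidesteps the concern you raise about collisions when $\on{supp}(\xi)$ is arithmetically structured, whereas your weighted-collision-pattern argument would need to rule these out by hand. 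Second, your compactness step to pass from ray-by-ray strictness to a uniform $\theta$ is plausible but unnecessary: because $\xi$ has finite support, the paper's arguments already produce explicit $\xi$-dependent gaps $c_\xi>0$ and a corresponding explicit $\theta$, with no appeal to compactness on the circle or to the companion estimate \cref{prop:p-characterization-II} (which is a separate strengthening under an extra hypothesis, not a patch for a residual regime here).
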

\begin{proof}
Since $\on{Elem}'(\delta)$ is increasing with $\delta$, it suffices to prove the statement for sufficiently small $\delta$ (depending on $\xi$), which will be chosen during the course of the proof. Moreover, we may assume that $|x_1|\ge |x_2|\ge \dots \ge |x_n|$.   

Since $x \notin \on{Elem}_{1}(\delta)$, we must have $\snorm{(x_2,\dots, x_n)}_{2} \ge \delta /2$. In case $|x_2| \le \delta^{4}$, then we are done using \cref{lem:LKR} (cf.~\cite[Lemma~2.6]{JSS20discrete1}) for all sufficiently small $\delta$. Similarly, if $\snorm{(x_3,\dots,x_n)}_{2} \ge \delta/4$ and $|x_3|\le \delta^{4}$, we are done. We now analyze the remaining situations via case analysis. 

\textbf{Case I: }$\delta^{4} \le  |x_2| < (1-\delta^5)|x_1|$. Since 
$\mc{L}_{\xi}(b_1x_1+\dots + b_n x_n, \theta) \le \mc{L}_{\xi}(b_1 x_1 + b_2 x_2, \theta)$, it suffices to bound the latter. Let $\xi'$ be an independent copy of $\xi$. For any $s \in \mb{R}$, we have
\begin{align*}
\mb{P}[x_1 \xi + x_2 \xi'\in [s-c,s+c]]^{2} 
&= \bigg(\sum_{a}\mb{P}[\xi' = a]\mb{P}[|\xi - x_1^{-1}(s-x_2a)|\le c|x_1|^{-1}]\bigg)^{2}\\
&\le\bigg(\sum_a\mb{P}[\xi'=a]^2\bigg)\bigg(\sum_a\mb{P}[|\xi - x_1^{-1}(s-x_2a)|\le c|x_1|^{-1}]^2\bigg)\le\snorm{\vec{p}}_{2}^{4},
\end{align*}
where the sum is over $a\in\on{supp}(\xi)$. Here, the equality is by definition, the first inequality is Cauchy-Schwarz, and the last inequality holds as long as $c > 0$ is chosen small enough in terms of $\delta,\xi$. Let us elaborate on this final point. We choose $c > 0$ small enough so that  $c|x_1|^{-1}\le c\delta^{-4}$ is smaller than $|x_2/x_1|$ times half the minimum gap in $\on{supp}(\xi)$, which is possible since $|x_2/x_1| \ge \delta^{4}$. Now, such a choice of $c$ clearly implies that each summand in $\sum_a\mb{P}[|\xi - x_1^{-1}(s-x_2a)|\le c|x_1|^{-1}]^2$ covers at most a single atom in $\on{supp}(\xi)$, and that different choices of $a, a' \in \on{supp}(\xi)$ cover distinct atoms in $\on{supp}(\xi)$.

Moreover, for such a choice of $c$, equality in the final inequality holds if and only if there is a permutation $\sigma$ on $\on{supp}(\xi)$ such that for all $a \in \on{supp}(\xi)$,
\[\mb{P}[\xi' = \sigma(a)] = \mb{P}[|\xi - x_1^{-1}(s-x_2 a)|\le c|x_1|^{-1}].\]
Summing over all the atoms in $\on{supp}(\xi)$, we see that if equality holds in the final inequality, then 
\[\on{supp}(\xi)\subseteq\bigcup_{j=1}^k[x_1^{-1}(s-x_2 a)-c|x_1|^{-1},x_1^{-1}(s-x_2 a)+c|x_1|^{-1}],\]
so that in particular, $\on{supp}(\xi)$ is contained in an interval of length at most
$|x_2/x_1|m_\xi+2c|x_1|^{-1}$, where $m_\xi = \max\on{supp}(\xi)-\min\on{supp}(\xi)$. But since $|x_2/x_1|\le 1-\delta^5$ and $c|x_1|^{-1}\le c\delta^{-4}$, we see (by taking $c > 0$ sufficiently small) that $\on{supp}(\xi)$ is contained in an interval of length at most $(1-\delta^5/2)m_\xi$, which contradicts the definition of $m_\xi$. Hence, we see that equality cannot hold in the final inequality. 

Since equality does not hold, it follows from the above discussion that (for $c > 0$ sufficiently small), we have the stronger inequality
\[\mb{P}[x_1 \xi + x_2 \xi'\in [s-c,s+c]]^2\le\snorm{\vec{p}}_2^2(\snorm{\vec{p}}_2^2-(\min\vec{p})^2),\]
which completes the analysis in this case, noting that the choice of $c$ depends only on $\xi,\delta$.

\textbf{Case II: }$|x_2| \ge (1-\delta^5)|x_1|, \snorm{(x_3,\dots, x_n)}_{2} \le \delta/4$. This implies that $x \in \on{Elem}'_{1,2}(\delta) \cup \on{Elem}_{1,2}(\delta)$, thereby violating our assumption.

\textbf{Case III: }$\delta^{4} \le |x_3|\le (1-\delta^5)|x_1|$. This can be treated in exactly the same way as Case I. 

\textbf{Case IV: }$(1-\delta^5)|x_1|\le |x_3| \le |x_2|$ and $|x_2|\ge\delta^4$. It suffices to bound $\mc{L}_{\xi}(b_1 x_1 + b_2 x_2 + b_3 x_3)$. Let $u_i\in\{\pm 1\}$ be defined via $u_i = \on{sgn}(x_i) = x_i/|x_i|$. Let $m'_\xi > 0$ be the smallest positive real such that $\on{supp}(\xi) \subseteq [-m'_\xi, m'_\xi]$. 

We begin by noting that for any $s\in\mb{R}$,
\begin{align*}
\mb{P}[x_1 \xi_1 + x_2 \xi_2 + x_3 \xi_3&\in [s-c,s+c]]\\
&= \mb{P}[|x_1| (u_1\xi_1 + |x_1|^{-1}|x_2|u_2 \xi_2 + |x_1|^{-1}|x_3|u_3 \xi_3)\in [s-c,s+c]]\\ 
&\le\mb{P}[|x_1|(u_1\xi_1+u_2\xi_2+u_3\xi_3)\in[s-c-3\delta^5m'_\xi,s+c+3\delta^5m'_\xi]],
\end{align*}
where the inequality uses $(1-\delta^5) \le |x_1|^{-1}|x_3| \le |x_1|^{-1}|x_2| \le 1$, $|x_1| \le 1$, and the definition of $m'_\xi$. 

Since $|x_1|\ge|x_2|\ge\delta^4$, this localizes the value of $u_1\xi_1+u_2\xi_2+u_3\xi_3$ to an interval of length at most $2(c\delta^{-4}+3\delta m'_\xi)$. As discussed at the beginning, we can assume that $\delta$ is sufficiently small based on $\xi$. By first choosing $\delta > 0$ sufficiently small depending on $\xi$, and then choosing $c > 0$ sufficiently small depending on $\delta$ and $\xi$, we may assume that $2(c\delta^{-4}+3\delta m'_\xi)$ is smaller than the minimum distance between two distinct atoms in both $\on{supp}(\xi+\xi'+\xi'')$ and $\on{supp}(\xi+\xi'-\xi'')$, where $\xi, \xi', \xi''$ are independent copies of $\xi$. Note that, after possibly multiplying by an overall negative sign, $u_1\xi_1+u_2\xi_2+u_3\xi_3$ is distributed as either $\xi + \xi' + \xi''$ or $\xi + \xi' - \xi''$. 

Therefore, by our choice of $\delta$ and $c$, we see that it suffices to show that for all $s \in \mb{R}$,
\[\mb{P}[\xi_1+\xi_2+\xi_3=s] \le \snorm{\vec{p}}_2^2 - c_\xi,\quad\mb{P}[\xi_1+\xi_2-\xi_3=s] \le \snorm{\vec{p}}_2^2 - c_\xi,\]
for some $c_\xi > 0$ depending only on $\xi$.
Now for $u_3\in\{\pm 1\}$, we have
\begin{align*}
\mb{P}[\xi_1 + \xi_2 + u_3\xi_3 = s]^2
&= \bigg(\sum_a\mb{P}[\xi_3 = a]\mb{P}[\xi_1 +\xi_2 = s - u_3a]\bigg)^2\\
&\le\bigg(\sum_a\mb{P}[\xi_3 = a]^2\bigg)\bigg(\sum_a\mb{P}[\xi_1 +\xi_2 = s - u_3a]^2\bigg)\\
&\le\bigg(\sum_a\mb{P}[\xi_3 = a]^2\bigg)\bigg(\sum_{a'\in\on{supp}(\xi_1+\xi_2)}\mb{P}[\xi_1 +\xi_2 = a']^2\bigg)\\
&\le\snorm{\vec{p}}_{2}^{4},
\end{align*}
where the first line is by definition, the second line is Cauchy-Schwarz, and the last line follows by Young's convolution inequality. To obtain the inequality with a positive constant $c_\xi > 0$, we note that equality cannot hold in the third line since $\on{supp}(\xi_1+\xi_2)$ has strictly more positive atoms than $\on{supp}(\xi)$ (since $\xi$ is supported on at least $2$ points), and this leads to the desired improvement since $\xi$ has finite support.
\end{proof}

When $\xi$ is not a translate of an origin-symmetric distribution, the above result can be strengthened. 
\begin{proposition}\label{prop:p-characterization-II}
Fix a discrete distribution $\xi$ and $\delta \in (0,1/2)$. Suppose that $\xi$ is not a translate of any origin-symmetric distribution. Then, there exists $\theta = \theta(\delta,\xi) > 0$ such that for all $x \in \mb{S}^{n-1}\setminus \on{Elem}(\delta)$,
\[\mc{L}_\xi(b_1x_1+\cdots+b_nx_n,\theta)\le\snorm{\vec{p}}_2^2-\theta.\]
\end{proposition}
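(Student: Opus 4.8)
The plan is to reuse the proof of \cref{prop:p-characterization} essentially verbatim, changing only the treatment of ``Case II''. As there, we may take $\delta$ as small as we like depending on $\xi$, assume $|x_1|\ge |x_2|\ge\cdots\ge|x_n|$, and run the same case analysis: the initial reductions via \cref{lem:LKR} together with Cases I, III, and IV all produce a bound of the form $\mc{L}_\xi(b_1x_1+\cdots+b_nx_n,\theta)\le\snorm{\vec{p}}_2^2-\theta$ (for a suitable $\theta=\theta(\delta,\xi)>0$) \emph{without} ever invoking membership of $x$ in $\on{Elem}'(\delta)$. The only place the exclusion $x\notin\on{Elem}'(\delta)$ was used is Case II, namely $|x_2|\ge(1-\delta^5)|x_1|$ and $\snorm{(x_3,\dots,x_n)}_2\le\delta/4$; there, from $\snorm{x}_2=1$ and $|x_1|\ge|x_2|\ge(1-\delta^5)|x_1|$ one checks that $x_1,|x_2| = 1/\sqrt2 + O(\delta)$ (in particular $x_1\ge 1/2$) and that the tail has norm $O(\delta)$, so $x$ lies within $O(\delta)$ of one of $\pm(e_1\pm e_2)/\sqrt2$. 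Since $\mc{L}_\xi$ is invariant under $x\mapsto -x$ we may assume $x_1>0$; if $x_2<0$ then (after adjusting the constant) $x\in\on{Elem}_{1,2}(\delta)\subseteq\on{Elem}(\delta)$ and is excluded by hypothesis, so the only new situation is that $x$ is $O(\delta)$-close to $(e_1+e_2)/\sqrt2$, which I treat below.

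In this situation, first discard the coordinates $i\ge 3$: conditioning on $b_3,\dots,b_n$ and using that the supremum over $z$ is translation-invariant, one gets $\mc{L}_\xi(b_1x_1+\cdots+b_nx_n,\theta)\le\mc{L}_\xi(b_1x_1+b_2x_2,\theta)$, where $b_1,b_2$ are independent copies of $\xi$. Writing $b_1x_1+b_2x_2 = x_1(b_1+b_2) + (x_2-x_1)b_2$ and using $|x_2-x_1|\le\delta^5|x_1|\le\delta^5$, $x_1\ge 1/2$, together with $\on{supp}(\xi)\subseteq[-m'_\xi,m'_\xi]$, we obtain for every $z$
\[\mb{P}[|b_1x_1+b_2x_2-z|\le\theta]\le\mb{P}\big[|b_1+b_2-z/x_1|\le 2(\theta+\delta^5 m'_\xi)\big]\le\sup_s\mb{P}[\xi_1+\xi_2=s],\]
provided $\delta$, and then $\theta$, are chosen small enough that $2(\theta+\delta^5 m'_\xi)$ is less than half the minimum gap between distinct atoms of $\xi_1+\xi_2$. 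It remains to bound $\sup_s\mb{P}[\xi_1+\xi_2=s]=\sup_s\sum_a p_a p_{s-a}$ (sums over $a\in\on{supp}(\xi)$). By Cauchy--Schwarz, $\sum_a p_a p_{s-a}\le\snorm{\vec{p}}_2\big(\sum_a p_{s-a}^2\big)^{1/2}\le\snorm{\vec{p}}_2^2$, where the second inequality is because $(p_{s-a})_{a\in\on{supp}(\xi)}$ is a subsequence of $\vec{p}$; equality throughout forces $\on{supp}(\xi)=s-\on{supp}(\xi)$ and $p_a=p_{s-a}$ for all $a$, i.e.\ $\xi$ is symmetric about $s/2$ --- which is ruled out by the hypothesis that $\xi$ is not a translate of an origin-symmetric distribution. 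Since $\on{supp}(\xi_1+\xi_2)$ is finite, this yields $\sup_s\mb{P}[\xi_1+\xi_2=s]\le\snorm{\vec{p}}_2^2-c_\xi$ for some $c_\xi=c_\xi(\xi)>0$; taking $\theta\le c_\xi$ in addition to the earlier constraints finishes this case, and hence the proof.

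All the computations are elementary; the only genuinely new point compared with \cref{prop:p-characterization} is the inequality $\sup_s\mb{P}[\xi_1+\xi_2=s]<\snorm{\vec{p}}_2^2$ for $\xi$ not symmetric up to translation. This is exactly the asymmetry between the $(e_i+e_j)$ and $(e_i-e_j)$ directions: in the latter, $\mc{L}_\xi$ can always reach $\mb{P}[\xi_1=\xi_2]=\snorm{\vec{p}}_2^2$, which is why $\on{Elem}'_{i,j}$ cannot be dropped in general. I expect no serious obstacle; the one point that requires a little care is checking that the Case II hypotheses genuinely confine $x$ to an $O(\delta)$-neighbourhood of $\pm(e_1\pm e_2)/\sqrt2$, so that the reduction to $\mc{L}_\xi(b_1x_1+b_2x_2,\theta)$ with $x_1\approx x_2$ is legitimate, and keeping straight the order in which $\delta$ and then $\theta$ are sent to $0$.
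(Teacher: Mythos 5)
Your proposal is correct and follows essentially the same route as the paper: both reduce the new case (vectors near $(e_1+e_2)/\sqrt{2}$, whether you reach it by recycling the Case II analysis or by citing \cref{prop:p-characterization} as a black box and handling $\on{Elem}'(\delta)\setminus\on{Elem}(\delta)$ directly) to the key estimate $\sup_s\mb{P}[\xi_1+\xi_2=s]\le\snorm{\vec{p}}_2^2-c_\xi$, proved via Cauchy--Schwarz with the asymmetry hypothesis forcing strict inequality. The only cosmetic difference is that the paper invokes \cref{prop:p-characterization} once and then treats $\on{Elem}'_{1,2}(\delta)$, avoiding the need to re-verify Cases I, III, IV, while you re-run the full case split; the mathematics is identical.
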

\begin{proof}
As before, since $\on{Elem}(\delta)$ is increasing with $\delta$, it suffices to prove the statement for sufficiently small $\delta$ depending on $\xi$. By \cref{prop:p-characterization}, we can choose $\theta = \theta(\delta, \xi) > 0$ such that for all $x \in \mb{S}^{n-1} \setminus \on{Elem}(\delta)$,
\[\mc{L}_\xi(b_1x_1+\cdots+b_nx_n,\theta)\le\snorm{\vec{p}}_2^2-\theta.\]
Hence, it remains to prove the result for $x \in \on{Elem}'(\delta)\setminus \on{Elem}(\delta)$. By symmetry, it suffices to consider  $x\in\on{Elem}'_{1,2}(\delta)$. We will bound $\mc{L}_{\xi}(b_1 x_1 + b_2 x_2, \theta)$. 

We use an argument similar to \textbf{Case IV} of the proof of \cref{prop:p-characterization}. Let $m'_\xi > 0$ be the smallest positive real for which $\on{supp}(\xi) \subseteq [-m'_\xi, m'_\xi]$. We have
\[\mb{P}[x_1\xi_1+x_2\xi_2\in[s-c,s+c]]\le\mb{P}\bigg[\frac{1}{\sqrt{2}}(\xi_1+\xi_2)\in[s-c-2m'_\xi\delta,s+c+2m'_\xi\delta]\bigg].\]
Once again, by choosing $\delta$ and $c$ sufficiently small (depending on $\xi$), we may assume that $2(c + 2m'_\xi \delta)$ is smaller than the minimum distance between two distinct atoms in $\on{supp}(\xi+\xi')$, where $\xi, \xi'$ are independent copies of $\xi$. With this choice of $\delta$ and $c$, the problem reduces to showing that there exists some $c_\xi > 0$ depending only on $\xi$ such that for all $s \in \mb{R}$, \[\mb{P}[\xi_1+\xi_2=s] \le \snorm{\vec{p}}_2^2 - c_\xi.\]
We have
\[\mb{P}[\xi_1+\xi_2=s] = \sum_a\mb{P}[\xi_1=a]\mb{P}[\xi_2=s-a]\le\bigg(\sum_a\mb{P}[\xi_1=a]^2\bigg)^{1/2}\bigg(\sum_a\mb{P}[\xi_2=s-a]^2\bigg)^{1/2}\le\snorm{\vec{p}}_2^2,\]
where the first inequality is Cauchy-Schwarz. To obtain the improved inequality with $c_\xi > 0$, we note that equality can hold in both inequalities if and only if $\mb{P}[\xi_1 = a] = \mb{P}[\xi_2 = s-a]$, which implies that $\xi$ is a shift (by $s/2$) of an origin-symmetric random variable. Since we have assumed that $\xi$ is not a shift of an origin-symmetric random variable, we see that equality cannot hold, and using that the support of $\xi$ is finite, we can conclude. 
\end{proof}

Using the preceding lemmas, and exploiting the low metric entropy of $\on{Cons}(\delta, \rho)$ along with \cref{lem:subgaussian-norm}, we obtain the following corollary. Note that since $\xi$ may not have mean $0$, one must perform the standard trick of densifying the net of these vectors along the direction $1_n$ (see \cite[Proposition~3.6]{Tik20}). We refer the reader to \cite[Proposition~2.7]{JSS20discrete1} for the (standard) details. 
\begin{corollary}
\label{cor:non-elementary}
Fix a discrete distribution $\xi$. For all $\delta' > 0$, there exist $\delta,\rho, \epsilon', n_0 > 0$, depending on $\xi$ and $\delta'$, such that for all $n \ge n_0$,
\[\mb{P}[\exists x \in \on{Cons}(\delta, \rho) \setminus \on{Elem}'(\delta'): \snorm{M_{n}(\xi)x}_{2} \le \epsilon'\sqrt{n}] \le (\snorm{\vec{p}}_{2}^{2} - \epsilon')^{n}.\]
Further, if $\xi$ is not a shift of any origin-symmetric random variable, then the same conclusion holds with $\on{Elem}(\delta')$ instead of $\on{Elem}'(\delta')$. 
\end{corollary}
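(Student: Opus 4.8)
Here is the plan, following the route of \cite[Proposition~2.7]{JSS20discrete1}; I only indicate the structure since the individual steps are standard.

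\textbf{Reduction to a pointwise bound plus tensorization.} Given $\delta'$, let $\theta = \theta(\delta'/2,\xi) > 0$ be the constant produced by \cref{prop:p-characterization}, so that for every $x \in \mb{S}^{n-1}\setminus\on{Elem}'(\delta'/2)$ and every row $R$ of $M_n(\xi)$ one has $\mb{P}[|\langle R, x\rangle| \le \theta] \le \mc{L}_\xi(\sum_i b_i x_i, \theta) \le \snorm{\vec{p}}_2^2 - \theta$. Feeding this into the second part of \cref{lem:tensorization} (with $\eta = \theta$, $\tau = \snorm{\vec{p}}_2^2 - \theta$) gives, for each fixed such $x$ and any $\beta\in(0,1]$,
\[\mb{P}\big[\snorm{M_n(\xi)x}_2 \le \theta\sqrt{\beta n}\big] \le (e/\beta)^{\beta n}\big(\snorm{\vec{p}}_2^2 - \theta\big)^{(1-\beta)n}.\]
Since $\log(\snorm{\vec{p}}_2^2 - \theta) < \log(\snorm{\vec{p}}_2^2 - \theta/2)$, I would fix $\beta = \beta(\xi,\delta') > 0$ small enough that the right-hand side is at most $(\snorm{\vec{p}}_2^2 - \theta/2)^n$ for all large $n$. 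The same reduction works with $\theta/2$ replaced by $\theta/4$ in the exponent, which provides room for the net-transfer step below.

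\textbf{Discretization with densification along $1_n$.} Decompose each $x \in \on{Cons}(\delta,\rho)$ as $x = \tfrac{\langle x, 1_n\rangle}{n}1_n + x^\perp$ with $x^\perp \perp 1_n$; then $x^\perp$ is within $O(\rho + \sqrt{\delta})$ of a vector supported on at most $\delta n$ coordinates. I would build a net $\mc{N}$ of $\on{Cons}(\delta,\rho)\setminus\on{Elem}'(\delta')$ by netting $\langle x, 1_n\rangle \in [-\sqrt n,\sqrt n]$ to precision $O(1)$ (contributing only a factor $\mr{poly}(n)$) and netting the almost-sparse part $x^\perp$ to precision $\eta$ (contributing a factor $\exp(c(\delta)n)$ with $c(\delta) = O(\delta\log(1/(\delta\eta))) \to 0$ as $\delta\to 0$); take $\rho\le\eta\le\delta'/4$, so each net point lies in $\mb{S}^{n-1}\setminus\on{Elem}'(\delta'/2)$. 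Using \cref{lem:subgaussian-norm} with $t$ a large constant multiple of $\sqrt n$, restrict to the event $\mc{E}_K = \{\snorm{M_n(\xi) - \mb{E}[\xi]J_{n\times n}} \le K\sqrt n\}$, whose complement has probability at most $(\snorm{\vec{p}}_2^2 - \epsilon')^n$ once $K = K(\xi,\epsilon')$ is large. The point of the $1_n$-densification (as in \cite[Proposition~3.6]{Tik20}) is that on $\mc{E}_K$, transferring from $x$ to its net point $x_0$ costs only $O(1)$ along the $1_n$-component and at most $\snorm{M_n(\xi) - \mb{E}[\xi]J_{n\times n}}\cdot\eta \le K\eta\sqrt n$ along $x^\perp$ — here we crucially use that $\mb{E}[\xi]J_{n\times n}$ annihilates $1_n^\perp$, so the mean $\mb{E}[\xi]$ never enters. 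Choosing $\eta$ small enough (in terms of $\theta,\beta,K$) makes this total at most $\tfrac{\theta}{2}\sqrt{\beta n}$, so that $\snorm{M_n(\xi)x}_2\le\epsilon'\sqrt n$ forces $\snorm{M_n(\xi)x_0}_2 \le 2\theta\sqrt{\beta n}$ whenever $\epsilon'$ is small enough.

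\textbf{Union bound and the symmetric case.} Combining the two previous steps,
\[\mb{P}\big[\exists x \in \on{Cons}(\delta,\rho)\setminus\on{Elem}'(\delta'): \snorm{M_n(\xi)x}_2 \le \epsilon'\sqrt n\big] \le (\snorm{\vec{p}}_2^2 - \epsilon')^n + \mr{poly}(n)\,e^{c(\delta)n}\big(\snorm{\vec{p}}_2^2 - \theta/2\big)^n,\]
and choosing $\delta$ (hence $c(\delta)$) small enough, and then $\epsilon'$ small enough, bounds the total by $(\snorm{\vec{p}}_2^2 - \epsilon')^n$ for large $n$. The main obstacle is exactly the interplay in the second step: the metric entropy of $\on{Cons}(\delta,\rho)$ must be pushed below the exponential gap $\log(\snorm{\vec{p}}_2^2 - \epsilon') - \log(\snorm{\vec{p}}_2^2 - \theta/2) > 0$ \emph{while} arranging the net so that the operator-norm transfer does not incur the $\mb{E}[\xi]\,n$ loss; once the decomposition along $1_n$ versus $1_n^\perp$ is in place, everything else is routine bookkeeping. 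For the final assertion, when $\xi$ is not a shift of an origin-symmetric distribution one repeats the argument verbatim with \cref{prop:p-characterization-II} in place of \cref{prop:p-characterization}, which is exactly what allows $\on{Elem}(\delta')$ to replace $\on{Elem}'(\delta')$ throughout.
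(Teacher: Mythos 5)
Your proposal is correct and follows essentially the same route the paper indicates (tensorize the anticoncentration bound from \cref{prop:p-characterization} or \cref{prop:p-characterization-II}, control the operator norm via \cref{lem:subgaussian-norm}, and take a union bound over a net of $\on{Cons}(\delta,\rho)$ densified along $1_n$ as in \cite[Proposition~3.6]{Tik20}), which is exactly the argument the paper defers to \cite[Proposition~2.7]{JSS20discrete1}. The only cosmetic wrinkle is a slight circularity in the stated order of choosing $K$ (which you take to depend on $\epsilon'$) and $\epsilon'$ (which you choose last); this is resolved in the standard way by fixing $\epsilon'$ as a small multiple of $\theta$ immediately after $\theta$ and $\beta$ are determined, and then choosing $K,\eta,\delta,\rho$ accordingly.
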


Given the previous corollary, it remains to analyze vectors in $\on{Elem}'(\delta')$ (or only in $\on{Elem}(\delta')$ if $\xi$ is not a shift of any origin-symmetric random variable), which is the content of the remainder of this section. 

\subsection{Two columns}\label{sub:two-columns}
We first handle vectors in $\on{Elem}_{i,j}(\delta')$. By the invariance of the distribution of $M_n(\xi)$ under permuting columns, it suffices to analyze vectors in $\on{Elem}_{1,2}(\delta')$. We show the following.

\begin{proposition}\label{prop:e1-e2}
Fix a discrete distribution $\xi$. There exist $\delta', \eta, n_0 > 0$ depending on $\xi$ such that for all $n \ge n_0$ and $t \le 1$,
\[\mb{P}[\exists x\in\on{Elem}_{1,2}(\delta'): \snorm{M_n(\xi)x}_2\le t]\le\snorm{\vec{p}}_2^{2n} + (\snorm{\vec{p}}_2^2-\eta)^n + t\exp(-\eta n).\]
\end{proposition}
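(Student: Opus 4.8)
The plan is to use a rotation trick to reduce the analysis of vectors near $(e_1-e_2)/\sqrt 2$ to a setting resembling the treatment of vectors near $e_1$, and then to carry out an argument in the spirit of the one sketched in the introduction for $\on{Ber}(p)$. Write $M = M_n(\xi)$, and let $M^{(1)},\dots,M^{(n)}$ denote its columns. If $x \in \on{Elem}_{1,2}(\delta')$ has small image, then $\snorm{x_1 M^{(1)} + x_2 M^{(2)} + \sum_{i\ge 3} x_i M^{(i)}}_2 \le t$, and since $x$ is close to $(e_1-e_2)/\sqrt 2$, the coefficients $x_1 \approx 1/\sqrt 2$, $x_2 \approx -1/\sqrt 2$ are bounded away from $0$, while $\sum_{i \ge 3} x_i^2 = O(\delta'^2)$. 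Apply the orthogonal change of variables in the first two coordinates sending $(e_1-e_2)/\sqrt2 \mapsto e_1$ and $(e_1+e_2)/\sqrt2 \mapsto e_2$: concretely, replace the pair of columns $(M^{(1)}, M^{(2)})$ by $u := (M^{(1)}-M^{(2)})/\sqrt2$ and $w := (M^{(1)}+M^{(2)})/\sqrt2$. In these coordinates the condition becomes that the new first coefficient is $\approx 1$, the new second coefficient is $O(\delta')$, and the remaining coefficients are $O(\delta')$; i.e.\ the rotated kernel witness is close to $e_1$. Note $u$ takes values in $\tfrac1{\sqrt2}(\on{supp}(\xi) - \on{supp}(\xi))$ with the distribution of $(\xi-\xi')/\sqrt2$, and $w$ is a companion vector; crucially $\mb{P}[u = 0] = \mb{P}[\xi=\xi'] = \snorm{\vec p}_2^2$, which is exactly the leading term we are aiming for. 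Apply \cref{thm:compressible}-style reasoning is circular here, so instead I would mirror the direct argument: condition on all columns except the rotated pair, let $v = v(M^{(-1,2)})$ be a unit normal to their span (more precisely work with the $(n-1)$-dimensional span of $M^{(3)},\dots,M^{(n)}$ inside the relevant subspace), and split according to whether $v$ is almost-constant or unstructured, using a version of \cref{prop:structure}.

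More precisely, the key steps in order: (1) Perform the rotation and reduce to bounding $\mb{P}[\exists\, \tilde x \in \on{Elem}_1(C\delta'): \snorm{\tilde M \tilde x}_2 \le t]$, where $\tilde M$ has first column $u \sim (\xi-\xi')/\sqrt2$, second column $w$, and remaining columns i.i.d.\ $\xi$; the point is that the event forces $\on{dist}(u, \on{span}(w, M^{(3)},\dots,M^{(n)})) = O(t)$ (since the first coefficient of $\tilde x$ is $\asymp 1$). (2) Use \cref{lem:subgaussian-norm} to restrict to the event $\mc{E}_K$ that $\snorm{\tilde M - \mb{E}\tilde M}$ is $O(\sqrt n)$, at cost $\snorm{\vec p}_2^{2n}$ or smaller. (3) Condition on the columns $w, M^{(3)},\dots,M^{(n)}$, let $v$ be a unit vector orthogonal to their span (here one uses that with probability $1-k^{-\Omega(n)}$ this span is $(n-1)$-dimensional); by a variant of \cref{prop:structure} (applied to the matrix with these columns as rows), either $v \in \on{Cons}(\delta,\rho)$ or $\mc{T}_{\xi,\gamma 1_k}(v, L) \le \exp((\epsilon - H(\vec p))n)$. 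In the first case, $v$ being almost-constant together with $\mc{E}_K$ and standard column-distance estimates (as in the proof of \cref{thm:incompressible}) produces a contradiction, or an extra factor of $t e^{-\eta n}$ after a net argument over the almost-constant witness. In the second, unstructured case: $\on{dist}(u, \on{span}(\dots)) \ge |\langle u, v\rangle|$, so the probability that this distance is $\le t/\kappa$ (over the randomness of $u$, i.e.\ of $\xi - \xi'$) is controlled by the Lévy concentration function of $\langle u, v\rangle = \sum_i u_i v_i$. (4) The last point requires care because $u$ has the distribution of $(\xi-\xi')/\sqrt2$, not of $\xi$: one writes $\langle u, v\rangle = \tfrac1{\sqrt2}\sum_i (\xi_i - \xi_i') v_i$ and observes that the unstructured bound on $\mc{T}_\xi(v,L)$ gives $\mc{L}_\xi(\sum b_i v_i, s) \le Ls$ for $s \ge \exp((\epsilon-H(\vec p))n)$, hence $\mc{L}(\sum(\xi_i-\xi_i')v_i, s) \le \mc{L}_\xi(\sum \xi_i v_i, s) \le Ls$ by the standard fact that symmetrization does not increase Lévy concentration at a fixed window size — actually one needs the reverse direction, so more carefully condition on $(\xi_i')_i$ and use that for each fixed choice the residual distribution of $\sum \xi_i v_i$ has the same concentration bound. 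This yields $\mb{P}[|\langle u,v\rangle| \le t/\kappa] \le L' t + \exp((\epsilon - H(\vec p))n)$, and since $H(\vec p) \ge -\log\snorm{\vec p}_2^2 + \Omega_\xi(1)$ whenever $\xi$ is non-degenerate — wait, this inequality can fail; when $\snorm{\vec p}_2^2$ is large the entropy bound $\exp(-H(\vec p)n)$ may exceed $(\snorm{\vec p}_2^2 - \eta)^n$. So in that regime one instead uses the companion column $w$: revealing $u$ alone is not enough, and one should reveal the pair and argue directly that $\mb{P}[u \in \text{fixed affine subspace of the right codimension}]$ is at most $(\snorm{\vec p}_2^2)^n$ plus a gain, which is precisely the structure of the leading terms.

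\textbf{Main obstacle.} The delicate point — and the step I expect to require the most work — is obtaining the error term $(\snorm{\vec p}_2^2 - \eta)^n$ rather than merely $(\snorm{\vec p}_2^2 + o(1))^n$, in the regime where the entropy savings $\exp(-H(\vec p)n)$ is \emph{not} smaller than $\snorm{\vec p}_2^{2n}$. This is exactly the situation flagged in the overview after \cref{cor:exact}: one must bootstrap off the preliminary estimates \cref{cor:initial-compressible} and \cref{prop:initial-unstructured} (which give the weaker $\snorm{\vec p}_\infty^n$-type bounds) to localise $v(M^{(-1,2)})$ into an explicit low-probability subset of $\on{supp}(\xi)^{n-2}$, and then argue that, on the event that $u$ is nearly orthogonal to this $v$, the joint distribution of the rotated columns $(u,w)$ lands in a universal set of measure $(\snorm{\vec p}_2^2 - \eta)^n$, at which point a union bound over that set (whose size is controlled via the Rudelson–Vershynin-type small-ball input implicit in \cref{prop:structure}) closes the argument. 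Reconciling the two cases of the \cref{prop:structure} dichotomy so that \emph{both} contribute at most the claimed bound, while correctly handling the almost-constant witness via the $\inf_{y \in \on{Cons}}\snorm{yM}_2$ slack (which is where the $t\exp(-\eta n)$ term comes from), is the technical heart of the proof.
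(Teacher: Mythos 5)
Your starting move — the rotation trick replacing $(M^{(1)},M^{(2)})$ by $u=(M^{(1)}-M^{(2)})/\sqrt2$ and $w=(M^{(1)}+M^{(2)})/\sqrt2$, and the observation that $\mb{P}[u=0]=\snorm{\vec p}_2^{2n}$ — is exactly the paper's opening, and the restriction to the operator-norm event $\mc{E}_K$ is also correct. But after that point you take a route the paper does not take, and it has a genuine gap. You condition on $w, M^{(3)},\dots,M^{(n)}$, extract a normal vector $v$, and invoke \cref{prop:structure}. That structure theorem can only deliver an error of size $\exp((\epsilon-H(\vec p))n)$ (plus an $Lt$ term). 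This is strictly weaker than what the proposition demands: the required error is $(\snorm{\vec p}_2^2-\eta)^n$, and for $\xi$ uniform on its support one has $\exp(-H(\vec p))=\snorm{\vec p}_2^2$, so $\exp((\epsilon-H(\vec p))n)$ is \emph{larger} than $\snorm{\vec p}_2^{2n}$ and far larger than $(\snorm{\vec p}_2^2-\eta)^n$. Note that \cref{prop:e1-e2} must hold for uniform $\xi$ (it feeds into \cref{thm:compressible}, which is used for the uniform case as well), so the entropy-precision route cannot close. Your bound also yields a bare $Lt$ term rather than $t\exp(-\eta n)$; the exponential suppression of the $t$ term is essential for the union bound over $\binom{n}{2}$ pairs in the proof of \cref{thm:compressible}. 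You flag the first of these problems at the end, but the proposed fix (``bootstrap to localise $v$ into an explicit low-probability subset'') is not specified concretely enough to constitute a proof, and in particular you apply the localisation to the normal vector $v$ rather than to $u$.

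The paper does not use \cref{prop:structure} (nor \cref{prop:initial-unstructured}) in this proof at all, and that is what lets it get beyond entropy precision. The actual mechanism: on $\mc{E}\wedge\mc{E}_K$, it is the \emph{rotated first column} $u=\wt{M}^{(1)}$ itself, not a kernel normal vector, that is forced to lie within $8K\delta'\sqrt n$ of a constant vector, i.e.\ $u$ lies in an explicit set $\mc{C}\subseteq\Xi^n$ with $\Xi=\on{supp}((\xi-\xi')/\sqrt2)$. One then decomposes $\mc{C}$ according to which atom $z\in\Xi$ most of $u$'s coordinates equal. By Cauchy--Schwarz, $z=0$ is the unique most probable atom of $(\xi-\xi')/\sqrt2$, giving directly $\mb{P}[u\in\mc{C}\setminus\mc{C}_0]\le(\snorm{\vec p}_2^2-c_\xi)^n$ — this is the error term, obtained purely combinatorially with no entropy appearing. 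The event $u=0$ gives the main term $\snorm{\vec p}_2^{2n}$. For $u=a\in\mc{C}_0\setminus\{0\}$ one has $\snorm{a}_2\ge h_\xi>0$; conditioning on $u=a$, all entries of $\wt M^{(-1)}$ are independent sub-Gaussians with all but $\kappa n$ having variance bounded below, so one applies a Rudelson--Vershynin/\cite{LTV19}-style invertibility estimate (\cref{lem:strange-block}, not \cref{prop:structure}) to get $C'n^3\sqrt t\,e^{\kappa n}+e^{-c'n}e^{\kappa n}$, and then AM--GM on $\snorm{\vec p}_2^{2n}\sqrt t\le t\snorm{\vec p}_2^n+\snorm{\vec p}_2^{3n}$ yields the $te^{-\eta n}$ and residual exponentially small terms. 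The takeaway is that the sharp two-column bound is obtained by direct probabilistic analysis of the support of $(\xi-\xi')/\sqrt2$, not by the structure/entropy machinery; the latter enters the paper's almost-constant analysis only in the one-column case (\cref{prop:e1}) and even there it is the preliminary estimates \cref{prop:initial-unstructured} and \cref{thm:weak}, not the full \cref{prop:structure}, that are invoked.
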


We will need the following preliminary lemma, which essentially follows from the seminal work of Rudelson and Vershynin \cite{RV08}. Since we were not able to locate the statement we need in the literature, we provide details below and in \cite[Lemma~2.9]{JSS20discrete1}.
\begin{lemma}\label{lem:strange-block}
Fix $S, s > 0$. There exist $C', c', n_0 > 0$ depending on $s, S$ such that the following holds. For all $n\ge n_0$, any $v \in \mb{R}^{n}$ with $\snorm{v}_{2} \ge 1$,  any $\kappa \in (0,1)$, and all $t \le 1$, we have
\[\mb{P}[\exists x \in \mb{R}^{n-1}: \snorm{Ax - v}_{2} \le t] \le C'n^{3}\sqrt{t}\exp(\kappa n) + \exp(-c'n)\exp(\kappa n),\]
where $A$ is an $n\times (n-1)$ random matrix, each of whose entries is an independent random variable with sub-Gaussian norm at most $S$, and such that all but a collection of $\kappa n$ specified entries have variance at least $s$.  
\end{lemma}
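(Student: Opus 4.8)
The plan is to reduce the claim to the anti-concentration estimate of Rudelson and Vershynin for the smallest singular value, after first discarding the small collection of "bad" columns/entries. First I would note that it suffices to bound the probability of the event $\mc{F} := \{\exists x \in \mb{R}^{n-1} : \snorm{Ax - v}_2 \le t\}$, and that on $\mc{F}$ we may write $v = Ax + e$ with $\snorm{e}_2 \le t \le 1$; since $\snorm{v}_2 \ge 1$ this forces $\snorm{Ax}_2 \ge 1 - t$, so $x \neq 0$ and we may normalize. The key geometric observation (as in \cite{RV08}) is that the event $\{\exists x : \snorm{Ax-v}_2 \le t\}$ is contained in the event that $v$ lies within distance $t$ of the column span of $A$; equivalently, letting $w$ be a unit normal to the $(n-1)$-dimensional subspace $H := \on{span}(\text{columns of } A)$ (which is a.s.\ a hyperplane in $\mb{R}^n$), we have $|\langle v, w\rangle| \le t \cdot \|A\|/s_{n-1}(A)$ — more cleanly, $\on{dist}(v, H) \le t$ implies $|\langle v, w\rangle| \le t$. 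Wait, more carefully: $\on{dist}(v,H) = |\langle v, w\rangle|$, so $\mc{F} \subseteq \{|\langle v, w\rangle| \le t\}$ directly. So the task reduces to a small-ball estimate for $\langle v, w\rangle$ where $w = w(A)$ is the random unit normal to the span of the columns of $A$.

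Next I would handle the $\kappa n$ bad entries by conditioning. Group the columns of $A$ as "good" columns (those all of whose entries have variance $\ge s$) and "bad" columns (those containing at least one low-variance entry); there are at most $\kappa n$ bad columns. Condition on the bad columns, so that $H$ is the span of a fixed subspace $H_0$ of dimension $\le \kappa n$ together with the $\ge (1-\kappa)n$ good columns, each of which is a genuinely random sub-Gaussian vector with all coordinates having variance $\ge s$ and sub-Gaussian norm $\le S$. Passing to the orthogonal complement $H_0^\perp$ (dimension $\ge (1-\kappa)n$), the good columns project to i.i.d.-in-rows sub-Gaussian vectors in $H_0^\perp$, and $w$ is determined by their span; this is now a square-ish $(1-\kappa)n \times (\text{something})$ random matrix situation. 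Apply the Rudelson–Vershynin machinery (the "distance to a random subspace" / smallest singular value bound, \cite{RV08}) in $H_0^\perp$ to conclude that for the projection $\bar v$ of $v$ onto $H_0^\perp$, we have $\mb{P}[|\langle \bar v, w\rangle| \le t \mid \text{bad columns}] \le C' n^{3} \sqrt{t} + \exp(-c'n)$, uniformly over the conditioning, provided $\snorm{\bar v}_2$ is bounded below — here one needs that the projection of $v$ onto $H_0^\perp$ is not too small, which is where the factor $\exp(\kappa n)$ enters: we cannot control $\snorm{\bar v}_2$, but we can split $\mb{R}^n$ into $H_0$ and $H_0^\perp$ and a union/worst-case bound over the $\kappa n$-dimensional "lost" directions costs at most $\exp(O(\kappa n))$ (e.g.\ via a net on the unit sphere of $H_0$ of size $e^{O(\kappa n)}$, or more simply because the only way the estimate fails is if $v$ is essentially concentrated in $H_0$, a set we can cover cheaply since $\kappa$ may be taken in the exponent). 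Finally, take expectation over the bad columns and multiply the two bounds, yielding $C' n^3 \sqrt{t}\, e^{\kappa n} + e^{-c'n} e^{\kappa n}$ as claimed.

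The main obstacle I anticipate is the bookkeeping around the factor $\exp(\kappa n)$: one must be careful that the Rudelson–Vershynin input genuinely applies after projecting away a $\kappa n$-dimensional subspace — in particular that the projected good columns still form a matrix to which the small-ball/invertibility estimates apply with constants depending only on $s, S$ (not on $\kappa$ or the conditioning), and that the lower bound on $\snorm{\bar v}_2$ needed for the distance estimate is either automatic or can be bought at cost $e^{O(\kappa n)}$. A clean way to organize this is: if $\on{dist}(v, H_0) \le t$ then already $v$ is within $t$ of $H_0$ and we are (trivially) done after noting this forces structure we can net; otherwise $\snorm{\bar v}_2 \ge t$ (or some fixed multiple), and we run the Rudelson–Vershynin argument with $\bar v / \snorm{\bar v}_2$ as the target unit vector. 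Everything else — the reduction to $|\langle v, w\rangle|$, the conditioning on bad columns, the polynomial prefactor $n^3$ — is routine and parallels \cite[Lemma~2.9]{JSS20discrete1}, to which we refer for the details omitted here.
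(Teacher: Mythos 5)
Your overall plan---reduce to a small-ball estimate for $|\langle v, w\rangle|$ with $w$ the random unit normal to $\operatorname{col}(A)$, and make the low-variance entries harmless at a cost of $\exp(O(\kappa n))$---is the right shape, but the mechanism you propose for the second step is genuinely different from the paper's and has two real gaps. The paper's device is a \emph{rerandomization} trick: condition on the $\kappa n$ specified entries so that they take fixed values $a_1,\dots,a_{\kappa n}$, then replace each by $a_i + b_i$ with $b_1,\dots,b_{\kappa n}$ independent $\on{Ber}(1/2)$'s, obtaining a matrix $A'$ \emph{all} of whose entries now have variance $\ge \min(s,1/4)$. One then applies an inhomogeneous RV-type bound (the paper cites a generalization of \cite[Lemma~2.9]{JSS20discrete1} via \cite{LTV19}) directly to $A'$ to get $C'n^3\sqrt{t}+e^{-c'n}$, and finishes by noting that $A'=A$ whenever $b_1=\dots=b_{\kappa n}=0$, an event of probability $2^{-\kappa n}\ge e^{-\kappa n}$; this gives the clean multiplicative cost $e^{\kappa n}$ with no geometry needed.

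Your alternative---group the $\kappa n$ bad entries into at most $\kappa n$ ``bad columns'', condition on those, and project onto $H_0^\perp$---runs into the problem that after projecting the good columns onto $H_0^\perp$, their coordinates (in any orthonormal basis of $H_0^\perp$) are \emph{not} independent, so the Rudelson--Vershynin / LTV machinery you want to cite no longer applies off the shelf. The RV-type results you need are for matrices with jointly independent entries; a matrix whose columns are projections of such vectors onto an oblique subspace does not have that structure. This is not a bookkeeping annoyance; it is the reason the paper does not take the projection route.

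The second gap is the handling of the case $\snorm{\bar v}_2$ small. Your ``clean organization'' splits on whether $\on{dist}(v,H_0)\le t$. But if $\on{dist}(v,H_0)\le t$, the conditional probability $\mb{P}[\mc F \mid \text{bad cols}]$ can be $1$ (indeed if $v\in H_0$ the event is deterministic), so you cannot ``trivially conclude''; you would instead need to bound $\mb{P}[\on{dist}(v,H_0)\le t]$ over the remaining randomness of the bad columns, which is itself a nontrivial small-ball problem about a low-dimensional random subspace and is not obviously cheaper than what you started with. And if $\on{dist}(v,H_0) > t$ but still, say, of order $t$, then normalizing $\bar v$ effectively rescales $t$ to a constant, and the RV estimate you would invoke gives a bound like $\sqrt{t/\snorm{\bar v}_2}=O(1)$, which is useless. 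You would really need $\snorm{\bar v}_2$ bounded below by a constant, and ruling out the regime $t\le\snorm{\bar v}_2\ll 1$ is exactly the missing content. A net on the unit sphere of $H_0$ of size $e^{O(\kappa n)}$ does not resolve this, since the problematic event is about the randomness of the bad columns interacting with the fixed $v$, not about covering directions. The rerandomization trick sidesteps all of this: it never decomposes the space, never projects, and never needs a lower bound on any projection of $v$.
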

\begin{proof}
By the law of total probability, it suffices to assume that the $\kappa n$ specified entries are deterministic, and take the values $a_1,\dots, a_{\kappa n}$. Consider the $n\times (n-1)$ random matrix $A'$, which has the same distribution as $A$, except for the $\kappa n$ specified entries, which are now replaced by $a_1 + b_1,\dots, a_{\kappa n} + b_{\kappa n}$, where $b_1,\dots, b_{\kappa n}$ are independent $\on{Ber}(1/2)$ random variables. 

From a slight generalization of \cite[Lemma~2.9]{JSS20discrete1} (specifically, one should replace the application of \cite{RV08} with an inhomogeneous version due to \cite{LTV19} and replace $2^{-cn}$ by $t$, see the proof of \cref{lem:sharp-block}), we get that there exist $C',c', n_0$ depending on $s,S$ such that for all $n \ge n_0$, for any $v \in \mb{R}^{n}$ with $\snorm{v}_{2} \ge 1$, and for all $t \ge 1$, we have
\[\mb{P}[\exists x \in \mb{R}^{n-1}: \snorm{A'x - v}_{2} \le t] \le C'n^{3}\sqrt{t} + \exp(-c'n).\]
The conclusion now follows since, with probability $2^{-\kappa n}$, $b_{1} = \dots = b_{\kappa n} = 0$. 
\end{proof}

We now prove \cref{prop:e1-e2}.

\begin{proof}[Proof of \cref{prop:e1-e2}]
By \cref{lem:subgaussian-norm}, we can choose $K > 0$ depending on $\xi$ such that $\mb{P}[\mc{E}_K]\le\snorm{\vec{p}}_2^{3n}$, where
\[\mc{E}_K := \{\snorm{M_n(\xi)-\mb{E}[\xi]J_{n\times n}}\le K\sqrt{n}\}.\]
For $\delta'\in(0,1/4)$, which will be chosen later in terms of $\xi$, let
\[\mc{E} := \{\exists x\in\mb{B}_2^n(e_1,\delta')\cap\mb{S}^{n-1}: \snorm{M_n(\xi)Qx}_2\le t\},\]
where $Q$ is the rotation matrix whose bottom-right $(n-2) \times (n-2)$ minor is the identity matrix, and the top-left $2\times 2$ minor is the rotation matrix given by
\[\begin{bmatrix}\frac{1}{\sqrt{2}}&\frac{1}{\sqrt{2}}\\ -\frac{1}{\sqrt{2}}&\frac{1}{\sqrt{2}}\end{bmatrix}.\]
Up to scaling $\delta'$ by a constant factor, this is clearly equivalent to the event that we wish to bound.

Note that on the event $\mc{E}$, there exists some vector $y = e_1+u\in\mb{R}^n$ with $u_1 = 0$ and $\snorm{u}_2\le 4\delta'$ such that 
\[\snorm{M_n(\xi)Qy}_2\le 2t.\]
Let $u' = (u_2,\ldots,u_n) \in \mb{R}^{n-1}$, let $\wt{M}^{(1)}$ be the first column of $M_{n}(\xi)Q$, and let $\wt{M}^{(-1)}$ denote the $n\times (n-1)$ matrix obtained by removing this column. Then, on the event $\mc{E}\wedge \mc{E}_K$, we have
\begin{align*}
\snorm{\wt{M}^{(1)}- \mb{E}[\xi]J_{n\times n-1}u'}_2&\le\snorm{\wt{M}^{(1)}+\wt{M}^{(-1)}u'}_2+\snorm{(\wt{M}^{(-1)}-\mb{E}[\xi]J_{n\times n-1})u'}_2\\
&\le 2t + K\sqrt{n}\cdot 4\delta'\\
&\le 8K\delta'\sqrt{n}
\end{align*}
for all sufficiently large $n$, since $t\le 1$. 

The key point is the following. Let $\Xi := \on{supp}(\xi - \xi')/\sqrt{2} \subseteq \mb{R}$. Let
\[\mc{C} := \{a \in \Xi^{n} : \exists \lambda \in \mb{R} \text{ with } \snorm{a-\lambda 1_{n}}_{2} \le 8K\delta' \sqrt{n} \},\]
and for $\kappa = \kappa(\delta',\xi) > 0$, to be chosen later depending on $\delta',\xi$, and for $z \in \Xi$, let
\[\mc{C}_z: = \mc{C} \cap \{a \in \mb{R}^{n}: |\on{supp}(a-z1_n)| \le \kappa n\}.\]
It is easy to see that
\[\mc{C}\subseteq\bigcup_{z\in\Xi}\mc{C}_z\]
for an appropriate choice of $\kappa$ which goes to $0$ as $\delta'$ goes to $0$. Furthermore,
\[\mb{P}[\wt{M}^{(1)}\in\mc{C}_z]\le\mb{P}[(\xi-\xi')/\sqrt{2}=z]^n\exp(c_{\kappa,\delta',\xi}n),\]
where $c_{\kappa,\delta',\xi} > 0$ goes to $0$ as $\kappa,\delta'$ go to $0$. Therefore, we have
\[\mb{P}[\wt{M}^{(1)} \in \mc{C}] \le \snorm{\vec{p}}_{2}^{2n}\exp(2c_{\kappa,\delta', \xi}n),\text{ and } \]
\[\mb{P}[\wt{M}^{(1)} \in \mc{C}\setminus \mc{C}_0] \le (\snorm{\vec{p}}_{2}^{2} - c_{\xi})^n\]
for some $c_{\xi} > 0$ depending only on $\xi$, provided that $\delta'$ (hence $\kappa$) is chosen sufficiently small. Here, for the second inequality, we have used that by Cauchy-Schwarz (as in the proof of \cref{prop:p-characterization}), the unique most probable atom of $(\xi - \xi')/\sqrt{2}$ is at $0$, and is $\snorm{\vec{p}}_{2}^{2}$, so that any other atom in $\Xi$ has probability at most $\snorm{\vec{p}}_{2}^{2} - 2c_{\xi}$ for some $c_{\xi} > 0$. 

So far, we have shown that for all $\kappa$ and $\delta'$ sufficiently small (depending on $\xi$), we have
\begin{align*}
\mb{P}[\mc{E}]&\le\snorm{\vec{p}}_2^{3n} + \sum_{a\in\mc{C}}\mb{P}[\wt{M}^{(1)}=a]\mb{P}[\exists u'\in\mb{R}^{n-1}: \snorm{\wt{M}^{(-1)}u'+a}_2\le 2t|\wt{M}^{(1)}]\\
&\le\snorm{\vec{p}}_2^{3n} + (\snorm{\vec{p}}_2^2-c_{\xi})^n + \sum_{a\in\mc{C}_0}\mb{P}[\wt{M}^{(1)}=a]\mb{P}[\exists u'\in\mb{R}^{n-1}: \snorm{\wt{M}^{(-1)}u'+a}_2\le 2t|\wt{M}^{(1)}].
\end{align*}

We proceed to bound the third term in the above sum. 

\textbf{Case I: }If $a = 0$, we have $\mb{P}[\wt{M}^{(1)} = 0] = \snorm{p}_{2}^{2n}$. 

\textbf{Case II: }If $a \neq 0$, we have in particular that $\snorm{a}_{2} \ge h_{\xi} > 0$. The crucial observation is the following. Given $\wt{M}^{(1)} = a$, the entries of the first column of $\wt{M}^{(-1)}$ are independent random variables, each of which is distributed as the sum of two i.i.d.~copies of $\xi/\sqrt{2}$, conditioned on knowing their difference. In particular, for the coordinates $i \in [n]$ for which $a_i = 0$, the corresponding coordinate of the first column of $\wt{M}^{(-1)}$ is distributed as $\sqrt{2}\cdot \xi^\ast$, where $\xi^\ast$ has the same support as $\xi$ but takes on atom $a_i$ with probability proportional to $p_i^2$. Thus, we see that conditioned on $\wt{M}^{(1)} = a \in
\mc{C}_0$, all entries of $\wt{M}^{(-1)}$ are independent with sub-Gaussian norm at most $S_{\xi}$, and all but at most $\kappa n$ entries have variance at least $s_{\xi} > 0$. Hence, by \cref{lem:strange-block}, and by using the lower bound $\snorm{a}_{2} \ge h_{\xi}$, we find that there exist $C',c', n_1$ depending on $\xi$  such that for all $n \ge n_{1}$,
\[\mb{P}[\exists u' \in \mb{R}^{n-1}: \snorm{\wt{M}^{(-1)}u'+a}_2\le 2t|\wt{M}^{(1)}] \le C'n^{3}\sqrt{t}\exp(\kappa n) + \exp(-c'n)\exp(\kappa n).\]
Thus, the contribution of this case is at most
\[\snorm{\vec{p}}_{2}^{2n}\exp(2c_{\kappa,\delta', \xi}n)\exp(\kappa n)\bigg(C'n^{3}\sqrt{t} + 2\exp(-c'n)\bigg).\]
By the AM-GM inequality, we have $\snorm{\vec{p}}_{2}^{2n}\sqrt{t} \le t\snorm{\vec{p}}_{2}^{n} + \snorm{\vec{p}}_{2}^{3n}$. The desired conclusion now follows by taking $\eta > 0$ sufficiently small so that $\snorm{\vec{p}}_{2}^{n} \le \exp(-2\eta n)$, and then taking $\delta'$ (hence $\kappa$) sufficiently small so that $2c_{\kappa,\delta', \xi} + \kappa < \min(c'/2, \eta/2)$.
\end{proof}

The preceding proposition handles vectors in $\on{Elem}_{i,j}(\delta)$. If the distribution $\xi$ is a translate of an origin-symmetric distribution, we also need to handle vectors in $\on{Elem}'_{i,j}(\delta)$. In case the distribution $\xi$ is itself an origin-symmetric distribution, the desired bound follows immediately from the previous proposition, using that the distribution of any column of $M_n(\xi)$ is invariant under negation in this case. Therefore, it remains to handle vectors in $\on{Elem}'_{i,j}(\delta)$ when $\xi$ is a nonzero translate of an origin-symmetric distribution, which is done by the next proposition. 

\begin{proposition}\label{prop:e1+e2}
Fix a discrete distribution $\xi$ that is a \emph{nonzero} translate of an origin-symmetric distribution. There exist $\delta', \eta, n_0 > 0$ depending on $\xi$ such that for all $n \ge n_0$ and $t \le 1$,
\[\mb{P}[\exists x\in\on{Elem}_{1,2}'(\delta'): \snorm{M_n(\xi)x}_2\le t]\le (\snorm{\vec{p}}_2^2-\eta)^n + t\exp(-\eta n).\]
\end{proposition}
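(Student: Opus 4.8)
The plan is to follow the proof of \cref{prop:e1-e2} essentially verbatim, replacing the rotation $Q$ by the rotation $Q'$ which is the identity on the last $n-2$ coordinates and whose top-left $2\times2$ block sends $e_1\mapsto(e_1+e_2)/\sqrt2$ (for instance, with columns $(1,1)/\sqrt2$ and $(-1,1)/\sqrt2$), so that $\on{Elem}'_{1,2}(\delta')$ is the image under $Q'$ of $\mb{B}_2^n(e_1,\delta')\cap\mb{S}^{n-1}$ up to scaling $\delta'$ by a constant. Write $\xi=\mu+\zeta$ with $\zeta$ origin-symmetric and $\mu\neq0$; then the first column $\wt M^{(1)}$ of $M_n(\xi)Q'$ has i.i.d.\ entries distributed as $(\xi+\xi')/\sqrt2$. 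The one structural input that changes is this: whereas in \cref{prop:e1-e2} the variable $(\xi-\xi')/\sqrt2$ had its unique most probable atom at $0$, here $(\xi+\xi')/\sqrt2$ has its \emph{unique} most probable atom at $z_0:=\sqrt2\mu\neq0$, with probability $\snorm{\vec p}_2^2$. Indeed Cauchy--Schwarz gives $\mb{P}[\zeta+\zeta'=s]\le\snorm{\vec p}_2^2$ with equality iff $\zeta$ is symmetric about $s/2$; a finite-support distribution is symmetric about at most one point, which for $\zeta$ is $0$, so $s=0$ is the unique maximizer, hence $\xi+\xi'$ is uniquely maximized at $2\mu$.

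As in \cref{prop:e1-e2}, I would first use \cref{lem:subgaussian-norm} to restrict to $\mc E_K=\{\snorm{M_n(\xi)-\mb E[\xi]J_{n\times n}}\le K\sqrt n\}$ (complement of probability $\le\snorm{\vec p}_2^{3n}$); on the relevant event intersected with $\mc E_K$, a triangle inequality places $\wt M^{(1)}$ within $8K\delta'\sqrt n$ of a multiple of $1_n$, hence in $\mc C:=\{a\in\Xi'^n:\exists\lambda,\ \snorm{a-\lambda1_n}_2\le8K\delta'\sqrt n\}$ with $\Xi'=\on{supp}(\xi+\xi')/\sqrt2$. Covering $\mc C\subseteq\bigcup_{z\in\Xi'}\mc C_z$ with $\mc C_z:=\mc C\cap\{a:|\on{supp}(a-z1_n)|\le\kappa n\}$ and $\kappa=\kappa(\delta',\xi)\to0$ as $\delta'\to0$, and using $\mb{P}[\wt M^{(1)}\in\mc C_z]\le\mb{P}[(\xi+\xi')/\sqrt2=z]^n e^{c_{\kappa,\delta',\xi}n}$ together with the structural fact, gives $\mb{P}[\wt M^{(1)}\in\mc C\setminus\mc C_{z_0}]\le(\snorm{\vec p}_2^2-c_\xi)^n$ and $\mb{P}[\wt M^{(1)}\in\mc C_{z_0}]\le\snorm{\vec p}_2^{2n}e^{c_{\kappa,\delta',\xi}n}$.

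The crucial simplification over \cref{prop:e1-e2} is that, since $z_0\neq0$, every $a\in\mc C_{z_0}$ satisfies $\snorm a_2\ge|z_0|\sqrt{(1-\kappa)n}\ge1$ for $n$ large, so the analogue of ``Case I: $a=0$'' --- the source of the extra $\snorm{\vec p}_2^{2n}$ term there --- simply does not arise. For $a\in\mc C_{z_0}$, conditioning on $\wt M^{(1)}=a$ forces, in each of the $\ge(1-\kappa)n$ coordinates $i$ with $a_i=z_0$, the relation $\zeta_i^{(1)}+\zeta_i^{(2)}=0$; the corresponding entry $\tfrac1{\sqrt2}(M^{(2)}_i-M^{(1)}_i)=-\sqrt2\,\zeta^{(1)}_i$ then has the law of $-\sqrt2\,\zeta^\ast$ where $\mb{P}[\zeta^\ast=c]\propto\mb{P}[\zeta=c]^2$, which is nondegenerate (variance $\ge s_\xi>0$) because $\zeta$, like $\xi$, has at least two atoms. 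Hence, conditioned on $\wt M^{(1)}=a\in\mc C_{z_0}$, the $n\times(n-1)$ matrix $\wt M^{(-1)}$ of the remaining columns has independent entries with sub-Gaussian norm $O_\xi(1)$ and all but $\le\kappa n$ of variance $\ge s_\xi$, so \cref{lem:strange-block} with $v=-a$ ($\snorm v_2\ge1$) bounds the conditional probability of $\exists u':\snorm{\wt M^{(-1)}u'+a}_2\le2t$ by $C'n^3\sqrt t\,e^{\kappa n}+e^{-c'n}e^{\kappa n}$. Finally, summing over $a$, using the AM--GM bound $\snorm{\vec p}_2^{2n}\sqrt t\le t\snorm{\vec p}_2^n+\snorm{\vec p}_2^{3n}$, choosing $\eta$ small so that $\snorm{\vec p}_2^n\le e^{-2\eta n}$ (possible as $\snorm{\vec p}_2<1$ since $\xi$ is non-constant) and then $\delta'$ (hence $\kappa$) small so that $2c_{\kappa,\delta',\xi}+\kappa<\min(c'/2,\eta/2)$, absorbs every term into $(\snorm{\vec p}_2^2-\eta)^n+t\,e^{-\eta n}$. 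The only genuinely new step, and the one to handle with care, is the structural input in the first paragraph --- that $(\xi+\xi')/\sqrt2$ is uniquely maximized at the nonzero atom $\sqrt2\mu$ with the associated conditional law nondegenerate --- which is exactly where the hypothesis that $\xi$ is a nonzero translate of a finite-support origin-symmetric distribution is used.
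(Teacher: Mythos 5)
Your proposal is correct and follows the paper's proof essentially verbatim: same rotation, same decomposition of $\mc{C}$ into $\mc{C}_z$, same Cauchy--Schwarz argument identifying the unique heaviest atom of $(\xi+\xi')/\sqrt{2}$ at $\sqrt{2}\mu = s/\sqrt{2}$, same observation that every $a\in\mc{C}_{z_0}$ is automatically nonzero (so the ``main term'' from $a=0$ does not arise), and the same application of \cref{lem:strange-block} using the nondegenerate conditional law $\propto p_i^2$. The only cosmetic difference is that you parametrize via the symmetrization $\xi=\mu+\zeta$ where the paper works directly with the shift $s$ satisfying $\xi\overset{d}{=}s-\xi$, and you spell out the uniqueness-of-center-of-symmetry argument that the paper leaves implicit.
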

\begin{proof}
The proof is essentially the same as that of \cref{prop:e1-e2}. The lack of the ``main term'' $\snorm{\vec{p}}_{2}^{2}$ comes from the fact that $e_1+e_2$ is unlikely to be a kernel vector since $\xi$ is not origin-symmetric. 

We quickly discuss the main modifications to the proof of \cref{prop:e1-e2}. Throughout, $s \neq 0$ denotes a real number such that  $\xi$ and $s-\xi$ have the same distribution (such an $s$ exists by our assumption about $\xi$). First, the top-left $2\times 2$ minor of $Q$ is now 
\[\begin{bmatrix}\frac{1}{\sqrt{2}}&-\frac{1}{\sqrt{2}}\\\frac{1}{\sqrt{2}}&\frac{1}{\sqrt{2}}\end{bmatrix}.\]
Next, we let $\Xi = \on{supp}(\xi+\xi')/\sqrt{2}$ and as before, let
\[\mc{C} := \{a\in\Xi^n: \exists\lambda\in\mb{R}\text{ with }\snorm{a-\lambda 1_n}_2\le 8K\delta'\sqrt{n}\}\]
and for $z\in\Xi$,
\[\mc{C}_z := \mc{C}\cap\{a\in\mb{R}^n: |\on{supp}(a-z1_n)|\le\kappa n\},\]
where $\kappa = \kappa(\delta',\xi) > 0$ is chosen as in the previous argument. For such a choice of $\kappa$, we have
\[\mb{P}[\wt{M}^{(1)} \in \mc{C}] \le \snorm{\vec{p}}_{2}^{2n}\exp(2c_{\kappa,\delta', \xi}n),\text{ and } \]
\[\mb{P}[\wt{M}^{(1)} \in \mc{C}\setminus \mc{C}_{s/\sqrt{2}}] \le (\snorm{\vec{p}}_{2}^{2} - c_{\xi})^{n}.\]
This time the inequalities are derived as follows. We note that, by Cauchy-Schwarz, for any $z \in \Xi$, $\mb{P}[\xi + \xi' = z\sqrt{2}] \le \snorm{\vec{p}}_{2}^{2}$, with equality holding if and only if $\mb{P}[\xi = a] = \mb{P}[\xi' = z\sqrt{2} - a]$ for all $a \in \on{supp}(\xi)$, which happens if and only if $z = s/\sqrt{2}$. 

Using this, we have as before that 
\[\mb{P}[\mc{E}]\le \snorm{\vec{p}}_{2}^{3n} + (\snorm{\vec{p}}_2^2-c_\xi)^n+\sum_{a\in\mc{C}_{s/\sqrt{2}}}\mb{P}[\wt{M}^{(1)}=a]\mb{P}[\exists u'\in\mb{R}^{n-1}: \snorm{\wt{M}^{(-1)}u'+a}_2\le 2t|\wt{M}^{(1)}].\]

The most important detail is that for $\kappa \le 1/2$ (say),
every $a\in\mc{C}_{s/\sqrt{2}}$ is nonzero, since it has at least $(1-\kappa)n$ coordinates equal to $s/\sqrt{2}$. Since $s$ is a nonzero constant depending only on $\xi$, we can now use the analysis in
\textbf{Case II} of the proof of \cref{prop:e1-e2}. The final thing to note is that the distribution of the random variable $(\xi - \xi')/\sqrt{2}$, conditioned on $(\xi + \xi')/\sqrt{2} = s/\sqrt{2}$ coincides with the distribution of $(2\xi^\ast-s)/\sqrt{2}$, where $\xi^\ast$ has the same support as $\xi$, but takes on atom $a_i$ with probability proportional to $p_i^2$. The remaining details of the proof are essentially the same.
\end{proof}

\subsection{One column}\label{sub:one-column}
We now handle vectors in $\on{Elem}_i(\delta')$. Once again, by permutation invariance, it suffices to handle $\on{Elem}_{1}(\delta')$. We will prove the following. 

\begin{proposition}\label{prop:e1}
Fix a discrete distribution $\xi$. There exist $C', \delta', \eta, n_0 > 0$ depending on $\xi$ such that for all $n \ge n_0$ and $t\le 1$, 
\[\mb{P}[\exists x\in\on{Elem}_1(\delta'): \snorm{M_n(\xi)x}_2\le t]\le p_0^n + C't\exp(-\eta n) +  (\snorm{\vec{p}}_2^2-\eta)^n.\]
\end{proposition}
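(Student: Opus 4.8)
The plan is to follow the template established in the proof of \cref{prop:e1-e2}, but with a more delicate conditioning argument reflecting the fact that we are aiming for the main term $p_0^n = \mb{P}[\mc{E}_{e_1}]$ rather than $\snorm{\vec{p}}_2^{2n}$. First, using \cref{lem:subgaussian-norm}, restrict to the event $\mc{E}_K = \{\snorm{M_n(\xi) - \mb{E}[\xi]J_{n\times n}} \le K\sqrt{n}\}$, which fails with probability at most $p_0^{2n}$ (say, after enlarging the eventual error term). On the event that some $x \in \on{Elem}_1(\delta')$ has $\snorm{M_n(\xi)x}_2 \le t$, write $x = e_1 + u$ with $\snorm{u}_2 \le 2\delta'$ and $u_1$ essentially negligible; letting $M^{(1)}$ be the first column and $M^{(-1)}$ the remaining $n \times (n-1)$ block, the triangle inequality together with $\mc{E}_K$ gives $\snorm{M^{(1)} - \mb{E}[\xi]J_{n\times(n-1)}u'}_2 \le C K \delta' \sqrt{n}$ for $u' = (u_2,\dots,u_n)$. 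Hence $M^{(1)}$ lies in the universal set $\mc{C} := \{a \in \on{supp}(\xi)^n : \exists \lambda \in \mb{R},\ \snorm{a - \lambda 1_n}_2 \le C K \delta' \sqrt{n}\}$, which decomposes as $\bigcup_{z \in \on{supp}(\xi)} \mc{C}_z$ where $\mc{C}_z := \mc{C} \cap \{a : |\on{supp}(a - z 1_n)| \le \kappa n\}$, and $\kappa = \kappa(\delta',\xi) \to 0$ as $\delta' \to 0$.

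The key point — and where this proof genuinely differs from \cref{prop:e1-e2} — is that the single column $M^{(1)}$ now has i.i.d.\ $\xi$-entries rather than entries of a conditioned type, so the ``main'' near-constant value is $z = 0$: $\mb{P}[M^{(1)} \in \mc{C}_0] \le p_0^n \exp(c_{\kappa,\delta',\xi}n)$ with $c_{\kappa,\delta',\xi} \to 0$, while $\mb{P}[M^{(1)} \in \mc{C}\setminus\mc{C}_0] \le (\snorm{\vec{p}}_\infty - c_\xi)^n$ for some $c_\xi > 0$ (since the runner-up atom probability is $\snorm{\vec{p}}_\infty$ minus a constant, and more carefully one can even get $\snorm{\vec{p}}_2^2$ in the exponent base — but $(\snorm{\vec{p}}_2^2 - \eta)^n$ will suffice for the statement as written, and one may need to invoke \cref{prop:p-characterization} / \cref{cor:non-elementary}-style reasoning if $\snorm{\vec{p}}_\infty$ is not small enough, though since here we only have a single near-constant column the elementary bound $\snorm{\vec{p}}_\infty - c_\xi$ combined with tensorization on the remaining block is what we use). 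Then, as in \cref{prop:e1-e2}, condition on $M^{(1)} = a$ and bound $\mb{P}[\exists u' \in \mb{R}^{n-1} : \snorm{M^{(-1)}u' + a}_2 \le 2t \mid M^{(1)} = a]$ using \cref{lem:strange-block}: when $a \ne 0$ we have $\snorm{a}_2 \ge h_\xi > 0$, the entries of $M^{(-1)}$ are (unconditionally, since the columns are independent) i.i.d.\ copies of $\xi$ hence sub-Gaussian with variance bounded below — so \emph{every} entry is non-degenerate and we may take $\kappa$ as small as we like — giving a bound $C' n^3 \sqrt{t} + \exp(-c'n)$.

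Assembling: the contribution from $a = 0$ is exactly $\mb{P}[M^{(1)} = 0] = p_0^n$ (this is the event $\mc{E}_{e_1}$, which genuinely can make $\snorm{M_n(\xi)x}_2 = 0$ and must appear as a main term); the contribution from $a \in \mc{C}_0 \setminus \{0\}$ is at most $p_0^n \exp(c_{\kappa,\delta',\xi}n)(C'n^3\sqrt t + \exp(-c'n))$; the contribution from $a \in \mc{C} \setminus \mc{C}_0$ is at most $(\snorm{\vec{p}}_2^2 - c_\xi)^n$; and $\mb{P}[\mc{E}_K^c] \le p_0^{2n}$. Using $p_0^n \sqrt{t} \le t p_0^{n/2} + p_0^{3n/2}$ (AM–GM) and choosing $\eta > 0$ small enough that $p_0^{n/2} \le \exp(-2\eta n)$, then $\delta'$ (hence $\kappa$) small enough that $c_{\kappa,\delta',\xi} + \kappa$-type losses are absorbed into $\min(c'/2, \eta/2)$, yields the claimed bound $p_0^n + C' t \exp(-\eta n) + (\snorm{\vec{p}}_2^2 - \eta)^n$. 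The main obstacle is the bookkeeping around extracting exactly $p_0^n$ as the leading constant while ensuring the ``off-main'' near-constant columns (the $\mc{C}_z$ with $z \ne 0$) truly contribute at the strictly smaller exponential rate $\snorm{\vec{p}}_2^2 - \eta$ rather than merely $\snorm{\vec{p}}_\infty - \eta$; this requires the Cauchy–Schwarz observation that, among sums/differences of $\xi$'s, the atom at the "expected" location dominates, exactly as in the proof of \cref{prop:p-characterization}, and carefully tracking that the $\exp(c_{\kappa,\delta',\xi}n)$ entropy factor from the size of $\mc{C}_z$ does not overwhelm the gap.
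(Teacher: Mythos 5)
Your overall architecture (restrict to $\mc{E}_K$, force $M^{(1)}$ into the near-constant set $\mc{C}=\bigcup_z\mc{C}_z$, split off $a=0$, and run a small-ball estimate on the remaining block) matches the paper's, but two of your key quantitative claims fail, and they are precisely the points that make this proposition more delicate than \cref{prop:e1-e2}. First, the bound $\mb{P}[M^{(1)}\in\mc{C}\setminus\mc{C}_0]\le(\snorm{\vec{p}}_\infty-c_\xi)^n$ (let alone $(\snorm{\vec{p}}_2^2-c_\xi)^n$) is false in general: $0$ need not be the most probable atom of $\xi$, so for $\xi=\on{Ber}(0.9)$ the set $\mc{C}_1$ already has probability about $0.9^n$ while $\snorm{\vec{p}}_2^2=0.82$; and even when $p_0=\snorm{\vec{p}}_\infty$, the second-largest atom can exceed $\snorm{\vec{p}}_2^2$ (e.g.\ $\vec{p}=(0.4,0.35,0.25)$ gives $\snorm{\vec{p}}_2^2=0.345<0.35$). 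So you cannot bound the contribution of $\mc{C}\setminus\mc{C}_0$ by its probability alone. The correct accounting multiplies $\mb{P}[M^{(1)}\in\mc{C}]\le\snorm{\vec{p}}_\infty^n e^{c_{\xi,\delta'}n}$ by the conditional small-ball probability of the remaining block, and it is only the product $\snorm{\vec{p}}_\infty^n\cdot(\snorm{\vec{p}}_\infty+\eta/2)^n$, together with the strict inequality $\snorm{\vec{p}}_\infty^2<\snorm{\vec{p}}_2^2$, that lands below the target $(\snorm{\vec{p}}_2^2-\eta)^n$.

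Second, and relatedly, \cref{lem:strange-block} is the wrong tool here: its additive error $\exp(-c'n)$ is not small enough, because after multiplying by $p_0^n$ (or $\snorm{\vec{p}}_\infty^n$) you need the result to fall below $(\snorm{\vec{p}}_2^2-\eta)^n$, and $p_0e^{-c'}\le\snorm{\vec{p}}_2^2-\eta$ fails whenever $p_0$ is close to $1$ --- the constant $c'$ coming from a Rudelson--Vershynin-type argument is not at your disposal. This is exactly why the paper proves \cref{lem:sharp-block}, whose error term is $(\snorm{\vec{p}}_\infty+\eta)^n$ and which rests on the preliminary invertibility estimates \cref{thm:weak} and \cref{prop:initial-unstructured}; that substitution is the essential new ingredient of the one-column case (see points (i) and (ii) stated before \cref{prop:e1}) and is absent from your argument. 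A smaller, fixable slip: the AM--GM split $p_0^n\sqrt{t}\le tp_0^{n/2}+p_0^{3n/2}$ requires $p_0^{3/2}<\snorm{\vec{p}}_2^2$, which also fails for $p_0$ near $1$; you need a split of the form $a\sqrt{t}\le\tfrac12(a^{2\theta}t+a^{2-2\theta})$ with $\theta$ small so that only $p_0^{2-2\theta}<\snorm{\vec{p}}_2^2$ is required.
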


The analysis is more delicate than the two column case, since (i) we may have  $p_0 < \snorm{\vec{p}}_\infty$, but we still want  to isolate $p_0$ as the major contribution coming from these events, and (ii) we are aiming for an error term of $(\snorm{\vec{p}}_{2}^{2} - \eta)^{n}$, which may be smaller than $(p_0 - \eta)^{n}$. However, given the preparation above, the rest of the proof is similar to the proof in the sparse Bernoulli case, isolated in \cite[Proposition~2.8]{JSS20discrete1}, except that we need to replace the application of the results of Rudelson and Vershynin \cite{RV08} with the much sharper \cref{prop:initial-unstructured} and \cref{thm:weak}.

We begin with the following proposition. The analogue in the sparse Bernoulli case is \cite[Lemma~2.9]{JSS20discrete1}.

\begin{lemma}\label{lem:sharp-block}
Fix a discrete distribution $\xi$. For any $\eta \in (0,1)$, there exist $C, n_0 > 0$ depending on $\xi, \eta$ for which the following holds.  For any $v\in\mb{R}^n$ with $\snorm{v}_2\ge 1$, we have
\[\mb{P}[\exists x\in\mb{R}^{n-1}: \snorm{Ax-v}_2\le t]\le C\cdot n^3 t ^{1/2} + (\snorm{\vec{p}}_\infty+\eta)^n,\]
where $A$ is a random $n\times(n-1)$ matrix with independent $\xi$ entries.
\end{lemma}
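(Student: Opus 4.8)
The plan is to reduce the estimate to a smallest–singular–value bound for a nearly square matrix and to re-run the argument behind \cref{thm:weak} (which follows \cite[Section~5]{Tik20}), now fed by the sharper inputs \cref{cor:initial-compressible} and \cref{prop:initial-unstructured}. We may assume $t$ is smaller than any fixed constant, since otherwise $C n^3 t^{1/2}$ already exceeds $1$. Since $\snorm{v}_2\ge 1$, replacing $v$ by $v/\snorm{v}_2$ only decreases $t$ on the left-hand side and leaves the right-hand side unchanged, so we may also assume $v\in\mb{S}^{n-1}$. If $\snorm{Ax-v}_2\le t$ for some $x\in\mb{R}^{n-1}$, then the $n\times n$ matrix $B:=[\,A\mid v\,]$ --- whose first $n-1$ columns are i.i.d.\ copies of $\xi$ and whose last column is the fixed unit vector $v$ --- satisfies $\snorm{B(x,-1)}_2\le t$, and as $\snorm{(x,-1)}_2\ge 1$ this gives $s_n(B)\le t$. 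Hence it suffices to prove $\mb{P}[s_n(B)\le t]\le C't\sqrt n+(\snorm{\vec p}_\infty+\eta)^n$, from which the stated bound follows because $t\sqrt n=\sqrt t\cdot(\sqrt t\,\sqrt n)\le n^3\sqrt t$ for $t\le 1$.

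To bound $\mb{P}[s_n(B)\le t]$ one splits the unit sphere into almost-constant and non-almost-constant vectors. For $x\in\on{Cons}(\delta,\rho)$ one distinguishes whether $\snorm{(x_1,\dots,x_{n-1})}_2<1/2$ --- in which case $x$ is within $O(\delta+\rho)$ of $\pm e_n$ and $\snorm{Bx}_2\ge 1/3>t$ for $n$ large --- or not, in which case the normalized vector $(x_1,\dots,x_{n-1})/\snorm{(x_1,\dots,x_{n-1})}_2$ lies in a set $\on{Cons}(\delta',\rho')\subseteq\mb{S}^{n-2}$, and after covering the scalar $x_n\in[-1,1]$ by $\exp(o(n))$ values and invoking \cref{lem:subgaussian-norm} to control $\snorm{A}$, \cref{cor:initial-compressible} (applied to the $n\times(n-1)$ i.i.d.\ matrix $A$, using $t\le 1\le c\sqrt n$) bounds this contribution by $(\snorm{\vec p}_\infty+\epsilon)^n$. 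For non-almost-constant $x$, one uses $\snorm{Bx}_2\ge|x_j|\,\on{dist}(B^{(j)},H_j)$, where $B^{(j)}$ is the $j$-th column and $H_j$ the span of the remaining columns; for such $x$ there are $\ge cn$ indices $j\le n-1$ with $|x_j|\ge c/\sqrt n$ (cf.\ \cref{lem:nonconstant-admissible}), so it suffices to bound, for each random-column index $j\le n-1$, the probability that $\on{dist}(B^{(j)},H_j)\le t\sqrt n/c$. Writing this distance as $|\langle B^{(j)},w_j\rangle|$ with $w_j$ a unit normal to $H_j$ (which is independent of $B^{(j)}$), an inhomogeneous version of \cref{prop:structure}/\cref{prop:initial-unstructured} --- valid for an $(n-1)\times n$ matrix one of whose rows is the deterministic vector $v$, proved exactly as \cite[Proposition~4.2]{JSS20discrete1} with the threshold parameter taken large --- shows that outside an event of probability $(\snorm{\vec p}_\infty+\epsilon)^n$ one has $w_j\in\on{Noncons}(\delta,\rho)$ and $\mc{T}_\xi(w_j,L)\le(\snorm{\vec p}_\infty+\eta)^n$, so $\mc{L}_\xi(\langle B^{(j)},w_j\rangle,s)\lesssim_\xi Ls$ for all $s$ exceeding this super-exponentially small threshold, in particular for $s=t\sqrt n/c$. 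Averaging over the $\ge cn$ good indices (losing a factor $O(1/n)$) and summing over $j\le n-1$ yields a non-almost-constant contribution $\lesssim_\xi t\sqrt n+(\snorm{\vec p}_\infty+\epsilon)^n$; combining the two parts and adjusting $\epsilon$ against $\eta$ gives the claimed bound on $\mb{P}[s_n(B)\le t]$, over the full range of $t$ exactly as in the proof of \cref{thm:weak}.

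The essential difficulty is the non-almost-constant step, namely verifying that Tikhomirov's invertibility machinery underlying \cref{thm:weak} is robust to replacing one random column by the fixed vector $v$: one must carry the deterministic column through the reduction to a well-conditioned multislice (as in \cref{prop:structure} and \cite[Proposition~4.2]{JSS20discrete1}) and confirm that the resulting kernel normal $w_j$ still satisfies the sharp threshold estimate $\mc{T}_\xi(w_j,L)\le(\snorm{\vec p}_\infty+\eta)^n$. The square-root loss ($\sqrt t$ rather than $t$) is inessential and is kept only because it is the form needed downstream in \cref{prop:e1} and \cref{lem:strange-block}.
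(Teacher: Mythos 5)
Your proposal is viable in outline but takes a genuinely different route from the paper, and it shoulders a heavier burden than necessary. The paper's proof never forms the augmented square matrix $[\,A\mid v\,]$ and never touches the compressible/incompressible decomposition for a matrix containing a deterministic column. Instead it splits $A$ into its first row $R$ and the remaining square block $A_{n-1}$ (reindexing so that the corresponding part $v'$ of $v$ has $\snorm{v'}_2\ge 1/2$), invokes \cref{thm:weak} as a black box to get $s_{n-1}(A_{n-1})\ge\sqrt t$ off an event of probability $C'\sqrt{nt}+(\snorm{\vec p}_\infty+\eta/2)^n$, and observes that any approximate solution $x$ must then satisfy $\snorm{x-y}_2\le t^{1/2}$ for the exact solution $y=A_{n-1}^{-1}v'$ (this is where the $\sqrt t$ comes from, and $\snorm{y}_2\ge 1/(C''n^2)$ is where the $n^3$ comes from). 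Applying \cref{prop:initial-unstructured} to the honest i.i.d.\ matrix $A_{n-1}$ makes $y/\snorm{y}_2$ unstructured, and the independence of $R$ from $A_{n-1}$ then gives anti-concentration of $Ry$ near $v_1$. By contrast, your route hinges on an ``inhomogeneous structure theorem'' for an $(n-1)\times n$ matrix with one deterministic row, which you assert but do not prove; this is plausible (one simply forfeits the randomness of that row in the counting argument behind \cref{prop:structure}, costing a factor that is harmless at the weak threshold $(\snorm{\vec p}_\infty+\eta)^n$ you need), but it is precisely the work the paper's row-splitting trick is designed to avoid. If carried out, your argument would give the stronger bound $O(t\,\mathrm{poly}(n))$ rather than $O(n^3\sqrt t)$, which is a real (if unneeded) gain.

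One concrete slip: in your almost-constant analysis, the case $\snorm{(x_1,\dots,x_{n-1})}_2<1/2$ does \emph{not} force $x$ to be within $O(\delta+\rho)$ of $\pm e_n$ (take $x_1=0.4$, $x_n=\sqrt{0.84}$, rest zero), and the bound $\snorm{Bx}_2\ge 1/3$ fails as stated because $\snorm{A}$ is of order $n$, not $\sqrt n$, when $\mb{E}[\xi]\neq 0$. The fix is to further split on whether $\snorm{(x_1,\dots,x_{n-1})}_2\le 1/(2\snorm{A})$, and to fold the intermediate range back into the compressible analysis with a scalar net for $x_n/\snorm{(x_1,\dots,x_{n-1})}_2$ of polynomial size, using that one may assume $t\le n^{-6}$; this is routine but your one-line dismissal is not correct as written.
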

\begin{proof}
Fix $\eta > 0$, and let $\mc{E}$ be the event whose probability we are trying to control. After potentially reindexing the coordinates, we may write
\[A = \begin{bmatrix}R\\A_{n-1}\end{bmatrix},\quad v=\begin{bmatrix}v_1\\v'\end{bmatrix}\]
where $A_{n-1}$ is an $(n-1)\times(n-1)$ matrix and $v'\in\mb{R}^{n-1}$ satisfies $\snorm{v'}_2\ge 1/2$. Let $\mc{E}_S = \{s_{n-1}(A_{n-1})\le\sqrt{t}\}$. By \cref{thm:weak}, we have that for all sufficiently large $n$, there exists a constant $C'$ depending on $\xi$ and $\eta$ such that
\[\mb{P}[\mc{E}_S]\le C'\sqrt{nt} + (\snorm{\vec{p}}_\infty+\eta/2)^n.\]
It therefore suffices to bound the probability of $\mc{E}\wedge\mc{E}_S^c$. In such a situation, we see that $y := (A_{n-1})^{-1}v'$ is unique. Let $y_0 := y/\snorm{y}_2$, and for $\delta, \rho, L$ to be chosen momentarily, let
\[\mc{E}_U = \{y_0\in\on{Cons}(\delta,\rho)\vee\mc{T}_{\xi}(y_0,L)\ge(\snorm{\vec{p}}_\infty+\eta/2)^n\}.\]
By \cref{prop:initial-unstructured}, we can choose $\delta,\rho,L > 0$ depending on $\xi$ and $\eta$ so that
\[\mb{P}[\mc{E}_U]\le(\snorm{\vec{p}}_\infty+\eta/2)^n.\]
Hence, it suffices to bound the probability of $\mc{E}\wedge\mc{E}_S^c\wedge\mc{E}_U^c$. Let $x \in \mb{R}^{n-1}$ be a vector certifying this event. Then, we have for all sufficiently large $n$ that
\[\snorm{A_{n-1}x-v'}_2\le t \implies \snorm{x-y}_2\le t^{1/2}, \text{ and }\]
\[|Rx-v_1|\le t \implies|Ry-v_1|\le t+n t^{1/2}.\]
Furthermore, since $\snorm{v}_{2} \ge 1$, we have $\snorm{y}_2\ge 1/C''n^2$, for some constant $C''$ depending on $\xi$. 

We now fix a realization of $A_{n-1}$ satisfying $\mc{E}_S^{c} \wedge \mc{E}_U^{c}$. In particular, this fixes $y,y_0$ satisfying the conditions in $\mc{E}_U^{c}$ and with $\snorm{y}_{2} \ge 1/C''n^{2}$.   
Now, we use the independence of $R$ and $A_{n-1}$ and the fact that $\mc{E}$ implies
\[|Ry-v_1|\le t +n t^{1/2}\le 2n t^{1/2}.\]
Since
\[\mc{T}_{\xi}(y_0,L) < (\snorm{\vec{p}}_\infty+\eta/2)^n\]
and $\snorm{y}_2\ge 1/C''n^2$, we find that the desired probability is bounded by
\[2LC
''n^3 t^{1/2} + L(\snorm{\vec{p}}_\infty+\eta/2)^n. \qedhere\]

\end{proof}
Now we are ready to conclude \cref{prop:e1}.
\begin{proof}[Proof of \cref{prop:e1}]
A completely identical argument to the proof of \cref{prop:e1-e2} shows that for a sufficiently large constant $K$ depending on $\xi$, and for $\Xi := \on{supp(\xi)} \subseteq \mb{R}$,
\[\mb{P}[\mc{E}]\le p_0^{3n} + \sum_{a\in\mc{C}}\mb{P}[M^{(1)}=a]\mb{P}[\exists u'\in\mb{R}^{n-1}: \snorm{M^{(-1)}u'+a}_2\le 2t],\]
where $M^{(1)}$ denotes the first column of $M_{n}$, $M^{(-1)}$ denotes the $n\times (n-1)$ matrix formed by excluding this column, and 
\[\mc{C} = \{a\in\Xi^n: \exists\lambda\in\mb{R}\text{ with }\snorm{a-\lambda 1_n}_2\le 8K\delta'\sqrt{n}\}.\]
We want to bound the contribution of the sum on the right hand side. 

\textbf{Case I: }If $a = 0$, $\mb{P}[M^{(1)} = a] = p_0^{n}$.

\textbf{Case II: }If $a \neq 0$, then $\snorm{a}_{2} \ge h_{\xi} > 0$. Hence, by \cref{lem:sharp-block}, there is a constant $C > 0$ depending on $\xi, \eta$ such that
\[\mb{P}[\exists u'\in\mb{R}^{n-1}: \snorm{M^{(-1)}u'+a}_2\le 2t] \le C n^{3} \sqrt{t} + (\snorm{\vec{p}}_{\infty} + \eta/2)^{n}.\]
Moreover, a similar (but easier) argument as in the proof of \cref{prop:e1-e2} shows that
\[\mb{P}[M^{(1)} \in \mc{C}] \le \snorm{\vec{p}}_{\infty}^{n}\exp(c_{\xi, \delta'}n),\]
where $c_{\xi, \delta'}$ goes to $0$ as $\delta'$ goes to $0$. 

Hence, we see that the contribution to the sum from this case is bounded by
\[\snorm{\vec{p}}_{\infty}^{n}\exp(c_{\xi, \delta'}n)\cdot \bigg(C n^{3} \sqrt{t} + (\snorm{\vec{p}}_{\infty} + \eta/2)^{n}\bigg).\]
By choosing $\delta'$ sufficiently small depending on $\xi$ and $\eta$, and using $\snorm{\vec{p}}_{\infty}^{2} \le \snorm{\vec{p}}_{2}^{2} - c_\xi$ for some $c_\xi > 0$, we see as before (using the AM-GM inequality) that the above quantity is at most
\[t\exp(-\eta' n)+(\snorm{\vec{p}}_{2}^{2} - \eta')^{n}\]
for a sufficiently small $\eta'$ depending on $\xi$ and $\eta$. This completes the proof. 
\end{proof}

The proof of \cref{thm:compressible} is now immediate. 
\begin{proof}[Proof of \cref{thm:compressible}]
First, assume that $\xi$ is not a shift of an origin-symmetric random variable. We choose $\delta'$ small enough so that the conclusions of \cref{prop:e1,prop:e1-e2} are satisfied. By the union bound, this shows that the contribution of $\on{Elem}(\delta')$ to the probability is at most
\[n\mb{P}[\mc{E}_{e_1}] + \binom{n}{2}\mb{P}[\mc{E}_{e_1 - e_2}] + (t + \snorm{\vec{p}}_{2}^{2n})e^{-\eta n},\]
for a sufficiently small $\eta > 0$ depending on $\xi$, and for all sufficiently large $n$ depending on $\xi$. Now, we can conclude using \cref{cor:non-elementary}.

Next, if $\xi$ is a nonzero shift of an origin-symmetric random variable, we do the same, except we require \cref{prop:e1,prop:e1-e2,prop:e1+e2} and then conclude with \cref{cor:non-elementary}.

Finally, we consider the case when $\xi$ is an origin-symmetric random variable. As before, we begin by using \cref{prop:e1,prop:e1-e2}. The only thing to note is that, by the symmetry of $\xi$ about the origin, for all $i\neq j$, $\mb{P}[\mc{E}_{e_i - e_j}] = \mb{P}[\mc{E}_{e_i + e_j}]$. Hence, by the union bound, the contribution of $\on{Elem}'(\delta')$ to the probability is at most 
\[n\mb{P}[\mc{E}_{e_1}] + \binom{n}{2}(\mb{P}[\mc{E}_{e_1 - e_2}] + \mb{P}[\mc{E}_{e_1 + e_2}]) + (t+\snorm{\vec{p}}_{2}^{2n})e^{-\eta n}.\]
Now, we can conclude using \cref{cor:non-elementary}.
\end{proof}

\section{Deduction of \texorpdfstring{\cref{thm:main-approximate,thm:main-exact}}{Theorems 1.2 and 1.4}}
\label{sec:deduction-main}
Given the results in \cref{sec:incompressible,sec:compressible}, the deduction of  \cref{thm:main-approximate,thm:main-exact} is immediate. Fix a discrete distribution $\xi$, and let  $\delta,\rho,\eta, n_0 > 0$ be parameters depending on $\xi$ coming from \cref{thm:compressible}. Then, for the proof of \cref{thm:main-approximate}, let $\epsilon > 0$ be as in the statement of the theorem (it suffices to assume that $\epsilon < 1$), and for the proof of \cref{thm:main-exact}, let $\epsilon > 0$ be such that
\[\exp(2\epsilon-H(\vec{p})) < \snorm{\vec{p}}_2^2,\]
which is possible since, by the weighted AM-GM inequality, we have 
\[\exp(-H(\vec{p})) = \prod_ip_i^{p_i}\le\sum_ip_i^2 = \snorm{\vec{p}}_2^2,\]
and equality holds if and only if $\xi$ is uniform on its support. 

Let $C = C_{\ref{thm:incompressible}}(\xi, \delta, \rho, \epsilon/2)$. By taking $C_{\xi, \epsilon}$ in \cref{thm:main-approximate} and $C_{\xi}$ in \cref{thm:main-exact} to be at least $C$, we may restrict our attention to $0\le t\le 1/C$ (since for $t\ge 1/C$, the right-hand sides of \cref{thm:main-approximate,thm:main-exact} are at least $1$). By \cref{thm:compressible} and \cref{thm:incompressible}, for all $0\le t\le 1/C$, we have
\begin{align*}
\mb{P}[s_n(M_n(\xi))\le t/\sqrt{n}]&\le\mb{P}\bigg[\inf_{x\in\on{Cons}(\delta,\rho)}\snorm{M_n(\xi)x}_2\le t/\sqrt{n}\bigg]+\mb{P}\bigg[\inf_{y\in\on{Cons}(\delta,\rho)}\snorm{yM_n(\xi)}_2\le Ct\bigg]\\
&\quad+\mb{P}\bigg[\inf_{x\in\on{Noncons}(\delta,\rho)}\snorm{M_n(\xi)x}_2\le t/\sqrt{n}\wedge\inf_{y\in\on{Cons}(\delta,\rho)}\snorm{yM_n(\xi)}_2 > Ct\bigg]\\
&\le 2n\mb{P}[\mc{E}_{e_1}] + (n^2-n)(\mb{P}[\mc{E}_{e_1-e_2}] + \mb{P}[\mc{E}_{e_1+e_2}]) + 2(Ct+\snorm{\vec{p}}_2^{2n})e^{-\eta n}\\
&\quad+Ct+\exp((\epsilon/2-H(\vec{p}))n)
\end{align*}
for all sufficiently large $n$. Here, we have used that $M_n(\xi)$ and $M_n(\xi)^\intercal$ have the same distribution.

For \cref{thm:main-exact}, we are done by our choice of $\epsilon$. 

For \cref{thm:main-approximate}, we note that by Cauchy-Schwarz (as in \cref{prop:p-characterization}), $\mb{P}[\mc{E}_{e_1 + e_2}] \le \mb{P}[\mc{E}_{e_1 - e_2}]$ and recall from above that $\exp(-H(\vec{p})) \le \snorm{\vec{p}}_{2}^{2}$. Using this, we can bound the right hand side of the above computation by
\[2Ct + 2n\mb{P}[\mc{E}_{e_1}] + 2n^{2}\exp(\epsilon n/2)\mb{P}[\mc{E}_{e_1 - e_2}].\]
The desired conclusion follows since $2n^{2}\exp(\epsilon n/2) \le (1+\epsilon)^{n}$ for all $\epsilon < 1$ and $n$ sufficiently large.

\bibliographystyle{amsplain0.bst}
\bibliography{main.bib}

\providecommand{\bysame}{\leavevmode\hbox to3em{\hrulefill}\thinspace}
\providecommand{\MR}{\relax\ifhmode\unskip\space\fi MR }
\providecommand{\MRhref}[2]{%
  \href{http://www.ams.org/mathscinet-getitem?mr=#1}{#2}
}
\providecommand{\href}[2]{#2}
\begin{thebibliography}{10}

\bibitem{BR18}
Anirban Basak and Mark Rudelson, \emph{Sharp transition of the invertibility of
  the adjacency matrices of sparse random graphs}, arXiv:1809.08454.

\bibitem{BVW10}
Jean Bourgain, Van~H. Vu, and Philip~Matchett Wood, \emph{On the singularity
  probability of discrete random matrices}, J. Funct. Anal. \textbf{258}
  (2010), 559--603.

\bibitem{FJLS19}
Asaf Ferber, Vishesh Jain, Kyle Luh, and Wojciech Samotij, \emph{On the
  counting problem in inverse {L}ittlewood--{O}fford theory}, arXiv:1904.10425.

\bibitem{Hua20}
Han Huang, \emph{Singularity of {B}ernoulli matrices in the sparse regime
  $pn={O}(\log n)$}, arXiv:2009.13726.

\bibitem{Irm20}
A.~Irmatov, \emph{Asymptotics of the number of threshold functions and the
  singularity probability of random $\{\pm1\}$-matrices}, Doklady Mathematics
  \textbf{101} (2020), 247--249.

\bibitem{Jai19}
Vishesh Jain, \emph{Approximate {S}pielman-{T}eng theorems for the least
  singular value of random combinatorial matrices}, arXiv:1904.10592.

\bibitem{JSS20discrete1}
Vishesh Jain, Ashwin Sah, and Mehtaab Sawhney, \emph{Sharp invertibility of
  random {B}ernoulli matrices}, in preparation.

\bibitem{JSS20digraphs}
Vishesh Jain, Ashwin Sah, and Mehtaab Sawhney, \emph{The smallest singular
  value of dense random regular digraphs}, arXiv:2008.04755.

\bibitem{KKS95}
Jeff Kahn, J\'{a}nos Koml\'{o}s, and Endre Szemer\'{e}di, \emph{On the
  probability that a random {$\pm 1$}-matrix is singular}, J. Amer. Math. Soc.
  \textbf{8} (1995), 223--240.

\bibitem{Kas77}
Boris~Sergeevich Ka{\v{s}}in, \emph{Diameters of some finite-dimensional sets
  and classes of smooth functions}, Izvestiya: Mathematics \textbf{11} (1977),
  317--333.

\bibitem{Kom67}
J.~Koml\'{o}s, \emph{On the determinant of {$(0,\,1)$} matrices}, Studia Sci.
  Math. Hungar. \textbf{2} (1967), 7--21.

\bibitem{LPRT05}
A.~E. Litvak, A.~Pajor, M.~Rudelson, and N.~Tomczak-Jaegermann, \emph{Smallest
  singular value of random matrices and geometry of random polytopes}, Adv.
  Math. \textbf{195} (2005), 491--523.

\bibitem{LT20}
Alexander~E Litvak and Konstantin~E Tikhomirov, \emph{Singularity of sparse
  {B}ernoulli matrices}, arXiv:2004.03131.

\bibitem{Liv18}
Galyna~V Livshyts, \emph{The smallest singular value of heavy-tailed not
  necessarily i.i.d.~random matrices via random rounding}, arXiv:1811.07038.

\bibitem{LTV19}
Galyna~V Livshyts, Konstantin Tikhomirov, and Roman Vershynin, \emph{The
  smallest singular value of inhomogeneous square random matrices},
  arXiv:1909.04219.

\bibitem{Ngu13}
Hoi~H. Nguyen, \emph{On the singularity of random combinatorial matrices}, SIAM
  J. Discrete Math. \textbf{27} (2013), 447--458.

\bibitem{Rog61}
B.~A. Rogozin, \emph{On the increase of dispersion of sums of independent
  random variables}, Teor. Verojatnost. i Primenen \textbf{6} (1961), 106--108.

\bibitem{Rud08}
Mark Rudelson, \emph{Invertibility of random matrices: norm of the inverse},
  Annals of Mathematics (2008), 575--600.

\bibitem{RV08}
Mark Rudelson and Roman Vershynin, \emph{The {L}ittlewood-{O}fford problem and
  invertibility of random matrices}, Adv. Math. \textbf{218} (2008), 600--633.

\bibitem{Sch04}
Gideon Schechtman, \emph{Special orthogonal splittings of {$L^{2k}_1$}}, Israel
  J. Math. \textbf{139} (2004), 337--347.

\bibitem{TV06}
Terence Tao and Van Vu, \emph{On random {$\pm1$} matrices: singularity and
  determinant}, Random Structures Algorithms \textbf{28} (2006), 1--23.

\bibitem{TV07}
Terence Tao and Van Vu, \emph{On the singularity probability of random
  {B}ernoulli matrices}, J. Amer. Math. Soc. \textbf{20} (2007), 603--628.

\bibitem{Tik20}
Konstantin Tikhomirov, \emph{Singularity of random {B}ernoulli matrices}, Ann.
  of Math. (2) \textbf{191} (2020), 593--634.

\bibitem{Tra20}
Tuan Tran, \emph{The smallest singular value of random combinatorial matrices},
  arXiv:2007.06318.

\bibitem{Ver18}
Roman Vershynin, \emph{High-dimensional probability}, Cambridge Series in
  Statistical and Probabilistic Mathematics, vol.~47, Cambridge University
  Press, Cambridge, 2018, An introduction with applications in data science,
  With a foreword by Sara van de Geer.

\end{thebibliography}

\end{document}